\def\R {\mathbb{R}}
\def\N {\mathbb{N}}
\def\S {\mathbb{S}}
\def\Z {\mathbb{Z}}
\def\eps{\varepsilon}
\def\pa{\partial}
\newcommand{\loc}{\mathrm{loc}}
\renewcommand{\div}{\mathrm{div}}
\newcommand{\de}[1] {\mathrm{d} #1}
\newcommand{\ddfrac}[2] {\frac{\displaystyle #1 }{\displaystyle #2} }
\DeclareMathOperator{\inter}{Int}
\DeclareMathOperator{\supp}{supp}
\newtheorem{proposition}{Proposition}[section]
\newtheorem{theorem}[proposition]{Theorem}
\newtheorem{corollary}[proposition]{Corollary}
\newtheorem{lemma}[proposition]{Lemma}
\theoremstyle{definition}
\newtheorem{remark}[proposition]{Remark}
\newtheorem*{plan}{Plan of the paper}
\newtheorem*{notation}{Notation}
\numberwithin{equation}{section}
\title[Exponential entire solutions]{Entire solutions with exponential growth for an elliptic system modeling phase-separation}
\begin{document}
\maketitle

{\footnotesize
\centerline{Nicola Soave}
 \centerline{Universit\`a degli Studi di Milano Bicocca - Dipartimento di Ma\-t\-ema\-ti\-ca e Applicazioni}
   \centerline{Via Roberto Cozzi 53, 20125 Milano, Italy}
\centerline{email: n.soave@campus.unimib.it}

\medskip

\centerline{Alessandro Zilio}
 \centerline{Politecnico di Milano - Dipartimento di Ma\-t\-ema\-ti\-ca ``Francesco Brioschi"}
   \centerline{Piazza Leonardo da Vinci 32, 20133 Milano, Italy}
\centerline{email: alessandro.zilio@mail.polimi.it}
}

 \begin{abstract}
\noindent We prove the existence of entire solutions with exponential growth for the semilinear elliptic system
\[
\begin{cases}
-\Delta u = -u v^2 & \text{in $\R^N$}\\
-\Delta v= -u^2 v & \text{in $\R^N$} \\
u,v>0,
\end{cases}
\]
for every $N \ge 2$. Our construction is based on an approximation procedure, whose convergence is ensured by suitable Almgren-type monotonicity formulae. The construction of \emph{some} solutions is extended to systems with $k$ components, for every $k > 2$.
\end{abstract}

\noindent \textbf{Keywords:} elliptic system, phase-separation, Almgren monotonicity formulae, entire solutions, exponential growth.

\section{Introduction and main results}
In this paper we investigate the existence of entire solutions with exponential growth for the semilinear elliptic system
\begin{equation}\label{eqn: system R}
    \begin{cases}
    - \Delta u = - u v^2 \\
    - \Delta v = - u^2 v \\
    u,v > 0,
    \end{cases}
\end{equation}
in $\R^2$ (thus in $\R^N$ for every $N \ge 2$). System \eqref{eqn: system R}, which appears in the study of phase-separation phenomena for Bose-Einstein condensates with multiple states, has been intensively studied in the last years; we refer in particular to \cite{BeLiWeZh, CaffLin, ChanLinLinLin, DaWaZh, NoTaTeVe, TaTe}, where physical motivations are discussed and a precise description of the phase-separation is derived, and to \cite{BeLiWeZh, BeTeWaWe} where existence and qualitative properties of entire solutions are central topics. In \cite{NoTaTeVe}, it is proved that if $(u,v)$ is an entire solution to \eqref{eqn: system R} and is globally $\alpha$-H\"older continuous for some $\alpha \in (0,1)$, then one between $u$ and $v$ is constant while the other is identically $0$. On the other hand, in \cite{BeLiWeZh} the authors show that there exists a nontrivial solution for the system of ODEs
\[
\begin{cases}
    - u'' = - u v^2 & \text{in $\R$}\\
    - v'' = - u^2 v & \text{in $\R$}\\
    u,v>0
    \end{cases}
\]
which is reflectionally symmetric with respect to a point of $\R$, in the sense that there exists $t_0 \in \R$ such that $u(t_0+t)=v(t_0-t)$ for every $t \in \R$, and has linear growth: there exists $C>0$ such that
\[
u(t)+v(t) \le C(1+|t|) \qquad \forall t \in \R.
\]
The paper \cite{BeTeWaWe} completes the study of the $1$-dimensional problem with the proof of the uniqueness of the positive $1$-dimensional profile, up to translations and scalings. Always in \cite{BeTeWaWe}, the authors construct entire solutions to \eqref{eqn: system R} with algebraic growth for any integer rate of growth greater then $1$; here and in the rest of the paper we say that $(u,v)$ has algebraic growth if there exist $p \ge 1$ and $C>0$ such that
\[
u(x)+v(x) \le C(1+|x|^p) \qquad \forall x \in \R^N.
\]
The solutions constructed in \cite{BeTeWaWe} are not $1$-dimensional, and \emph{are modeled on}  (we will be more precise later, see Remark \ref{rmk:model}) the homogeneous harmonic polynomials $\Re(z^d)$, for every $d \ge 2$. There is a deep relationship between entire solutions to \eqref{eqn: system R} and harmonic functions; this relationship has been established in \cite{DaWaZh,NoTaTeVe}. For instance, in case $(u,v)$ has algebraic growth, it is possible to show that up to a subsequence, the blow-down family, defined by
\[
\left( u_R(x), v_R(x) \right) = \frac{R^{N-1}}{\int_{\pa B_R(0)} u^2 + v^2} \left( u(Rx), v(Rx)\right),
\]
is uniformly convergent in every compact subset of $\R^N$, as $R \to +\infty$, to a limiting profile $(\Psi^+,\Psi^-)$, where $\Psi$ is a homogeneous harmonic polynomial (see Theorem 1.4 in \cite{BeTeWaWe}).

To conclude this bibliographic introduction, we have to mention that major efforts have been done recently in order to prove classification results and in particular the $1$-dimensional symmetry of solutions to \eqref{eqn: system R}. This is motivated by the relationship between \eqref{eqn: system R} and the Allen-Cahn equation, which has been established in \cite{BeLiWeZh}, and led the authors to formulate a De Giorgi's-type and a Gibbons'-type conjecture for solutions to \eqref{eqn: system R}; for results in this direction, we refer to \cite{BeLiWeZh,BeTeWaWe, Fa, FaSo, Wa}.

\medskip

Motivated by the quoted achievements, we wonder if the system \eqref{eqn: system R} has solutions with super-algebraic growth. We can give a positive answer to this question proving the existence of solutions with exponential growth. In our construction we adapt the same line of reasoning introduced in the proof of Theorem 1.3 of \cite{BeTeWaWe}. Therein, the authors proved the existence of solutions to \eqref{eqn: system R} with the same symmetry of the function $\Re(z^d)$ in any bounded ball $B_R(0) \subset \R^2$, with boundary conditions $u=(\Re(z^d))^+$, $v=(\Re(z^d))^-$ on $\pa B_R(0)$. By means of suitable monotonicity formulae, they could pass to the limit for $R \to +\infty$ obtaining convergence (up to a subsequence) for the previous family to a nontrivial entire solution. In this sense, their solutions \emph{are modeled on} the harmonic functions $\Re(z^d)$.

Here, having in mind the construction of solutions with exponential growth, and recalling the relationship between entire solution of our system and harmonic functions, we start by considering
\[
\Phi(x,y):=\cosh x \sin y.
\]
The first of our main results is the following.

\begin{theorem}\label{main theorem 1}
There exists an entire solution $(u,v) \in (\mathcal{C}^\infty(\R^2))^2$ to system \eqref{eqn: system R} such that
\begin{itemize}
\item[1)] $u(x,y+2\pi)=u(x,y)$ and $v(x,y+2\pi)=v(x,y)$,
\item[2)] $u(-x,y)=u(x,y)$ and $v(-x,y)=v(x,y)$,
\item[3)] the symmetries
\begin{align*}
v(x,y)=u(x,y-\pi) \quad & \quad u(x,\pi-y)= v(x,\pi +y)\\
u\left(x,\frac{\pi}{2}+y\right)= u\left(x,\frac{\pi}{2}-y\right) \quad & \quad v\left(x,\frac{3}{2}\pi+y\right)= v\left(x,\frac{3}{2}\pi-y\right)
\end{align*}
hold,
\item[4)] $u-v>0$ in $\{\Phi>0\}$ and $v-u>0$ in $\{\Phi<0\}$,
\item[5)] $u > \Phi^+$ and $v > \Phi^-$ in $\R^2$,
\item[6)] the function (\emph{Almgren quotient})
\[
r \mapsto \frac{\int_{(0,r) \times (0,2\pi)} |\nabla u|^2 + |\nabla v|^2 + 2 u^2 v^2}{\int_{\{r\} \times [0,2\pi]} u^2+v^2}
\]
is well-defined for every $r>0$, is nondecreasing, and
\[
\lim_{r \to +\infty} \frac{\int_{(0,r) \times (0,2\pi)} |\nabla u|^2 + |\nabla v|^2 + 2 u^2 v^2}{\int_{\{r\} \times [0,2\pi]} u^2+v^2} = 1,
\]
\item[7)] 
there exists the limit
\[
\lim_{r \to +\infty} \frac{\int_{\{r\} \times [0,2\pi]} u^2+v^2}{e^{2r}}=: \alpha \in (0,+\infty).
\]
\end{itemize}
\end{theorem}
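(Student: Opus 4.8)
The plan is to obtain $(u,v)$ as a limit, for $R\to+\infty$, of minimizers of approximating problems on bounded domains, mimicking the scheme of \cite{BeTeWaWe} but adapted to the strip geometry dictated by $\Phi(x,y)=\cosh x\sin y$ (which plays the role of the polynomials $\Re(z^d)$ there; note $\Delta\Phi=0$). For $R>0$ let $Q_R=(0,R)\times(0,2\pi)$ and minimize
\[
J_R(u,v)=\int_{Q_R}|\nabla u|^2+|\nabla v|^2+2u^2v^2
\]
over the class $\mathcal A_R$ of pairs $(u,v)\in H^1(Q_R)^2$ that are $2\pi$-periodic in $y$, even in $x$, obey the symmetry relations of item~3) (each of which comes from an isometry commuting with $J_R$; they force in particular $u=v$ on the nodal lines $\{y=0\}$, $\{y=\pi\}$ of $\Phi$), and satisfy $(u,v)=(\Phi^+,\Phi^-)$ on $\{R\}\times[0,2\pi]$ (a $\tau$–invariant datum). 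The direct method (using compactness of $H^1(Q_R)\hookrightarrow L^4(Q_R)$ in dimension $2$ for the quartic term, and Poincar\'e for coercivity) gives a minimizer $(u_R,v_R)$; replacing it by $(|u_R|,|v_R|)$ we take $u_R,v_R\ge 0$, and by the principle of symmetric criticality it solves $\Delta u_R=u_Rv_R^2$, $\Delta v_R=u_R^2v_R$ with the stated boundary and symmetry conditions. Since $u_R,v_R$ are then subharmonic and not identically zero on $\{R\}\times[0,2\pi]$, the strong maximum principle and the Hopf lemma give $u_R,v_R>0$ in $Q_R\cup(\{0\}\times(0,2\pi))$, and elliptic regularity makes them smooth.

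Next I would prove items 4)--5) for the approximating solutions. Set $w_R=u_R-v_R$, so $\Delta w_R=-u_Rv_Rw_R$, $w_R=\Phi$ on $\{R\}\times[0,2\pi]$, and $w_R=0$ on $\{y=0\}\cup\{y=\pi\}$ by the symmetries. Suppose $\Omega:=\{w_R<0\}\cap\{\Phi>0\}\cap Q_R\neq\emptyset$. Then $w_R=0$ on $\partial\Omega$ ($\Omega$ cannot reach $\{x=R\}\cap\{\Phi>0\}$, where $w_R=\Phi>0$, and the boundary part on $\{x=0\}$ disappears after even reflection in $x$). Exchanging $u_R$ and $v_R$ on $\Omega$ and on its images under the symmetry group produces a competitor $(\tilde u,\tilde v)\in\mathcal A_R$ with $J_R(\tilde u,\tilde v)=J_R(u_R,v_R)$, hence itself a minimizer, hence smooth; but on $\Omega$ we have $\Delta(-w_R)=u_Rv_Rw_R<0$, so $-w_R$ is superharmonic, positive, vanishing on $\partial\Omega$, and the Hopf lemma gives $\partial_\nu w_R>0$ at a boundary point with an interior ball, so $\nabla\tilde u$ jumps there --- contradiction. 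Thus $w_R\ge 0$ on $\{\Phi>0\}$, and being superharmonic and nontrivial there it is $>0$; symmetrically $w_R<0$ on $\{\Phi<0\}$, which is item~4). For item~5), $\psi_R:=w_R-\Phi$ satisfies $\Delta\psi_R=-u_Rv_Rw_R<0$ on $\{\Phi>0\}$, $\psi_R=0$ on the Dirichlet part of $\partial(\{\Phi>0\}\cap Q_R)$ and $\partial_x\psi_R=0$ on $\{x=0\}$, so the maximum principle gives $\psi_R>0$; with $v_R>0$ this yields $u_R>\Phi+v_R>\Phi^+$ on $\{\Phi>0\}$ and $u_R>0=\Phi^+$ elsewhere, hence $u_R>\Phi^+$ in $Q_R$, and $v_R>\Phi^-$ by the symmetry $v_R(x,y)=u_R(x,y-\pi)$. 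In particular
\[
H_R(r):=\int_0^{2\pi}\big(u_R^2+v_R^2\big)(r,y)\,dy\ \geq\ \int_0^{2\pi}\big((\Phi^+)^2+(\Phi^-)^2\big)(r,y)\,dy=\int_0^{2\pi}\cosh^2r\sin^2y\,dy=\pi\cosh^2r .
\]

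Now the core, the monotonicity formula. With $H_R$ as above and $E_R(r):=\int_{(0,r)\times(0,2\pi)}|\nabla u_R|^2+|\nabla v_R|^2+2u_R^2v_R^2$, the equations, $2\pi$-periodicity in $y$, and $\partial_xu_R=\partial_xv_R=0$ on $\{x=0\}$ give, integrating by parts, $H_R'(r)=2E_R(r)$; a Rellich--Ne\v{c}as (Pohozaev-type) identity on the slices $\{r\}\times[0,2\pi]$ together with a Cauchy--Schwarz inequality --- exactly as in the Almgren-type formulae of \cite{NoTaTeVe,TaTe}, and it is precisely the coefficient $2$ in $2u^2v^2$ that makes this computation close --- shows that $N_R(r):=E_R(r)/H_R(r)$ is nondecreasing on $(0,R)$. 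Testing the minimization with $(\Phi^+,\Phi^-)$, and using $\Phi^+\Phi^-\equiv0$ and $|\nabla\Phi^+|^2+|\nabla\Phi^-|^2=|\nabla\Phi|^2$ a.e., gives $E_R(R)\le\int_{Q_R}|\nabla\Phi|^2=\tfrac\pi2\sinh(2R)$, while $H_R(R)=\int_0^{2\pi}\Phi(R,y)^2\,dy=\pi\cosh^2R$; hence
\[
N_R(r)\le N_R(R)\le\tanh R<1\qquad\text{for all }0<r<R .
\]
This uniform bound, $H_R'=2E_R\ge0$, and the lower bound $H_R(r)\ge\pi\cosh^2r$ of the previous step give two-sided bounds on $H_R$ on compact subsets of $\{x<R\}$; then local $L^\infty$ estimates (subharmonicity) followed by Schauder estimates yield uniform $C^\infty_{\mathrm{loc}}$ bounds on $(u_R,v_R)$ on compact subsets of $\{x<R\}$.

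Extracting $R_n\to+\infty$ with $(u_{R_n},v_{R_n})\to(u,v)$ in $C^\infty_{\mathrm{loc}}(\R^2)$ (after even reflection in $x$) produces an entire classical solution of \eqref{eqn: system R} inheriting 1)--3) and 4)--5) (strictness restored by the strong maximum principle), with $H(r)=\lim H_{R_n}(r)\ge\pi\cosh^2r>0$, so $(u,v)\not\equiv(0,0)$. Passing to the limit in the monotonicity, $N(r):=E(r)/H(r)=\lim N_{R_n}(r)$ is well defined on $(0,\infty)$, nondecreasing, and $\le1$; set $\gamma=\lim_{r\to\infty}N(r)\in(0,1]$. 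Since $H'=2E$ persists for the limit (by 1)--2)), $(\log H)'=2N$; if $\gamma<1$ this forces $H(r)\le Ce^{2\gamma r}$, contradicting $H(r)\ge\pi\cosh^2r\ge\tfrac\pi4e^{2r}$, so $\gamma=1$, which is item~6). Finally $\tfrac{d}{dr}\log\!\big(H(r)e^{-2r}\big)=2N(r)-2\le0$, so $H(r)e^{-2r}$ is nonincreasing and converges to some $\alpha\ge0$; and $H(r)e^{-2r}\ge\pi\cosh^2r\cdot e^{-2r}\ge\tfrac\pi4$ forces $\alpha\ge\tfrac\pi4>0$, which is item~7). The main obstacle is the third step: establishing the monotonicity of $N_R$ for the \emph{non-segregated} approximating solutions in the strip/periodic geometry and, at the same time, extracting from it the uniform estimates that keep the limit nontrivial with the exact exponential rate --- all of 6)--7) rests on this, and it is what forces the precise shape of the energy and of the model $\Phi$.
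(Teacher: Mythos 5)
Your overall scheme (minimize on bounded cylinders with boundary datum $(\Phi^+,\Phi^-)$, prove symmetries and the pointwise bounds $u_R>\Phi^+$, $v_R>\Phi^-$, then pass to the limit via an Almgren quotient) is the same as the paper's, but the two steps on which your items 6)--7) rest contain genuine gaps. First, the bound $N_R(r)\le N_R(R)\le\tanh R<1$ is not available: there is a coefficient mismatch between the functional you minimize and the Almgren numerator. If you minimize $\int|\nabla u|^2+|\nabla v|^2+2u^2v^2$, the Euler--Lagrange system is $-\Delta u=-2uv^2$, $-\Delta v=-2u^2v$, not \eqref{eqn: system R}; if instead you minimize the correct functional $\int|\nabla u|^2+|\nabla v|^2+u^2v^2$ (as the paper does), then the quantity whose quotient with $H$ is monotone and satisfies $H'=2E$ is $E(r)=\int|\nabla u|^2+|\nabla v|^2+2u^2v^2$, and comparison with the segregated competitor only yields $E_R(R)\le 2\,J(u_R,v_R)\le 2\int|\nabla\Phi|^2$, i.e.\ $N_R\le 2\tanh R\le 2$ (this is exactly Lemma \ref{lem: N less 2}). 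With only $N\le 2$ your argument gives $\lim_{r\to\infty}N(r)\in[1,2]$ and $H(r)\lesssim e^{4r}$, so neither the limit $=1$ in item 6) nor the convergence of $H(r)e^{-2r}$ in item 7) follows. Closing this gap is precisely the content of the paper's Lemma \ref{lem: N less 1}: one writes $N_{R}(r)=\mathfrak{N}_{R}(r)+f_R(r)$ with $\mathfrak{N}$ the quotient built on the coefficient-$1$ energy (which \emph{is} $\le 1$ by minimality and is monotone by Proposition \ref{prp:Almgren Neumann}), and then proves $f(r)=\int_{C_{(0,r)}}u^2v^2/H(r)\to 0$ as $r\to+\infty$ through a differential inequality $f'+2\delta f\le g$ with $g\in L^1(\R^+)$. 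Nothing in your proposal plays this role, and the "Cauchy--Schwarz closes because of the coefficient $2$" remark does not substitute for it.

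Second, your compactness step asserts "two-sided bounds on $H_R$ on compact subsets", but you only prove the lower bound $H_R(r)\ge\pi\cosh^2 r$; no uniform-in-$R$ \emph{upper} bound on $H_R$ (equivalently on $E_R$) at a fixed slice is derived. Minimality controls $E_R$ only at $r=R$, where the bound blows up like $e^{2R}$, and the doubling inequality $(\log H_R)'=2N_R\le C$ propagates an upper bound only once you have an anchor such as $H_R(1)\le C$ uniformly in $R$. This is the paper's Lemma \ref{lem:H(1) limitato}, whose proof is genuinely nontrivial: a blow-up by $\sqrt{H_{R_n}(1)}$, identification of a segregated limit to get a uniform lower bound on $N_{R_n}(1)$, and then a contradiction via the weighted monotonicity of $E(r)e^{-2r}e^{C\varphi(r)}$ (Lemma \ref{lem monotonicity for E}, which in turn relies on the spectral estimate of Theorem \ref{thm:teo 5.6 k}). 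Without this ingredient the extraction of a convergent subsequence is unjustified. Two further, more minor, points: your swap-and-Hopf argument for item 4) is a different (and plausible, though incompletely justified near the fixed lines $\{y=0,\pi\}$ and the free boundary) route from the paper's, which instead minimizes over a constrained class containing the ordering $u\ge v$ on $\{\Phi\ge0\}$ and uses invariance under the parabolic flow precisely to secure the pointwise estimates; and, as written, your claim that the minimizer of the coefficient-$2$ functional solves $\Delta u=uv^2$ is false and must be repaired before anything else.
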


\begin{remark}\label{rmk:model}
This solution \emph{is modeled on} the harmonic function $\Phi$, in the sense that it inherits the symmetries of $(\Phi^+,\Phi^-)$ and has the same rate of growth of $\Phi$.
\end{remark}

\begin{remark}\label{rmk: exponential growth}
Point 7) of the Theorem gives a lower and a upper bound to the rate of growth of the quadratic mean of $(u,v)$ on $\{r\} \times [0,2\pi]$ when $r$ varies:
\[
\left(\int_{\{r\} \times [0,2\pi]} u^2+v^2 \right)^{\frac{1}{2}}= O(e^r) \qquad \text{as $r \to +\infty$}.
\]
The domain of integration takes into account the periodicity of $(u,v)$. The quadratic mean of $(u,v)$ on $\{r\} \times [0,2\pi]$ has exponential growth, and the rate of growth is the same of the function $e^r$, which in turns has the same rate of growth of $\Phi$. Note that the coefficient $1$ in the exponent of $e^r$ coincides with the limit as $r \to +\infty$ of the Almgren quotient defined in point 6).
\end{remark}

\begin{remark}\label{rem:scaling invariance}
With a scaling argument, it is not difficult to prove the existence of entire solutions with exponential growth of order $\lambda$ for every $\lambda>0$ (in the previous sense). To see this, let
\[
\left(u_{\lambda}(x,y), v_\lambda(x,y)\right)= \left(\lambda u (\lambda x, \lambda y), \lambda v (\lambda x, \lambda y\right).
\]
It is straightforward to check that $(u_\lambda, v_\lambda)$ is still a solution to \eqref{eqn: system R} in the plane, is $\frac{2\pi}{\lambda}$-periodic in $y$ and is such that
\[
u_\lambda(x,y) \ge \lambda\left(\cosh (\lambda x)\sin(\lambda y)\right)^+ \quad \text{and} \quad v_\lambda(x,y) \ge \lambda \left(\cosh (\lambda x)\sin(\lambda y)\right)^-.
\]
Moreover,
\begin{equation}\label{eq10}
\lim_{r \to +\infty} \frac{\int_{(0,r) \times \left(0,\frac{2\pi}{\lambda}\right)} |\nabla u_\lambda|^2 + |\nabla v_{\lambda}|^2 + 2 u_{\lambda}^2 v_{\lambda}^2}{\int_{\{r\} \times \left[0,\frac{2\pi}{\lambda}\right]} u_{\lambda}^2+v_{\lambda}^2} = \lambda,
\end{equation}
and
\[
\lim_{r \to +\infty} \frac{ \int_{\{r\} \times \left[0,\frac{2\pi}{\lambda}\right]} u_\lambda^2+v_\lambda^2}{e^{2 \lambda r}}  = \lambda \alpha.
\]
One can consider the solution $(u_\lambda,v_\lambda)$ as related to the harmonic function $\cosh(\lambda x) \sin (\lambda y)$. This reveals that there exists a correspondence
\[
\{(u_\lambda,v_\lambda):\lambda >0\} \leftrightarrow \{ \sin(\lambda x) \cosh(\lambda y): \lambda>0\}.
\]
Due to the invariance under translations and rotations of problem \eqref{eqn: system R}, the family $\{(u_\lambda,v_\lambda):\lambda >0\}$ can equivalently be related with the families of harmonic functions $\{\cosh(\lambda x) \left[C_1 \cos(\lambda y) + C_2 \sin(\lambda y)\right]\}$ or $\{\left[ C_3\cos(\lambda x) + C_4 \sin(\lambda x)\right] \cosh(\lambda y): \lambda>0\}$, where $C_1,C_2,C_3,C_4 \in \R$.

As observed in Remark \ref{rmk: exponential growth}, the limit of the Almgren quotient in \eqref{eq10} describes the rate of the growth of the quadratic mean of $(u_\lambda,v_\lambda)$ computed on an interval of periodicity in the $y$ variable. The previous computation reveals that for every $\lambda>0$ we can construct a solution having rate of growth equal to $\lambda$. This marks a relevant difference between entire solutions with polynomial growth and entire solutions with exponential growth: while in the former case the admissible rates of growth are quantized (Theorem 1.4 of \cite{BeTeWaWe}), in the latter one we can prescribe any positive real value as rate of growth.
\end{remark}

Remark \ref{rem:scaling invariance} reveals that, starting from the solution found in Theorem \ref{main theorem 1}, we can build infinitely-many entire solutions with different exponential growth. However, noting that system \ref{eqn: system R} is invariant under rotations, translations and scalings, intuitively speaking they are all the same solution. We wonder if there exists an entire solution of \eqref{eqn: system R} having exponential growth which cannot be obtained by that found in Theorem \ref{main theorem 1} through a rotation, a translation or a scaling; the answer is affirmative. We denote
\[
\Gamma(x,y):= e^x \sin y.
\]

\begin{theorem}\label{thm: main thm 2}
There exists an entire solution $(u,v) \in (\mathcal{C}^\infty(\R^2))^2$ to system \eqref{eqn: system R}  which enjoys points 1), 3), 4) of Theorem \ref{main theorem 1}; moreover
\begin{itemize}
\item[2)] for every $r \in \R$
\begin{equation}\label{eqn:finite_energy}
\int_{(-\infty,r) \times (0,2\pi) } |\nabla u|^2 + |\nabla v|^2 +  u^2 v^2 <+\infty,
\end{equation}
\item[5)] $u> \Gamma^+$ and $v> \Gamma^-$ in $\R^2$,
\item[6)] the function (\emph{Almgren quotient})
\[
r \mapsto \frac{\int_{(-\infty,r) \times (0,2\pi)} |\nabla u|^2 + |\nabla v|^2 + 2 u^2 v^2}{\int_{\{r\} \times (0,2\pi)} u^2+v^2}
\]
is well-defined for every $r>0$, is nondecreasing, and
\[
\lim_{r \to +\infty} \frac{\int_{(-\infty,r) \times (0,2\pi)} |\nabla u|^2 + |\nabla v|^2 + 2 u^2 v^2}{\int_{\{r\} \times (0,2\pi)} u^2+v^2} = 1,
\]
\item[7)] 
there exist the limits
\[
\lim_{r \to +\infty} \frac{\int_{\{r\} \times [0,2\pi]} u^2 +v^2}{e^{2r}}=:\beta \in (0,+\infty) \quad \text{and} \quad \lim_{r \to -\infty} \int_{\{r\} \times [0,2\pi]} u^2 +v^2=0.
\]
\end{itemize}
\end{theorem}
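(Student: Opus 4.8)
The plan is to repeat, with $\Gamma(x,y)=e^x\sin y$ in place of $\Phi$, the approximation scheme that proves Theorem \ref{main theorem 1}, exploiting the one structural novelty of $\Gamma$: it \emph{decays} as $x\to-\infty$, which is exactly what will produce the finite-energy statement \eqref{eqn:finite_energy} in the limit. Concretely, for $R>0$ I would work on the half-strip $\Omega_R=(-\infty,R)\times(0,2\pi)$ (with the $2\pi$-periodicity and the reflections of points 1) and 3) imposed as linear constraints) and minimize $J(u,v)=\int_{\Omega_R}\frac12|\nabla u|^2+\frac12|\nabla v|^2+\frac12 u^2v^2$ over the pairs with $u\ge\Gamma^+$, $v\ge\Gamma^-$, finite energy on $\Omega_R$, and $(u,v)=(\Gamma^+,\Gamma^-)$ on $\{R\}\times(0,2\pi)$. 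The obstacle pair $(\Gamma^+,\Gamma^-)$ is itself admissible and, since $\Gamma^+\Gamma^-\equiv0$, has energy $\frac12\int_{\Omega_R}|\nabla\Gamma|^2=\frac\pi2 e^{2R}<+\infty$; so the direct method produces a minimizer $(u_R,v_R)$, nonnegative after truncation. One then checks the obstacle is inactive in the interior (at an interior contact point $u_R-\Gamma^+\ge0$ would be subharmonic on $\{\Gamma>0\}$ and vanish, forcing $u_R\equiv\Gamma^+$ locally, hence $0=\Delta u_R=u_Rv_R^2>0$, absurd), so $(u_R,v_R)$ solves \eqref{eqn: system R} in $\Omega_R$ and is smooth.

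Next come the $R$-uniform a priori estimates. From the obstacle, $\int_{\{r\}\times(0,2\pi)}u_R^2+v_R^2\ge\int_{\{r\}\times(0,2\pi)}\Gamma^2=\pi e^{2r}$; from comparison with an explicit supersolution growing like $e^x$ — for instance $(\bar u,\bar v)=(e^x+1,e^x+1)$, which satisfies $-\Delta\bar u=-e^x\ge-(e^x+1)^3$ and dominates $(\Gamma^+,\Gamma^-)$ on $\partial\Omega_R$ — one gets the matching bound $u_R,v_R\le e^x+1$, hence the local uniform bounds needed below. Set $E_R(r):=\int_{(-\infty,r)\times(0,2\pi)}|\nabla u_R|^2+|\nabla v_R|^2+2u_R^2v_R^2$ and $H_R(r):=\int_{\{r\}\times(0,2\pi)}u_R^2+v_R^2$. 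Testing the equations against $(u_R,v_R)$ on $(-\infty,r)\times(0,2\pi)$ — the boundary terms at $x\to-\infty$ vanish because $u_R,v_R\to0$ there, a consequence of finite energy via the convexity of the $y$-averages — gives the Pohozaev identity $E_R(r)=\int_{\{r\}\times(0,2\pi)}u_R\partial_x u_R+v_R\partial_x v_R=\frac12 H_R'(r)$; a Rellich identity in the $x$-direction gives $E_R'(r)=2\int_{\{r\}\times(0,2\pi)}(\partial_x u_R)^2+(\partial_x v_R)^2+\int_{\{r\}\times(0,2\pi)}u_R^2v_R^2$ (here the coefficient $2$ in front of $u^2v^2$ is used); combined with Cauchy--Schwarz, $E_R(r)^2\le\frac12 H_R(r)E_R'(r)$, so $N_R:=E_R/H_R$ is nondecreasing. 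Finally $E_R(R^-)\le 4J(u_R,v_R)\le 4J(\Gamma^+,\Gamma^-)=2\pi e^{2R}$ and $H_R(R)=\pi e^{2R}$ force $N_R(R^-)\le 2$, hence $N_R(r)\le 2$ for all $r<R$.

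By the local uniform bounds, along a subsequence $(u_R,v_R)\to(u,v)$ in $C^2_{\loc}(\R^2)$, an entire solution of \eqref{eqn: system R} inheriting points 1), 3), 4); it is positive (nonnegative, subharmonic, not identically zero by the lower bound, hence $>0$), and $u>\Gamma^+$, $v>\Gamma^-$ because $u-\Gamma^+\ge0$ is subharmonic and cannot vanish on $\{\Gamma>0\}$. The limit retains $E(r)=\frac12 H'(r)$, the monotonicity of $N=E/H$, the bound $N(r)\le 2$, and $H(r)\ge\pi e^{2r}$; thus $\gamma:=\lim_{r\to+\infty}N(r)$ exists in $[1,2]$. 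To identify $\gamma=1$ — which is point 6) — I would perform a blow-down: $\hat u_\rho(x,y):=(2\pi/H(\rho))^{1/2}u(x+\rho,y)$, $\hat v_\rho:=(2\pi/H(\rho))^{1/2}v(x+\rho,y)$ solves the competition system with coupling $H(\rho)/(2\pi)\to+\infty$, carries the symmetries and periodicity, satisfies $\int_{\{0\}\times(0,2\pi)}\hat u_\rho^2+\hat v_\rho^2=2\pi$ and uniform local bounds (from $N\le 2$); by the theory of strongly competing systems a subsequence converges to a segregated pair $(\hat u_\infty,\hat v_\infty)$ with $\Psi:=\hat u_\infty-\hat v_\infty$ \emph{entire harmonic}, $2\pi$-periodic in $y$, antisymmetric under $y\mapsto y-\pi$, vanishing on $\{y\in\pi\Z\}$, nonnegative on $\{0<y<\pi\}$, and of finite energy on left half-strips — and a short complex-analytic argument (the associated entire function is Herglotz on the upper half-plane, hence affine) forces $\Psi=c\,e^x\sin y$ with $c>0$, whose frequency is $1$; therefore $\gamma=1$. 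Consequently $N(r)\le 1$ for all $r$, so $r\mapsto H(r)e^{-2r}$ is nonincreasing and, being trapped between $\pi$ and a finite constant, converges to some $\beta\in(0,+\infty)$ — the first limit in point 7). For the behaviour at $-\infty$: the $y$-average of $u$ is convex and (being trapped under $e^x+1$) nondecreasing, hence tends to some $\ell\ge0$; integrating $\Delta u=uv^2$ over $(-\rho_2,-\rho_1)\times(0,2\pi)$ produces, if $\ell>0$, a term of order $\ell^3(\rho_2-\rho_1)\to+\infty$ dominated by fluxes that stay bounded — impossible; so $u,v\to0$ as $x\to-\infty$, whence $H(r)\to0$ as $r\to-\infty$, and feeding $u,v\to0$ into the Pohozaev identity on $(-\rho,r)\times(0,2\pi)$ and letting $\rho\to+\infty$ yields \eqref{eqn:finite_energy}.

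The technical core is the second step: the Almgren-type monotonicity on the unbounded half-strip (where one must carefully justify the vanishing of the boundary contributions at $x=-\infty$) together with the $R$-uniform estimates, in particular the comparison with $e^x+1$, which has to be made rigorous for the competitive system. Of these, the step I would expect to be the most delicate is the identification $\gamma=1$: the uniform bound alone gives only $\gamma\le 2$, and pinning the limiting frequency exactly requires the blow-down together with the classification of the possible segregated limits carrying the prescribed symmetries.
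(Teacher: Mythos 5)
Your construction of the approximating solutions is where the proposal breaks down. You minimize under the pointwise constraints $u\ge\Gamma^+$, $v\ge\Gamma^-$ and then claim the obstacle is inactive, arguing that at an interior contact point one would have $u_R\equiv\Gamma^+$ locally and hence $0=\Delta u_R=u_Rv_R^2>0$. But on the contact set a minimizer of the obstacle problem satisfies only the variational inequality $-\Delta u_R+u_Rv_R^2\ge 0$; the equation is available only where the constraint is slack, so the contradiction you invoke is circular. Worse, the obstacle $\Gamma^+$ is a \emph{strict supersolution} of the $u$-equation wherever $v_R>0$ (indeed $-\Delta\Gamma^+=0>-\Gamma^+v_R^2$ on $\{\Gamma>0\}$), so the energy pushes the minimizer down onto the obstacle rather than away from it: there is no reason the contact set is empty, hence no reason your minimizer solves \eqref{eqn: system R} at all. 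In the paper the bound $u_R>\Phi^+$ (and later $u>\Gamma^+$) is not imposed as a constraint but derived from the system structure: the ordering $u\ge v$ on $\{\Phi\ge 0\}$ is built into the admissible class and preserved by the parabolic flow (Proposition \ref{existence thm in bdd cylinders}, recast as Proposition \ref{existence thm in bdd cylinders exp}), and then $-\Delta(u_R-v_R-\Phi)=u_Rv_R(u_R-v_R)\ge 0$ with zero boundary data on $\partial D_R$ gives the lower bound. Your admissible class contains neither this ordering nor any mechanism producing it, so point 4) for the approximations, point 4) of the theorem, and the sign information on $\hat u_\infty-\hat v_\infty$ that your blow-down classification later uses are all unsupported.

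Beyond this, two further points. First, you work directly on the half-strip $(-\infty,R)\times(0,2\pi)$; the paper deliberately avoids this, replacing it by the bounded cylinders $C_{(-3R,R)}$ with data $\Phi_R^{\pm}$, $\Phi_R(x,y)=2e^{-R}\cosh(x+R)\sin y\to\Gamma$, because evenness about $x=-R$ yields the Neumann condition needed for the monotonicity formulae of subsection \ref{sub:monot Neumann}, and boundedness restores the compact trace and Sobolev embeddings that drive the nondegeneracy estimates (Lemma \ref{lem:H(1) limitato 2}) and the segregation argument; on the unbounded strip these compactness properties fail and you do not address this. Moreover your Pohozaev/Rellich identities and the comparison $u_R\le e^x+1$ presuppose that the approximating pair solves the system and decays as $x\to-\infty$, neither of which has been established at that stage (decay at $-\infty$ is proved in the paper only for the limit solution, by a rescaling argument). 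Second, your identification of $\lim_{r\to+\infty}N(r)=1$ by blow-down is genuinely different from the paper, which obtains the lower bound $\ge 1$ from the spectral estimate of Theorem \ref{thm:teo 5.6 k} through Lemma \ref{cor estimate for N finite}, and the upper bound $\le 1$ by comparing with the boundary data via minimality; your route is plausible but needs the ingredients of Proposition \ref{thm: Almgren per armoniche} and Theorem \ref{thm: blow-down 2} (decay of $H(\cdot\,;\Psi)$ at $-\infty$, exclusion of higher frequencies from positivity on $\{0<y<\pi\}$), and these again rest on the symmetry and ordering information that your construction does not supply.
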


\begin{remark}
This solution is modeled on the harmonic function $\Gamma$. As explained in Remark \ref{rmk: exponential growth}, it is possible to obtain a family of entire solutions which is in correspondence with a family of harmonic functions.
\end{remark}

\begin{remark}
Note that the Almgren quotients that we defined in Theorem \ref{main theorem 1} and \ref{thm: main thm 2} are different. They are both different to the Almgren quotient which has been defined in \cite{BeTeWaWe}. 
\end{remark}

We can partially generalize our existence result to the case of systems with many components. To be precise, given an integer $k$, we will construct a solution $(u_1, \dots, u_k)$ of
\begin{equation}\label{eqn: system k comp}
	\begin{cases}
	- \Delta u_i = - u_i \sum_{j\neq i} u_j^2 \\
	u_i >0 ,
	\end{cases} i=1, \ldots, k,
\end{equation}
in the whole plane $\R^2$ having the same growth and the same symmetries of $\Gamma$. Here and in the paper we consider the indexes $\mod k$.

\begin{theorem}\label{main theorem 2}
There exists an entire solution $(u_1,\dots,u_k) \in (\mathcal{C}^\infty(\R^2))^k$ to system \eqref{eqn: system k comp} such that, for every $i=1,\ldots,k$,
\begin{itemize}
\item[1)] $u_i(x,y+ k \pi)=u_i(x,y)$,
\item[2)] the symmetries
\[
u_{i+1}(x,y) = u_i\left(x,y- \pi\right) \quad \quad u_{1}\left(x,\frac{\pi}{2}+y\right)= u_1\left(x,\frac{\pi}{2}-y\right)
\]
hold,
\item[3)] for every $r \in \R$
\[
\int_{(-\infty,r) \times (0,k\pi)} \sum_{i=1}^k |\nabla u_i|^2 + \sum_{1 \le i < j \le k} u_i^2 u_j^2 <+\infty;
\]
\item[4)] the function (\emph{Almgren quotient})
\[
r \mapsto \frac{\int_{(-\infty,r) \times (0,k\pi)} \sum_{i=1}^k |\nabla u_i|^2 +  2 \sum_{1 \le i < j \le k} u_i^2 u_j^2}{ \int_{\{r\} \times [0,k\pi]} \sum_{i=1}^k u_i^2   }
\]
is well-defined for every $r>0$, is nondecreasing, and
\[
\lim_{r \to +\infty} \frac{\int_{(-\infty,r) \times (0,k\pi)} \sum_{i=1}^k |\nabla u_i|^2 +  2 \sum_{1 \le i < j \le k} u_i^2 u_j^2}{ \int_{\{r\} \times [0,k\pi]} \sum_{i=1}^k u_i^2   } = 1.
\]
\item[5)] 
there exist the limits
\[
\lim_{r \to +\infty} \int_{\{r\} \times [0,k\pi]} \sum_{i=1}^k u_i^2 =: \gamma \in (0,+\infty) \quad \text{and} \quad      \lim_{r \to -\infty} \int_{\{r\} \times [0,k\pi]} \sum_{i=1}^k u_i^2 =0.
\]
\end{itemize}
\end{theorem}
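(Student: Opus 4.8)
The proof of Theorem \ref{main theorem 2} follows the same scheme as the proofs of Theorems \ref{main theorem 1} and \ref{thm: main thm 2}, which is an approximation procedure: one solves the system on bounded domains with boundary data modeled on $\Gamma$ and passes to the limit using monotonicity formulae. The plan is as follows.

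\medskip

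\emph{Step 1: the approximating problems.} For $R>0$ let $\Omega_R = (-R,R)\times(0,k\pi)$, and consider the boundary value problem for \eqref{eqn: system k comp} on $\Omega_R$ with boundary data $u_i = (\Gamma(\,\cdot\,, \cdot - (i-1)\pi))^+$ on $\{x = \pm R\}$ and periodic (or reflection) conditions on the horizontal sides; equivalently, one minimizes the energy
\[
J_R(\bu) = \int_{\Omega_R}\Bigl( \tfrac12 \tsum_{i=1}^k |\nabla u_i|^2 + \tfrac12 \tsum_{i<j} u_i^2 u_j^2 \Bigr)
\]
over the class of nonnegative $H^1$ maps with the prescribed symmetries and boundary values. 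Direct methods give a minimizer $\bu^R = (u_1^R,\dots,u_k^R)$, which is a classical solution by standard elliptic regularity (the right-hand sides are smooth and, by a maximum principle argument using the $k$-component structure, each component is bounded on compact sets uniformly in $R$; a comparison with $\Gamma$ gives the lower bounds $u_i^R > (\Gamma(\,\cdot\,,\cdot-(i-1)\pi))^+$ in the interior, yielding point 5) in the limit). The symmetries in points 1)--2) are preserved by minimization because the functional and the constraint class are invariant under the corresponding reflections/translations (one checks that symmetrizing a competitor does not increase the energy, so a symmetric minimizer exists, and it is the unique minimizer in its symmetry class once strict convexity along the relevant directions is used, or one simply works in the symmetric class throughout).

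\medskip

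\emph{Step 2: the Almgren-type monotonicity formula.} This is the technical heart. One introduces, for the solution $\bu^R$ on $\Omega_R$, the ``height'' and ``energy'' functions
\[
H(r) = \int_{\{r\}\times[0,k\pi]} \tsum_i (u_i^R)^2, \qquad E(r) = \int_{(-R,r)\times(0,k\pi)} \Bigl( \tsum_i |\nabla u_i^R|^2 + 2\tsum_{i<j}(u_i^R)^2(u_j^R)^2 \Bigr),
\]
and the quotient $N(r) = E(r)/H(r)$. Differentiating, using the equation (Pohozaev/Rellich-type identities adapted to the strip geometry, where the role of the sphere is played by the segment $\{r\}\times[0,k\pi]$ and the variable $x$ replaces the radial variable) and the boundary/periodicity conditions, one obtains $H'(r) = 2\int_{\{r\}\times[0,k\pi]} \sum_i u_i^R \partial_x u_i^R$ plus controlled terms, and a differential inequality of the form $N'(r) \ge (\text{something nonnegative})$ — typically $N'(r)\ge 0$ up to the interaction term $2\sum_{i<j} u_i^2 u_j^2$, which has the right sign. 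The key outputs are: (i) $N$ is nondecreasing in $r$; (ii) the doubling-type estimate $\frac{d}{dr}\log H(r) = \frac{H'(r)}{H(r)}$ is comparable to $2N(r)$, so that $H$ grows at most like $e^{2N(R)r}$; (iii) since the boundary datum $\Gamma$ has $N \equiv 1$, one gets $N(r)\le 1$ (or $\to 1$) uniformly in $R$, giving uniform-in-$R$ exponential bounds $H_R(r)\le C e^{2r}$ on compact sets. A second, lower monotonicity-type bound (or the lower bound $u_i^R\ge(\Gamma(\cdot,\cdot-(i-1)\pi))^+$) prevents the solutions from collapsing to zero, so $H_R(0)$ is bounded away from $0$.

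\medskip

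\emph{Step 3: passage to the limit and conclusion.} The uniform bounds from Step 2, together with elliptic estimates, give compactness: along a subsequence $R_n\to+\infty$, $\bu^{R_n}\to\bu$ in $C^2_{\loc}(\R^2)$, and $\bu$ solves \eqref{eqn: system k comp} in the whole plane, is nontrivial (by the non-collapsing bound), is positive (strong maximum principle), inherits the symmetries 1)--2) and the lower bounds 5), and satisfies the finite-energy bound 3) (Fatou applied to $E_{R_n}$ on $(-\infty,r)$, using that the datum $\Gamma^+$ decays as $x\to-\infty$ so the energy on the left is uniformly finite). Passing to the limit in the monotonicity formula gives that the Almgren quotient of the limit is well-defined, nondecreasing, and $\le 1$; a matching lower bound, obtained by comparing with $\Gamma$ (whose quotient is exactly $1$) via the fact that $u_i \ge (\Gamma(\cdot,\cdot-(i-1)\pi))^+$, forces the limit as $r\to+\infty$ to equal $1$, proving 4). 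Finally, the limit $N(r)\to 1$ plugged into the (now equality, by the usual argument) logarithmic derivative of $H$ yields $\frac{d}{dr}\log H(r)\to 2$, hence $H(r)e^{-2r}$ has a finite positive limit $\gamma$; the $r\to-\infty$ statement $H(r)\to 0$ follows from the finite-energy bound 3) and the monotonicity (an integrable nonincreasing tail must vanish).

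\medskip

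The main obstacle is Step 2: proving the monotonicity formula for the $k$-component system in the strip. The difficulty is twofold. First, one must check that the boundary terms arising in the Rellich--Pohozaev identity on $\Omega_R$ — in particular those on the segments $\{x=\pm R\}$ where the Dirichlet datum $\Gamma^+$ lives, and those on the horizontal sides — are either zero (by periodicity) or have a sign compatible with monotonicity; near the corners and near the zero set of $\Gamma$ one must argue that $\Gamma^+$ is Lipschitz and the relevant traces are well-defined. Second, the interaction term now involves $\binom{k}{2}$ competing densities rather than a single product $u^2v^2$, and one needs the algebraic fact that $\sum_{i<j} u_i^2 u_j^2$ together with its ``radial derivative'' contribution enters the computation with the correct sign; this is where the specific structure of \eqref{eqn: system k comp} (gradient system with Lotka--Volterra-type competition) is used, and it is the reason the construction generalizes only partially (one obtains \emph{some} solution, modeled on $\Gamma$, rather than the full family modeled on arbitrary harmonic polynomials).
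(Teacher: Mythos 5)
There is a genuine gap, and it is exactly the one the paper is organized around. Your Steps 1--3 are a transcription of the two-component scheme (Sections \ref{sec:proof1}--\ref{sec:sec2}) to $k$ components, and the whole argument hinges on the pointwise barrier $u_i^R > \bigl(\Gamma(\cdot,\cdot-(i-1)\pi)\bigr)^+$, which you invoke both for the nondegeneracy of the limit and for the lower bound forcing the Almgren limit to equal $1$. For two components this barrier comes from the parabolic invariance of the ordering constraint together with the identity $-\Delta(u-v-\Phi)=uv(u-v)\ge 0$ on $\{\Phi>0\}$ (point 4) of Proposition \ref{existence thm in bdd cylinders}); for $k\ge 3$ the difference $u_i-u_j$ satisfies $-\Delta(u_i-u_j)=-u_i\sum_{l\ne i}u_l^2+u_j\sum_{l\ne j}u_l^2$, which is not of the form $c(x)(u_i-u_j)$ with a usable sign because of the cross terms from the remaining components, so no such comparison is available. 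Your fallback (``a second, lower monotonicity-type bound prevents collapsing'') does not repair this: the analogue of Corollary 5.4 of \cite{BeTeWaWe}, i.e.\ point ($i$) of Corollary \ref{doubling}, gives only $H(r_1)e^{-2\underline d r_1}\le H(r_2)e^{-2\underline d r_2}$, which in the exponential setting carries no lower bound on the limit profile (the remainder term in Proposition \ref{prp:Almgren Neumann} differs from the algebraic case, where that estimate is sharp). So with your construction the limit of $(u_{1,R},\dots,u_{k,R})$ could be identically zero, and points 4)--5) are not reachable.

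The paper's proof of Theorem \ref{main theorem 2} is genuinely different: it does not solve boundary value problems in (semi-infinite or bounded) cylinders at all. It starts from the entire solutions $(u_1^d,\dots,u_k^d)$ with algebraic growth of Theorem \ref{thm: BeTeWaWe k-components} (Theorem 1.6 of \cite{BeTeWaWe}), modeled on $\Im(z^d)$ with $2d=hk$, rescales them by a radius $R_d$ chosen so that a normalized trace integral equals $1$ (Lemma \ref{lem:choice of R}) --- this normalization is what replaces the missing barrier and guarantees nondegeneracy --- and then performs a blow-up centered at $(1,0)$, letting $d\to\infty$. The cones $\hat S^d_r$ converge to the strip $\R\times(0,k\pi)$ (Lemma \ref{lem: scaling domini}), the bound $\hat N_{d,R}\le 1$ follows from $N_d(r)\nearrow d$, and the monotonicity formulae of subsection \ref{sub:monot finite} applied to the limit give points 3)--5). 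If you want to salvage your approach you would have to produce a $k$-component substitute for the pointwise estimate (or an independent nondegeneracy argument for the approximating family), and the authors explicitly state they could not do this; absent that, the blow-up-from-algebraic-growth route is the proof.
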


This solution is modeled on $\Gamma$.

\medskip

Our last main result is the counterpart of Theorem 1.4 of \cite{BeTeWaWe} in our setting. This can be quite surprising because, as we already observed, we cannot expect a quantization of the admissible rates of growth dealing with solutions with exponential growth, see Remark \ref{rem:scaling invariance}. Nevertheless, if we consider solutions which are periodic in one component, prescribing a period such a quantization can be recovered.


\begin{theorem}\label{thm: blow-down 2}
Let $(u,v)$ be a nontrivial solution of \eqref{eqn: system R} in $\R^2$ which is $2\pi$-periodic in $y$, and such that one of the following situation occurs:
\begin{itemize}
\item[($i$)] there holds
\[
\lim_{r \to -\infty} \int_{\{r\} \times [0,2\pi] } u^2 + v^2 = 0,
\]
and
\[
    d:=\lim_{r \to +\infty} \frac{\int_{(-\infty,r) \times (0,2\pi)} |\nabla u|^2 +|\nabla v|^2 + u^2 v^2   }{\int_{\{r\} \times [0,2\pi]} u^2 +v^2    } < +\infty.
\]
\item[($ii$)] $\pa_x u = 0 = \pa_x v$ on $\{a\} \times [0,2\pi]$ for some $a \in \R$, and
\[
    d:=\lim_{r \to +\infty} \frac{\int_{(a,r) \times (0,2\pi)} |\nabla u|^2 +|\nabla v|^2 + u^2 v^2   }{\int_{\{r\} \times [0,2\pi]} u^2 +v^2    } < +\infty.
\]
\end{itemize}
Then $d$ is a positive integer,
\[
   \left(\int_{\{r\} \times [0,2\pi] } u^2 + v^2 \right)^{\frac{1}{2}} = O(e^{dr}) \qquad \text{as $r \to +\infty$},
\]
and the sequence
\[
(u_R(x,y),v_R(x,y)):= \frac{1}{ \sqrt{\int_{\{r\} \times [0,2\pi] } u^2 + v^2 }} \left(u(x+R,y), v(x+R,y) \right)
\]
converges in $\mathcal{C}^0_{\loc}(\R^2)$ and in $H^1_{\loc}(\R^2)$ to $(\Psi^+,\Psi^-)$, where $\Psi(x,y) = e^{dx} \left( C_1 \cos(d y)+ C_2 \sin(d y)\right)$ for some $C_1,C_2 \in \R$.
\end{theorem}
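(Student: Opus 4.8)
The plan is to combine the Almgren-type monotonicity formula for the system on half‑cylinders with a blow‑down analysis as $x\to+\infty$, and to identify the limiting profile by a Fourier expansion in the cross–section $[0,2\pi]$.

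First I would fix the relevant quantities. Put $\Omega_r:=(-\infty,r)\times(0,2\pi)$ in case ($i$) and $\Omega_r:=(a,r)\times(0,2\pi)$ in case ($ii$), and set
\[
H(r):=\int_{\{r\}\times[0,2\pi]}u^2+v^2,\qquad E(r):=\int_{\Omega_r}|\nabla u|^2+|\nabla v|^2+2u^2v^2 .
\]
Multiplying the equations by $u$ and $v$, integrating, and using the $2\pi$–periodicity in $y$ together with either $\lim_{r\to-\infty}H(r)=0$ or the Neumann condition $\pa_x u=\pa_x v=0$ on $\{a\}\times[0,2\pi]$, one gets $H'(r)=2E(r)$; in particular $H$ is nondecreasing and $N(r):=E(r)/H(r)=\tfrac12(\log H)'(r)$. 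By the monotonicity formulae of the previous sections, $N$ is well defined and nondecreasing on the relevant range, and $N(r)\to d$ as $r\to+\infty$ (I use $N$ in place of the quotient in the statement, the two having the same limit, since they differ only by $\int_{\Omega_r}u^2v^2/H(r)$). Since $(u,v)$ is a genuine positive solution we have $E\not\equiv 0$, hence $d>0$; monotonicity then gives $N\le d$ throughout, i.e. $(\log H)'\le 2d$, so $r\mapsto H(r)e^{-2dr}$ is nonincreasing. This already yields $H(r)=O(e^{2dr})$, which is the stated growth bound, and, since $N(r)\ge d/2$ for $r$ large, also $H(R)\to+\infty$ as $R\to+\infty$.

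Next I would study the blow‑down family $u_R:=H(R)^{-1/2}u(\cdot+R,\cdot)$, $v_R:=H(R)^{-1/2}v(\cdot+R,\cdot)$, which are $2\pi$–periodic in $y$, satisfy $\int_{\{0\}\times[0,2\pi]}u_R^2+v_R^2=1$, and solve $-\Delta u_R=-H(R)u_Rv_R^2$, $-\Delta v_R=-H(R)u_R^2v_R$. By translation invariance, the Almgren quotient of $(u_R,v_R)$ (with coupling term weighted by $H(R)$) at level $s$ equals $N(R+s)$, and the associated energy over $\{x<s\}$ equals $N(R+s)\,H(R+s)/H(R)$; since $H(R+s)/H(R)\le e^{2ds}$ for $s\ge 0$ and $\le 1$ for $s\le 0$ (monotonicity of $H$ and of $He^{-2dr}$) and $N(R+s)\le d$, these energies are bounded uniformly in $R$ on every slab. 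Combining this with the equations, the subharmonicity of $u_R,v_R$ and the uniform Hölder bounds for competition systems of this kind (as in \cite{NoTaTeVe,CaffLin,TaTe} and the earlier sections), the family $\{(u_R,v_R)\}$ is precompact in $\mathcal{C}^0_{\loc}(\R^2)\cap H^1_{\loc}(\R^2)$. Let $(u_\infty,v_\infty)$ be a subsequential limit. Since $H(R)\to+\infty$, the uniform energy bound forces $\int_K u_R^2v_R^2\to0$ on compacts, so $u_\infty v_\infty\equiv 0$; by the segregation theory, $u_\infty,v_\infty\ge0$ are subharmonic, $\Psi:=u_\infty-v_\infty$ is harmonic and $2\pi$–periodic in $y$, and $u_\infty=\Psi^+$, $v_\infty=\Psi^-$. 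Using lower semicontinuity of the Dirichlet energy one gets $\int_{(-\infty,s)\times(0,2\pi)}|\nabla\Psi|^2<+\infty$ for every $s$, and using the strong $H^1_{\loc}$ convergence and the fact that $\Psi^{\pm}$ vanish on $\supp\Delta\Psi^{\pm}$ one gets that the rescaled coupling energy carries no mass in the limit; hence the (purely Dirichlet) Almgren quotient of $\Psi$,
\[
s\longmapsto \frac{\int_{(-\infty,s)\times(0,2\pi)}|\nabla\Psi|^2}{\int_{\{s\}\times[0,2\pi]}\Psi^2},
\]
equals $\lim_{R\to+\infty}N(R+s)=d$ for every $s\in\R$. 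Expanding in $y$, harmonicity and periodicity give $\Psi(x,y)=a_0(x)+\sum_{n\ge1}(a_n(x)\cos ny+b_n(x)\sin ny)$ with $a_0''=0$, $a_n''=n^2a_n$, $b_n''=n^2b_n$; finiteness of $\int_{(-\infty,s)\times(0,2\pi)}|\nabla\Psi|^2$ for all $s$ forces $a_0$ constant and $a_n,b_n$ multiples of $e^{nx}$. By orthogonality of the trigonometric system, $\int_{\{s\}\times[0,2\pi]}\Psi^2=\sum_{n\ge0}c_ne^{2ns}$ and $\int_{(-\infty,s)\times(0,2\pi)}|\nabla\Psi|^2=\sum_{n\ge0}nc_ne^{2ns}$ with $c_n\ge0$, so the Almgren quotient of $\Psi$ is the weighted mean $\big(\sum_n nc_ne^{2ns}\big)\big/\big(\sum_n c_ne^{2ns}\big)$, which is constant in $s$ precisely when a single $c_n$ is nonzero; as this constant equals $d>0$, that index is $d$, hence $d\in\N$ and $\Psi(x,y)=e^{dx}(C_1\cos dy+C_2\sin dy)$ with $\pi(C_1^2+C_2^2)=1$ by the normalization. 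To upgrade from subsequential to full convergence I would check that $(C_1,C_2)$ is independent of the subsequence: with $w=u-v$ the identity $\Delta w=-uvw$ gives $P''=d^2P-\int_0^{2\pi}uvw\cos dy\,dy$ for $P(r)=\int_0^{2\pi}w(r,y)\cos dy\,dy$, and similarly for $Q$, and the forcing term (vanishing after rescaling) allows one to show that $e^{-dr}P(r)$ and $e^{-dr}Q(r)$ converge, which pins down $(C_1,C_2)$.

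The main obstacle I anticipate is the passage to the limit inside the Almgren quotient: one must rule out that the rescaled coupling energy $H(R)\int_{(-\infty,s)\times(0,2\pi)}u_R^2v_R^2$ leaks into the limit and alters the value $d$ of the limiting quotient of $\Psi$, which relies on the strong $H^1_{\loc}$ convergence and the energy identities for the system; together with the uniform-in-$R$ a priori bounds this is the technical heart of the argument. Once these are in place, the Fourier computation on the cross-section that forces $d\in\N$ and the explicit form of $\Psi$, as well as the upgrade to convergence of the whole family, are comparatively routine.
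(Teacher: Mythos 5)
Your overall strategy (blow-down along the cylinder, uniform bounds from the Almgren quotient, segregation limit, then classification of the harmonic limit) is the same as the paper's up to the identification step, but the way you identify the limiting profile contains a genuine gap as written. You deduce that the Dirichlet Almgren quotient of $\Psi$ equals $d$ by passing to the limit \emph{inside} the quotient $N_R(s)=E_R(s)/H_R(s)$. The denominator is fine ($\mathcal{C}^0_{\loc}$ convergence), but for the numerator you invoke only ``strong $H^1_{\loc}$ convergence and the fact that $\Psi^{\pm}$ vanish on $\supp\Delta\Psi^{\pm}$''. This does not suffice: (a) $E_R(s)$ is an integral over the \emph{unbounded} half-cylinder $(-\infty,s)\times(0,2\pi)$, and local convergence gives no control of the tail; you must show the energy of $(u_R,v_R)$ on $(-\infty,-M)\times(0,2\pi)$ is small uniformly in $R$ (this is true, e.g.\ via $E_R(-M)\le d\,H_R(-M)$ and $H_R(-M)=\exp\bigl(-2\int_{R-M}^{R}N\bigr)\le e^{-dM}$ for $R$ large, using $N\to d>0$, but it is nowhere in your argument); (b) the local vanishing of the rescaled interaction energy $H(R)\int_K u_R^2v_R^2$ is a nontrivial theorem of the segregation theory (NTTV/TT/DWZ type), not a consequence of the heuristic you give; it should be cited or proved. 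Without both ingredients you only get the Fatou inequality $N(s;\Psi)\le d$, which is not enough for your single-Fourier-mode argument. Note also that your parenthetical claim that the quotient with coupling weight $1$ (the one in the statement) and the one with weight $2$ have the same limit requires an argument (it follows from $N\le 2\mathfrak{N}$, the integrability of $\int_{\Sigma_s}u^2v^2/H(s)$ and the ODE trick used in Lemma \ref{lem: N less 1}, but it is not automatic from ``they differ by $\int u^2v^2/H$'').

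The paper sidesteps exactly this difficulty: instead of proving convergence of the energies, it passes to the limit in the identity $\log\bigl(H_R(r_2)/H_R(r_1)\bigr)=2\int_{r_1}^{r_2}N_R(s)\,\de s$, which requires only $\mathcal{C}^0_{\loc}$ convergence of $H_R$ and the elementary fact $N_R(s)=N(R+s)\to N(+\infty)$, plus the verification that $H(\cdot;\Psi)\to0$ at $-\infty$ so that $H'(\cdot;\Psi)=2E^{unb}(\cdot;\Psi)$ holds for the harmonic limit; the classification is then done by the rigidity statement (Proposition \ref{thm: Almgren per armoniche}, equality in Cauchy--Schwarz). By contrast, your Fourier-mode computation on the cross-section is a perfectly good substitute for that rigidity result, and it even avoids the decay-at-$-\infty$ verification that the paper singles out as the only delicate point in case ($ii$); so with items (a) and (b) above repaired (or replaced by the paper's $\log H$ device) your route closes. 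Finally, your proposed upgrade from subsequential to full convergence via the ODE for $P(r),Q(r)$ is not convincing as sketched (the forcing is only of size $O\bigl(\sqrt{\eps_r}\,e^{2dr}\bigr)$, which does not obviously yield convergence of $e^{-dr}P(r)$), but this goes beyond what the paper itself establishes, since its proof, too, only extracts a convergent subsequence.
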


\begin{notation}
We will deal with functions defined in domains of type $(a,b) \times \R$, where $a<b$ are extended real numbers ($a=-\infty$ and $b=+\infty$ are admissible). We will often assume that $(u_1,\dots,u_k)$ is $k\pi$-periodic in $y$; therefore, we can think to $(u_1,\dots, u_k)$ as defined on the cylinder
\[
C_{(a,b)} := (a,b) \times \S_k \quad \text{where} \quad \S_k=\R/(k\pi \Z).
\]
We will also denote $\Sigma_r :=\{r\} \times \S_k$. In case $b>0$, $a=-b$, we will simply write $C_b$ instead of $C_{(-b,b)}$ to simplify the notation.
\end{notation}

\begin{plan}
In section \ref{sec:monotonicity} we will prove some monotonicity formulae which will come useful in the rest of the paper. We can deal with two types of solutions: solutions satisfying a homogeneous Neumann condition defined in a cylinder $C_{(a,b)}$ with $a>-\infty$, or solutions defined in a semi-infinite cylinder of type $C_{(-\infty,b)}$ and decaying at $x \to -\infty$. For the sake of completeness and having in mind to use some monotonicity formulae in the proof of Theorem \ref{main theorem 2}, we will always consider the case of systems with $k$ components.

The proof of Theorem \ref{main theorem 1} will be the object of section \ref{sec:proof1}. It follows the same sketch of the proof of Theorem 1.3 in \cite{BeTeWaWe}: we start by showing that for any $R>0$ there exists a solution $(u_R,v_R)$ to \eqref{eqn: system R} in the cylinder $C_R$, with Dirichlet boundary condition
\[
u_R= \Phi^+ \quad \text{and} \quad v_R= \Phi^- \quad \text{on $\{-R,R\} \times [0,2\pi]$},
\]
and exhibiting the same symmetries of $(\Phi^+,\Phi^-)$. In order to obtain a solution defined in the whole $C_\infty$, we wish to prove the $\mathcal{C}_{loc}^2(C_\infty)$ convergence of the family $\{(u_R,v_R): R>1\}$, as $R \to +\infty$. To show that this convergence occurs, we will exploit the monotonicity formulae proved in subsection \ref{sub:monot Neumann}. With respect to Theorem 1.3 of \cite{BeTeWaWe}, major difficulties arise in the precise characterization of the growth of $(u,v)$, points 6) and 7) of Theorem \ref{main theorem 1}.

In section \ref{sec:sec2} we will prove Theorem \ref{thm: main thm 2}. One could be tempted to try to adapt the proof of Theorem \ref{main theorem 1} replacing $\Phi$ with $\Gamma$. Unfortunately, in such a situation we could not exploit the results of subsection \ref{sub:monot Neumann}; this is related to the lack of the even symmetry in the $x$ variable of the function $\Gamma$ (note that the function $\Phi$ enjoys this symmetry). A possible way to overcome this problem is to work in semi-infinite cylinders $C_{(-\infty,R)}$ and use the monotonicty formulae proved in subsection \ref{sub:monot finite}. But to work in an unbounded set introduces further complications: for instance, the compactness of the Sobolev embedding and of some trace operators, a property that we will use many times in section \ref{sec:proof1}, does not hold in $C_{(-\infty,R)}$. Although we believe that this kind of obstacle can be overcome, we propose a different approach for the construction of solutions modeled on $\Gamma$, which is based on the elementary limit
\[
\lim_{R \to +\infty}  \Phi_R(x,y)  =\Gamma(x,y) \qquad \forall (x,y) \in \R^2,
\]
where $\Phi_R(x,y) = 2 e^{-R} \cosh(x+R) \sin y$. We will prove the existence of a solution $(u_R,v_R)$ of \eqref{eqn: system R} in $C_{(-3R,R)}$ with Dirichlet boundary condition
\[
u_R= \Phi_R^+ \quad \text{and} \quad v_R= \Phi_R^- \quad \text{on $\{-3R,R\} \times [0,2\pi]$},
\]
and exhibiting the same symmetries of $(\Phi_R^+,\Phi_R^-)$. Then, using again the results of section \ref{sec:monotonicity}, we will pass to the limit as $R \to +\infty$ proving the compactness of $\{(u_R,v_R)\}$.

Section \ref{sec:k comp} is devoted to the study of systems with many components. As in \cite{BeTeWaWe} the authors could prove in one shot an existence theorem for $2$ or $k$ components (there are no substantial changes in the proofs), it is natural to wonder if here we can simply adapt step by step the construction carried on in section \ref{sec:proof1} or \ref{sec:sec2}, or not. Unfortunately, the answer is negative: following the sketch of the proof of Theorem \ref{main theorem 1}, we can adapt most the results of sections \ref{sec:proof1} and \ref{sec:sec2} with minor changes, but in the counterpart of Proposition \ref{existence thm in bdd cylinders} we cannot prove the pointwise estimate given by point 4). As a consequence, with respect to subsections \ref{sub:compat 1} and \ref{sub:compat 2} we cannot show that the limit of the sequence $(u_{1,R},\ldots,u_{k,R})$ does not vanish. Note that, in the case of two components, this nondegeneracy is ensured precisely by the above pointwise estimate. As far as the case of $k$ component in \cite{BeTeWaWe}, we observe that they obtained nondegeneracy through their Corollary 5.4, which is the counterpart of point ($i$) of our Corollary \ref{doubling}. But, while therein the estimate of the growth given by this statement is optimal, in our situation it does not provide any information; this is related to the different expression of the term of rest in the Almgren monotonicity formula, Proposition \ref{prp:Almgren Neumann}. This is why we have to use a completely different argument which is not based on the existence of solutions for the system of $k$ components in bounded cylinders (or in semi-infinite cylinders), but rests on Theorem 1.6 of \cite{BeTeWaWe}. Roughly speaking, we will obtain the existence of a solution of \eqref{eqn: system k comp} with exponential growth as a limit of solutions of the same system having algebraic growth.

The proof of Theorem \ref{thm: blow-down 2} will be the object of section \ref{sec: blow-down}.

We conclude the paper with an appendix, in which we state and prove some known results for which we cannot find a proper reference.

\end{plan}

\section{Almgren-type monotonicity formulae}\label{sec:monotonicity}

Let $k \ge 2$ be a fixed integer. In this section we are going to prove some monotonicity formulae for solutions of
\begin{equation}\label{eqn:system k}
\begin{cases}
-\Delta u_i= - u_i \sum_{j \neq i} u_j^2 \\
u_i>0
\end{cases}
\end{equation}
defined in a cylinder $C_{(a,b)}$ (this means that we assume from the beginning that $(u_1,\dots,u_k)$ is $k\pi$-periodic in $y$).

In this section we will use many times the following general result:

\begin{lemma}\label{lem: Pohozaev harmonic k}
Let $(u_1, \dots, u_k)$ be a solution of \eqref{eqn: system k comp} in $C_{(a,b)}$. Then the function
\[
    r \mapsto  \int_{\Sigma_{r}} \sum_{i=1}^k |\nabla u_i|^2+ \sum_{1 \le i<j \le k} u_i^2u_j^2 - 2 \int_{\Sigma_{r}}  \sum_{i=1}^k (\partial_x u_i)^2
\]
is constant in $(a,b)$.
\end{lemma}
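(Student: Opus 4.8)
The plan is to show that the given function has vanishing derivative in $r$, by differentiating under the integral sign and using the equations together with the periodicity in $y$. This is a Rellich--Pohozaev-type identity: the quantity
\[
r \mapsto \int_{\Sigma_r} \Bigl( \sum_i |\nabla u_i|^2 + \sum_{i<j} u_i^2 u_j^2 - 2\sum_i (\partial_x u_i)^2 \Bigr)
\]
is constant because it is (up to the boundary term at the two ends of the cylinder, which will drop out) the flux of a divergence-free vector field through the slices $\Sigma_r$. Concretely, I would test the equation $-\Delta u_i = -u_i\sum_{j\neq i} u_j^2$ with $\partial_x u_i$ on the sub-cylinder $C_{(r_1,r_2)}$ and sum over $i$. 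The term $\int_{C_{(r_1,r_2)}} \Delta u_i \,\partial_x u_i$, integrated by parts, produces $\tfrac12 \partial_x |\nabla u_i|^2$ in the interior (whose $x$-integral telescopes to a difference of slice integrals of $|\nabla u_i|^2$) plus boundary terms on $\Sigma_{r_1}, \Sigma_{r_2}$ involving $\partial_x u_i \, \partial_\nu u_i$; collecting these gives a difference of slice integrals of $|\nabla u_i|^2 - 2(\partial_x u_i)^2$. On the right-hand side, $\int u_i \partial_x u_i \sum_{j\neq i} u_j^2 = \tfrac12 \int \partial_x(u_i^2)\sum_{j\neq i} u_j^2$; summing over $i$ and symmetrizing, the $\sum_{j\neq i}$ bookkeeping collapses to $\partial_x\bigl(\sum_{i<j} u_i^2 u_j^2\bigr)$, whose $x$-integral again telescopes to a difference of slice integrals. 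Matching the two sides yields that the displayed quantity at $r_2$ equals the quantity at $r_1$, which is the claim.

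**The one genuinely important point** — and the place where periodicity in $y$ is essential — is that all the boundary contributions on the lateral part of the boundary must vanish. Since we work on the cylinder $C_{(a,b)} = (a,b)\times \S_k$ with $\S_k = \R/(k\pi\Z)$, there is no lateral boundary: integration by parts in the $y$-variable over $\S_k$ produces no boundary term, and the periodic terms $\int_{\Sigma_r} \partial_y(\,\cdot\,)$ all integrate to zero. Thus the only boundary pieces are on $\Sigma_{r_1}$ and $\Sigma_{r_2}$, and these are exactly what get organized into the slice integrals above. I should also record that $(u_1,\dots,u_k)$ is smooth (standard elliptic regularity for this system, used throughout the paper), so differentiating under the integral and integrating by parts are legitimate, and the slice integrals depend continuously — indeed smoothly — on $r$.

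**The main (mild) obstacle** is purely bookkeeping: keeping track of the algebraic identity
\[
\sum_{i=1}^k u_i^2 \sum_{j\neq i} u_j^2 = 2\sum_{1\le i<j\le k} u_i^2 u_j^2 ,
\qquad
\sum_{i=1}^k \partial_x(u_i^2)\sum_{j\neq i} u_j^2 = 2\,\partial_x\!\Bigl(\sum_{1\le i<j\le k} u_i^2 u_j^2\Bigr),
\]
and being careful that the coefficient $2$ in front of $\sum_i(\partial_x u_i)^2$ is precisely what is needed so that the $|\nabla u_i|^2$ and $(\partial_x u_i)^2$ terms combine correctly after the integration by parts (one factor of $2$ comes from $\tfrac12\partial_x|\nabla u_i|^2$ telescoping, the other from the boundary term $\partial_x u_i\,\partial_x u_i$). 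In other words, I would first establish the identity on a general sub-slab and then let $r_1, r_2$ vary; there are no analytic difficulties beyond smoothness, which is available. An alternative, equivalent route is to note that the vector field $V = (V_1, V_2)$ with $V_1 = \sum_i (\partial_x u_i)^2 - \sum_i (\partial_y u_i)^2 - \sum_{i<j} u_i^2 u_j^2$ (plus suitable nonlinear terms matching the equations) and $V_2 = 2\sum_i \partial_x u_i\,\partial_y u_i$ is divergence-free by the equations, and integrate $\div V = 0$ over $C_{(r_1,r_2)}$ using the divergence theorem and periodicity; I would probably present whichever of the two formulations is shortest given the notation already fixed.
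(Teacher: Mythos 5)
Your proposal is correct and follows essentially the same route as the paper: test the $i$-th equation with $\partial_x u_i$ on $C_{(r_1,r_2)}$, integrate by parts using the periodicity in $y$ so that only the slice terms on $\Sigma_{r_1},\Sigma_{r_2}$ survive, sum over $i$ so that the nonlinear terms collapse to $\tfrac12\partial_x\bigl(\sum_{i<j}u_i^2u_j^2\bigr)$, and let the $x$-integral telescope. The bookkeeping identities and the role of the factor $2$ are exactly as you describe, so nothing is missing.
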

\begin{proof}
Let $a<r_1<r_2<b$. We test the equation \eqref{eqn:system k} with $(\pa_x u_1,\dots, \pa_x u_k)$ in $C_{(r_1,r_2)}$: for every $i$ it results
\[
\int_{C_{(r_1,r_2)}} \frac{1}{2} \pa_x \left(|\nabla u_i|^2\right) + \left(\sum_{j \neq i}  u_j^2\right) u_i \pa_x u_i = \int_{\Sigma_{r_2}} (\pa_x u_i)^2 - \int_{\Sigma_{r_1}} (\pa_x u_i)^2.
\]
Summing for $i=1,\dots,k$ we obtain
\[
\int_{C_{(r_1,r_2)}} \pa_x\left( \sum_{i} |\nabla u_i|^2+ \sum_{i<j} u_i^2 u_j^2\right) = 2\int_{\Sigma_{r_2}} \sum_ i (\pa_x u_i)^2 -2 \int_{\Sigma_{r_1}} \sum_i (\pa_x u_i)^2,
\]
which gives the thesis.
\end{proof}

\subsection{Solutions with Neumann boundary conditions}\label{sub:monot Neumann}

In this subsection we are interested in solutions to \eqref{eqn:system k} defined in $C_{(a,b)}$ (thus $k\pi$-periodic in $y$), with $a>-\infty$ and $b \in (a,+\infty]$, and satisfying a homogeneous Neumann boundary condition on $\Sigma_{a}$, that is,
\begin{equation}\label{eqn:Neumann assumption}
\pa_x u_i = 0 \qquad \text{on $\Sigma_{a}$, for every $i=1,\dots,k$}.
\end{equation}
Firstly, we observed that under this assumption Lemma \ref{lem: Pohozaev harmonic k} implies

\begin{lemma}\label{lem:pohozaev Neumann}
Let $(u_1,\ldots,u_k)$ be a solution of \eqref{eqn:system k} in $C_{(a,b)}$, such that \eqref{eqn:Neumann assumption} holds true. For every $r \in (a,b)$ the following identity holds:
\[
\int_{\Sigma_r} \sum_{i=1}^k |\nabla u_i|^2 + \sum_{1 \le i<j \le k} u_i^2 u_j^2=  2 \int_{\Sigma_r} \sum_{i=1}^k (\pa_x u_i)^2
       + \int_{\Sigma_{a}} \sum_{i=1}^k (\pa_y u_i)^2 + \sum_{1 \le i<j \le k} u_i^2 u_j^2.
\]\end{lemma}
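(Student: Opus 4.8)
The plan is to invoke Lemma \ref{lem: Pohozaev harmonic k} and then identify the value of the constant it produces by evaluating the relevant expression on the Neumann boundary $\Sigma_a$. Set
\[
g(r) := \int_{\Sigma_r} \sum_{i=1}^k |\nabla u_i|^2 + \sum_{1 \le i<j \le k} u_i^2 u_j^2 - 2 \int_{\Sigma_r} \sum_{i=1}^k (\pa_x u_i)^2 .
\]
By Lemma \ref{lem: Pohozaev harmonic k}, $g$ is constant on $(a,b)$. Since $(u_1,\dots,u_k)$ is a classical solution of \eqref{eqn:system k} in $C_{(a,b)}$ satisfying the homogeneous Neumann condition \eqref{eqn:Neumann assumption} on $\Sigma_a$, elliptic regularity up to the boundary shows that each $u_i$ is smooth up to $\Sigma_a$, so $g$ extends continuously to $r=a$ and hence $g(r)=g(a)$ for every $r \in (a,b)$.

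Next I would compute $g(a)$. On $\Sigma_a$ the condition $\pa_x u_i = 0$ gives $|\nabla u_i|^2 = (\pa_x u_i)^2 + (\pa_y u_i)^2 = (\pa_y u_i)^2$ and $\int_{\Sigma_a} \sum_i (\pa_x u_i)^2 = 0$; plugging this into the definition of $g$ yields
\[
g(a) = \int_{\Sigma_a} \sum_{i=1}^k (\pa_y u_i)^2 + \sum_{1 \le i<j \le k} u_i^2 u_j^2 .
\]
Equating this with $g(r)$ and bringing the term $2\int_{\Sigma_r} \sum_i (\pa_x u_i)^2$ to the other side gives precisely the asserted identity.

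I expect the only genuinely delicate point to be the continuity of $g$ at the endpoint $r=a$, i.e.\ the regularity of the solution up to the Neumann boundary $\Sigma_a$; once that is in hand, the rest is a one-line substitution. As a fallback not relying on Lemma \ref{lem: Pohozaev harmonic k}, one can test \eqref{eqn:system k} directly with $(\pa_x u_1, \dots, \pa_x u_k)$ on the truncated cylinder $C_{(a,r)}$ — exactly as in the proof of that lemma — and use the Neumann condition on $\Sigma_a$ to discard the boundary contribution there, obtaining the same conclusion.
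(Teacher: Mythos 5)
Your proposal is correct and follows exactly the route the paper intends: Lemma \ref{lem: Pohozaev harmonic k} gives that the quantity $g(r)$ is constant, and the Neumann condition \eqref{eqn:Neumann assumption} lets you evaluate it at $r=a$, where $|\nabla u_i|^2=(\pa_y u_i)^2$ and the $(\pa_x u_i)^2$ term drops. Your attention to continuity of $g$ up to $\Sigma_a$ (via boundary regularity, or equivalently the even reflection across $\Sigma_a$ mentioned in the paper's remark) is the only point the paper leaves implicit, and your fallback of testing with $(\pa_x u_1,\dots,\pa_x u_k)$ on $C_{(a,r)}$ is the same computation carried out directly.
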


For a solution $(u_1,\dots,u_k)$ of \eqref{eqn:system k} in $C_{(a,b)}$ satisfying \eqref{eqn:Neumann assumption}, we define
\[
\begin{split}
E^{sym}(r)& := \int_{C_{(a,r)}} \sum_{i=1}^k |\nabla u_i|^2 + 2\sum_{1 \le i<j \le k} u_i^2 u_j^2, \\
\mathcal{E}^{sym}(r) & := \int_{C_{(a,r)}} \sum_{i=1}^k |\nabla u_i|^2 + \sum_{1 \le i<j \le k} u_i^2 u_j^2, \\
H(r)  & :=\int_{\Sigma_r} \sum_{i = 1}^k u_i^2
\end{split}
\]

\begin{remark}
The index $sym$ denotes the fact that, as we will see, the quantities $E^{sym}$ and $ \mathcal{E}^{sym}$ are well suited to describe the growth of the solution $(u_1,\dots,u_k)$ only if $(u_1,\dots,u_k)$ satisfies the \eqref{eqn:Neumann assumption}, which can be considered as a symmetry condition. Indeed, under \eqref{eqn:Neumann assumption} one can extend $(u_1,\dots,u_k)$ on $C_{(2a-b,b)}$ by even symmetry in the $x$ variable.
\end{remark}

\medskip

By regularity, $E$, $\mathcal{E}$ and $H$ are smooth. A direct computation shows that they are nondecreasing functions: in particular
\begin{equation}\label{eqn:der H}
H'(r) = 2 \int_{\Sigma_r} \sum_i u_i \pa_\nu u_i = 2 E(r),
\end{equation}
where the last identity follows from the divergence theorem and the boundary conditions of $(u_1,\dots,u_k)$. Our next result consist in showing that also the ratio between $E$ (or $\mathcal{E}$) and $H$ is nondecreasing.

\begin{proposition}\label{prp:Almgren Neumann}
Let $(u_1,\ldots,u_k)$ be a solution of \eqref{eqn:system k} in $C_{(a,b)}$ such that \eqref{eqn:Neumann assumption} holds true. The \emph{Almgren quotient}
\[
N^{sym}(r):= \frac{E^{sym}(r)}{H(r)}
\]
is well defined and nondecreasing in $(a,b)$. Moreover
\[
\int_a^r \frac{\int_{\Sigma_s} \sum_{i<j} u_i^2 u_j^2}{H(s)}\, \de s \le N(r).
\]
Analogously, the function (which we will call \emph{Almgren quotient}, too) $\displaystyle
\mathfrak{N}^{sym}(r) := \frac{E^{sym}(r)}{H(r)}$ is well defined and nondecreasing in $(a,b)$, and
\[
\mathfrak{N}'(r) \ge 2 \mathfrak{N}(r) \frac{\int_{C_{(a,r)}} \sum_{i<j} u_i^2 u_j^2 }{H(r)} + 2 \left( \frac{\int_{C_{(a,r)}} \sum_{i<j} u_i^2 u_j^2 }{H(r)} \right)^2.
\]
\end{proposition}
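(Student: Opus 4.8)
The plan is to run the standard Almgren frequency-function scheme in the cylindrical geometry; the essential ingredients are the Pohozaev identity of Lemma~\ref{lem:pohozaev Neumann}, the differentiation rule \eqref{eqn:der H}, and a Cauchy--Schwarz inequality on each slice $\Sigma_r$. Since $u_i>0$ we have $H>0$, so both quotients are well defined on $(a,b)$; moreover $E^{sym}(r),\mathcal E^{sym}(r)\to0$ while $H(r)\to H(a)>0$ as $r\to a^+$, hence $N^{sym}(r),\mathfrak N^{sym}(r)\to0$ there. Testing \eqref{eqn:system k} with $u_i$ on $C_{(a,r)}$, summing over $i$ and using \eqref{eqn:Neumann assumption} gives $E^{sym}(r)=\int_{\Sigma_r}\sum_i u_i\,\pa_x u_i$, whence $H'(r)=2E^{sym}(r)$. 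Differentiating the definitions, $(\mathcal E^{sym})'(r)=\int_{\Sigma_r}\sum_i|\nabla u_i|^2+\sum_{i<j}u_i^2u_j^2$, and Lemma~\ref{lem:pohozaev Neumann} rewrites this as
\[
(\mathcal E^{sym})'(r)=2\int_{\Sigma_r}\sum_i(\pa_x u_i)^2+c_a,\qquad c_a:=\int_{\Sigma_a}\Bigl(\sum_i(\pa_y u_i)^2+\sum_{i<j}u_i^2u_j^2\Bigr)\ge0,
\]
with $c_a$ a nonnegative \emph{constant} (independent of $r$); consequently $(E^{sym})'(r)=(\mathcal E^{sym})'(r)+\int_{\Sigma_r}\sum_{i<j}u_i^2u_j^2$. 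Finally, reading $\sum_i u_i\pa_x u_i$ as the Euclidean scalar product of the vectors $(u_1,\dots,u_k)$ and $(\pa_x u_1,\dots,\pa_x u_k)$, Cauchy--Schwarz on $\Sigma_r$ yields $\bigl(E^{sym}(r)\bigr)^2\le H(r)\int_{\Sigma_r}\sum_i(\pa_x u_i)^2$.

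For $N^{sym}=E^{sym}/H$ I would differentiate by the quotient rule. Using $H'=2E^{sym}$, the numerator $(E^{sym})'H-E^{sym}H'$ equals $\bigl[2\int_{\Sigma_r}\sum_i(\pa_x u_i)^2+c_a+\int_{\Sigma_r}\sum_{i<j}u_i^2u_j^2\bigr]H-2(E^{sym})^2$, and the Cauchy--Schwarz bound cancels $2(E^{sym})^2$ against the $2\int_{\Sigma_r}\sum_i(\pa_x u_i)^2\cdot H$ coming from $(\mathcal E^{sym})'$, leaving $\bigl(c_a+\int_{\Sigma_r}\sum_{i<j}u_i^2u_j^2\bigr)H\ge0$. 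Hence
\[
(N^{sym})'(r)\ \ge\ \frac{c_a+\int_{\Sigma_r}\sum_{i<j}u_i^2u_j^2}{H(r)}\ \ge\ \frac{\int_{\Sigma_r}\sum_{i<j}u_i^2u_j^2}{H(r)}\ \ge\ 0 ,
\]
so $N^{sym}$ is nondecreasing; integrating this differential inequality over $(a,r)$ and using $N^{sym}(r)\to0$ as $r\to a^+$ gives $\int_a^r\bigl(\int_{\Sigma_s}\sum_{i<j}u_i^2u_j^2\bigr)H(s)^{-1}\,\de s\le N^{sym}(r)$.

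For $\mathfrak N^{sym}=\mathcal E^{sym}/H$ I would set $W(r):=\int_{C_{(a,r)}}\sum_{i<j}u_i^2u_j^2=E^{sym}(r)-\mathcal E^{sym}(r)$ and apply the quotient rule again: $(\mathcal E^{sym})'H-\mathcal E^{sym}H'=(\mathcal E^{sym})'H-2\mathcal E^{sym}E^{sym}$, and bounding $(\mathcal E^{sym})'H\ge2(E^{sym})^2+c_aH$ by Cauchy--Schwarz together with the identity $2(E^{sym})^2-2\mathcal E^{sym}E^{sym}=2E^{sym}W=2\mathcal E^{sym}W+2W^2$ gives
\[
(\mathfrak N^{sym})'(r)\ \ge\ \frac{2\mathcal E^{sym}(r)W(r)+2W(r)^2+c_aH(r)}{H(r)^2}\ \ge\ 2\,\mathfrak N^{sym}(r)\frac{W(r)}{H(r)}+2\Bigl(\frac{W(r)}{H(r)}\Bigr)^2 ,
\]
which is exactly the asserted inequality; since its right-hand side is nonnegative, $\mathfrak N^{sym}$ is nondecreasing.

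The computations are elementary, so I do not expect a genuine obstacle; the points that deserve attention are the justification of differentiating $E^{sym},\mathcal E^{sym},H$ under the integral sign and of the boundary integrations --- for which one uses that the solution is smooth up to $\Sigma_a$, e.g.\ via the even reflection across $\Sigma_a$ recalled before the statement, noting that $C_{(a,r)}$ is compact here because $a>-\infty$ --- and the fact that the Cauchy--Schwarz step must be performed with the full slice inner product $\sum_i u_i\pa_x u_i$ rather than componentwise, since only then does it match $E^{sym}(r)=\int_{\Sigma_r}\sum_i u_i\pa_x u_i$ and produce the sharp constant in both inequalities. Everything else amounts to bookkeeping with the harmless term $c_a\ge0$, which only strengthens the estimates.
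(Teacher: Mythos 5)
Your proposal is correct and follows essentially the same route as the paper: the identity $E^{sym}(r)=\int_{\Sigma_r}\sum_i u_i\,\pa_x u_i$ (hence $H'=2E^{sym}$), the Pohozaev-type identity of Lemma \ref{lem:pohozaev Neumann}, and Cauchy--Schwarz on the slice $\Sigma_r$, with the quotient rule in place of the paper's logarithmic derivative being only a cosmetic difference. You also carry out explicitly the $\mathfrak{N}^{sym}$ computation (via $W=E^{sym}-\mathcal{E}^{sym}$) that the paper dismisses as ``minor changes,'' and it yields exactly the stated differential inequality.
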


In the rest of this subsection we will briefly write $E,\mathcal{E},N$ and $\mathfrak{N}$ instead of $E^{sym}, \mathcal{E}^{sym}, N^{sym}$ and $\mathfrak{N}^{sym}$ to ease the notation.
\begin{proof}
Since $(u,v) \in H^1_{\loc}(C_{(a,b)})$ is nontrivial, $E$ and $H$ are positive in $(a,b)$ and bounded for $r$ bounded. We compute, by means of Lemma \ref{lem:pohozaev Neumann}
\[
    \begin{split}
    E'(r) &=  \int_{\Sigma_r} \sum_i |\nabla u_i|^2 + 2 \sum_{i<j} u_i^2 u_j^2\\
          &=  \int_{\Sigma_r} 2\sum_i (\pa_x u_i)^2 + \sum_{i<j} u_i^2 u_j^2 + \int_{\Sigma_a} \sum_i (\pa_y u_i)^2 + \sum_{i<j} u_i^2 u_j^2.
    \end{split}
\]
Note that $\pa_x u_i = \pa_\nu u_i$ on $\Sigma_r$. Using the previous identity and the \eqref{eqn:der H} we are in position to compute the logarithmic derivative of $N$:
\begin{align*}
    \frac{N'(r)}{N(r)} &= \frac{E'(r)}{E(r)} - \frac{H'(r)}{H(r)} \\
    &= 2\frac{ \int_{\Sigma_r} \sum_i (\pa_\nu u_i)^2}{ \int_{\Sigma_r} \sum_i u  \pa_\nu u_i} + \frac{ 2\int_{\Sigma_a} \sum_i (\pa_y u_i)^2 + \sum_{i<j}  u_i^2u_j^2+ \int_{\Sigma_r} \sum_{i<j} u_i^2 u_j^2}{ E(r)} -2 \frac{ \int_{\Sigma_r} \sum_ i u \pa_\nu u_i}{\int_{\Sigma_r} \sum_i u_i^2}\\
    &\geq 2 \left(\frac{ \int_{\Sigma_r} \sum_i (\pa_\nu u_i)^2}{ \int_{\Sigma_r} \sum_i u  \pa_\nu u_i} - \frac{ \int_{\Sigma_r} \sum_ i u \pa_\nu u_i}{\int_{\Sigma_r} \sum_i u_i^2} \right) + \frac{\int_{\Sigma_r} \sum_{i<j} u_i^2 u_j^2}{ E(r)} \ge \frac{\int_{\Sigma_r} \sum_{i<j} u_i^2 u_j^2}{ E(r)} \ge 0,
\end{align*}
where we used the Cauchy-Schwarz and the Young inequalities.  As a consequence, $N$ is nondecreasing in $(a,b)$. Note also that
\[
N'(r) \ge \frac{\int_{\Sigma_r} \sum_{i<j}u_i^2 u_j^2}{ H(r)} \quad \Rightarrow\quad  N(r) \ge \int_a^r \frac{\int_{\Sigma_s} \sum_{i<j} u_i^2 u_j^2 }{H(s)}\,\de s
\]
for every $r>a$. The same argument can be adapted with minor changes to prove the monotonicity of $\mathfrak{N}$.
\end{proof}

As a first consequence, we have the following

\begin{corollary}\label{doubling}
Let $(u_1,\dots, u_k)$ be a solution of \eqref{eqn:system k} in $C_{(a,b)}$ such that \eqref{eqn:Neumann assumption} holds.
\begin{itemize}
\item[($i$)] If $N(r) \ge \underline{d}$ for $r\ge s >a$, then
\[
\frac{H(r_1)}{e^{2 \underline{d} r_1}} \le \frac{H(r_2)}{e^{2 \underline{d} r_2}}  \qquad \forall \ s \le r_1<r_2<b,
\]
\item[$ii$)] If $N(r) \le \overline{d}$ for $r \le t<b$, then
\[
 \frac{H(r_1)}{e^{2 \overline{d} r_1}} \ge \frac{H(r_2)}{e^{2 \overline{d}r_2}} \qquad \forall \ a < r_1<r_2 \le t.
\]
\end{itemize}
\end{corollary}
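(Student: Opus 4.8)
The idea is to read off the growth of $H$ directly from the two facts already established in this subsection: the identity $H'(r) = 2E(r)$ from \eqref{eqn:der H}, and the monotonicity of $N = E/H$ from Proposition \ref{prp:Almgren Neumann}. Combining them, for $r \in (a,b)$ one has
\[
\frac{d}{dr} \log H(r) = \frac{H'(r)}{H(r)} = \frac{2 E(r)}{H(r)} = 2 N(r),
\]
which is legitimate since $H(r)>0$ on $(a,b)$ (as noted at the start of the proof of Proposition \ref{prp:Almgren Neumann}, the solution being nontrivial). Thus the growth of $\log H$ is governed pointwise by $N$.

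For part $(i)$: assume $N(r) \ge \underline{d}$ for all $r \ge s$. Then for $r \in [s,b)$ we get
\[
\frac{d}{dr} \left( \log H(r) - 2 \underline{d}\, r \right) = 2 N(r) - 2 \underline{d} \ge 0,
\]
so the function $r \mapsto \log H(r) - 2\underline{d}\, r$ is nondecreasing on $[s,b)$. Evaluating at $r_1 < r_2$ with $s \le r_1 < r_2 < b$ and exponentiating yields $H(r_1) e^{-2\underline{d} r_1} \le H(r_2) e^{-2\underline{d} r_2}$, which is the claimed inequality. For part $(ii)$: assume $N(r) \le \overline{d}$ for all $r \le t$. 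The same computation gives that $r \mapsto \log H(r) - 2\overline{d}\, r$ has nonpositive derivative on $(a,t]$, hence is nonincreasing there, and evaluating at $a < r_1 < r_2 \le t$ and exponentiating gives $H(r_1) e^{-2\overline{d} r_1} \ge H(r_2) e^{-2\overline{d} r_2}$.

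There is no serious obstacle here: the statement is the standard ``doubling/comparison'' consequence of an Almgren-type formula, and the only inputs are the already-proved monotonicity of $N$ and the derivative formula for $H$. The one point worth a line of care is the smoothness/positivity of $H$ on $(a,b)$ needed to pass to the logarithmic derivative, which is guaranteed by the regularity of solutions and the nontriviality hypothesis; everything else is integration of a one-sided differential inequality.
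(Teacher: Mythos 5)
Your proposal is correct and follows essentially the same route as the paper: the paper also computes $\frac{\de}{\de r}\log H(r)=2N(r)$ from \eqref{eqn:der H} and integrates the resulting one-sided bound (it writes out only case ($ii$), leaving ($i$) symmetric). No issues.
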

\begin{proof}
We prove only ($ii$). Recalling that $H'(r)= 2E(r)$ (see \eqref{eqn:der H}), we have
\[
\frac{\de}{\de r} \log H(r) = 2 N(r) \le 2\overline{d} \qquad \forall r \in (a,t].
\]
By integrating, the thesis follows.
\end{proof}

The next step is to prove a similar monotonicity property for the function $E$. Our result rests on Theorem 5.6 of \cite{BeTeWaWe} (see also \cite{BeLiWeZh}), which we state here for the reader's convenience

\begin{theorem}\label{thm:teo 5.6 k}
Let $k$ be a fixed integer and let $\Lambda>1$. Let
\[
\mathcal{L}(k,\Lambda): = \min \left\{ \int_{0}^{2\pi} \sum_{i=1}^k (f'_i)^2 +\Lambda \sum_{1 \le i<j \le k} f_i^2 f_j^2 \left| \begin{array}{l}
f_1,\dots, f_k \in H^1([0,2\pi]), \ \int_{0}^{2\pi} \sum_{i=1}^k f_i^2 = 1 \\
f_{i+1}(t)= f_i\left(t-\frac{2\pi}{k}\right), \ f_1(\pi +t )= f_1(\pi-t)
\end{array}\right.
\right\},
\]
where the indexes are counted $\mod k$. There exists $C>0$ such that
\[
\left( \frac{k}{2} \right)^2-C \Lambda^{- 1/4} \le \mathcal{L}(k,\Lambda) \le \left(\frac{k}{2} \right)^2.
\]
\end{theorem}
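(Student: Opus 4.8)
The plan is to establish the two bounds separately. The upper bound $\mathcal{L}(k,\Lambda)\le(k/2)^2$ and the attainment of the minimum are soft; the lower bound carries all the content, and the exponent $1/4$ will come from balancing the penalisation term against a ``level--set overshoot'' error in a one--dimensional Faber--Krahn estimate.

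\textbf{Upper bound and existence.} For the upper bound I would test the functional on an explicit \emph{segregated} $k$--tuple: on $[0,2\pi]$, viewed as $\R/2\pi\Z$, set
\[
f_1(t):=\tfrac{1}{\sqrt\pi}\,\sin\!\Big(\tfrac{k}{2}\big(t-\pi+\tfrac{\pi}{k}\big)\Big)\ \text{for } t\in\big[\pi-\tfrac{\pi}{k},\,\pi+\tfrac{\pi}{k}\big],\qquad f_1:=0\ \text{elsewhere},
\]
and $f_i(t):=f_1\big(t-(i-1)\tfrac{2\pi}{k}\big)$. One checks directly that $f_1$ is continuous and even about $\pi$, that $f_{i+1}(t)=f_i(t-\tfrac{2\pi}{k})$ holds modulo $k$, that the supports of the $f_i$ are $k$ adjacent arcs of length $\tfrac{2\pi}{k}$ tiling the circle (so $f_if_j=0$ a.e.\ for $i\ne j$ and the interaction term vanishes), that $\int_0^{2\pi}\sum_i f_i^2=k\int_0^{2\pi}f_1^2=1$, and that $\int_0^{2\pi}\sum_i(f_i')^2=k\int_0^{2\pi}(f_1')^2=(k/2)^2$, since $f_1$ is, up to normalisation, the first Dirichlet eigenfunction of an interval of length $\tfrac{2\pi}{k}$, whose eigenvalue is $(k/2)^2$. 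Hence $\mathcal{L}(k,\Lambda)\le(k/2)^2$. Attainment is the usual direct method: the functional is nonnegative, $\int_0^{2\pi}\sum_i(f_i')^2$ controls the $H^1$--seminorms while the normalisation fixes the $L^2$--norm, so minimising sequences are bounded in $H^1$; the gradient term is weakly lower semicontinuous, and by the compact embedding $H^1([0,2\pi])\hookrightarrow C^0([0,2\pi])$ the interaction term passes to the weak limit and the constraint set is weakly closed.

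\textbf{Lower bound.} Let $(f_1,\dots,f_k)$ be admissible. Replacing $f_i$ by $|f_i|$ changes neither the functional nor the constraints, so I may assume $f_i\ge0$; I may also assume the functional value is $\le(k/2)^2$, else there is nothing to prove, so that, writing $\mathcal{I}:=\int_0^{2\pi}\sum_{i<j}f_i^2f_j^2$, we have $\Lambda\mathcal{I}\le(k/2)^2$. Fix a threshold $\eta>0$ and put $\phi_i:=(f_i-\eta)^+$; since $\phi_i'=f_i'\,\mathbf{1}_{\{f_i>\eta\}}$, we have $\int(\phi_i')^2\le\int(f_i')^2$, so the functional value is at least $\sum_i\int_0^{2\pi}(\phi_i')^2+\Lambda\mathcal{I}$. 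Each $\phi_i$ vanishes off $\{f_i>\eta\}$, and by the translation constraint all these sets have the same measure $\ell:=|\{f_1>\eta\}|$. Three estimates enter. \emph{(a) Level--set measure:} counting multiplicities, $k\ell=\int_0^{2\pi}\sum_i\mathbf{1}_{\{f_i>\eta\}}=\sum_{m\ge1}m|A_m|$, where $A_m$ is the set on which exactly $m$ of the $f_i$ exceed $\eta$; since $\sum_{m\ge1}|A_m|\le2\pi$ and, on $A_m$, $\sum_{i<j}f_i^2f_j^2\ge\binom m2\eta^4\ge(m-1)\eta^4$, one gets $k\ell\le2\pi+\eta^{-4}\mathcal{I}$, hence $\ell\le\tfrac{2\pi}{k}+\tfrac1k\eta^{-4}\mathcal{I}$. \emph{(b) Faber--Krahn:} for the value of $\eta$ chosen below one has $\ell<2\pi$, so each $\phi_i$ has a zero; cutting the circle there and using symmetric rearrangement on the resulting interval gives $\int(\phi_i')^2\ge(\pi/\ell)^2\int\phi_i^2$. \emph{(c) Truncation loss:} from $\phi_i^2\ge f_i^2-2\eta f_i$ on $\{f_i>\eta\}$, summing and applying Cauchy--Schwarz, $\sum_i\int\phi_i^2\ge1-2\pi k\eta^2-2\sqrt{2\pi k}\,\eta\ge1-C_k\eta$ for $\eta\le1$. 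Since $(\pi/\ell)^2=(k/2)^2\big(1+\tfrac{k}{2\pi}(\ell-\tfrac{2\pi}{k})\big)^{-2}\ge(k/2)^2\big(1-\tfrac{k}{\pi}(\ell-\tfrac{2\pi}{k})\big)\ge(k/2)^2\big(1-\tfrac1\pi\eta^{-4}\mathcal{I}\big)$, estimates (a)--(c) combine to
\[
\int_0^{2\pi}\sum_i(f_i')^2+\Lambda\sum_{i<j}f_i^2f_j^2\ \ge\ (k/2)^2(1-C_k\eta)\ +\ \mathcal{I}\Big(\Lambda-\tfrac{(k/2)^2}{\pi}\,\eta^{-4}\Big).
\]
Choosing $\eta=\eta_0:=\big((k/2)^2/(\pi\Lambda)\big)^{1/4}$ kills the coefficient of $\mathcal{I}$; for this $\eta_0$, estimate (a) together with $\Lambda\mathcal{I}\le(k/2)^2$ gives $\ell\le3\pi/k<2\pi$ (so (b) is legitimate), and since $\eta_0=c_k\Lambda^{-1/4}$ and $\eta_0\le1$ for $\Lambda$ large, one concludes $\mathcal{L}(k,\Lambda)\ge(k/2)^2-C\Lambda^{-1/4}$; for the remaining bounded range of $\Lambda$ the inequality holds trivially after enlarging $C$.

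\textbf{Main obstacle.} The only non--routine step is this final balancing. A cruder argument that first discards the penalisation (using $\mathcal{I}\le(k/2)^2/\Lambda$) and only then optimises in $\eta$ must trade a truncation loss $O(\eta)$ against an error $O(\eta^{-4}\Lambda^{-1})$, which yields only the exponent $1/5$; \emph{retaining} $\Lambda\mathcal{I}$ and cancelling it exactly against the level--set overshoot cost $\tfrac{(k/2)^2}{\pi}\eta^{-4}\mathcal{I}$ is what promotes the exponent to $1/4$. A secondary point is justifying the one--dimensional Faber--Krahn inequality on the circle when the nodal set of $\phi_i$ is an arbitrary set of measure $<2\pi$ rather than an interval; this is the cut--at--a--zero and rearrange argument of (b).
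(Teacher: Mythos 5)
The paper itself contains no proof of this statement: it is imported verbatim as Theorem 5.6 of \cite{BeTeWaWe} (``which we state here for the reader's convenience''), so there is no internal argument to compare yours against; what you have written is a self-contained substitute, and it is correct. Your upper bound is exact (the segregated half-sine profiles give $k\int (f_1')^2=(k/2)^2$, $k\int f_1^2=1$, zero interaction, and satisfy all the symmetry constraints), and the lower bound is sound: the level-set count $k\ell\le 2\pi+\eta^{-4}\mathcal{I}$, the truncation loss $\sum_i\int\phi_i^2\ge 1-C_k\eta$, and the cut-and-rearrange Faber--Krahn inequality combine as you claim, and with $\eta_0=((k/2)^2/(\pi\Lambda))^{1/4}$ one indeed has $\tfrac1\pi\eta_0^{-4}\mathcal{I}\le 1$ and $\ell\le 3\pi/k<2\pi$ under the harmless assumption $\Lambda\mathcal{I}\le(k/2)^2$, so every step in the chain is legitimate and the exponent $1/4$ comes out as you describe. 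This is a genuinely different, more elementary route than the source \cite{BeTeWaWe} (see also \cite{BeLiWeZh}), where the asymptotics of $\mathcal{L}(k,\Lambda)$ is extracted from an analysis of the minimizers of the constrained problem; your argument uses nothing about minimizers beyond admissibility of an arbitrary competitor, at the price of a non-sharp constant $C$, which is all the theorem asks. Two points deserve to be made explicit. First, the Faber--Krahn step (b) needs the $f_i$ to be viewed on the circle $\R/2\pi\Z$: for a function on $[0,2\pi]$ vanishing only at an interior point the best constant would be the quarter-wave $(\pi/(2\ell))^2$, which is not enough; the circle reading is the intended one (in the paper's application the $f_i$ are traces of $2\pi$-periodic functions) and is in fact forced by the constraints, since the shift relation only makes sense mod $2\pi$ and the evenness $f_1(\pi+t)=f_1(\pi-t)$ evaluated at $t=\pi$ gives $f_1(0)=f_1(2\pi)$. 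Second, in the displayed combined inequality the expansion $(1-a)(1-b)\ge 1-a-b$ should be invoked only after noting $a=\tfrac1\pi\eta^{-4}\mathcal{I}\le1$ for the chosen $\eta_0$ (and $C_k\eta_0\le1$ for $\Lambda$ large, the bounded range of $\Lambda$ being absorbed into $C$, as you say); with that bookkeeping spelled out the proof is complete.
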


\begin{remark}\label{rem: su teo 5.6 k}
Having in mind to apply Theorem \ref{thm:teo 5.6 k} on $2\pi$-periodic functions, note that the condition $f_1(\pi + t)=f_1(\pi-t)$ can be replaced by $f_1(t+\tau)=f_1(\tau -t)$ for any $\tau \in [0,2\pi)$.
\end{remark}

For a fixed $r_0 \in (a,b)$, let us introduce
\[
	\varphi(r;r_0):= \int_{r_0}^r \frac{\de s}{H(s)^{1/4}}.
\]
The function $\varphi$ is positive and increasing in $\R^+$; thanks to point ($i$) of Corollary \ref{doubling} and to the monotonicity of $N$, whenever $(u,v)$ is nontrivial $\varphi$ is bounded by a quantity depending only $H(r_0)$ and $N(r_0)$. To be precise:
\begin{equation}\label{eqn:estimate for phi}
	\varphi(r;r_0) \leq 2\frac{e^{\frac{1}{2}N(r_0)r_0}}{H(r_0)^{\frac14} N(r_0)} \left[e^{-\frac12 N(r_0) r_0}-e^{-\frac12 N(r_0)r} \right].
\end{equation}
This, together with the monotonicity of $\varphi(\cdot;r_0)$, implies that if $b =+\infty$ then there exists the limit
\begin{equation}\label{eqn: limit of fi}
\lim_{r \to +\infty} \varphi(r;r_0)<+\infty.
\end{equation}

\begin{lemma}\label{lem monotonicity for E}
Let $(u_1,\dots,u_k)$ be a solution of \eqref{eqn: system R} in $C_{(a,b)}$ such that \eqref{eqn:Neumann assumption} holds. Let $r_0 \in (a,b)$, and assume that
\begin{equation}\label{symmetry u-v}
u_{i+1}(x,y)=u_i(x,y-\pi) \quad \text{and} \quad u_1\left(x,\tau+y \right)= u_1\left(x,\tau -y\right)
\end{equation}
where $\tau \in [0,k\pi)$. There exists $C>0$ such that the function $\displaystyle{r \mapsto \frac{E(r)}{e^{2r}} e^{ C \varphi(r;r_0)}}$ is nondecreasing in $r$ for $r>r_0$.
\end{lemma}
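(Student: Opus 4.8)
The plan is to establish the differential inequality
\[
\frac{E'(r)}{E(r)}\ \ge\ 2-\frac{C}{H(r)^{1/4}}\qquad\text{for }r>r_0,
\]
for a suitable $C>0$; since $E,H,\varphi$ are smooth and $\varphi'(r;r_0)=H(r)^{-1/4}$, integrating this inequality shows that $\log\!\big(E(r)e^{-2r}e^{C\varphi(r;r_0)}\big)$ is nondecreasing, which is the claim. First I would record two elementary facts. Differentiating $E$ and using the divergence theorem together with the Neumann condition \eqref{eqn:Neumann assumption} (as in \eqref{eqn:der H} and the proof of Proposition \ref{prp:Almgren Neumann}),
\[
E'(r)=\int_{\Sigma_r}\Big(\sum_i(\pa_x u_i)^2+\sum_i(\pa_y u_i)^2+2\sum_{i<j}u_i^2u_j^2\Big),\qquad E(r)=\int_{\Sigma_r}\sum_i u_i\,\pa_x u_i;
\]
from the second identity and the Cauchy--Schwarz inequality on $L^2(\Sigma_r;\R^k)$ one gets $\int_{\Sigma_r}\sum_i(\pa_x u_i)^2\ge E(r)^2/H(r)$.

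The heart of the matter is a lower bound for the $y$--derivative and interaction part of $E'(r)$. Restricting $(u_1,\dots,u_k)$ to the circle $\Sigma_r$, rescaling $y$ so that the period becomes $2\pi$ (the symmetries \eqref{symmetry u-v} then become those appearing in Theorem \ref{thm:teo 5.6 k}, using also Remark \ref{rem: su teo 5.6 k}) and normalizing by $H(r)^{1/2}$, Theorem \ref{thm:teo 5.6 k} and the bound $\mathcal{L}(k,\Lambda)\ge(k/2)^2-C\Lambda^{-1/4}$ yield constants $c_k,\widetilde C>0$ such that, for every $\Lambda>1$,
\[
\int_{\Sigma_r}\sum_i(\pa_y u_i)^2+\frac{\Lambda}{c_kH(r)}\int_{\Sigma_r}\sum_{i<j}u_i^2u_j^2\ \ge\ \big(1-\widetilde C\Lambda^{-1/4}\big)H(r).
\]
Writing $2\sum_{i<j}u_i^2u_j^2=\frac{\Lambda}{c_kH(r)}\sum_{i<j}u_i^2u_j^2+\big(2-\frac{\Lambda}{c_kH(r)}\big)\sum_{i<j}u_i^2u_j^2$ and discarding the last term, which has a favourable sign as soon as $\Lambda\le 2c_kH(r)$, we obtain
\[
\int_{\Sigma_r}\Big(\sum_i(\pa_y u_i)^2+2\sum_{i<j}u_i^2u_j^2\Big)\ \ge\ \big(1-\widetilde C\Lambda^{-1/4}\big)H(r)\qquad\text{whenever }1<\Lambda\le 2c_kH(r).
\]
Combining this with the previous step, $E'(r)\ge E(r)^2/H(r)+(1-\widetilde C\Lambda^{-1/4})H(r)$, so dividing by $E(r)$ and setting $N(r)=E(r)/H(r)>0$,
\[
\frac{E'(r)}{E(r)}\ \ge\ N(r)+\frac{1-\widetilde C\Lambda^{-1/4}}{N(r)}\ \ge\ 2\sqrt{1-\widetilde C\Lambda^{-1/4}}\ \ge\ 2\big(1-\widetilde C\Lambda^{-1/4}\big),
\]
by the arithmetic--geometric mean inequality (valid once $\widetilde C\Lambda^{-1/4}\le1$). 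Choosing $\Lambda=2c_kH(r)$ whenever this exceeds $1$ gives $\frac{E'(r)}{E(r)}\ge 2-C_1H(r)^{-1/4}$ with $C_1=2\widetilde C(2c_k)^{-1/4}$. In the complementary bounded range, where $2c_kH(r)\le1$ and hence $H(r)$ is small, one simply notes that $E'(r)\ge0$ while $2-CH(r)^{-1/4}<0$, which holds provided $C$ is enlarged so that $(C/2)^4>1/(2c_k)$; thus, taking $C=\max\{C_1,\,2(2c_k)^{-1/4}\}$, the displayed differential inequality holds for all $r>r_0$, and the Lemma follows.

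The main obstacle is exactly this lower bound for the $y$--derivative/interaction part of $E'(r)$: one has to invoke Theorem \ref{thm:teo 5.6 k} with a parameter $\Lambda$ that is let tend to infinity together with $r$. The decisive point is that after de-normalizing the eigenvalue problem the quartic term carries the factor $H(r)^{-1}$, which makes the choice $\Lambda\sim H(r)$ admissible: this choice simultaneously makes the eigenvalue defect $\Lambda^{-1/4}$ comparable with $\varphi'(r;r_0)=H(r)^{-1/4}$ (so that the correction $e^{C\varphi}$ has precisely the right form and is bounded, by \eqref{eqn:estimate for phi}), and keeps the coefficient in front of $\int_{\Sigma_r}\sum_{i<j}u_i^2u_j^2$ with a favourable sign so that it may simply be thrown away. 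The remaining ingredients---Cauchy--Schwarz, the arithmetic--geometric mean inequality, and the trivial treatment of the region where $H$ is small---are routine; a minor technical point to keep track of is the rescaling of the circle $\Sigma_r=\{r\}\times\S_k$, of length $k\pi$, onto the interval $[0,2\pi]$ on which Theorem \ref{thm:teo 5.6 k} is formulated.
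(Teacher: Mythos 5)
Your proof is correct and follows essentially the same route as the paper: differentiate $\log\bigl(E(r)e^{-2r}\bigr)$, use $E(r)=\int_{\Sigma_r}\sum_i u_i\,\pa_x u_i$ from \eqref{eqn:der H} together with Cauchy--Schwarz and the arithmetic--geometric mean inequality, and control the $y$-derivative plus interaction part of $E'(r)$ by Theorem \ref{thm:teo 5.6 k} applied with $\Lambda$ proportional to $H(r)$, so that the defect $\Lambda^{-1/4}$ produces exactly the term $\varphi'(r;r_0)=H(r)^{-1/4}$. The only differences are cosmetic: you take $\Lambda=2c_kH(r)$ and discard the surplus interaction term, and you dispose of the small-$H(r)$ regime by the trivial bound $E'\ge 0$, whereas the paper fixes $\Lambda$ so the interaction coefficient is exactly $2$ and absorbs that regime into the constant.
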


\begin{proof}
Recalling the \eqref{eqn:der H}, we compute the logarithmic derivative
\begin{equation}\label{eqn: der E k}
\frac{\de}{\de r} \log\left(\frac{E(r)}{e^{2r}} \right) = -2 + \frac{\int_{\Sigma_r} \sum_i \left( \pa_\nu u_i\right)^2 + \int_{\Sigma_r} \left(\pa_y u_i\right)^2 + 2 \sum_{i<j} u_i^2 u_j^2   }{\int_{\Sigma_r} \sum_i u_i \pa_\nu u_i  }
\end{equation}
To apply Theorem \ref{thm:teo 5.6 k}, we observe that $\Sigma_r=\{r\} \times [0,k\pi]$, so that
\begin{multline}\label{eqn:der E k 2}
\int_{\Sigma_r} \left(\pa_y u_i\right)^2 + 2 \sum_{i<j} u_i^2 u_j^2 = \int_0^{k\pi} \left(\pa_y u_i(r,y)\right)^2 + 2 \sum_{i<j} u_i(r,y)^2 u_j(r,y)^2 \, \de y \\
= \frac{2}{k} \int_0^{2\pi} \left(\pa_y \tilde u_i(r,y)\right)^2 + 2 \left(\frac{k}{2}\right)^2 \sum_{i<j} \tilde u_i(r,y)^2 \tilde u_j(r,y)^2 \, \de y,
\end{multline}
where $\tilde u_i(r,y) = u_i \left(r,\frac{k}{2}y\right)$. By a scaling argument, thanks to assumption \eqref{symmetry u-v} (see also Remark \ref{rem: su teo 5.6 k}) we can say that for every $\Lambda> \frac12$ there holds
\begin{multline*}
\int_0^{2\pi} \left(\pa_y \tilde u_i(r,y)\right)^2 + \left(\frac{k}{2}\right)^2\frac{2\Lambda}{\int_0^{2\pi} \sum_i \tilde u_i(r,y)^2\,\de y} \sum_{i<j} \tilde u_i(r,y)^2 \tilde u_j(r,y)^2\, \de y  \\
\ge \mathcal{L}\left(k,2\Lambda \left(\frac{k}{2}\right)^2\right) \int_0^{2\pi} \sum_i \tilde u_i(r,y)^2\, \de y = \frac{2}{k} \mathcal{L}\left(k,2\Lambda \left(\frac{k}{2}\right)^2\right) \int_{\Sigma_r} \sum_i  u_i^2
 \end{multline*}
The choice
 \[
 \Lambda = \int_0^{2\pi} \sum_i \tilde u_i(r,y)^2\,\de y = \frac{2}{k}H(r)
 \]
yields
\[
\int_0^{2\pi} \left(\pa_y \tilde u_i(r,y)\right)^2 + 2 \left(\frac{k}{2}\right)^2 \sum_{i<j} \tilde u_i(r,y)^2 \tilde u_j(r,y)^2 \, \de y \ge  \frac{2}{k} \mathcal{L}\left(k,k H(r) \right) \int_{\Sigma_r} \sum_i  u_i^2,
\]
and coming back to \eqref{eqn:der E k 2} we obtain
\[
\int_{\Sigma_r} \left(\pa_y u_i\right)^2 + 2 \sum_{i<j} u_i^2 u_j^2 \ge \left(\frac{2}{k} \right)^2 \mathcal{L}\left(k,k H(r) \right) \int_{\Sigma_r} \sum_i  u_i^2.
\]
Plugging this estimate into the \eqref{eqn: der E k} we see that
\[
\begin{split}
\frac{\de}{\de r} \log\left(\frac{E(r)}{e^{2r}} \right) & \ge -2 + \frac{ \int_{\Sigma_r} \sum_i \left( \pa_\nu u_i\right)^2  + \left(\frac{2}{k} \right)^2 \mathcal{L}\left(k,k H(r) \right) \int_{\Sigma_r} \sum_i  u_i^2}{\int_{\Sigma_r} \sum_i u_i \pa_\nu u_i  } \\
& \ge -2 + 2\frac{2}{k} \sqrt{ \mathcal{L}\left(k,k H(r) \right) } \ge -\frac{C}{H(r)^{1/4}}
\end{split}
\]
where we used Theorem \ref{thm:teo 5.6 k}. An integration gives the thesis.
\end{proof}

\begin{lemma}\label{cor estimate for N}
Let $(u_1,\dots,u_k)$ be a nontrivial solution of \eqref{eqn:system k} in $C_{(a,+\infty)}$,  and assume that \eqref{eqn:Neumann assumption} and \eqref{symmetry u-v} hold. If $d:=\lim_{r \to +\infty} N(r)<+\infty$, then $d \ge 1$ and
\[
\lim_{r \to +\infty}\frac{E(r)}{e^{2r}}>0.
\]
\end{lemma}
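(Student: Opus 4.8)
The plan is to derive both conclusions from Lemma~\ref{lem monotonicity for E}, which does the real work; everything else is elementary manipulation of the Almgren quotient. Fix $r_0 \in (a,+\infty)$. Since $(u_1,\dots,u_k)$ is nontrivial, $E(r_0)>0$, and since $\varphi(r_0;r_0)=0$, the monotonicity of $r \mapsto \frac{E(r)}{e^{2r}}\,e^{C\varphi(r;r_0)}$ on $(r_0,+\infty)$ immediately gives, for every $r>r_0$,
\[
\frac{E(r)}{e^{2r}} \ge \frac{E(r_0)}{e^{2r_0}}\,e^{-C\varphi(r;r_0)} \ge \frac{E(r_0)}{e^{2r_0}}\,e^{-C\varphi_\infty} =: c_0 > 0,
\]
where $\varphi_\infty := \lim_{r\to+\infty}\varphi(r;r_0) < +\infty$ by \eqref{eqn: limit of fi}. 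To see that the limit in the statement exists, I would write $\frac{E(r)}{e^{2r}}$ as the product of the nondecreasing factor $\frac{E(r)}{e^{2r}}e^{C\varphi(r;r_0)}$ (bounded below by $\frac{E(r_0)}{e^{2r_0}}>0$, hence admitting a limit in $(0,+\infty]$) and the factor $e^{-C\varphi(r;r_0)}$, which tends to $e^{-C\varphi_\infty}\in(0,1]$. Hence $\lim_{r\to+\infty}\frac{E(r)}{e^{2r}}$ exists in $(0,+\infty]$ and, by the displayed lower bound, is $\ge c_0>0$. This proves the second assertion. (I would not try to show finiteness of this limit: it is not needed, and indeed it equals $+\infty$ whenever $d>1$, as one checks from Corollary~\ref{doubling}~($i$).)

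For the first assertion, $d\ge 1$, I would argue by contradiction: assume $d=\lim_{r\to+\infty}N(r)<1$. Since $N$ is nondecreasing (Proposition~\ref{prp:Almgren Neumann}), $N(r)\le d$ for all $r\in(a,+\infty)$; Corollary~\ref{doubling}~($ii$) (with $\overline{d}=d$, and with $t$ arbitrarily large since $b=+\infty$) then gives $H(r)\le H(r_0)\,e^{2d(r-r_0)}$ for $r>r_0$. Using $E(r)=N(r)H(r)\le d\,H(r)$ we obtain
\[
\frac{E(r)}{e^{2r}} \le d\,H(r_0)\,e^{-2dr_0}\,e^{2(d-1)r},
\]
and the right-hand side tends to $0$ as $r\to+\infty$ because $d-1<0$. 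This contradicts $\frac{E(r)}{e^{2r}}\ge c_0>0$, so $d\ge 1$.

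The only genuinely delicate ingredient here is Lemma~\ref{lem monotonicity for E}, and through it the sharp lower bound on $\mathcal{L}(k,\Lambda)$ from Theorem~\ref{thm:teo 5.6 k}: this is what makes the error term $e^{C\varphi(r;r_0)}$ controllable, via the boundedness of $\varphi$ recorded in \eqref{eqn:estimate for phi}--\eqref{eqn: limit of fi}. Granted that lemma, the rest is routine bookkeeping with the Almgren quotient $N$ and the identity $H'=2E$ of \eqref{eqn:der H}, so I do not anticipate any further obstacle.
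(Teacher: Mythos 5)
Your proposal is correct and follows essentially the same route as the paper: existence and positivity of $\lim_{r\to+\infty}E(r)/e^{2r}$ via Lemma \ref{lem monotonicity for E} together with the boundedness of $\varphi$ from \eqref{eqn: limit of fi}, and then a contradiction from $d<1$ using the monotonicity of $N$ and Corollary \ref{doubling}. The only (cosmetic) difference is that you conclude the contradiction by bounding $E(r)\le d\,H(r)$ directly, whereas the paper passes to the limit in $E=N\cdot H$ after showing $H(r)/e^{2r}\to 0$.
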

\begin{proof}
Let us fix $r_0>a$. Firstly, from the previous Lemma and the \eqref{eqn: limit of fi}, we deduce that there exists the limit
\[
l:= \lim_{r \to+\infty} \frac{E(r)}{e^{2r}} \ge 0.
\]
Recalling that $\varphi(r;r_0)$ is bounded, it results
\[
\frac{E(r)}{e^{2r}} \ge e^{-C \varphi(r;r_0)} \frac{E(r_0)}{e^{2 r_0}} \ge C>0 \qquad \forall r > r_0,
\]
so that the value $l$ is strictly greater then $0$. Now, assume by contradiction that $d= \lim_{r \to +\infty} N(r)<1$. The monotonicity of $N$ implies $N(r) \le d$ for every $r>0$. Hence, from Corollary \ref{doubling} we deduce
\[
\frac{H(r)}{e^{2dr}} \le \frac{H(r_0)}{e^{2d r_0}} \quad \forall r>r_0 \quad \Rightarrow \quad \limsup_{r \to +\infty} \frac{H(r)}{e^{2dr}} < +\infty \quad \Rightarrow \quad \lim_{r \to +\infty} \frac{H(r)}{e^{2r}}=0,
\]
which in turns gives
\[
0<l=\lim_{r \to +\infty} \frac{E(r)}{e^{2r}}=\lim_{r \to +\infty} N(r) \lim_{r \to +\infty} \frac{H(r)}{e^{2r}} =0,
\]
a contradiction.
\end{proof}

\subsection{Solutions with finite energy in unbounded cylinders}\label{sub:monot finite}

In what follows we consider a solution $(u_1,\dots,u_k)$ of \eqref{eqn:system k} defined in an unbounded cylinder $C_{(-\infty,b)}$, with $b \in \R$ (the choice $b=+\infty$ is admissible). In this setting we assume that $(u_1,\dots,u_k)$ has a sufficiently fast decay as $x \to -\infty$, in the sense that
\begin{equation}\label{eqn:decay}
H(r):= \int_{\Sigma_r} \sum_{i=1}^k u_i^2 \to 0 \quad \text{as $r \to -\infty$}.
\end{equation}

First of all, we can show that under assumption \eqref{eqn:decay} $(u_1,\dots,u_k)$ has finite energy in $C_{(-\infty,b)}$.

\begin{lemma}\label{lem:decay to finite E}
Let $(u_1,\dots,u_k)$ be a solution of \eqref{eqn: system k comp} in $C_{(-\infty,b)}$, such that \eqref{eqn:decay} holds. Then
\[
\mathcal{E}^{unb}(r):=\int_{C_{(-\infty,r)}} \sum_{i=1}^k |\nabla u_i|^2 + \sum_{1 \le i<j \le k} u_i^2 u_j^2 < +\infty \qquad \forall r <b.
\]
\end{lemma}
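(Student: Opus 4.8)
The plan is to reduce the statement to a local energy identity on bounded cylinders $C_{(s,r)}$ and then to send $s\to-\infty$, using the decay hypothesis \eqref{eqn:decay} only to control the boundary term produced on $\Sigma_s$. Fix $r<b$. For $s<r$ the cylinder $C_{(s,r)}$ is bounded and has no lateral boundary (thanks to $k\pi$-periodicity), and by standard elliptic regularity $(u_1,\dots,u_k)$ is smooth up to $\overline{C_{(s,r)}}$, so all the integrals below are finite and the integration by parts is legitimate. Testing the $i$-th equation in \eqref{eqn:system k} with $u_i$ on $C_{(s,r)}$, noting that the outer unit normal is $+\pa_x$ on $\Sigma_r$ and $-\pa_x$ on $\Sigma_s$, summing over $i$, and recalling $H'(t)=2\int_{\Sigma_t}\sum_i u_i\,\pa_x u_i$, one gets the identity
\[
\int_{C_{(s,r)}} \sum_{i=1}^k |\nabla u_i|^2 + 2\!\!\sum_{1\le i<j\le k}\!\! u_i^2 u_j^2 \;=\; \int_{\Sigma_r}\sum_{i=1}^k u_i\,\pa_x u_i - \int_{\Sigma_s}\sum_{i=1}^k u_i\,\pa_x u_i \;=\; \tfrac12 H'(r) - \tfrac12 H'(s).
\]

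Differentiating this identity in $r$ gives $\tfrac12 H''(r)=\int_{\Sigma_r}\sum_i|\nabla u_i|^2+2\sum_{i<j}u_i^2u_j^2\ge 0$, so $H'$ is nondecreasing on $(-\infty,b)$; equivalently, the left-hand side above is nondecreasing as $s$ decreases. Hence the limit $\ell:=\lim_{s\to-\infty}H'(s)$ exists in $[-\infty,H'(r)]$, and the crux of the argument is to show that $\ell$ is \emph{finite}. If it were not finite (more generally, if $\ell<0$), then for every $M>0$ there would be $s_M<r$ with $H'(t)\le -M$ for all $t\le s_M$, and therefore, for $s<s_M$,
\[
H(s) \;=\; H(s_M) - \int_s^{s_M} H'(t)\,\de t \;\ge\; H(s_M) + M\,(s_M - s) \;\xrightarrow[s\to-\infty]{}\; +\infty,
\]
contradicting \eqref{eqn:decay}. (This argument in fact forces $\ell\ge 0$, but only finiteness is needed.)

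Finally, since all integrands are nonnegative, monotone convergence and the identity above yield
\[
\mathcal{E}^{unb}(r) \;\le\; \int_{C_{(-\infty,r)}} \sum_{i=1}^k |\nabla u_i|^2 + 2\!\!\sum_{1\le i<j\le k}\!\! u_i^2 u_j^2 \;=\; \lim_{s\to-\infty}\Big(\tfrac12 H'(r) - \tfrac12 H'(s)\Big) \;=\; \tfrac12\big(H'(r)-\ell\big) \;<\; +\infty,
\]
which is the claim. The only genuinely delicate point is the boundedness of $\ell$; the rest is a routine local computation. It is worth recording, for later use in this subsection, that the same computation shows that the full energy $E^{unb}(r):=\int_{C_{(-\infty,r)}}\sum_i|\nabla u_i|^2+2\sum_{i<j}u_i^2u_j^2$ is finite and equals $\tfrac12(H'(r)-\ell)$, in particular $(H^{unb})'(r)=H'(r)=2E^{unb}(r)+\ell$, which is the analogue here of the relation $H'=2E$ from the Neumann case.
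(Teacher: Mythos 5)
Your argument is correct, but it follows a genuinely different route from the paper. You derive the exact identity $\int_{C_{(s,r)}}\sum_i|\nabla u_i|^2+2\sum_{i<j}u_i^2u_j^2=\tfrac12\bigl(H'(r)-H'(s)\bigr)$ by testing with $(u_1,\dots,u_k)$ on the bounded cylinder, read off from it the convexity of $H$, and then use the decay (in fact only the boundedness) of $H$ to rule out $\lim_{s\to-\infty}H'(s)\in[-\infty,0)$, concluding by monotone convergence. The paper instead argues by contradiction: it sets $e(r)$ equal to the energy on $C_{(-r+r_0,r_0)}$, invokes the Pohozaev-type invariant of Lemma \ref{lem: Pohozaev harmonic k} to express $e'(r)$ through $\int_{\Sigma_{-r}}\sum_i(\pa_x u_i)^2$, bounds $e(r)$ via Cauchy--Schwarz and the boundedness of $H$, and obtains a Riccati-type differential inequality $e'\gtrsim e^2$ whose solutions blow up in finite time, contradicting $H^1_{\loc}$ regularity. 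Your version is more elementary (no Pohozaev identity, no ODE comparison) and more quantitative: it gives $\int_{C_{(-\infty,r)}}\sum_i|\nabla u_i|^2+2\sum_{i<j}u_i^2u_j^2=\tfrac12(H'(r)-\ell)$, and since $H'$ is nondecreasing with $H\ge 0$ one also gets $\ell\le 0$, hence $\ell=0$; this yields $H'(r)=2E^{unb}(r)$ directly, a fact the paper proves separately afterwards with a cut-off argument. The one point to state explicitly in your closing remark is this observation that $\ell=0$ (otherwise your relation $H'=2E^{unb}+\ell$ is weaker than the identity actually needed later in the subsection), but this costs one line and does not affect the correctness of the lemma's proof.
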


The index $unb$ stands for the fact that the energy is evaluated in an unbounded cylinder, and will be omitted in the rest of the subsection.

\begin{proof}
Firstly, being a solution in $C_{(-\infty,b)}$, it results $(u_1,\dots, u_k) \in H^1_{loc}(C_{(-\infty,b)})$. Thus, under assumption \eqref{eqn:decay}, there exists $C>0$ such that $H(r) \le C$ for every $r<b$.

Let $r_0<b$. Let us introduce, for $r >0$, the functional
\[
    e(r) := \int_{C_{(-r+r_0,r_0)}} \sum_i |\nabla u_i|^2+ \sum_{i<j} u_i^2u_j^2.
\]
For the sake of simplicity, in the rest of the proof we assume $r_0=0$ (thus $b>0$). By direct computation and an application of Lemma \ref{lem: Pohozaev harmonic k}, we find
\[
        e'(r) = \int_{\Sigma_{-r}} \sum_i |\nabla u_i|^2+ \sum_{i<j} u_i^2u_j^2 = 2 \int_{\Sigma_{-r}} (\partial_x u_i)^2 +\int_{\Sigma_{0}} \sum_i |\nabla u_i|^2+ \sum_{i<j} u_i^2u_j^2 - 2 \int_{\Sigma_{0}} (\partial_x u_i)^2
\]
that is
\begin{equation}\label{eqn: identita nella pohozaev k-comp}
    \int_{\Sigma_{-r}} (\partial_x u_i)^2  = \frac12 e'(r) + C_0
\end{equation}
On the other hand, testing the equation \eqref{eqn: system k comp} in $C_{(-r,0)}$ by $(u_1,\dots,u_k)$ and summing for $i = 1,\dots, k$, we find
\[
    \begin{split}
        e(r) &\leq \int_{C_{(-r,0)}} \sum_i |\nabla u_i|^2 + 2 \sum_{i<j} u_i^2u_j^2 = \int_{\Sigma_0}\sum_i u_i \partial_{x} u_i - \int_{\Sigma_{-r}} \sum_i u_i \partial_{x} u_i  \\
        &\leq \int_{\Sigma_0}\sum_i u_i \partial_{x} u_i + \left( \int_{\Sigma_{-r} } \sum_i (\partial_x u_i)^2 \right)^\frac12 \left(\int_{\Sigma_{-r} } \sum_i u_i^2 \right)^\frac12
    \end{split}
\]
Let us assume that by contradiction that $e(r) \to +\infty$ as $r \to +\infty$. Taking the square of the previous inequality, using the boundedness of $H$ and the assumption \eqref{eqn:decay}, we have
\[
    \begin{cases}
        \frac{1}{C^2} (e(r) + C_1)^2 -2C_0 \leq e'(r) &\text{ for $r > \bar r$}\\
        e(\bar r) > 0,
    \end{cases}
\]
for some $C_0,C_1>0$ and $\bar r$ sufficiently large. Any solution to the previous differential inequality blows up in finite time, in contradiction with the fact that $(u_1,\dots,u_k) \in H^1_{\loc}(C_{(-\infty,b)})$. As a consequence $e$ is bounded and, by regularity,
\[
    \int_{C_{(-\infty,r)}} \sum_i |\nabla u_i|^2+ \sum_{i<j} u_i^2u_j^2 < +\infty \qquad \forall r<b. \qedhere
\]
\end{proof}

\begin{remark}\label{decay energy}
As a byproduct of the previous Lemma, if $(u_1,\dots, u_k)$ solves the \eqref{eqn: system k comp} in $C_{(-\infty,b)}$ and \eqref{eqn:decay} holds, then
\[
\lim_{r \to -\infty} \mathcal{E}(r) = 0.
\]
\end{remark}

Having in mind to recover the monotonicity formulae of the previous subsection in the present situation, we cannot adapt the proof of Lemma \ref{lem:pohozaev Neumann}, where assumption \eqref{eqn:Neumann assumption} played an important role. However, we can obtain a similar result with a different proof.

\begin{lemma}\label{lem:pohozaev energy}
Let $(u_1,\ldots,u_k)$ be a solution to \eqref{eqn: system R} in $C_{(-\infty,b)}$, such that \eqref{eqn:decay} holds. Then
\[
\int_{\Sigma_r} \sum_{i=k} |\nabla u_i|^2 + \sum_{1 \le i<j le k} u_i^2 u_j^2 = 2\int_{\Sigma_r} \sum_{i=1}^k (\pa_x u_i)^2
\]
for every $r <b$.
\end{lemma}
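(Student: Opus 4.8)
The statement to prove is Lemma \ref{lem:pohozaev energy}: for a solution $(u_1,\dots,u_k)$ to \eqref{eqn:system k} in $C_{(-\infty,b)}$ satisfying the decay condition \eqref{eqn:decay}, one has the pointwise Pohozaev-type identity
\[
\int_{\Sigma_r} \sum_{i=1}^k |\nabla u_i|^2 + \sum_{1 \le i<j \le k} u_i^2 u_j^2 = 2\int_{\Sigma_r} \sum_{i=1}^k (\pa_x u_i)^2 \qquad \forall r<b.
\]
By Lemma \ref{lem: Pohozaev harmonic k} we already know that the function
\[
r \mapsto \int_{\Sigma_r} \Big( \sum_i |\nabla u_i|^2 + \sum_{i<j} u_i^2 u_j^2 \Big) - 2\int_{\Sigma_r} \sum_i (\pa_x u_i)^2 =: c
\]
is constant in $(-\infty,b)$. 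So the whole lemma reduces to showing that this constant $c$ equals $0$. The plan is therefore to evaluate $c$ in the limit $r \to -\infty$ and show it vanishes there, exploiting the decay assumption.

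First I would observe that, by Lemma \ref{lem:decay to finite E} and Remark \ref{decay energy}, the energy $\mathcal{E}(r) = \int_{C_{(-\infty,r)}} \sum_i |\nabla u_i|^2 + \sum_{i<j} u_i^2 u_j^2$ is finite for every $r<b$ and tends to $0$ as $r \to -\infty$. Since $\mathcal{E}$ is monotone and finite, its derivative $\mathcal{E}'(r) = \int_{\Sigma_r} \sum_i |\nabla u_i|^2 + \sum_{i<j} u_i^2 u_j^2$ must have a sequence $r_n \to -\infty$ along which $\mathcal{E}'(r_n) \to 0$ — otherwise $\mathcal{E}'$ would be bounded below by some $\delta>0$ on a half-line and $\mathcal{E}$ could not converge. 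Along such a sequence, $\int_{\Sigma_{r_n}} \sum_i |\nabla u_i|^2 \to 0$, hence also $\int_{\Sigma_{r_n}} \sum_i (\pa_x u_i)^2 \to 0$ (this term is dominated by $|\nabla u_i|^2$), and the mixed terms $\int_{\Sigma_{r_n}} \sum_{i<j} u_i^2 u_j^2 \to 0$ as well. Evaluating the constant $c$ at $r=r_n$ and passing to the limit gives $c = 0$, which is exactly the claimed identity.

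The main (minor) obstacle is the justification of the existence of a sequence along which $\mathcal{E}'$ vanishes: one needs $\mathcal{E}' \in L^1$ near $-\infty$, which is immediate since $\int_{-\infty}^{r_0} \mathcal{E}'(s)\,\de s = \mathcal{E}(r_0) < +\infty$ by Lemma \ref{lem:decay to finite E} and Remark \ref{decay energy}; an $L^1$ nonnegative function on a half-line necessarily has $\liminf$ equal to $0$ along a sequence tending to $-\infty$. One should also check that the decomposition $c = \mathcal{E}'(r) - 2\int_{\Sigma_r}\sum_i(\pa_x u_i)^2$ is consistent with Lemma \ref{lem: Pohozaev harmonic k} (it is, by definition of $\mathcal{E}$), and that $0 \le \int_{\Sigma_r}\sum_i (\pa_x u_i)^2 \le \int_{\Sigma_r}\sum_i |\nabla u_i|^2 \le \mathcal{E}'(r)$, so both terms entering $c$ go to zero along $r_n$. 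Everything else is routine; no integration by parts beyond what Lemma \ref{lem: Pohozaev harmonic k} already supplies is needed.
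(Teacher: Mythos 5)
Your proof is correct, but it takes a genuinely different route from the paper. The paper proves the identity by the method of domain variations: it regards the solution as a critical point of the energy with respect to inner variations $u_i(x+\eps\psi(x),y)$, first for $\psi\in\mathcal{C}^1_c(-\infty,r)$, then uses the finiteness of the energy (Lemma \ref{lem:decay to finite E}) to extend the resulting identity to bounded $\mathcal{C}^1$ test functions, and finally inserts a cut-off near $\Sigma_r$ and takes $\psi\equiv 1$ to generate the boundary term. You instead note that Lemma \ref{lem: Pohozaev harmonic k} (whose proof only involves bounded subcylinders, so it applies verbatim in $C_{(-\infty,b)}$; the paper itself uses it this way inside the proof of Lemma \ref{lem:decay to finite E}) already yields that $c:=\int_{\Sigma_r}\bigl(\sum_i|\nabla u_i|^2+\sum_{i<j}u_i^2u_j^2\bigr)-2\int_{\Sigma_r}\sum_i(\pa_x u_i)^2$ is independent of $r$, and you identify $c=0$ by sending $r\to-\infty$ along a sequence on which $\mathcal{E}'(r)=\int_{\Sigma_r}\sum_i|\nabla u_i|^2+\sum_{i<j}u_i^2u_j^2$ tends to $0$; such a sequence exists because $\mathcal{E}'\ge 0$ and $\int_{-\infty}^{r_0}\mathcal{E}'(s)\,\de s=\mathcal{E}(r_0)<+\infty$ by Lemma \ref{lem:decay to finite E} and Remark \ref{decay energy}, and along it the term $\int_{\Sigma_{r_n}}\sum_i(\pa_x u_i)^2$ also vanishes since it is dominated by $\mathcal{E}'(r_n)$. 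Your argument is shorter and more elementary (no inner-variation computation or cut-off limit is needed), whereas the paper's argument re-derives the Pohozaev identity directly in the unbounded cylinder; both exploit the decay assumption \eqref{eqn:decay} only through the finiteness and decay of the energy, and there is no circularity in your use of Lemma \ref{lem:decay to finite E} and Remark \ref{decay energy}, since they precede the present lemma and do not rely on it.
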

\begin{proof}
We use the method of the variations of the domains: for $\psi \in \mathcal{C}^1_c(-\infty,r)$, we consider
\[
u_{i,\eps}(r,y)=u_i(r+\eps \psi(r),y) \qquad i=1,\ldots,k.
\]
It is possible to see $(u_{1,\eps},\ldots,u_{k,\eps})$ as a smooth variations of $(u_1,\dots,u_k)$ with compact support in $C_{(-\infty,r)}$: indeed
\[
u_i(x+\eps \psi(x),y)-u_i(x,y)= \eps \pa_x u(\xi_x,y) \psi(x),
\]
where $\xi_x \in (x,x+\eps \psi(x))$. To proceed, we explicitly remark that any solution to \eqref{eqn: system k comp} is critical for the energy functional
\[
J(v_1,\dots,v_k):= \int_{C_{(-\infty,b)}} \sum_{i=1}^k |\nabla v_i|^2 + \sum_{1 \le i<j \le j} v_i^2 v_j^2
\]
with respect to variations with compact support in $\mathcal{C}^\infty_c(C_{(-\infty,b)})$. Note that $J(u_1,\dots,u_k) = \mathcal{E}(b)$. As $(u_1,\dots,u_k)$ is a smooth solution of \eqref{eqn: system k comp} with finite energy $\mathcal{E}(r)$, it follows that
\begin{equation}\label{eqn:var1}
\begin{split}
0 & = \lim_{\eps \to 0} \frac{ \int_{C_{(-\infty,r)}} \sum_i |\nabla u_{i,\eps}|^2 + \sum_{i<j} u_{i,\eps}^2 u_{j,\eps}^2- \mathcal{E}(r)}{\eps} \\
& = \int_{C_{(-\infty,r)}}\frac{\pa}{\pa \eps}  \left. \left( \sum_i |\nabla u_i (x+\eps \psi(x),y)|^2 + \sum_{i<j} u_i^2(x+\eps \psi (x),y) u_j^2 (x+\eps \psi(x),y)\right)\right|_{\eps=0} \, \de x \de y \\
& \quad + 2\lim_{\eps \to 0} \int_{C_{(-\infty,r)}} \psi'(x) \sum_i (\pa_x u_i)^2(x+ \eps \psi (x)) \, \de x \de y \\
&= \int_{C_{(-\infty,x)}} \left(2 \sum_i (\pa_x u_{i})^2 - \left(\sum_i |\nabla u_i|^2+ \sum_{i<j} u_i^2 u_j^2\right)\right) \psi'
\end{split}
\end{equation}
for every $\psi \in \mathcal{C}_c^1(-\infty,x)$. Since $\mathcal{E}(r)<+\infty$, for every $\eps>0$ there exists a compact $K_\eps \subset C_{(-\infty,r)}$ such that
\[
\int_{C_{(-\infty,r)} \setminus K_\eps} \sum_i |\nabla u_i|^2  + \sum_{i<j} u_i^2 u_j^2 <\eps.
\]
Let $\psi \in \mathcal{C}^1(-\infty,r)$ be such that $\| \psi\|_{\mathcal{C}^1(-\infty,r)} < +\infty$ and $\psi=0$ in a neighborhood of $r$.  It is possible to write $\psi=\psi_1+\psi_2$ where $\psi_1 \in \mathcal{C}_c^1(-\infty,r)$ and $\supp \psi_2 \times (\R/k\pi\Z) \subset (C_{(-\infty,r)} \setminus K_\eps)$. Therefore, from \eqref{eqn:var1} it follows
\begin{multline*}
\int_{C_{(-\infty,r)}}    \left(2\sum_i  (\pa_x u_i)^2 - \left(\sum_i |\nabla u_i|^2 + \sum_{i<j} u_i^2 u_j^2\right)\right) \psi' \\
= \int_{C_{(-\infty,r)} \setminus K_\eps} \left(2 \sum_i (\pa_x u_i)^2  - \left(\sum_i |\nabla u|^2 + \sum_{i<j} u_i^2 u_j^2\right)\right) \psi_2' \\
 \le 3\| \psi\|_{\mathcal{C}^1(-\infty,x)} \int_{C_{(-\infty,r)} \setminus K_\eps} \left(\sum_i |\nabla u_i|^2 + \sum_{i<j} u_i^2 u_j^2\right) < C \eps.
\end{multline*}
Since $\eps$ has been arbitrarily chosen, we obtain
\begin{equation}\label{eqn:var2}
\int_{C_{(-\infty,r)}} \left(2\sum_i  (\pa_x u_i)^2 - \left(\sum_i |\nabla u_i|^2 + \sum_{i<j} u_i^2 u_j^2\right)\right) \psi'=0
\end{equation}
for every $\psi \in \mathcal{C}^1(-\infty,r)$ be such that $\| \psi\|_{\mathcal{C}^1(-\infty,r)} < +\infty$ and $\psi=0$ in a neighborhood of $r$.\\
Now, let $\psi \in \mathcal{C}^1((-\infty,r])$ be such that $\|\psi\|_{\mathcal{C}^1((-\infty,r])}<+\infty$. For a given $\eps>0$, we introduce a cut-off function $\eta \in \mathcal{C}^\infty(\R)$ such that
\[
\eta(s)= \begin{cases} 1 & \text{if $s \le r-\eps$} \\
0 & \text{if $s \ge r$}.
\end{cases}
\]
Since $\eta \psi \in \mathcal{C}^1(-\infty,r)$, $\|\eta \psi\|_{\mathcal{C}^1(-\infty,r)}<+\infty$ and $\eta \psi=0$ in a neighborhood of $r$, from \eqref{eqn:var2} we deduce
\begin{multline}\label{eqn:var3}
\int_{C_{(-\infty,r)}}   \left(2 \sum_i (\pa_x u_i)^2  - \left(\sum_i |\nabla u_i|^2 + \sum_{i<j}u_i^2 u_j^2\right)\right) \eta \psi' \\
= \int_{C_{(-\infty,r)}}   \left( \sum_i |\nabla u_i|^2 + \sum_{i<j} u_i^2 u_j^2-2 \sum_i (\pa_x u_i)^2  \right) \eta'\psi.
\end{multline}
Denoting by
\[
\gamma= \left( \sum_i |\nabla u_i|^2 + \sum_{i<j} u_i^2 u_j^2  -2 \sum_i (\pa_x u_i)^2\right)\psi,
\]
the right hand side is
\begin{multline*}
\int_0^{k\pi} \left( \int_{r-\eps}^r \eta'(x) \gamma(s,y)\,\de x \right)\, \de y = -\int_0^{k\pi} \gamma(r-\eps,y)\,\de y \\
-  \int_0^{k\pi} \left( \int_{r-\eps}^r \eta(s) \pa_x \gamma(x,y)\,\de x \right)\, \de y \\
= \int_{\Sigma_r} \left(2 \sum_i (\pa_x u_i)^2 - \left( \sum_i |\nabla u_i|^2 + \sum_{i<j} u_i^2 u_j^2\right)\right) \psi + o(1)
\end{multline*}
as $\eps \to 0$, where the last identity follows from the regularity of $(u_1,\dots,u_k)$ and from the $\mathcal{C}^1$-boundedness of $\psi$ and $\eta$. Passing to the limit as $\eps \to 0$ in the \eqref{eqn:var3}, we deduce that for every $\psi \in \mathcal{C}^1((-\infty,r])$ such that $\|\psi\|_{\mathcal{C}^1((-\infty,r])} <+\infty$ it results
\begin{multline*}
\int_{C_{(-\infty,r)}}   \left(2 \sum_i (\pa_x u_i)^2  - \left(\sum_i |\nabla u_i|^2 + \sum_{i<j}u_i^2 u_j^2\right)\right) \psi' \\
= \int_{\Sigma_r} \left(2 \sum_i (\pa_x u_i)^2 - \left( \sum_i |\nabla u_i|^2 + \sum_{i<j} u_i^2 u_j^2\right)\right) \psi.
\end{multline*}
Choosing $\psi=1$ we obtain the thesis.
\end{proof}

This result permits to prove an Almgren monotonicity formula for a solution $(u_1,\dots,u_k)$ of \eqref{eqn: system k comp} in $C_{(-\infty,b)}$ such that \eqref{eqn:decay} holds. For such a solution, let us set
\[
E^{unb}(r):= \int_{C_{(-\infty,r)}} \sum_{i=1}^k |\nabla u_i|^2 + 2 \sum_{1 \le i<j\le k} u_i^2 u_j^2,
\]
We will briefly write $E$ in the rest of the subsection. Clearly, Lemma \ref{lem:decay to finite E} and the fact that $\mathcal{E}(r) \to 0$ as $r \to -\infty$ (see Remark \ref{decay energy}) implies that
\begin{equation}\label{eqn:finite energy 2}
E(r)<+\infty \quad \forall r<b \quad \text{and} \quad \lim_{r \to -\infty} E(r) =0.
\end{equation}
By regularity, $E, \mathcal{E}$ and $H$ are smooth. A direct computation shows that $E$ and $\mathcal{E}$ are increasing in $r$. As far as $H$ is concerned, with respect to the previous subsection we cannot deduce the \eqref{eqn:der H} by means of a simple integration by parts, because we are working in an unbounded domain. However,

\begin{lemma}
Let $(u_1,\ldots,u_k)$ be a solution to \eqref{eqn: system k comp} in $C_{(-\infty,b)}$, such that \eqref{eqn:decay} holds. Then
\[
H'(r) = 2 \int_{\Sigma_r} \sum_{i=1}^k u_i \pa_\nu u_i = 2 E(r)
\]
for every $r <b$. In particular, $H$ is nondecreasing.
\end{lemma}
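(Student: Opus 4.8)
The plan is to establish the two equalities separately. The first, $H'(r)=2\int_{\Sigma_r}\sum_i u_i\pa_\nu u_i$, is immediate: writing $H(r)=\int_0^{k\pi}\sum_{i=1}^k u_i(r,y)^2\,\de y$ and using the smoothness of $(u_1,\dots,u_k)$ (guaranteed by elliptic regularity), one differentiates under the integral sign and observes that along $\Sigma_r$ the outer normal of $C_{(-\infty,r)}$ is $\pa_x$, so that $\pa_\nu u_i=\pa_x u_i$ there.

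The content is in the identity $\int_{\Sigma_r}\sum_i u_i\pa_\nu u_i=E(r)$. I would fix $r<b$, take $\rho<r$, test \eqref{eqn:system k} with $(u_1,\dots,u_k)$ on the bounded cylinder $C_{(\rho,r)}$, and sum over $i$; since $k\pi$-periodicity in $y$ removes the lateral boundary, integration by parts (together with $\sum_i u_i^2\sum_{j\neq i}u_j^2=2\sum_{i<j}u_i^2u_j^2$) yields
\[
\int_{C_{(\rho,r)}}\sum_i |\nabla u_i|^2 + 2\sum_{i<j} u_i^2 u_j^2 = \int_{\Sigma_r}\sum_i u_i\pa_\nu u_i - \int_{\Sigma_\rho}\sum_i u_i\pa_x u_i .
\]
As $\rho\to-\infty$ the left-hand side increases to $E(r)$, which is finite by Lemma \ref{lem:decay to finite E}; consequently $\lim_{\rho\to-\infty}\int_{\Sigma_\rho}\sum_i u_i\pa_x u_i$ exists. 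Showing this limit is $0$ then gives $E(r)=\int_{\Sigma_r}\sum_i u_i\pa_\nu u_i$, and combining with the first step, $H'(r)=2E(r)\ge0$, so $H$ is nondecreasing.

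To identify the boundary limit I would use Cauchy--Schwarz,
\[
\Bigl| \int_{\Sigma_\rho}\sum_i u_i\pa_x u_i \Bigr| \le H(\rho)^{1/2}\Bigl(\int_{\Sigma_\rho}\sum_i (\pa_x u_i)^2\Bigr)^{1/2},
\]
together with $H(\rho)\to0$ (assumption \eqref{eqn:decay}) and the observation that, since $\int_{C_{(-\infty,r)}}\sum_i|\nabla u_i|^2$ is finite by Lemma \ref{lem:decay to finite E}, one has $\liminf_{\rho\to-\infty}\int_{\Sigma_\rho}\sum_i(\pa_x u_i)^2=0$. This produces a sequence $\rho_n\to-\infty$ along which the right-hand side above tends to $0$; since the full limit as $\rho\to-\infty$ already exists, it must equal $0$. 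The only delicate point, and the place I expect to need care, is exactly this last step: one cannot assert that $\int_{\Sigma_\rho}\sum_i(\pa_x u_i)^2$ itself vanishes as $\rho\to-\infty$ (finiteness of the energy only yields this along a subsequence), but this is harmless precisely because convergence of the left-hand side already forces the boundary term to have a limit, which the subsequence then pins down to $0$.
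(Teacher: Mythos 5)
Your argument is correct, and it reaches the key identity $E(r)=\int_{\Sigma_r}\sum_i u_i\pa_\nu u_i$ by a route genuinely different from the paper's at the crucial step, namely the vanishing of the boundary term at $-\infty$. The paper fixes a cut-off $\eta_s$ supported in $(s-1,s)$ with $\eta_s=1$ on $\Sigma_s$, tests the equation with $u_i\eta_s$ on the unit cylinder $C_{(s-1,s)}$, and bounds $\int_{\Sigma_s}\sum_i u_i\pa_x u_i$ by $C\sum_i\|u_i\|^2_{H^1(C_{(s-1,s)})}+E(s)$; it then invokes a Poincar\'e inequality on $C_{(s-1,s)}$ together with \eqref{eqn:decay} and the fact that $E(s)\to 0$ (Remark \ref{decay energy}) to conclude that the boundary term tends to $0$ along the full limit $s\to-\infty$. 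You instead observe that the left-hand side of the Green identity on $C_{(\rho,r)}$ increases monotonically to the finite quantity $E(r)$ (Lemma \ref{lem:decay to finite E}), so the boundary term automatically has a limit, and then identify that limit as $0$ via Cauchy--Schwarz, $H(\rho)\to0$, and the fact that finiteness of $\int_{C_{(-\infty,r)}}\sum_i|\nabla u_i|^2$ forces $\liminf_{\rho\to-\infty}\int_{\Sigma_\rho}\sum_i(\pa_x u_i)^2=0$ along some sequence; you correctly flag that the subsequential information suffices only because the full limit is already known to exist, which is exactly the point that makes the trick legitimate. Your approach is more elementary in that it dispenses with the cut-off construction, the Poincar\'e inequality and the decay of the energy $E(s)\to 0$, needing only finiteness of the energy plus \eqref{eqn:decay}; what the paper's test-function argument buys in exchange is a direct, quantitative proof that $\int_{\Sigma_s}\sum_i u_i\pa_x u_i\to 0$ as a genuine limit, without any appeal to the prior existence of the limit, a form of the estimate that is closer in spirit to the variational manipulations used elsewhere in Section \ref{sub:monot finite} (e.g.\ in Lemma \ref{lem:pohozaev energy}).
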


\begin{proof}
For every $s<r <b$, the divergence theorem and the periodicity of $(u_1,\dots,u_k)$ imply that
\begin{equation}\label{eq2}
\begin{split}
E(r) &= E(s) + \int_{C_{(s,r)}}  \sum_i |\nabla u_i|^2+ 2 \sum_{i<j} u_i^2 u_j^2 \\
& = E(s) - \int_{\Sigma_s} \sum_i u_i \pa_x u_i + \int_{\Sigma_x} \sum_i u_i \pa_\nu u_i.
\end{split}
\end{equation}
We consider the second term on the right hand side. Let $\eta \in C^{\infty}_c(-1,1)$ be a non negative cut-off function, even with respect to $r = 0$, such that $\eta(0)=1$ and $\eta \le 1$ in $(-1,1)$. Let $\eta_s(x) = \eta(x-s)$; testing the equation \eqref{eqn:system k} with $u_i \eta_s$ in $C_{(s-1,s)}$, we find
\[
	\int_{C_{(s-1,s)}} \nabla u_i \cdot \nabla (u_i \eta_s) +  u_i^2 \sum_{i \neq j} u_j^2 \eta_s= \int_{\Sigma_s} u_i \partial_x u_i
\]
Summing up for $i =1,\dots,k$, we obtain
\begin{equation}\label{eq1}
\begin{split}
	\int_{\Sigma_s} \sum_i u_i \partial_x u_i  &= \int_{C_{(s-1,s)}} \sum_i \left(u_i \partial_x u_i \eta_s'+ |\nabla u_i|^2\eta_s \right) + 2\sum_{i < j} u_i^2 u_j^2 \eta_s \\
	&\leq C(\eta') \sum_i \|u_i\|^2_{H^1(C_{(s-1,s)})} + E(s),
\end{split}
\end{equation}
where the last estimate follows from the H\"older inequality. We claim that
\[
\sum_i \|u_i\|_{H^1(C_{(s-1,s)})} \to 0  \qquad \text{as $s \to -\infty$}.
\]
This is a consequence of the Poincar\'e inequality
\[
\int_{C_{(s-1,s)}} u^2 \le C \left( \int_{\Sigma_s} u^2 + \int_{C_{(s-1,s)}} |\nabla u|^2 \right) \qquad \forall u \in H^1(C_{(s-1,s)})
\]
together with assumption \eqref{eqn:decay} and the fact that $E(s) \to 0$ as $s \to -\infty$ (see \eqref{eqn:finite energy 2}). Thus, from the \eqref{eq1} we deduce that
\[
\lim_{s \to -\infty} \int_{\Sigma_s} \sum_i u_i \partial_{x} u_i = 0,
\]
which in turns can be used in the \eqref{eq2} to obtain the thesis:
\[
E(r)= \lim_{s \to -\infty} \left(E(s) -\int_{\Sigma_s} \sum_i u_i \pa_x u_i + \int_{\Sigma_x} \sum_i u_i \pa_\nu u_i\right) = \int_{\Sigma_x} \sum_i u_i \pa_\nu u_i. \qedhere
\]
\end{proof}

In light of the previous results, the proof of the following statements are straightforward modification of the proofs of Proposition \ref{prp:Almgren Neumann}, Corollary \ref{doubling} and Lemmas \ref{lem monotonicity for E} and \ref{cor estimate for N}.

\begin{proposition}\label{prp: Almgren k finite}
Let $(u_1,\ldots,u_k)$ be a solution of \eqref{eqn:system k} in $C_{(-\infty,b)}$ such that \eqref{eqn:decay} holds. The\emph{Almgren quotient}
\[
N^{unb}(r):= \frac{E^{unb}(r)}{H(r)}
\]
is well defined in $(-\infty,b)$ and nondecreasing. Moreover,
\[
\int_{-\infty}^r \frac{\int_{\Sigma_s} \sum_{i<j} u_i^2 u_j^2}{H(s)}\, \de s \le N(r).
\]
Analogously, the function $\displaystyle \mathfrak{N}^{unb}(r):= \frac{\mathcal{E}^{unb}(r)}{H(r)}$ is well defined in $(-\infty,b)$ and nondecreasing.
\end{proposition}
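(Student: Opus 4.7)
The plan is to reproduce the argument of Proposition \ref{prp:Almgren Neumann} line by line, with two substitutions. First, in the differentiation of $E$, I will apply the Pohozaev identity of Lemma \ref{lem:pohozaev energy} in place of Lemma \ref{lem:pohozaev Neumann}; the crucial feature is that this identity has no $\Sigma_a$ boundary contribution, because assumption \eqref{eqn:decay} makes the analogous left-end boundary terms vanish in the limit. Second, I will use the identity $H'(r) = 2E(r)$ just established (via the cut-off testing argument exploiting \eqref{eqn:decay} and \eqref{eqn:finite energy 2}) in place of \eqref{eqn:der H}.

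Concretely, nontriviality of $(u_1,\dots,u_k)$ and the strong maximum principle give $H>0$ on $(-\infty,b)$, so $N$ is well defined and smooth by regularity. Differentiating $E$ and using Lemma \ref{lem:pohozaev energy} yields
\[
E'(r) = \int_{\Sigma_r} \sum_i |\nabla u_i|^2 + 2 \sum_{i<j} u_i^2 u_j^2 = 2 \int_{\Sigma_r} \sum_i (\pa_x u_i)^2 + \int_{\Sigma_r} \sum_{i<j} u_i^2 u_j^2.
\]
Together with $H'(r) = 2\int_{\Sigma_r}\sum_i u_i \pa_\nu u_i$, this gives
\[
\frac{N'(r)}{N(r)} = 2\left(\frac{\int_{\Sigma_r}\sum_i(\pa_\nu u_i)^2}{\int_{\Sigma_r}\sum_i u_i\pa_\nu u_i} - \frac{\int_{\Sigma_r}\sum_i u_i\pa_\nu u_i}{\int_{\Sigma_r}\sum_i u_i^2}\right) + \frac{\int_{\Sigma_r}\sum_{i<j}u_i^2u_j^2}{E(r)} \ge \frac{\int_{\Sigma_r}\sum_{i<j}u_i^2u_j^2}{E(r)} \ge 0,
\]
the parenthesis being nonnegative by Cauchy--Schwarz (noting $\int_{\Sigma_r}\sum_i u_i\pa_\nu u_i = E(r)/2 > 0$). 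So $N$ is nondecreasing. The inequality $N'(r) \ge \frac{\int_{\Sigma_r}\sum_{i<j}u_i^2u_j^2}{H(r)}$ then follows from $E(r) \le H(r) N(r)$ after rearranging.

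For the integral estimate I will integrate this last differential inequality from $-\infty$ to $r$. Monotonicity of $N$ and $N \ge 0$ imply that the limit $N_{-\infty} := \lim_{s \to -\infty} N(s) \in [0, N(r)]$ exists, so
\[
N(r) \ge N(r) - N_{-\infty} \ge \int_{-\infty}^r \frac{\int_{\Sigma_s}\sum_{i<j}u_i^2u_j^2}{H(s)}\, \de s,
\]
which is the claimed bound (and in particular the integral converges). The monotonicity of $\mathfrak{N}^{unb} = \mathcal{E}^{unb}/H$ is obtained by the same computation, splitting $\mathcal{E}'(r) = \int_{\Sigma_r}\sum_i|\nabla u_i|^2 + \sum_{i<j} u_i^2 u_j^2$ via Lemma \ref{lem:pohozaev energy} and comparing against $H'/H$ exactly as above.

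The only real subtlety, and thus the step I expect to require the most care, is the passage to the limit as $s \to -\infty$ needed to legitimize both Lemma \ref{lem:pohozaev energy} (a boundary-free Pohozaev identity) and the integral estimate; both rely critically on \eqref{eqn:decay} together with $\lim_{s \to -\infty}\mathcal{E}(s) = 0$ of Remark \ref{decay energy}. Everything else is a mechanical transcription of the Neumann proof, simplified by the absence of $\Sigma_a$-boundary terms.
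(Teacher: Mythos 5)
Your proposal is correct and follows exactly the route the paper intends: the paper itself states that this proposition is a straightforward modification of Proposition \ref{prp:Almgren Neumann}, obtained by substituting the boundary-free Pohozaev identity of Lemma \ref{lem:pohozaev energy} for Lemma \ref{lem:pohozaev Neumann} and the identity $H'(r)=2E^{unb}(r)$ (proved via the cut-off argument under \eqref{eqn:decay}) for \eqref{eqn:der H}, which is precisely what you do, including the limit $s\to-\infty$ in the integral estimate.
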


We will briefly write $N$ and $\mathfrak{N}$ instead of $N^{unb}$ and $\mathfrak{N}^{unb}$ in the rest of this subsection.

\begin{corollary}\label{doubling finite}
Let $(u_1,\dots, u_k)$ be a solution of \eqref{eqn:system k} in $C_{(-\infty,b)}$ such that \eqref{eqn:decay} holds.
\begin{itemize}
\item[($i$)] If $N(r) \ge \underline{d}$ for $r\ge s$, then
\[
\frac{H(r_1)}{e^{2 \underline{d} r_1}} \le \frac{H(r_2)}{e^{2 \underline{d} r_2}}  \qquad \forall \ s \le r_1<r_2<b,
\]
\item[$ii$)] If $N(r) \le \overline{d}$ for $r \le t<b$, then
\[
  \frac{H(r_1)}{e^{2 \overline{d} r_1}} \ge \frac{H(r_2)}{e^{2 \overline{d}r_2}} \qquad \forall \ r_1<r_2 \le t.
\]
\end{itemize}
\end{corollary}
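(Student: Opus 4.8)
The plan is to imitate verbatim the proof of Corollary \ref{doubling}, the only difference being that the identity $H'(r)=2E(r)$ is now supplied by the Lemma immediately preceding Proposition \ref{prp: Almgren k finite} rather than by \eqref{eqn:der H}. Since $(u_1,\dots,u_k)$ is nontrivial, $H(r)>0$ for every $r<b$ (this is already implicit in the fact that $N^{unb}=E^{unb}/H$ is well defined on $(-\infty,b)$), so $\log H$ is a well-defined smooth function on all of $(-\infty,b)$, and combining $H'=2E$ with the definition $N=E/H$ gives
\[
\frac{\de}{\de r}\log H(r)=\frac{H'(r)}{H(r)}=2\,\frac{E(r)}{H(r)}=2N(r)\qquad\text{for every }r<b.
\]

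To obtain ($i$), assume $N(r)\ge\underline d$ for $r\ge s$. Then $\frac{\de}{\de r}\bigl(\log H(r)-2\underline d\,r\bigr)=2N(r)-2\underline d\ge 0$ on $[s,b)$, so $r\mapsto\log\bigl(H(r)e^{-2\underline d r}\bigr)$ is nondecreasing there; exponentiating gives $H(r_1)e^{-2\underline d r_1}\le H(r_2)e^{-2\underline d r_2}$ for all $s\le r_1<r_2<b$. Symmetrically, for ($ii$), if $N(r)\le\overline d$ for $r\le t$ then $\frac{\de}{\de r}\bigl(\log H(r)-2\overline d\,r\bigr)\le 0$ on $(-\infty,t]$, so $r\mapsto\log\bigl(H(r)e^{-2\overline d r}\bigr)$ is nonincreasing and $H(r_1)e^{-2\overline d r_1}\ge H(r_2)e^{-2\overline d r_2}$ for all $r_1<r_2\le t$; note that, in contrast with Corollary \ref{doubling}, no lower restriction $a<r_1$ is needed since the cylinder is now $C_{(-\infty,b)}$.

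There is essentially no obstacle here: the content is entirely in the two ingredients established earlier, namely the monotonicity of $N^{unb}$ (Proposition \ref{prp: Almgren k finite}) and the identity $H'=2E$ in the unbounded setting. The only point meriting a word of caution is that the integration of the differential inequality is carried out over a half-line $(-\infty,t]$ or $[s,b)$; this is harmless because $\log H$ is $\mathcal{C}^1$ on the whole interval $(-\infty,b)$ and positive there, so the sign of its derivative controls the sign of its increments on any subinterval, bounded or not.
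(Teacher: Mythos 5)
Your proof is correct and is exactly what the paper intends: the paper states that this corollary follows by a straightforward modification of the proof of Corollary \ref{doubling}, namely using the identity $H'(r)=2E(r)$ (proved in the lemma preceding Proposition \ref{prp: Almgren k finite}) to get $\frac{\de}{\de r}\log H=2N$ and then integrating, which is precisely your argument. The positivity of $H$ is automatic since the $u_i$ are positive, and the integration only ever takes place over the bounded interval $[r_1,r_2]$, so your concluding caution is harmless but unnecessary.
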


For a fixed $r_0 < b$, let us introduce
\[
	\varphi(r;r_0):= \int_{r_0}^r \frac{\de s}{H(s)^{1/4}}.
\]
The function $\varphi$ is positive and increasing in $\R^+$; thanks to point ($i$) of Corollary \ref{doubling finite} and to the monotonicity of $N$, whenever $(u,v)$ is nontrivial $\varphi$ is bounded by a quantity depending only $H(r_0)$ and $N(r_0)$:
\begin{equation}\label{eqn:estimate for phi finite}
	\varphi(r;r_0) \leq 2\frac{e^{\frac{1}{2}N(r_0)r_0}}{H(r_0)^{\frac14} N(r_0)} \left[e^{-\frac12 N(r_0) r_0}-e^{-\frac12 N(r_0)r} \right].
\end{equation}
This, together with the monotonicity of $\varphi(\cdot;r_0)$, implies that if $b =+\infty$ then there exists the limit
\[
\lim_{r \to +\infty} \varphi(r;r_0)<+\infty.
\]

\begin{lemma}\label{lem monotonicity for E finite}
Let $(u_1,\dots,u_k)$ be a solution of \eqref{eqn: system R} in $C_{(-\infty,b)}$ such that \eqref{eqn:decay} hold. Let $r_0 \in (-\infty,b)$, and assume that
\begin{equation}\label{symmetry u-v finite}
u_{i+1}(x,y)=u_i(x,y-\pi) \quad \text{and} \quad u_1\left(x,\tau+y \right)= u_1\left(x,\tau -y\right)
\end{equation}
where $\tau \in [0,k\pi)$. There exists $C>0$ such that the function $\displaystyle{r \mapsto \frac{E(r)}{e^{2r}} e^{ C \varphi(r;r_0)}}$ is nondecreasing in $r$ for $r>r_0$.
\end{lemma}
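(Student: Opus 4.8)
The plan is to mirror, almost word for word, the proof of Lemma~\ref{lem monotonicity for E}; the only substantive change is that the role played there by \eqref{eqn:der H} (which in that setting followed from a plain integration by parts) is now taken by the identity $H'(r)=2E(r)=2\int_{\Sigma_r}\sum_{i=1}^k u_i\pa_\nu u_i$ established in the Lemma immediately preceding the present statement, whose proof crucially uses the decay assumption \eqref{eqn:decay}. As a preliminary observation, since $H'=2E\ge 0$ the function $H$ is nondecreasing, so $H(r)\ge H(r_0)>0$ for every $r\in(r_0,b)$; hence $\varphi(\cdot\,;r_0)$ is finite and $r\mapsto H(r)^{-1/4}$ is bounded on $(r_0,b)$. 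Moreover $E(r)<+\infty$ on $(-\infty,b)$ by \eqref{eqn:finite energy 2} and, taking $(u_1,\dots,u_k)$ nontrivial as in Lemma~\ref{lem monotonicity for E}, $E,H>0$ on $(-\infty,b)$, so the logarithmic derivative considered below is meaningful.

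Using $E(r)=\int_{\Sigma_r}\sum_i u_i\pa_\nu u_i$ together with $|\nabla u_i|^2=(\pa_\nu u_i)^2+(\pa_y u_i)^2$ on $\Sigma_r$, one computes exactly as in \eqref{eqn: der E k}
\[
\frac{d}{dr}\log\!\left(\frac{E(r)}{e^{2r}}\right)=-2+\frac{\int_{\Sigma_r}\sum_i(\pa_\nu u_i)^2+\int_{\Sigma_r}\sum_i(\pa_y u_i)^2+2\sum_{i<j}u_i^2u_j^2}{\int_{\Sigma_r}\sum_i u_i\pa_\nu u_i}.
\]
On the slice $\Sigma_r=\{r\}\times[0,k\pi]$ I would then rescale $\tilde u_i(r,y):=u_i\!\left(r,\tfrac{k}{2}y\right)$, which is $2\pi$-periodic in $y$, and observe that the symmetry hypothesis \eqref{symmetry u-v finite} --- which is literally \eqref{symmetry u-v} --- becomes, after this rescaling, exactly the class of constraints admitted by Theorem~\ref{thm:teo 5.6 k} (the arbitrary shift $\tau$ being harmless, see Remark~\ref{rem: su teo 5.6 k}). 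Applying Theorem~\ref{thm:teo 5.6 k} with the choice $\Lambda=\int_0^{2\pi}\sum_i\tilde u_i(r,y)^2\,\de y=\tfrac{2}{k}H(r)$, exactly as in \eqref{eqn:der E k 2} and the lines following it, gives
\[
\int_{\Sigma_r}\sum_i(\pa_y u_i)^2+2\sum_{i<j}u_i^2u_j^2\;\ge\;\left(\frac{2}{k}\right)^2\mathcal{L}(k,kH(r))\int_{\Sigma_r}\sum_i u_i^2 .
\]

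Plugging this lower bound into the previous display and estimating the resulting quotient from below by Cauchy--Schwarz, $\int_{\Sigma_r}\sum_i u_i\pa_\nu u_i\le\bigl(\int_{\Sigma_r}\sum_i(\pa_\nu u_i)^2\bigr)^{1/2}\bigl(\int_{\Sigma_r}\sum_i u_i^2\bigr)^{1/2}$, followed by Young's inequality, one obtains
\[
\frac{d}{dr}\log\!\left(\frac{E(r)}{e^{2r}}\right)\;\ge\;-2+\frac{4}{k}\sqrt{\mathcal{L}(k,kH(r))}\;\ge\;-\frac{C}{H(r)^{1/4}},
\]
the last step following from the estimate $\mathcal{L}(k,\Lambda)\ge(k/2)^2-C\Lambda^{-1/4}$ of Theorem~\ref{thm:teo 5.6 k} (after enlarging $C$, if necessary, to absorb the bounded range of $H(r)$ on which that estimate is vacuous; this is legitimate since $H\ge H(r_0)$ on $(r_0,b)$, and it leaves $C$ depending only on $k$). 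Since $\varphi'(r;r_0)=H(r)^{-1/4}$, this inequality reads $\frac{d}{dr}\log\bigl(\frac{E(r)}{e^{2r}}e^{C\varphi(r;r_0)}\bigr)\ge 0$ on $(r_0,b)$, which is the asserted monotonicity; integrating it one also recovers $\frac{E(r)}{e^{2r}}e^{C\varphi(r;r_0)}\ge\frac{E(r_0)}{e^{2r_0}}$ for $r\in(r_0,b)$.

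I do not expect a genuine obstacle. The one ingredient that is not automatic --- and the only real difference from the bounded-cylinder case of Lemma~\ref{lem monotonicity for E} --- is the identity $H'=2E$ in the semi-infinite cylinder $C_{(-\infty,b)}$, and this is exactly what the Lemma just above the statement supplies, via a cut-off argument combined with the Poincar\'e inequality and the decay hypothesis \eqref{eqn:decay}; note in particular that Lemma~\ref{lem:pohozaev energy} plays no direct role here, just as Lemma~\ref{lem:pohozaev Neumann} does not enter the proof of Lemma~\ref{lem monotonicity for E}. Everything else is bookkeeping: checking that the rescaling $y\mapsto\tfrac{k}{2}y$ carries \eqref{symmetry u-v finite} into the admissible symmetry class of Theorem~\ref{thm:teo 5.6 k}, and that $C$ can be taken to depend only on $k$ --- both handled in the same way as in Lemma~\ref{lem monotonicity for E}.
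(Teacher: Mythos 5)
Your proposal is correct and matches the paper's intention exactly: the paper gives no separate proof of this lemma, stating only that it is a straightforward modification of Lemma~\ref{lem monotonicity for E} in light of the preceding results, and the one new ingredient you single out --- the identity $H'(r)=2E(r)=2\int_{\Sigma_r}\sum_i u_i\pa_\nu u_i$ supplied by the lemma proved just above via the cut-off/Poincar\'e argument under \eqref{eqn:decay} --- is precisely what replaces \eqref{eqn:der H} in the unbounded cylinder. The remaining steps (slice rescaling, Theorem~\ref{thm:teo 5.6 k} with $\Lambda=\tfrac{2}{k}H(r)$, Cauchy--Schwarz, and enlarging $C$ where the spectral estimate is vacuous) reproduce the bounded-case argument verbatim, so nothing further is needed.
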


\begin{lemma}\label{cor estimate for N finite}
Let $(u_1,\dots,u_k)$ be a nontrivial solution of \eqref{eqn:system k} in $C_{\infty}$,  and assume that \eqref{eqn:decay} and \eqref{symmetry u-v finite} hold. If $d:=\lim_{r \to +\infty} N(r)<+\infty$, then $d \ge 1$ and
\[
\lim_{r \to +\infty}\frac{E(r)}{e^{2r}}>0.
\]
\end{lemma}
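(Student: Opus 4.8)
The plan is to repeat the proof of Lemma~\ref{cor estimate for N} essentially word for word, systematically replacing the tools of subsection~\ref{sub:monot Neumann} by their finite-energy analogues from subsection~\ref{sub:monot finite}. Fix $r_0\in\R$ (here $b=+\infty$, so the condition $r_0<b$ is no restriction). By Lemma~\ref{lem monotonicity for E finite} the function $r\mapsto E(r)e^{-2r}e^{C\varphi(r;r_0)}$ is nondecreasing for $r>r_0$, while by \eqref{eqn:estimate for phi finite} together with the monotonicity of $\varphi(\cdot;r_0)$ the limit $\varphi_\infty:=\lim_{r\to+\infty}\varphi(r;r_0)$ is finite. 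Writing $E(r)e^{-2r}$ as the quotient of the above nondecreasing quantity by $e^{C\varphi(r;r_0)}\to e^{C\varphi_\infty}\in(0,+\infty)$, I conclude that the limit
\[
l:=\lim_{r\to+\infty}\frac{E(r)}{e^{2r}}
\]
exists in $[0,+\infty]$.

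Next I would show $l>0$. Using Lemma~\ref{lem monotonicity for E finite} once more together with the boundedness of $\varphi(\cdot;r_0)$ on $(r_0,+\infty)$, for every $r>r_0$ one has
\[
\frac{E(r)}{e^{2r}}\ \ge\ e^{-C\varphi(r;r_0)}\,\frac{E(r_0)}{e^{2r_0}}\ \ge\ e^{-C\varphi_\infty}\,\frac{E(r_0)}{e^{2r_0}}\ >\ 0,
\]
because $E(r_0)$ is positive (the solution being positive and nontrivial) and finite (by \eqref{eqn:finite energy 2}); letting $r\to+\infty$ gives $l>0$, which already yields the second assertion of the lemma.

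Finally I would prove $d\ge 1$ by contradiction. Suppose $d=\lim_{r\to+\infty}N(r)<1$. By Proposition~\ref{prp: Almgren k finite} the Almgren quotient $N$ is nondecreasing, so $N(r)\le d$ for every $r<b=+\infty$; then point~($ii$) of Corollary~\ref{doubling finite}, applied with $\overline d=d$, shows that $r\mapsto H(r)e^{-2dr}$ is nonincreasing on $\R$. Hence $\limsup_{r\to+\infty}H(r)e^{-2dr}\le H(r_0)e^{-2dr_0}<+\infty$, and since $d<1$ this forces $H(r)e^{-2r}=\bigl(H(r)e^{-2dr}\bigr)e^{-2(1-d)r}\to 0$ as $r\to+\infty$. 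Combining this with the identity $E(r)e^{-2r}=N(r)\,H(r)e^{-2r}$ and $N(r)\to d<+\infty$ gives $0<l=d\cdot 0=0$, a contradiction; therefore $d\ge 1$.

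I do not expect any real obstacle here: all the substantive ingredients (the Pohozaev-type identity of Lemma~\ref{lem:pohozaev energy}, the identity $H'=2E$, the monotonicity of $N$, the doubling estimates of Corollary~\ref{doubling finite}, and the refined monotonicity in Lemma~\ref{lem monotonicity for E finite}) are already available in subsection~\ref{sub:monot finite}, in exact parallel with the Neumann case. The only points deserving a little care are checking that the standing hypotheses \eqref{eqn:decay} and \eqref{symmetry u-v finite} are precisely those required by the cited lemmas, and, in the contradiction step, using $b=+\infty$ so that the hypothesis of Corollary~\ref{doubling finite}($ii$) holds for arbitrarily large values of the cut-off parameter $t$.
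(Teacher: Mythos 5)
Your proposal is correct and follows exactly the route the paper intends: it transplants the proof of Lemma \ref{cor estimate for N} to the unbounded-cylinder setting, using Lemma \ref{lem monotonicity for E finite}, the bound \eqref{eqn:estimate for phi finite} on $\varphi$, the monotonicity of $N$ from Proposition \ref{prp: Almgren k finite}, and Corollary \ref{doubling finite}($ii$), which is precisely the ``straightforward modification'' the paper asserts. No gaps; the care you take about $E(r_0)\in(0,+\infty)$ via \eqref{eqn:finite energy 2} and about $b=+\infty$ in the doubling step matches what is needed.
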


\begin{remark}\label{rem:vale tutto sul beta problema}
The achievements of this section hold true for solutions to
\[
\begin{cases}
-\Delta u_i = - \beta u_i \sum_{j \neq i} u_j^2 \\
u_i>0
\end{cases}
\]
with the energy density
\[
\sum_i |\nabla u_i|^2 + 2\sum_{i<j} u_i^2 u_j^2 \quad \text{replaced by} \sum_i |\nabla u_i|^2 + 2\beta\sum_{i<j} u_i^2 u_j^2.
\]
\end{remark}

\subsection{Monotonicity formulae for harmonic functions}\label{sub:monot har finite}

Here we prove some monotonicity formulae for harmonic functions of the plane which are $2\pi$ periodic in one variable. In what follows, in the definition of $C_{(a,b)}$ and $\Sigma_r$ we mean $k=2$. The following results will come useful in section \ref{sec: blow-down}.

Firstly, it is not difficult to obtain the counterpart of Lemma \ref{lem: Pohozaev harmonic k}.

\begin{lemma}\label{lem: Pohozaev harmonic}
Let $\Psi$ be an entire harmonic function in $C_{(a,b)}$. Then the function
\[
    r \mapsto \int_{\Sigma_r} |\nabla \Psi|^2 - 2 \Psi_x^2
\]
is constant.
\end{lemma}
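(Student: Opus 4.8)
The plan is to repeat, in the harmonic setting, the same testing argument used in the proof of Lemma~\ref{lem: Pohozaev harmonic k}. Fix $a<r_1<r_2<b$ and test the equation $\Delta\Psi=0$ against the derivative $\Psi_x=\pa_x\Psi$ on the sub-cylinder $C_{(r_1,r_2)}$. Concretely, integrate $\int_{C_{(r_1,r_2)}}\Delta\Psi\,\pa_x\Psi=0$ by parts. Since $\Psi$ is $2\pi$-periodic in $y$ (so that $C_{(r_1,r_2)}$ has boundary $\Sigma_{r_1}\cup\Sigma_{r_2}$ with no lateral contribution) and smooth by elliptic regularity, all boundary terms produced are honest integrals over $\Sigma_{r_1}$ and $\Sigma_{r_2}$, with outer normal $+e_x$ on $\Sigma_{r_2}$ and $-e_x$ on $\Sigma_{r_1}$.

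The computation has two pieces. The boundary term gives
\[
\int_{C_{(r_1,r_2)}}\Delta\Psi\,\pa_x\Psi=-\int_{C_{(r_1,r_2)}}\nabla\Psi\cdot\nabla(\pa_x\Psi)+\int_{\Sigma_{r_2}}(\pa_x\Psi)^2-\int_{\Sigma_{r_1}}(\pa_x\Psi)^2 .
\]
For the volume term, commute the $x$-derivative with the gradient and use the fundamental theorem of calculus in $x$ together with the periodicity in $y$:
\[
\int_{C_{(r_1,r_2)}}\nabla\Psi\cdot\nabla(\pa_x\Psi)=\frac12\int_{C_{(r_1,r_2)}}\pa_x\bigl(|\nabla\Psi|^2\bigr)=\frac12\int_{\Sigma_{r_2}}|\nabla\Psi|^2-\frac12\int_{\Sigma_{r_1}}|\nabla\Psi|^2 .
\]
Combining the two identities with $\Delta\Psi=0$ yields
\[
\int_{\Sigma_{r_2}}|\nabla\Psi|^2-2\Psi_x^2=\int_{\Sigma_{r_1}}|\nabla\Psi|^2-2\Psi_x^2 ,
\]
and since $r_1<r_2$ in $(a,b)$ were arbitrary, the function $r\mapsto\int_{\Sigma_r}|\nabla\Psi|^2-2\Psi_x^2$ is constant, as claimed.

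I do not expect any real obstacle here: the argument is a one-line Pohozaev/Rellich identity, the only points deserving a word being the smoothness of $\Psi$ (so that the integrations by parts and the differentiation under the integral sign are legitimate) and the periodicity in $y$ (which is exactly what removes the lateral boundary terms). Both are built into the standing hypotheses, so the proof is essentially the three displayed lines above.
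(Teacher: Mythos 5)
Your proof is correct and follows exactly the route the paper takes: the paper's own proof simply says to test $-\Delta\Psi=0$ with $\Psi_x$ on $C_{(r_1,r_2)}$ and integrate by parts, as in Lemma \ref{lem: Pohozaev harmonic k}, which is precisely your computation (including the use of periodicity in $y$ to discard lateral boundary terms). Nothing to add.
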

\begin{proof}
We proceed as in the proof of Lemma \ref{lem: Pohozaev harmonic k}: for $a<r_1<r_2<b$, we test the equation $-\Delta \Psi= 0$ with $\Psi_x$ in $C_{(r_1,r_2)}$ and integrate by parts.
\end{proof}

In what follows we consider a harmonic function $\Psi$ defined in an unbounded cylinder $C_{(-\infty,b)}$, with $b \in \R$ (the choice $b=+\infty$ is admissible). We assume that
\begin{equation}\label{eqn:decay har}
H(r;\Psi):= \int_{\Sigma_r} \Psi^2 \to 0 \quad \text{as $r \to -\infty$}.
\end{equation}

\begin{lemma}\label{lem: le armoniche vanno a zero}
Let $\Psi$ be a harmonic function in $C_{(-\infty,b)}$ such that \eqref{eqn:decay har} holds true.
Then
\begin{itemize}
    \item[($i$)] for every $r \in \R$ it results
    \(
	   \displaystyle E^{unb}(r;\Psi):= \int_{C_{(-\infty,r)}} |\nabla \Psi|^2  < +\infty
    \)
    \item[($ii$)] it results
    \begin{equation}\label{eqn: pohozaev per le arminiche che decadono}
        \int_{\Sigma_{r}} |\nabla \Psi|^2 = 2 \int_{\Sigma_{r}} (\partial_x \Psi)^2
    \end{equation}
\end{itemize}
\end{lemma}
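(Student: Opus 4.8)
The plan is to prove part ($i$) by reproducing, with all nonlinear terms switched off, the scheme of Lemma \ref{lem:decay to finite E}, and then to deduce part ($ii$) cheaply from Lemma \ref{lem: Pohozaev harmonic} together with ($i$) — there is no need here for the delicate variations-of-domains argument used in Lemma \ref{lem:pohozaev energy}.

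For ($i$), I would fix $r_0 < b$ and set, for $r > 0$, $e(r) := \int_{C_{(-r+r_0,\,r_0)}} |\nabla \Psi|^2$. Since $\Psi$ is harmonic it is smooth, so $e$ is $\mathcal{C}^1$, nondecreasing, with $e'(r) = \int_{\Sigma_{-r+r_0}} |\nabla \Psi|^2$, and Lemma \ref{lem: Pohozaev harmonic} gives $\int_{\Sigma_{-r+r_0}} (\pa_x \Psi)^2 = \tfrac12 e'(r) + c$ for a fixed constant $c$ (note the left-hand side is automatically $\ge 0$). On the other hand, testing $-\Delta \Psi = 0$ against $\Psi$ on $C_{(-r+r_0,\,r_0)}$ and using the $y$-periodicity yields $e(r) = \int_{\Sigma_{r_0}} \Psi\, \pa_x \Psi - \int_{\Sigma_{-r+r_0}} \Psi\, \pa_x \Psi$, so by Cauchy--Schwarz $e(r) \le A + \big(\tfrac12 e'(r) + c\big)^{1/2} H(-r+r_0;\Psi)^{1/2}$ with $A := \int_{\Sigma_{r_0}} \Psi\, \pa_x \Psi$ a fixed real number. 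By regularity $H(\cdot;\Psi)$ is continuous, and by \eqref{eqn:decay har} it tends to $0$ at $-\infty$, hence it is bounded, say $H(\cdot;\Psi) \le M$ on $(-\infty, r_0]$. Now argue by contradiction: if $e(r) \to +\infty$, then for $r$ large $e(r) - A > 0$ and, squaring, $e'(r) \ge \tfrac2M (e(r)-A)^2 - 2c \ge \tfrac1M (e(r)-A)^2$; such a differential inequality forces $e$ to blow up in finite time $r^*$, which is absurd since $C_{(-r^*+r_0,\,r_0)}$ is a bounded cylinder on which $\Psi$ is smooth, so $e(r^*) < +\infty$. Therefore $e$ is bounded, and letting $r \to +\infty$ gives $E^{unb}(r_0;\Psi) < +\infty$; as $r_0 < b$ was arbitrary, ($i$) follows. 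As a byproduct, since $E^{unb}(r;\Psi) = \int_{-\infty}^r \big(\int_{\Sigma_s}|\nabla\Psi|^2\big)\,\de s$, we also get $E^{unb}(r;\Psi) \to 0$ as $r \to -\infty$.

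For ($ii$), recall from Lemma \ref{lem: Pohozaev harmonic} that $r \mapsto \int_{\Sigma_r}|\nabla\Psi|^2 - 2(\pa_x\Psi)^2 =: C_0$ is constant on $(-\infty,b)$; it only remains to show $C_0 = 0$. By ($i$), $\int_{-\infty}^{r_0}\big(\int_{\Sigma_r}|\nabla\Psi|^2\big)\,\de r = E^{unb}(r_0;\Psi) < +\infty$, so there is a sequence $r_j \to -\infty$ along which $\int_{\Sigma_{r_j}}|\nabla\Psi|^2 \to 0$; since $0 \le \int_{\Sigma_{r_j}}(\pa_x\Psi)^2 \le \int_{\Sigma_{r_j}}|\nabla\Psi|^2$, both terms of $C_0 = \int_{\Sigma_{r_j}}|\nabla\Psi|^2 - 2\int_{\Sigma_{r_j}}(\pa_x\Psi)^2$ vanish along $r_j$, forcing $C_0 = 0$, which is exactly \eqref{eqn: pohozaev per le arminiche che decadono}.

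The only point requiring a little care is the contradiction step in ($i$): one must check that the fixed constants $A$ and $c$ (either sign) are absorbed once $e(r)$ is large, so that the quadratic lower bound on $e'(r)$ — and hence the finite-time blow-up — genuinely kicks in; this is handled exactly as in Lemma \ref{lem:decay to finite E}. Everything else is a routine transcription. I would add a remark that ($ii$) also admits a one-line proof via the Fourier expansion $\Psi(x,y) = \sum_{n\ge 1} e^{nx}\big(c_n\cos ny + d_n \sin ny\big)$ forced by \eqref{eqn:decay har}, but the argument above has the merit of running in parallel with the nonlinear Lemmas \ref{lem:decay to finite E} and \ref{lem:pohozaev energy}.
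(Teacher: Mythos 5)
Your proposal is correct. For part ($i$) you follow essentially the paper's route: the paper simply says that, in light of Lemma \ref{lem: Pohozaev harmonic}, one adapts the proof of Lemma \ref{lem:decay to finite E}, and your argument (Green's identity on $C_{(-r+r_0,r_0)}$, Cauchy--Schwarz on $\Sigma_{-r+r_0}$, boundedness of $H(\cdot;\Psi)$ from \eqref{eqn:decay har}, and the quadratic differential inequality forcing finite-time blow-up of the nondecreasing truncated energy $e$) is exactly that adaptation, with the correct care about the constants $A$ and $c$. Where you genuinely diverge is part ($ii$): the paper proceeds as in Lemma \ref{lem:pohozaev energy}, i.e.\ via the variations-of-domains argument (inner variations of the Dirichlet energy, cut-off in $x$, and passage to the boundary term), whereas you observe that Lemma \ref{lem: Pohozaev harmonic} already gives that $C_0:=\int_{\Sigma_r}|\nabla\Psi|^2-2\int_{\Sigma_r}(\partial_x\Psi)^2$ is constant on $(-\infty,b)$, and that the finiteness of $E^{unb}(r_0;\Psi)=\int_{-\infty}^{r_0}\bigl(\int_{\Sigma_s}|\nabla\Psi|^2\bigr)\de s$ from ($i$) produces a sequence $r_j\to-\infty$ along which $\int_{\Sigma_{r_j}}|\nabla\Psi|^2\to0$; since $0\le\int_{\Sigma_{r_j}}(\partial_x\Psi)^2\le\int_{\Sigma_{r_j}}|\nabla\Psi|^2$, both terms vanish along $r_j$ and $C_0=0$. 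This is sound and strictly more elementary: it avoids the delicate approximation by compactly supported variations and the cut-off limit entirely, at the price of using ($i$) as an input (which the paper's variational proof does too, through finiteness of the energy). In fact your squeeze argument would work verbatim for the nonlinear Lemma \ref{lem:pohozaev energy} as well, since the full energy density dominates $\sum_i(\partial_x u_i)^2$ there too; the paper's domain-variation proof is more machinery than is strictly needed once Lemma \ref{lem: Pohozaev harmonic} (or \ref{lem: Pohozaev harmonic k}) and the finite-energy lemma are available. Your closing Fourier-series remark is a fine sanity check but, as you say, inessential.
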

\begin{proof}
In light of Lemma \ref{lem: Pohozaev harmonic}, it is not difficult to adapt the proof of Lemma \ref{decay energy} and obtain ($i$). As far as ($ii$), we can proceed as in the proof of Lemma \ref{lem:pohozaev energy} (note that, thanks to \eqref{eqn:decay har}, it results $H'(r;\Psi) = 2E^{unb}(r;\Psi)$).
\end{proof}

\begin{proposition}\label{thm: Almgren per armoniche}
Let $\Psi$ be a nontrivial harmonic function in $C_{(-\infty,b)}$, such that \eqref{eqn:decay har} holds true. The \emph{Almgren quotient}
\[
    N^{unb}(r;\Psi) := \frac{\int_{C_{(-\infty,r)}} |\nabla \Psi|^2}{\int_{\Sigma_{r}} \Psi^2}
\]
is nondecreasing in $r$. If $N(\cdot;\Psi)$ is constant for $r$ in some non empty open interval $(r_1,r_2)$, then $N(r;\Psi)$ is constant for all $r \in \R$ and there exists a positive integer $d \in \N$ such that $N(r;\Psi)=d$; furthermore,
\[
    \Psi(x,y) = \left[C_1 \cos(d y) + C_2 \sin(d y)\right]e^{d x}
\]
for some $C_1,C_2 \in \R$.
\end{proposition}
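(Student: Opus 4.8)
The plan is to run the Almgren-type argument of Proposition~\ref{prp:Almgren Neumann} in this harmonic, decaying setting, using the ingredients already established in Lemma~\ref{lem: le armoniche vanno a zero}, and then to extract the classification from the equality case of a Cauchy--Schwarz inequality. Throughout write $E(r)=E^{unb}(r;\Psi)=\int_{C_{(-\infty,r)}}|\nabla\Psi|^2$, $H(r)=H(r;\Psi)$ and $N(r)=N^{unb}(r;\Psi)$. First I would record the basic identities: by Lemma~\ref{lem: le armoniche vanno a zero} one has $E(r)<+\infty$, the Pohozaev identity $\int_{\Sigma_r}|\nabla\Psi|^2=2\int_{\Sigma_r}(\partial_x\Psi)^2$, and $H'(r)=2E(r)$; differentiating $H$ under the integral sign also gives $H'(r)=2\int_{\Sigma_r}\Psi\,\partial_x\Psi$, so that $E(r)=\int_{\Sigma_r}\Psi\,\partial_x\Psi$. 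Since $\Psi$ is nontrivial and real-analytic, $E(r)>0$ for every $r<b$: otherwise $\Psi$ would be constant, hence $\equiv 0$ by \eqref{eqn:decay har}, on some $C_{(-\infty,r)}$, and then on all of $C_{(-\infty,b)}$. In particular $H(r)>0$, and $N$ is well defined and smooth.

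Next I would compute the logarithmic derivative of $N$. Using $E'(r)=\int_{\Sigma_r}|\nabla\Psi|^2=2\int_{\Sigma_r}(\partial_x\Psi)^2$ and $H'(r)/H(r)=2E(r)/H(r)$,
\[
\frac{N'(r)}{N(r)}=\frac{E'(r)}{E(r)}-\frac{H'(r)}{H(r)}=2\left(\frac{\int_{\Sigma_r}(\partial_x\Psi)^2}{\int_{\Sigma_r}\Psi\,\partial_x\Psi}-\frac{\int_{\Sigma_r}\Psi\,\partial_x\Psi}{\int_{\Sigma_r}\Psi^2}\right)\ge 0,
\]
the inequality being Cauchy--Schwarz, $\bigl(\int_{\Sigma_r}\Psi\,\partial_x\Psi\bigr)^2\le\bigl(\int_{\Sigma_r}\Psi^2\bigr)\bigl(\int_{\Sigma_r}(\partial_x\Psi)^2\bigr)$, combined with $\int_{\Sigma_r}\Psi\,\partial_x\Psi=E(r)>0$. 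Hence $N$ is nondecreasing.

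For the rigidity statement, assume $N\equiv d$ on a nonempty open interval $(r_1,r_2)$. Then $N'\equiv 0$ there, so equality holds in the Cauchy--Schwarz inequality above for every $r\in(r_1,r_2)$; since $\Psi(r,\cdot)\not\equiv 0$ on $\Sigma_r$ (because $H(r)>0$), this forces $\partial_x\Psi(r,\cdot)$ to be proportional to $\Psi(r,\cdot)$ on each such slice, with proportionality constant $\int_{\Sigma_r}\Psi\,\partial_x\Psi/\int_{\Sigma_r}\Psi^2=N(r)=d$. Thus $\partial_x\Psi-d\,\Psi$ vanishes on $C_{(r_1,r_2)}$; being real-analytic on the connected open cylinder $C_{(-\infty,b)}$, it vanishes identically, i.e. $\partial_x\Psi=d\,\Psi$ everywhere. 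Integrating this ODE in $x$ gives $\Psi(x,y)=g(y)e^{dx}$ with $g$ $2\pi$-periodic, and $0=\Delta\Psi=(g''+d^2g)e^{dx}$ forces $g''+d^2g=0$. The case $d=0$ is excluded, since then $g$ would be affine and $2\pi$-periodic, hence constant, making $\Psi$ constant, against \eqref{eqn:decay har} and nontriviality; so $d>0$, $g(y)=C_1\cos(dy)+C_2\sin(dy)$ for some $C_1,C_2\in\R$, and the $2\pi$-periodicity of $g$ forces $d\in\N$. Plugging this explicit $\Psi$ back into the definition of $N$ gives $N(r)=d$ for every $r<b$, so $N$ is constant everywhere, completing the proof.

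All of this is routine once Lemma~\ref{lem: le armoniche vanno a zero} is in hand; the only delicate points are the propagation of the pointwise relation $\partial_x\Psi=d\,\Psi$ from the slab $C_{(r_1,r_2)}$ to the whole cylinder, which I would base on the real-analyticity (unique continuation) of harmonic functions, and the final observation that the $2\pi$-periodicity constraint upgrades $d$ from a nonnegative real number to a positive integer.
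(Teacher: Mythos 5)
Your proof is correct and follows essentially the same route as the paper: monotonicity from the Pohozaev identity of Lemma \ref{lem: le armoniche vanno a zero} plus Cauchy--Schwarz, then rigidity from the equality case, solving $\partial_x\Psi=\lambda\Psi$, invoking unique continuation, and using $2\pi$-periodicity to force $d\in\N$. Your identification of the slice-wise proportionality constant with $N(r)=d$ (so that it is automatically independent of $r$ and positive) is a slightly more careful justification of a step the paper states without comment, but it is not a different argument.
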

\begin{proof}
The Almgren quotient is well defined, thanks to Lemma \ref{lem: le armoniche vanno a zero}. To prove its monotonicity, we compute the logarithmic derivative by means of the Pohozaev identity \eqref{eqn: pohozaev per le arminiche che decadono}
\[
    \frac{(N^{unb})'(r;\Psi)}{N^{unb}(r;\Psi)} = \frac{\int_{\Sigma_{r}} |\nabla \Psi|^2}{\int_{C_{(-\infty,r)}} |\nabla \Psi|^2}-2\frac{\int_{\Sigma_{r}} \Psi\partial_x\Psi }{ \int_{\Sigma_{r}} \Psi^2} = 2 \frac{\int_{\Sigma_{r}} |\partial_x \Psi|^2}{\int_{\Sigma_{r}} \Psi \partial_x \Psi} - 2\frac{\int_{\Sigma_{r}} \Psi\partial_x\Psi }{ \int_{\Sigma_{r}} \Psi^2} \geq 0
\]
where in the last step we used the Cauchy-Schwarz inequality.

Let us assume now that $N^{unb}(r;\Psi)$ is constant for  $r \in (r_1,r_2)$. By the previous computations it follows that necessarily
\[
    \int_{\Sigma_{r}} |\partial_x \Psi|^2 \int_{\Sigma_{r}} \Psi^2 = \left(\int_{\Sigma_{r}} \Psi\partial_x\Psi \right)^2
\]
for every $r \in (r_1,r_2)$. Again from the Cauchy-Schwarz inequality, we evince that it must be
\[
   \partial_x \Psi = \lambda \Psi \qquad \text{on $\Sigma_r$}
\]
for some constant $\lambda \in \R$ and for every $r \in (r_1,r_2)$. Solving the differential equation, we find the $\Psi$ is of the form
\[
    \Psi(x,y) = \psi(y) e^{\lambda x}.
\]
This together with the equation $\Delta \Psi = 0$ yields,
\[
    \psi'' + \lambda^2 \psi = 0 \quad \Rightarrow \quad  \Psi(x,y) = \left[a \cos(\lambda y) + b \sin(\lambda y)\right] e^{\lambda x} \qquad \forall (x,y) \in (r_1,r_2) \times \R,
\]
and $\Psi$ can be uniquely extended to $\R^2$ by the unique continuation principle for harmonic functions. Since $\Psi$ satisfies the condition \eqref{eqn:decay har} and is nontrivial, it follows that $\lambda > 0$. The proof is complete, recalling the periodicity in $y$ of the function $\Psi$ and computing its Almgren quotient.
\end{proof}

\section{Proof of Theorem \ref{main theorem 1}}\label{sec:proof1}

In this section we construct a solution to \eqref{eqn: system R} modeled on the harmonic function $\Phi(x,y)= \cosh x \sin y $.

\subsection{Existence in bounded cylinders}\label{subs: existence bdd}

For every $R>0$ we construct a solution $(u_R,v_R)$ to
\begin{subequations}\label{system in C_R}
\begin{equation}\label{eqn: system in C_R cosh}
\begin{cases}
-\Delta u=-u v^2 & \text{in $C_R$} \\
-\Delta v=-u^2 v & \text{in $C_R$} \\
u,v>0
\end{cases}
\end{equation}
(equivalently, we can consider the problem in $(-R,R) \times (0,2\pi)$ with periodic boundary condition on the sides $[-R,R] \times \{0,2\pi\}$) with Dirichlet boundary condition
\begin{equation}\label{eqn: bc system in C_R cosh}
u=\Phi^+, \quad v=\Phi^- \quad \text{on $\Sigma_R \cup \Sigma_{-R}$},
\end{equation}
\end{subequations}
and exhibiting the same symmetries of $(\Phi^+,\Phi^-)$. To be precise:

\begin{proposition}\label{existence thm in bdd cylinders}
There exists a solution $(u_R,v_R)$ to problem \eqref{eqn: system in C_R cosh} with the prescribed boundary conditions \eqref{eqn: bc system in C_R cosh}, such that
\begin{itemize}
\item[1)] $u_R(-x,y)=u_R(x,y)$ and $v_R(-x,y)=v_R(x,y)$,
\item[2)] the symmetries
\begin{align*}
v_R(x,y) = u_R(x,y-\pi) \quad  & \quad u_R(\pi- x,y)= v_R(\pi +x,y)\\
u_R\left(x, \frac{\pi}{2}+y\right)  =  u_R\left(x,\frac{\pi}{2}-y\right) \quad & \quad v_R\left(x,\frac{3}{2}\pi+y\right)= v_R\left(x,\frac{3}{2}\pi-y\right)
\end{align*}
hold,
\item[3)] $u_R-v_R \ge 0$ in $\{\Phi>0\}$ and $v_R-u_R \ge 0$ in $\{\Phi<0\}$,
\item[4)] $u_R > \Phi^+$ and $v_R > \Phi^-$.
\end{itemize}
\end{proposition}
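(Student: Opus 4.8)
The plan is to obtain $(u_R,v_R)$ as a minimizer of the natural energy over a class of $H^1$ pairs that already encodes the boundary condition \eqref{eqn: bc system in C_R cosh} and the symmetries 1)--2), and then to deduce the two pointwise properties separately: property 3) from a rearrangement that leaves the energy unchanged, and property 4) from the maximum principle by comparison with $\Phi$. Concretely, write $D^\pm:=\{\pm\Phi>0\}\cap C_R$ (so $D^+=(-R,R)\times(0,\pi)$ and $D^-=(-R,R)\times(\pi,2\pi)$ on the cylinder, with common boundary $\{\sin y=0\}$) and minimize
\[
J(u,v):=\int_{C_R}|\nabla u|^2+|\nabla v|^2+u^2v^2
\]
over the set $\mathcal{M}_R$ of pairs $(u,v)\in H^1(C_R)^2$ with trace $(\Phi^+,\Phi^-)$ on $\Sigma_{\pm R}$ and satisfying the (linear) symmetry constraints 1)--2). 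This set is non-empty because $(\Phi^+,\Phi^-)\in\mathcal{M}_R$ and $J(\Phi^+,\Phi^-)<+\infty$ (note $\Phi^+\Phi^-\equiv0$). Coercivity comes from the Poincaré inequality on the cylinder applied to $u-\Phi^+$ and $v-\Phi^-$, which have vanishing trace on $\Sigma_{\pm R}$; the Dirichlet part is weakly lower semicontinuous and the quartic term is weakly \emph{continuous} because $H^1(C_R)\hookrightarrow L^4(C_R)$ compactly in dimension two; and $\mathcal{M}_R$ is weakly closed. Hence a minimizer $(u,v)$ exists, and replacing it by $(|u|,|v|)$ (still in $\mathcal{M}_R$, same energy, since the symmetry relations and the non-negative boundary data are preserved) I may assume $u,v\ge0$.

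The next step is to check that this minimizer solves \eqref{eqn: system in C_R cosh}, is smooth and is positive. The (finite) group $G$ generated by the isometries underlying the symmetry constraints acts by isometries on $H^1(C_R)^2$, leaves $J$ invariant (the quartic term being symmetric in the two components) and preserves $\Sigma_{\pm R}$ together with the boundary data; hence averaging an arbitrary test pair $(\varphi,\psi)$ with vanishing trace on $\Sigma_{\pm R}$ over $G$ does not change $dJ(u,v)[\varphi,\psi]$, which therefore vanishes by minimality, so $(u,v)$ is a weak solution. Since $uv^2,u^2v\in L^q(C_R)$ for every $q<\infty$ (because $H^1(C_R)\hookrightarrow L^q$ in two dimensions), a standard bootstrap gives $(u,v)\in C^\infty(C_R)^2\cap C(\overline{C_R})^2$; and as $u\ge0$ solves $-\Delta u+v^2u=0$ with $v^2\ge0$ and $u\not\equiv0$ (its trace $\Phi^+$ is not identically zero), the strong maximum principle for $-\Delta+c$ with $c\ge0$ gives $u>0$ in $C_R$, and likewise $v>0$. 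This already yields a solution with properties 1), 2) and the positivity part of 4).

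For property 3) I would use the rearrangement $\hat u:=\max(u,v),\ \hat v:=\min(u,v)$ on $D^+$ and $\hat u:=\min(u,v),\ \hat v:=\max(u,v)$ on $D^-$. The symmetries force $u=v$ on $\{\sin y=0\}$ (e.g.\ $v(x,0)=u(x,-\pi)=u(x,\pi)=u(x,0)$, using $v=u(\,\cdot\,,\cdot-\pi)$ and the reflection about $y=\tfrac\pi2$), so the pieces glue to functions $\hat u,\hat v\in H^1(C_R)$; on $\Sigma_{\pm R}$ they reproduce $(\Phi^+,\Phi^-)$; and one verifies — this is the only slightly delicate point — that $(\hat u,\hat v)$ again satisfies all the symmetry constraints, i.e.\ $(\hat u,\hat v)\in\mathcal{M}_R$. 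Since $\{\hat u,\hat v\}=\{u,v\}$ pointwise, one has $\hat u^2\hat v^2=u^2v^2$ and $|\nabla\hat u|^2+|\nabla\hat v|^2=|\nabla u|^2+|\nabla v|^2$ a.e., so $J(\hat u,\hat v)=J(u,v)$ and $(\hat u,\hat v)$ is again a minimizer; running the previous paragraph on it produces a smooth positive solution, which I rename $(u_R,v_R)$ and which by construction satisfies $u_R\ge v_R$ on $D^+=\{\Phi>0\}$ and $v_R\ge u_R$ on $D^-=\{\Phi<0\}$, i.e.\ property 3).

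Finally, for property 4), set $w:=u_R-v_R$; subtracting the two equations gives $-\Delta w=u_Rv_Rw$ in $C_R$, so on $D^+$, where $w\ge0$ and $u_Rv_R>0$, $w$ is superharmonic. The function $\Phi=\cosh x\sin y$ is harmonic and positive on $D^+$, vanishes on $\{\sin y=0\}$ and equals $\cosh R\sin y$ on $\Sigma_{\pm R}\cap\overline{D^+}$; by the symmetries and the boundary data $w$ vanishes on $\{\sin y=0\}$ and equals $\cosh R\sin y$ on $\Sigma_{\pm R}\cap\overline{D^+}$ as well. Thus $w-\Phi$ is superharmonic on $D^+$ and vanishes on $\partial D^+$: the minimum principle gives $w\ge\Phi$ on $D^+$, and the strong minimum principle promotes this to $w>\Phi$ (equality at an interior point would force $w\equiv\Phi$, contradicting $-\Delta w=u_Rv_R\Phi>0=-\Delta\Phi$). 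Hence $u_R=v_R+w>\Phi=\Phi^+$ on $D^+$, while $u_R>0=\Phi^+$ elsewhere in $C_R$, so $u_R>\Phi^+$; and from $v_R(x,y)=u_R(x,y-\pi)$ together with $\Phi^-(x,y)=\Phi^+(x,y-\pi)$ one gets $v_R>\Phi^-$. I expect the main obstacle to be the third step — checking that the mass-transfer rearrangement is compatible with the entire symmetry group and glues correctly across $\{\sin y=0\}$ — while the remaining ingredients (direct method, elliptic bootstrap, symmetric criticality, and maximum/minimum principles) are routine.
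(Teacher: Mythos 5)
Your proof is correct, but it follows a genuinely different route from the paper's. The paper minimizes $J$ over a set $\mathcal{U}_R$ that builds the ordering constraint $u-v\ge 0$ on $\{\Phi\ge 0\}$ (and $u\ge 0$) directly into the admissible class; since a minimizer under such convex, non-linear constraints only satisfies a variational inequality, the authors then run the parabolic flow \eqref{parabolic system}, prove that $\mathcal{U}_R$ is positively invariant (via the parabolic maximum principle, Lemma \ref{lem:parabolic_max_principle}) and use $J$ as a Lyapunov functional to conclude that the constrained minimizer is in fact a stationary solution; property 3) is then automatic and 4) follows by comparison with $\Phi$. You instead minimize over the class defined only by the boundary data and the \emph{linear} symmetry constraints, so Palais' symmetric criticality applies directly — this is exactly the alternative the paper itself mentions in the remark after the proof, but there it is dismissed because it only yields 1)--2). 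Your extra ingredient is the energy-preserving $\max/\min$ rearrangement on $\{\Phi>0\}$ and $\{\Phi<0\}$, which recovers 3) without any parabolic machinery: it glues in $H^1$ because the symmetries force $u=v$ on $\{\sin y=0\}$, it preserves $J$ since $|\nabla\max(u,v)|^2+|\nabla\min(u,v)|^2=|\nabla u|^2+|\nabla v|^2$ a.e., and it stays in the symmetry class because the unordered pair $\{u,v\}$ is equivariant under the group (e.g.\ besides $u(x,\pi-y)=v(x,\pi+y)$ one also has $v(x,\pi-y)=u(x,-y)=u(x,\pi+y)$, and both $u$ and $v$ are symmetric about $y=\pi/2$, so the reflections permute the pointwise values of the pair and map $D^\pm$ to themselves); this is the verification you flag as delicate, and it does go through. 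Your treatment of 4) coincides with the paper's comparison argument on $D^+$. What each approach buys: yours is more elementary (direct method plus symmetric criticality plus a rearrangement, no parabolic existence/invariance theory), while the paper's flow argument imposes the ordering as a constraint from the start and so extends verbatim to situations where no energy-preserving rearrangement compatible with all the symmetries is available. One cosmetic point: the symmetry written in the statement as $u_R(\pi-x,y)=v_R(\pi+x,y)$ is a typo for $u_R(x,\pi-y)=v_R(x,\pi+y)$ (as in the definition of $\mathcal{U}_R$ and in Theorem \ref{main theorem 1}), and you correctly used the latter.
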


\begin{remark}\label{rem su Neu}
In light of the eveness of $(u_R,v_R)$ in $x$, it results
\[
\pa_x u = 0 = \pa_x v \qquad \text{on $\Sigma_0$}.
\]
As a consequence, the monotonicity formulae proved in subsection \ref{sub:monot Neumann} hold true for $(u_R,v_R)$ in the semi-cylinder $C_{(0,R)}$.
\end{remark}

In order to keep the notation as simple as possible, in what follows we will refer to a solution of \eqref{eqn: system in C_R cosh}-\eqref{eqn: bc system in C_R cosh} as to a solution of \eqref{system in C_R}.

\begin{proof}
Let
\[\mathcal{U}_R:= \left\{ (u,v) \in (H^1(C_R))^2 \left|
	\begin{array}{l}
		u=\Phi^+,\, v=\Phi^- \text{ on } \Sigma_R \cup \Sigma_{-R}, \, u \ge 0, \\
		\text{$u-v \ge 0$ in $\{\Phi \ge 0\}$,} \\
		v(x,y)=u(x,y-\pi), \, u(-x,y)=u(x,y), \\
		u(x,\pi-y)=v(x,\pi+y),  \, u\left(x,\frac{\pi}{2}+y\right)= u\left(x,\frac{\pi}{2}-y\right)
	\end{array} \right. \right\}.
\]
Note that if $(u,v) \in \mathcal{U}_R$ then $v$ is nonnegative, even in $x$ and symmetric in $y$ with respect to $\frac{3}{2}\pi$; moreover, $u-v \le 0$ in $\{\Phi<0\}$. It is immediate to check that $\mathcal{U}_R$ is weakly closed with respect to the $H^1$ topology. 
We seek solutions of \eqref{system in C_R} as minimizers of the energy functional
\[
J(u,v):= \int_{C_R}  |\nabla u|^2+ |\nabla v|^2 + u^2 v^2
\]
in $\mathcal{U}_R$. The existence of at least one minimizer is given by the direct method of the calculus of variations; for the coercivity of the functional $J$, we use the following Poincar\'e inequality:
\begin{equation}\label{Poincarè}
\int_{C_R} u^2 \le C \left( \int_{\Sigma_{-R}} u^2 + \int_{C_R} |\nabla u|^2 \right) \qquad \forall u \in H^1(C_R),
\end{equation}
where $C$ depends only on $R$. To show that a minimizer satisfies equation \eqref{system in C_R}, we consider the parabolic problem
\begin{equation}\label{parabolic system}
\begin{cases}
U_t- \Delta U=-U V^2 & \text{in $(0,+\infty) \times C_R$} \\
V_t- \Delta V= -U^2 V & \text{in $(0,+\infty) \times C_R$} \\
U=\Phi^+, \ V= \Phi^- & \text{on $(0,+\infty) \times (\Sigma_R \cup \Sigma_{-R})$} \\
\end{cases}
\end{equation}
with initial condition in $\mathcal{U}_R$. There exists a unique local solution $(U,V)$; by Lemma \ref{lem:parabolic_max_principle} if follows $U,V \ge 0$; hence, the maximum principle gives
\[
0\le U \le \sup_{C_R} \Phi^+ \quad \text{and} \quad 0\le V \le \sup_{C_R} \Phi^-.
\]
This control reveals that $(U,V)$ can be uniquely extended in the whole $(0,+\infty)$. Since
\begin{equation}\label{dissipative estimate}
\frac{\de}{\de t} J(U(t,\cdot),V(t,\cdot))=- 2\int_{C_R} \left(U_t^2+V_t^2\right) \le 0,
\end{equation}
that is, the energy is a Lyapunov functional, from the parabolic theory it follows that for every sequence $t_i \to +\infty$ there exists a subsequence $(t_j)$ such that $(U(t_j\cdot), V(t_j,\cdot))$ converges to a solution $(u,v)$ of \eqref{system in C_R}. Therefore, in order to prove that $(u_R,v_R)$ solves \eqref{system in C_R}, it is sufficient to show that there exists an initial condition in $\mathcal{U}_R$ such that the limiting profile $(u,v)$ coincides with $(u_R,v_R)$. We use the fact that
\begin{equation}\label{claim 2}
\text{$\mathcal{U}_R$ is positively invariant under the parabolic flow}.
\end{equation}
To prove this claim, we firstly note that by the symmetry of initial and boundary conditions and by the uniqueness of the solution to problem \eqref{parabolic system}, we have
\begin{equation}\label{eq4}
\begin{split}
V(t,x,y)= U(t,x,y-\pi), \quad &\quad U(t,-x,y)=U(t,x,y), \\
V(t,x,\pi + y)= U(t,x,\pi-y), \quad &\quad
 U\left(t,x,\frac{\pi}{2}+y\right)=U\left(t,x,\frac{\pi}{2}-y\right).
\end{split}
\end{equation}
This implies
\[
U(t,x,\pi)-V(t,x,\pi)=0 \qquad \forall (t,x) \in (0,+\infty) \times [-R,R].
\]
Furthermore, using the \eqref{eq4} and the periodicity of $(U,V)$
\begin{align*}
U(t,x,0)- V(t,x,0) = U(t,x,0)-V(t,x,2\pi)=0 \qquad &\forall (t,x) \in (0,+\infty) \times [-R,R] \\
U(t,x,2\pi)- V(t,x,2\pi) = U(t,x,2\pi)-V(t,x,0)=0 \qquad &\forall (t,x) \in (0,+\infty) \times [-R,R].
\end{align*}
This means that $U-V=0$ on $\{\Phi=0\}$. Let us introduce $D_R:= \{\Phi>0\} \cap C_R$. For every $(u_0,v_0) \in \mathcal{U}_R$, we have
\begin{equation}\label{system 4.5}
\begin{cases}
(U-V)_t - \Delta (U-V)= UV (U-V) & \text{in $(0,+\infty) \times D_R$} \\
U-V \ge 0 & \text{on $\{0\} \times D_R$} \\
U-V \ge 0 & \text{on $[0,+\infty) \times \pa D_R$}.
\end{cases}
\end{equation}
Lemma \ref{lem:parabolic_max_principle} implies $U-V \ge 0$ in $(0,+\infty) \times D_R$. This completes the proof of the claim.

Let us consider the equation \eqref{parabolic system} with the initial conditions $U(0,x,y)=u_R(x,y)$, $V(0,x,y)=v_R(x,y)$; let us denote $(U^R,V^R)$ the corresponding solution. On one side, by minimality,
\begin{equation*}
J(u_R,v_R) \le J(U^R(t,\cdot), V^R(t,\cdot)) \qquad \forall t \in (0,+\infty);
\end{equation*}
we point out that this comparison is possible because of \eqref{claim 2}. On the other side, by \eqref{dissipative estimate},
\begin{equation*}
J(U^R(t,\cdot), V^R(t,\cdot)) \le J(u_R,v_R) \qquad \forall t \in (0,+\infty).
\end{equation*}
We deduce that $J(U^R,V^R)$ is constant, which in turns implies (use again \eqref{dissipative estimate}),
\[
U_t^R(t,x,y) = V_t^R(t,x,y) \equiv 0 \quad \Rightarrow \quad  U^R(t,x,y) = u_R(x,y), \quad V^R(t,x,y)=v_R(x,y).
\]
By the above argument, as $(u_R,v_R)$ coincides with the asymptotic profile of a solution of the parabolic problem \eqref{parabolic system}, it solves \eqref{system in C_R}. Points 1)-3) of the thesis are satisfied due to the positive invariance of $\mathcal{U}_R$.  The strong maximum principle yields $u_R>0$ and $v_R>0$. Moreover,
\[
\begin{cases}
-\Delta(u_R-v_R-\Phi)= u_R v_R (u_R-v_R) \ge 0 & \text{in $D_R$} \\
u_R-v_R -\Phi = 0& \text{on $\pa D_R$}
\end{cases} \quad \Rightarrow \quad u_R-v_R-\Phi \ge 0  \quad \text{in $D_R$},
\]
so that by the strong maximum principle and the fact that $u_R, v_R>0$ we deduce $u_R > \Phi^+$. Analogously, $v_R > \Phi^-$.
\end{proof}

\begin{remark}
The existence of a positive solution satisfying the conditions 1)-2) of the Proposition can be proved by means of the celebrated Palais' Principle of Symmetric Criticality. To do this, it is sufficient to minimize the functional $J$ in the weakly closed set
\[\mathcal{V}_R:= \left\{ (u,v) \in (H^1(C_R))^2 \left|
	\begin{array}{l}
		u=\Phi^+,\, v=\Phi^- \text{ on } \Sigma_R \cup \Sigma_{-R},  \\
		v(x,y)=u(x,y-\pi), \, u(-x,y)=u(x,y),  \\
		u(x,\pi-y)=v(x,\pi+y), \,
		u\left(x,\frac{\pi}{2}+y\right)= u\left(x,\frac{\pi}{2}-y\right)
	\end{array} \right. \right\},
\]
and apply the maximum principle. We choose a more complicated proof since we will strongly use the pointwise estimates given by point 4).
\end{remark}

\subsection{Compactness of the family $\{(u_R,v_R)\}$}\label{sub:compat 1}

In this section we aim at proving that, up to a subsequence, the family $\{(u_R,v_R): R>1\}$ obtained in Proposition \ref{existence thm in bdd cylinders} converges, as $R \to +\infty$, to a solution $(u,v)$ of \eqref{eqn: system R} defined in the whole $C_\infty$. Then, by looking at $(u,v)$ as defined in $\R^2$ (this is possible thanks to the periodicity), we obtain a solution of \eqref{eqn: system R} satisfying the conditions 1)-5) of Theorem \ref{main theorem 1}. At a later stage, we will also obtain the estimates of points 6) and 7).

\medskip

We denote $E_R, \mathcal{E}_R, H_R, N_R$ and $\mathfrak{N}_R$ the functions $E^{sym},H, \mathcal{E}^{sym}, N^{sym}$ and $\mathfrak{N}^{sym}$ (which have been defined in subsection \ref{sub:monot Neumann}) when referred to $(u_R,v_R)$. As observed in Remark \ref{rem su Neu}, for these quantities the results of subsection \ref{sub:monot Neumann} apply.

We will obtain compactness of the sequence $(u_R,v_R)$ using some uniform-in-$R$ control on $N_R$ and $H_R$. We start with a uniform (in both $r$ and $R$) upper bound for the Almgren quotients $N_R(r)$.

\begin{lemma}\label{lem: N less 2}
There holds $N_R(r) \le 2$, for every $R>0$ and $r \in (0,R)$.
\end{lemma}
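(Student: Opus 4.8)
The statement $N_R(r) \le 2$ for all $R > 0$ and $r \in (0,R)$ should follow from comparing $(u_R, v_R)$ with the harmonic function $\Phi(x,y) = \cosh x \sin y$ that it is modeled on. Recall that by Remark \ref{rem su Neu} the monotonicity formulae of subsection \ref{sub:monot Neumann} apply to $(u_R, v_R)$ on the semi-cylinder $C_{(0,R)}$ with Neumann condition on $\Sigma_0$. Since $N_R$ is nondecreasing by Proposition \ref{prp:Almgren Neumann}, it suffices to control $N_R$ near the boundary $r = R$, i.e. to show $\lim_{r \to R^-} N_R(r) \le 2$, or more robustly to produce a competitor-based bound valid on all of $(0,R)$.

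**Main steps.** First I would observe that $\Phi$ restricted to the cylinder $C_{(0,R)}$ satisfies $\partial_x \Phi = \sinh x \sin y = 0$ on $\Sigma_0$, so $\Phi$ is an admissible object for the harmonic monotonicity formulae as well; a direct computation gives that the Almgren quotient of $\Phi$ (with energy $\int |\nabla \Phi|^2$ and trace $\int \Phi^2$) tends to $1$ as $r \to +\infty$ and is bounded above — in fact one checks $\int_{\Sigma_r} |\nabla \Phi|^2 \big/ \int_{\Sigma_r} \Phi^2 \le 2$ for a suitable normalization, coming from the fact that $\cosh^2$ and $\sinh^2$ differ by $1$. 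The key inequality to establish is then, for each fixed $r \in (0,R)$,
\[
E_R(r) \le 2 H_R(r),
\]
i.e. $\int_{C_{(0,r)}} |\nabla u_R|^2 + |\nabla v_R|^2 + 2 u_R^2 v_R^2 \le 2 \int_{\Sigma_r} u_R^2 + v_R^2$. I expect this to come from testing the equation against $(u_R, v_R)$ on $C_{(0,r)}$, which after integration by parts and using the Neumann condition on $\Sigma_0$ gives
\[
\int_{C_{(0,r)}} |\nabla u_R|^2 + |\nabla v_R|^2 + 2u_R^2 v_R^2 = \int_{\Sigma_r} u_R \partial_x u_R + v_R \partial_x v_R,
\]
and then one must bound $\int_{\Sigma_r} u_R \partial_x u_R + v_R \partial_x v_R$ by $2 \int_{\Sigma_r} u_R^2 + v_R^2$. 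For this I would use the pointwise bounds $u_R > \Phi^+$, $v_R > \Phi^-$ from point 4) of Proposition \ref{existence thm in bdd cylinders}, together with the subsolution structure: since $-\Delta u_R = -u_R v_R^2 \le 0$ and $-\Delta v_R \le 0$, the functions $u_R$ and $v_R$ are subharmonic, and comparison with the harmonic functions having the same boundary data forces $\partial_x u_R$, $\partial_x v_R$ on $\Sigma_r$ to be controlled in terms of traces of $u_R, v_R$ themselves (monotonicity of the Dirichlet-to-Neumann type operator, or a direct maximum-principle comparison of $u_R$ with the harmonic extension).

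**Expected obstacle.** The delicate point is converting the pointwise lower bounds $u_R > \Phi^+$, $v_R > \Phi^-$ and the subharmonicity into the exact constant $2$ in $\int_{\Sigma_r} u_R \partial_\nu u_R + v_R \partial_\nu v_R \le 2\int_{\Sigma_r} u_R^2 + v_R^2$. A clean route is probably to compare $(u_R,v_R)$ directly with $(\Phi^+,\Phi^-)$ via the energy: since $(u_R,v_R)$ minimizes $J$ over $\mathcal{U}_R$ and $(\Phi^+, \Phi^-)$ is itself a (non-smooth but admissible) competitor with $J(\Phi^+,\Phi^-) = \int_{C_R} |\nabla \Phi|^2$ (the cross term $(\Phi^+)^2(\Phi^-)^2$ vanishes), one gets $\mathcal{E}_R(R) \le \int_{C_R}|\nabla \Phi|^2$, and then propagating this global bound inward using the monotonicity of $N_R$ together with the explicit value of the Almgren quotient of $\Phi$ yields $N_R(r) \le N_R(R^-) \le 2$. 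I would pursue this energy-comparison argument first, as it sidesteps boundary-regularity issues; the main computation to carry out is the explicit Almgren quotient of $\Phi = \cosh x \sin y$ on $C_{(0,R)}$ and verifying it does not exceed $2$.
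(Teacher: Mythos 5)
Your final, recommended route is exactly the paper's proof: by monotonicity $N_R(r)\le N_R(R)$, and at $r=R$ one uses the minimality of $(u_R,v_R)$ in $\mathcal{U}_R$ against the competitor $(\Phi^+,\Phi^-)$ (whose interaction term vanishes), the boundary condition giving $H_R(R)=\int_{\Sigma_R}\Phi^2$, and the elementary bound $E_R(R)\le 2\mathcal{E}_R(R)$, so that $N_R(R)\le 2\int_{C_{(0,R)}}|\nabla \Phi|^2/\int_{\Sigma_R}\Phi^2=2\tanh R\le 2$. The first route you sketch (controlling $\int_{\Sigma_r}u_R\partial_\nu u_R+v_R\partial_\nu v_R$ pointwise by traces via subharmonicity or Dirichlet-to-Neumann comparison) is neither needed nor how the paper argues, but since you identify and settle on the energy-comparison argument, your proposal coincides with the paper's proof.
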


\begin{proof}
It is an easy consequence of the monotonicity of $N_R$ and of the minimality of $(u_R,v_R)$ for the functional $J$ in $\mathcal{U}_R$: noting that $J(u_R,v_R)= \mathcal{E}_R(R)$, we compute
\[
N_R(r) \le N_R(R) \le \frac{2 \mathcal{E}_R(R)}{H_R(R)} \le \frac{1}{\int_{\Sigma_R} \Phi^2} \int_{C_{(0,R)}} |\nabla \Phi|^2= 2\tanh R.
\]
We used the fact that the restriction of $(\Phi^+,\Phi^-)$ in $C_R$ is an element of $\mathcal{U}_R$ for every $R$, and the boundary condition of $(u_R,v_R)$ on $\Sigma_R$.
\end{proof}

In the proof of the following Lemma we will exploited the compactness of the local trace operator $T_{\Sigma_1}: u \in H^1(C_{(0,1)}) \mapsto u|_{\Sigma_1} \in L^2(\Sigma_1)$, see Corollary \ref{cor:local compactenss traces}.

\begin{lemma}\label{lem:H(1) limitato}
There exists $C>0$ such that $H_R(1) \le C$ for every $R>1$.
\end{lemma}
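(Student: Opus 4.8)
The goal is a uniform bound $H_R(1) \le C$ for $R>1$. The plan is to argue by contradiction: suppose along a subsequence $H_{R_n}(1) \to +\infty$. The natural object to exploit is the normalized sequence
\[
(\hat u_n, \hat v_n) := \frac{1}{\sqrt{H_{R_n}(1)}} (u_{R_n}, v_{R_n}),
\]
which satisfies $\int_{\Sigma_1} \hat u_n^2 + \hat v_n^2 = 1$, together with the energy bound that comes from Lemma \ref{lem: N less 2}: since $N_{R_n}(1) \le 2$, we have $E_{R_n}(1) \le 2 H_{R_n}(1)$, so $(\hat u_n, \hat v_n)$ is bounded in $H^1(C_{(0,1)})$ (the competing-quartic term in $E$ is also controlled). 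By the compactness of the local trace operator $T_{\Sigma_1}$ (Corollary \ref{cor:local compactenss traces}), up to a subsequence $\hat u_n \to \hat u$, $\hat v_n \to \hat v$ weakly in $H^1(C_{(0,1)})$ and strongly in $L^2(\Sigma_1)$, so the limit still satisfies the normalization $\int_{\Sigma_1} \hat u^2 + \hat v^2 = 1$; in particular $(\hat u, \hat v)$ is nontrivial.

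The second ingredient is that the nonlinear term disappears in the limit: $(\hat u_n, \hat v_n)$ solves $-\Delta \hat u_n = -H_{R_n}(1)\, \hat u_n \hat v_n^2$ and similarly for $\hat v_n$. Since $H_{R_n}(1) \to \infty$, the standard segregation/blow-up analysis for competitive systems (as in the references cited in the introduction, e.g. \cite{NoTaTeVe, BeTeWaWe}) forces $\hat u \hat v \equiv 0$, and each of $\hat u, \hat v$ is harmonic on its positivity set. Moreover the pointwise bound $u_{R_n} > \Phi^+$, $v_{R_n} > \Phi^-$ from point 4) of Proposition \ref{existence thm in bdd cylinders}, once divided by $\sqrt{H_{R_n}(1)} \to \infty$, contributes nothing — so this is consistent, but we will instead use an upper bound to get the contradiction.

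To close the argument I would combine two opposing estimates on $H_{R_n}(1)$. For the lower control one uses the doubling inequality: from Corollary \ref{doubling}(ii) with $\overline{d} = 2$ (valid since $N_{R_n} \le 2$ on $(0,R_n)$), $H_{R_n}(r)/e^{4r}$ is nonincreasing on $(0,R_n]$, which together with the boundary datum $H_{R_n}(R_n) = \int_{\Sigma_{R_n}} \Phi^2$ gives an upper bound $H_{R_n}(1) \le e^{4(1-R_n)} \int_{\Sigma_{R_n}} \Phi^2$. Since $\int_{\Sigma_R} \Phi^2 = \pi \cosh^2 R + (\text{const})$ grows like $e^{2R}/4$, the right side behaves like a constant times $e^{4 - 2R_n}$, which tends to $0$ — in fact this already shows $H_R(1)$ is bounded directly, without contradiction. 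So the cleanest route is: use $N_R \le 2$ on $(0,R)$, apply Corollary \ref{doubling}(ii) with $\overline d = 2$, $r_1 = 1$, $r_2 = R$, to get
\[
H_R(1) \le e^{2\cdot 2\cdot 1} \frac{H_R(R)}{e^{2 \cdot 2 \cdot R}} = e^{4(1-R)} \int_{\Sigma_R} \Phi^2,
\]
and then observe $\int_{\Sigma_R}\Phi^2 = \int_0^{2\pi}\cosh^2 R\, \sin^2 y \, dy = \pi \cosh^2 R \le \pi e^{2R}$, so $H_R(1) \le \pi e^{4}\, e^{-2R} \cdot e^{2} \le C$ uniformly for $R > 1$. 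I expect the main (minor) subtlety to be bookkeeping the exact constants in $\int_{\Sigma_R}\Phi^2$ and making sure the hypotheses of Corollary \ref{doubling}(ii) — namely $N_R(r) \le \overline d$ for all $r \le R$ — are met, which is exactly Lemma \ref{lem: N less 2}. The compactness/segregation machinery and the trace operator, though mentioned in the excerpt's setup, are in fact not needed for this particular lemma once one uses the doubling estimate together with the explicit boundary datum.
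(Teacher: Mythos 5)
Your ``clean route'' fails because you have applied Corollary \ref{doubling}($ii$) in the wrong direction. That corollary says that if $N_R(r)\le \overline d$ up to $t$, then $H_R(r_1)/e^{2\overline d r_1}\ \ge\ H_R(r_2)/e^{2\overline d r_2}$ for $r_1<r_2\le t$ (an upper bound on the Almgren quotient bounds the \emph{growth} of $\log H_R$, since $\frac{\de}{\de r}\log H_R=2N_R\le 2\overline d$). Hence with $\overline d=2$, $r_1=1$, $r_2\to R$ one gets
\[
H_R(1)\ \ge\ e^{4(1-R)}\,H_R(R)\ =\ e^{4(1-R)}\,\pi\cosh^2 R,
\]
i.e.\ only a (vacuous) lower bound of order $e^{-2R}$, not the upper bound $H_R(1)\le e^{4(1-R)}H_R(R)$ that you wrote. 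Indeed your conclusion $H_R(1)\le \pi e^{4}e^{-2R}\to 0$ is manifestly false: by point 4) of Proposition \ref{existence thm in bdd cylinders}, $u_R>\Phi^+$ and $v_R>\Phi^-$, so $H_R(1)\ge \int_{\Sigma_1}\Phi^2=\pi\cosh^2 1>0$ for every $R$. To convert knowledge of $H_R(R)$ into an upper bound at $r=1$ you would need Corollary \ref{doubling}($i$), i.e.\ a uniform \emph{lower} bound $N_R(r)\ge \underline d\ge 1$ on $(1,R)$ — and no such bound is available a priori; establishing a quantitative nondegeneracy of $N_{R}(1)$ is exactly the hard point of this lemma.

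Your first paragraph (contradiction, normalization by $\sqrt{H_{R_n}(1)}$, trace compactness, segregation of the limit) is in fact the paper's route, but you abandon it before extracting a contradiction, and the contradiction is not ``$\hat u\hat v\equiv0$'' by itself. The paper's argument is: the nontrivial segregated limit yields a uniform lower bound $N_{R_n}(1)\ge \tfrac12 m>0$; then, since $H_{R_n}(1)\to\infty$, the weight $\varphi_{R_n}(\cdot;1)$ in \eqref{eqn:estimate for phi} stays bounded, so Lemma \ref{lem monotonicity for E} together with the minimality of $(u_{R_n},v_{R_n})$ (comparison with $(\Phi^+,\Phi^-)$, giving $E_{R_n}(R_n)\le C e^{2R_n}$) transfers to a uniform bound $E_{R_n}(1)\le C$; hence $N_{R_n}(1)=E_{R_n}(1)/H_{R_n}(1)\to 0$, contradicting the lower bound. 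As proposed, your proof has a genuine gap: the only mechanism you offer for the upper bound rests on the reversed doubling inequality, and the compactness/segregation machinery you declare unnecessary is precisely what is needed.
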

\begin{proof}
By contradiction, assume that $H_{R_n}(1) \to +\infty$ for a sequence $R_n \to +\infty$. 
Let us introduce the sequence of scaled functions
\[
	(\hat u_n(x,y),\hat v_n(x,y)) := \frac{1}{\sqrt{H_{R_n}(1)}}\left(u_{R_n}(x,y),v_{R_n}(x,y)\right).
\]
We wish to prove a convergence result for such a sequence, in order to obtain a uniform lower bound for $N_{R_n}(1)$. In a natural way, the scaling leads us to consider, for $r \in (0,1)$, the quantities
\begin{gather*}
    \hat E_n(r) := \int_{C_{(0,r)}}  |\nabla \hat u_{n}|^2 +|\nabla \hat v_n|^2+ 2 H_{R_n}(1) \hat u_{n}^2\hat v_{n}^2,\\
    \hat H_n(r) := \int_{\Sigma_{r}}  \hat u_{n}^2+ \hat v_n^2, \quad \hat N_n(r) :=  \frac{\hat E_n(r)}{\hat H_n(r)}.
\end{gather*}
By construction, it holds $\hat H_n(1) = 1$ and $\hat N_n(r) = N_{R_n}(r) \leq 2$; therefore, thanks to Lemma \ref{lem: N less 2}
\begin{equation}\label{eqn: uniform bound E_n}
        \int_{C_{(0,1)}} |\nabla \hat u_{n}|^2+|\nabla \hat v_{n}|^2  \leq \hat E_n(1) = \hat N_n(1) \hat H_n(1) \leq 2 \qquad \forall r \in (0,1],
\end{equation}
which gives a uniform bound in the $H^1(C_{(0,1)})$ norm of the sequence $(\hat u_n,\hat v_n)$ (we can use a Poincar\'e inequality of type \eqref{Poincarè}). Then, we can extract a subsequence which converges weakly in $H^1(C_{(0,1)})$ to some limiting profile $(\hat u, \hat v)$, which is nontrivial in light of the compactness of the local trace operator $T_{\Sigma_1}$ and of the fact that $\hat H_n(1)=1$. Since the set of the restrictions to $C_{(0,1)}$ of functions of $\mathcal{U}_R$ is closed in the weak $H^1(C_{(0,1)})$ topology, $\hat u$ and $\hat v$ are nonnegative functions with the same symmetries of $(u_R,v_R)$; moreover we can show that $(\hat u, \hat v)$ satisfies the segregation condition $\hat u \hat v =0$ a.e. in $C_{(0,1)}$. Indeed, by the compactness of the Sobolev embedding $H^1(C_{(0,1)}) \hookrightarrow L^4(C_{(0,1)})$ we deduce that the interaction term
\[
	I(u,v) :=\int_{C_{(0,1)}} u^2 v^2
\]
is continuous in the weak topology of $(H^1(C_{(0,1)}))^2$. From the estimate \eqref{eqn: uniform bound E_n}, we infer
\[
	2 H_{R_n}(1) I( \hat u_n, \hat v_n) \leq \hat E_n(1) \leq 2;
\]
passing to the limit as $n \to +\infty$, we conclude
\[
	I(\hat u, \hat v) = \lim_{n \to \infty} I(\hat u_n, \hat v_n) = 0 \quad \Rightarrow \quad  \hat u \hat v =0 \text{ a.e. in } C_{(0,1)}.
\]
Moreover, from the compactness of the local trace operator $T_{\Sigma_1}$, we also deduce $\int_{\Sigma_1} \hat u^2 + \hat v^2 = 1$. Let us consider the functional
\[
	J^{\infty}(u,v) := \int_{C_{(0,1)}} |\nabla u|^2 + |\nabla v|^2,
\]
defined in the set
\[
	\mathcal{M}:= \left\{ (u,v) \in (H^1(C_{(0,1)}))^2 \left|
	\begin{array}{l}
		\int_{\Sigma_1} u^2+v^2 = 1, \\
		v(x,y)=u(x,y-\pi), \ u v = 0 \text{ a.e. in } C_1
	\end{array} \right.\right\}.
\]
Due to the compactness of the trace operator, one can check that $\mathcal{M}$ is closed in the weak $(H^1(C_{(0,1)}))^2$ topology. It is clear that $(\hat u,\hat v) \in \mathcal{M}$. We claim that
\[
	\inf_{(u,v) \in \mathcal{M}}  J^{\infty}(u,v) =: m > 0.
\]
Indeed, let us assume by contradiction that the infimum is 0: since the set $\mathcal{M}$ is weakly closed and $J^\infty$ is weakly lower semi-continuous and coercive, there exists $(\bar u, \bar v)$ such that	$J^{\infty}(\bar u,\bar v) = 0$. It follows that $(\bar u,\bar v)$ is a vector of constant functions; the symmetry and the segregation condition imply that $(\bar u,\bar v) \equiv(0,0)$, but this is in contrast with the fact that $(\bar u,\bar v) \in \mathcal{M}$. Thus, the weak convergence of the sequence $(\hat u_n,\hat v_n)$ entails
\[
	\liminf_{n \to \infty} \hat N_n(1) \ge \liminf_{n\to \infty} \int_{C_{(0,1)}} |\nabla \hat u_{n}|^2+ |\nabla \hat v_n|^2 \geq m > 0,
\]
so that whenever $n$ is sufficiently large
\begin{equation}\label{eqn:lower_estimate_N_n}
	N_{R_n}(1) = \hat N_n(1) \geq \frac12 m
\end{equation}
Thanks to Lemma \ref{lem: N less 2} we know that $\frac12 m \le N_{R_n}(1) \le 2$, and from the assumption $H_{R_n}(1) \to +\infty$ we deduce that (recall the \eqref{eqn:estimate for phi})
\begin{align*}
	\varphi_{R_n}(r;1): & = \int_1^r \frac{\de s}{H_{R_n}(s)^{1/4}} \\
	& \leq 2\frac{e^{\frac12 N_{R_n}(1)}}{H_{R_n}(1)^{\frac14} N_{R_n}(1)} \left[e^{-\frac12 N_{R_n}(1)}-e^{-\frac12 N_{R_n}(1)r} \right] \to 0
\end{align*}
as $n \to \infty$, for every $r>1$. In particular, there exists $C>0$ such that
\begin{equation}\label{fi bounded}
\varphi_{R_n}(r;1) \le C \qquad \forall 1 \le r \le R_n, \ \forall n.
\end{equation}
This implies that the sequence $(E_{R_n}(1))_n$ is bounded. To see this, we firstly note that $(u_{R_n},v_{R_n})$ satisfies the symmetry condition \eqref{symmetry u-v} which is necessary to apply Lemma \ref{lem monotonicity for E}; consequently, the variational characterization of $(u_{R_n},v_{R_n})$ (see also the proof of Lemma \ref{lem: N less 2} and the \eqref{fi bounded}) implies that
\begin{align*}
\frac{E_{R_n}(1)}{e^{2}} & \le e^{C\varphi_{R_n}(R_n;1)} \frac{E_{R_n}(R_n)}{e^{2R_n}} \le 2C \frac{\mathcal{E}_{R_n}(R_n)}{e^{2{R_n}}} \\
& \le C\frac{\int_{C_{(0,R_n)}}|\nabla \Phi|^2}{e^{2R_n}}= C \frac{\sinh R_n \cosh R_n  }{e^{2R_n}} \le C,
\end{align*}
where $C$ does not depend on $n$. Since $(E_{R_n}(1))_n$ is bounded and $(H_{R_n}(1))_n$ tends to infinity, we obtain
\[
	\lim_{n \to \infty} N_{R_n}(1) = \lim_{n \to \infty} \frac{E_{R_n}(1)}{H_{R_n}(1)} = 0,
\]
in contradiction with the \eqref{eqn:lower_estimate_N_n}
\end{proof}

\begin{proposition}\label{compactenss}
There exists a subsequence of $(u_R,v_R)$ which converges in $\mathcal{C}^2_{loc}(C_\infty)$, as $R \to +\infty$, to a solution $(u,v)$ of \eqref{eqn: system R} in the whole $C_\infty$. This solution satisfies point 2)-5) of Theorem \ref{main theorem 1}, and its Almgren quotient $N$ is such that
\[
N(r) \le 2 \quad \forall r>0 \quad \text{and} \quad \lim_{r \to +\infty} N(r) \ge 1.
\]
\end{proposition}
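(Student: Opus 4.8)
\noindent The plan is to realize $(u,v)$ as a $\mathcal{C}^2_{\loc}(C_\infty)$–limit of a subsequence of $\{(u_R,v_R)\}$. Since the boundary data $\Phi^\pm$ on $\Sigma_{\pm R}$ are unbounded, there is no uniform $L^\infty$ bound, and compactness has to be extracted from uniform \emph{local} energy estimates; producing these is the heart of the matter and is exactly where the monotonicity formulae of subsection~\ref{sub:monot Neumann} enter. By Remark~\ref{rem su Neu} the even symmetry in $x$ makes the Neumann condition \eqref{eqn:Neumann assumption} hold for $(u_R,v_R)$ on $\Sigma_0$, so $N_R$, $E_R$, $H_R$ obey all the results of subsection~\ref{sub:monot Neumann}. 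Combining $N_R\le2$ on $(0,R)$ (Lemma~\ref{lem: N less 2}) with point ($ii$) of Corollary~\ref{doubling} applied with exponent $2$ gives
\[
\frac{H_R(r)}{e^{4r}}\le\frac{H_R(1)}{e^{4}}\qquad\text{for }1\le r<R,
\]
and Lemma~\ref{lem:H(1) limitato} then upgrades this to $H_R(r)\le C(r_0)$ uniformly in $R>r_0$ for $r\in[0,r_0]$ (using that $H_R$ is nondecreasing for $r\le1$). Since $E_R=N_RH_R\le2H_R$ dominates $\int_{C_{(0,r)}}(|\nabla u_R|^2+|\nabla v_R|^2)$ while $\int_{\Sigma_r}(u_R^2+v_R^2)=H_R(r)$, a Poincaré inequality on $C_{(0,r)}$ (cf.\ \eqref{Poincarè}) yields a uniform bound for $(u_R,v_R)$ in $H^1(C_{(0,r)})$, hence, by the even symmetry in $x$, in $H^1(C_r)$, for every fixed $r$.

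From this point the argument is routine. The two–dimensional embedding $H^1_{\loc}\hookrightarrow L^q_{\loc}$ (every $q<\infty$) makes the nonlinearities $u_Rv_R^2$ and $u_R^2v_R$ bounded in $L^q_{\loc}$; $L^p$ elliptic estimates followed by Schauder bootstrap give uniform $\mathcal{C}^{2,\alpha}_{\loc}(C_\infty)$ bounds, so a subsequence converges in $\mathcal{C}^2_{\loc}(C_\infty)$ to a (smooth, by further bootstrap) solution $(u,v)$ of \eqref{eqn: system R} in $C_\infty$. Passing $u_R>\Phi^+$ and $v_R>\Phi^-$ to the limit gives $u\ge\Phi^+\ge0$ and $v\ge\Phi^-\ge0$; thus $u$ is nonnegative and superharmonic ($-\Delta u=-uv^2\le0$) and positive on $\{\Phi>0\}$, so $u>0$ in $C_\infty$ by the strong maximum principle, and likewise $v>0$.

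Points 1)--3) of Proposition~\ref{existence thm in bdd cylinders} are preserved under $\mathcal{C}^2_{\loc}$ convergence, giving points 2), 3) of Theorem~\ref{main theorem 1} and the weak inequalities $u-v\ge0$ on $\{\Phi>0\}$, $v-u\ge0$ on $\{\Phi<0\}$. To obtain the strict inequalities and point 5) I would pass to the limit the stronger inequality $u_R-v_R-\Phi\ge0$ on $D_R$ established at the end of the proof of Proposition~\ref{existence thm in bdd cylinders}: the limit $w:=u-v-\Phi\ge0$ satisfies, on the set $\{\Phi>0\}\subset C_\infty$, the identity $-\Delta w=uv(u-v)=uv(w+\Phi)>0$, since $u,v>0$ and $\Phi>0$ there; a nonnegative function with strictly negative Laplacian cannot vanish at an interior point (that would be an interior minimum), and $w=0$ on $\partial\{\Phi>0\}=\{\Phi=0\}$ because the limiting symmetries force $u=v$ there. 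Hence $w>0$, i.e.\ $u-v>\Phi>0$ on $\{\Phi>0\}$ (point 4)); moreover $u>\Phi+v>\Phi^+$ on $\{\Phi>0\}$ and $u>0=\Phi^+$ on $\{\Phi\le0\}$, so $u>\Phi^+$ in $C_\infty$, and symmetrically $v>\Phi^-$ (point 5)).

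It remains to treat the Almgren quotient of $(u,v)$. Since $u,v>0$ we have $H(r)>0$, so $N$ is well defined on $(0,+\infty)$ (the Neumann condition \eqref{eqn:Neumann assumption} at $\Sigma_0$ holds by the even symmetry in $x$, so the results of subsection~\ref{sub:monot Neumann} apply to $(u,v)$ as well). The $\mathcal{C}^2_{\loc}$ convergence yields $E_R(r)\to E(r)$ and $H_R(r)\to H(r)>0$ for each fixed $r$, hence $N(r)=\lim_R N_R(r)\le2$; by Proposition~\ref{prp:Almgren Neumann} the function $N$ is nondecreasing, so $d:=\lim_{r\to+\infty}N(r)$ exists and is finite, and since $(u,v)$ is a nontrivial solution in $C_{(0,+\infty)}$ satisfying \eqref{eqn:Neumann assumption} and \eqref{symmetry u-v} (with $\tau=\pi/2$), Lemma~\ref{cor estimate for N} gives $d\ge1$. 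The one genuinely load-bearing step is the uniform-in-$R$ local energy bound of the first paragraph: it is the doubling inequality of Corollary~\ref{doubling}($ii$) that converts the scale-invariant bound $N_R\le2$, together with the single bound $H_R(1)\le C$, into a bound on $H_R$ over all bounded intervals that does not depend on $R$; everything afterwards is standard elliptic regularity and maximum-principle reasoning.
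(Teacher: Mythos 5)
Your proposal is correct and follows essentially the same route as the paper: the uniform bound $N_R\le 2$ (Lemma \ref{lem: N less 2}), the bound on $H_R(1)$ (Lemma \ref{lem:H(1) limitato}) and the doubling property of Corollary \ref{doubling} give uniform-in-$R$ local $H^1$ bounds, elliptic regularity plus a diagonal argument give $\mathcal{C}^2_{\loc}(C_\infty)$ convergence, and the Almgren bounds follow from the convergence together with Proposition \ref{prp:Almgren Neumann} and Lemma \ref{cor estimate for N}. The only cosmetic difference is in the regularity step (you use the two-dimensional Sobolev embedding and an $L^p$/Schauder bootstrap, while the paper passes through compactness of the trace operator and subharmonicity to get local $L^\infty$ bounds first), and you are more explicit than the paper in recovering the strict inequalities of points 4)--5), which is welcome.

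One slip to correct: from $-\Delta u=-uv^2\le 0$ the limit $u$ is \emph{sub}harmonic, not superharmonic, and a nonnegative subharmonic function can vanish at an interior point, so positivity does not follow as you state it. The fix is the one the paper uses for $(u_R,v_R)$: $u\ge 0$ solves $-\Delta u+v^2u=0$, an equation with nonnegative zeroth-order coefficient, so by the strong maximum principle $u$ cannot attain the interior minimum value $0$ unless $u\equiv 0$, which is excluded by $u\ge\Phi^+$; hence $u>0$, and likewise $v>0$. With this correction the rest of your argument (in particular $w:=u-v-\Phi\ge0$ with $-\Delta w=uv(w+\Phi)>0$ on $\{\Phi>0\}$, and the ensuing strict inequalities) goes through unchanged.
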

\begin{proof}
As $H_R(1)$ is bounded in $R$ and $N_R(1)\le 2$, also $E_R(1)$ is bounded in $R$. By means of a Poincar\'e inequality of type \eqref{Poincarè}, this induces a uniform-in-$R$ bound for the $H^1(C_{(0,1)})$ norm of $(u_R,v_R)$, which in turns, by the compactness of the trace operator, gives a uniform-in-$R$ bound for the $L^2(\pa C_{(0,1)})$ norm. Due to the subharmonicity of $(u_R,v_R)$, the $L^2(\pa C_{(0,1)})$ bound provides a uniform-in-$R$ bound for the $L^\infty$ norm of $(u_R,v_R)$ in every compact subset of $C_{(0,1)}$; the regularity theory for elliptic equations (see \cite{GiTr}) ensures that, up to a subsequence, $(u_R,v_R)$ converges in $\mathcal{C}^2_{loc}(C_{(0,1)})$, as $R \to +\infty$, to a solution $(u^1,v^1)$ of \eqref{eqn: system R} in $C_{(0,1)}$. As each $(u_R,v_R)$ is even in $x$, this solution can be extended by even symmetry in $x$ to $C_1$, and here satisfies the conditions 1)-4) of Proposition \ref{existence thm in bdd cylinders} (hence both $u^1$ and $v^1$ are nontrivial). The previous argument can be iterated: indeed, by Corollary \ref{doubling} and Lemma \ref{lem: N less 2}, we deduce
\[
H_R(r) \le \frac{H_R(1)}{e^{4}}e^{4 r} \le C e^{4r} \qquad \forall r>1;
\]
that is, a uniform-in-$R$ bound for $H_R(1)$ implies a uniform-in-$R$ bound for $H_R(r)$ for every $r > 1$. As a consequence we obtain, for every $r> 1$, a solution $(u^{r},v^{r})$ to equation \eqref{eqn: system R} in $C_{r}$. A diagonal selection gives the existence of a solution $(u,v)$ to \eqref{eqn: system R} in the whole $C_\infty$. This solution inherits by $(u^r,v^r)$ the conditions 1)-4) of Proposition \ref{existence thm in bdd cylinders}, and thanks to the $\mathcal{C}^2_{loc}(C_\infty)$ convergence and Lemma \ref{lem: N less 2} there holds
\[
N(r) = \frac{\int_{C_{(0,r)}} |\nabla u|^2 + |\nabla v|^2 + 2 u^2 v^2}{\int_{\Sigma_r} u^2+v^2} \le 2   \qquad \forall r >0.
\]
From Lemma \ref{cor estimate for N}, which we can apply in light of the symmetries of $(u,v)$, we conclude
\[
\lim_{r \to +\infty}  N(r) \ge 1. \qedhere
\]
\end{proof}

The following Lemma completes the proof of point 6) of Theorem \ref{main theorem 1}. After that, by means of the pointwise estimates $u>\Phi^+$ and $v > \Phi^-$ and Corollary \ref{doubling}, it is straightforward to obtain also point 7).

\begin{lemma}\label{lem: N less 1}
There holds \( \displaystyle l:=\lim_{r \to \infty} N(r) = 1.\)
\end{lemma}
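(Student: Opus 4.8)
The plan is to deduce $l=1$ from two facts: the inequality $l\ge 1$, which is already contained in Proposition~\ref{compactenss}, together with the uniform bound $0< l\le 2$ and the $\mathcal C^2_{\loc}(C_\infty)$ convergence of the approximations; and a \emph{sharp} exponential upper bound $H(r)\le C e^{2r}$, equivalently $E(r)\le Ce^{2r}$ (recall $H'=2E$ by \eqref{eqn:der H} and the evenness of $(u,v)$ in $x$, Remark~\ref{rem su Neu}). Once such a bound is available, assume for contradiction $l>1$: by monotonicity of $N$ there is $r_0$ with $N(s)\ge\frac{1+l}{2}>1$ for $s\ge r_0$, and integrating $H'(r)/H(r)=2N(r)$ gives $H(r)\ge H(r_0)e^{(1+l)(r-r_0)}$; comparing exponential rates with $H(r)\le Ce^{2r}$ as $r\to+\infty$ forces $1+l\le 2$, a contradiction. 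So the whole point is the upper bound $E(r)\le Ce^{2r}$.

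To get it I would argue at the level of the approximating solutions $(u_R,v_R)$ and then pass to the limit (on the whole cylinder there is no comparison datum to bound $E$ from above). On $C_{(0,R)}$ the pair $(u_R,v_R)$ satisfies the homogeneous Neumann condition on $\Sigma_0$ and the symmetry \eqref{symmetry u-v} with $\tau=\pi/2$ (point~2) of Proposition~\ref{existence thm in bdd cylinders}), so Lemma~\ref{lem monotonicity for E} applies with a constant $C$ independent of $R$: $r\mapsto \frac{E_R(r)}{e^{2r}}e^{C\varphi_R(r;1)}$ is nondecreasing on $(1,R)$. Evaluating this between $r$ and $R$, and bounding $E_R(R)\le 2\mathcal E_R(R)\le 2\int_{C_{(0,R)}}|\nabla\Phi|^2=2\pi\sinh R\cosh R$ via the minimality of $(u_R,v_R)$ and $(\Phi^+,\Phi^-)|_{C_R}\in\mathcal U_R$ (as in the proof of Lemma~\ref{lem: N less 2}), one obtains
\[
\frac{E_R(r)}{e^{2r}}\le e^{C(\varphi_R(R;1)-\varphi_R(r;1))}\,\frac{E_R(R)}{e^{2R}}\le e^{C\varphi_R(R;1)}\,\frac{2\pi\sinh R\cosh R}{e^{2R}}
\]
for $1<r<R$. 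The last fraction is bounded, so everything hinges on a bound for $\varphi_R(R;1)$ \emph{uniform in $R$}. This is where point~4) of Proposition~\ref{existence thm in bdd cylinders} is essential: from $u_R>\Phi^+$ and $v_R>\Phi^-$ we get $H_R(s)>\int_0^{2\pi}\cosh^2 s\,\sin^2 y\,\de y=\pi\cosh^2 s$, hence
\[
\varphi_R(R;1)=\int_1^R\frac{\de s}{H_R(s)^{1/4}}<\int_1^{+\infty}\frac{\de s}{(\pi\cosh^2 s)^{1/4}}=:C_0<+\infty ,
\]
uniformly in $R$. Combining the two displays gives $E_R(r)\le C_1 e^{2r}$ for all $1<r<R$ with $C_1$ independent of $R$; passing to the $\mathcal C^2_{\loc}(C_\infty)$ limit (Proposition~\ref{compactenss}) yields $E(r)\le C_1 e^{2r}$ for $r>1$, and integrating $H'=2E$ gives $H(r)\le C_2 e^{2r}$, which is what Step~1 of the argument needs.

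I expect the main obstacle to be exactly the uniform control of $\varphi_R(R;1)$: the Almgren bound $N_R\le 2$ alone only gives $H_R(r)\lesssim e^{4r}$ through Corollary~\ref{doubling}, which misses the target rate, and the finer monotonicity of Lemma~\ref{lem monotonicity for E} is useful only because its error factor $e^{C\varphi_R}$ stays bounded — and for that one really needs the quantitative lower bound $H_R(s)\gtrsim\cosh^2 s$ furnished by $u_R>\Phi^+$, $v_R>\Phi^-$. The remaining ingredients — the limit passage, the integration of $H'=2E$, and the final comparison of exponential rates — are routine. (As a side remark, once $l=1$ is established, monotonicity of $N$ gives $N(r)\le 1$, so Corollary~\ref{doubling}($ii$) with $\overline d=1$ together with the same estimate $H(r)>\pi\cosh^2 r$ shows that $H(r)/e^{2r}$ is nonincreasing and bounded below by a positive constant, which is precisely what point~7) of Theorem~\ref{main theorem 1} requires.)
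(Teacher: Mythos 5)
Your proof is correct, but it follows a genuinely different route from the paper's. The paper proves $l\le 1$ by splitting the Almgren quotient as $N_{R_n}(r)=\mathfrak{N}_{R_n}(r)+f_n(r)$, where $f_n(r)=\int_{C_{(0,r)}}u_n^2v_n^2/H_{R_n}(r)$; the monotonicity of $\mathfrak{N}$ (Proposition \ref{prp:Almgren Neumann}) together with the comparison with $(\Phi^+,\Phi^-)$ at $r=R_n$ gives $\mathfrak{N}_{R_n}(R_n)\le \tanh R_n\le 1$, hence $N(r)\le 1+f(r)$ in the limit, and the proof is completed by showing $f(r)\to 0$ through a differential inequality $f'+2\delta f\le g$ with $g\in L^1(\R^+)$ (the integrability of $g$ being exactly the remainder term in Proposition \ref{prp:Almgren Neumann}, and $\delta>0$ coming from $\lim N\ge 1$). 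You instead bypass the interaction term entirely: you establish the sharp uniform bound $E_R(r)\le C e^{2r}$ via Lemma \ref{lem monotonicity for E} evaluated between $r$ and $R$, with $E_R(R)$ controlled by minimality against $(\Phi^+,\Phi^-)$ and, crucially, with $\varphi_R(R;1)$ bounded uniformly in $R$ thanks to the pointwise estimate $u_R>\Phi^+$, $v_R>\Phi^-$ (point 4 of Proposition \ref{existence thm in bdd cylinders}), which gives $H_R(s)>\pi\cosh^2 s$; passing to the $\mathcal{C}^2_{\loc}$ limit and integrating $H'=2E$ yields $H(r)\le Ce^{2r}$, and a growth-rate comparison excludes $l>1$. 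Your step 2 is structurally the same device the paper uses in Lemma \ref{lem:H(1) limitato} (there the $\varphi$-bound comes from the contradiction hypothesis $H_{R_n}(1)\to\infty$ rather than from point 4), and the constant in Lemma \ref{lem monotonicity for E} is indeed universal (it comes from Theorem \ref{thm:teo 5.6 k}), as the paper itself implicitly uses. What each approach buys: yours is shorter, avoids the $f$, $g$ analysis, and delivers the two-sided bound needed for point 7) of Theorem \ref{main theorem 1} essentially for free; its drawback is that it leans on the pointwise comparison with $\Phi$, which is precisely the ingredient unavailable in the $k$-component setting of Section \ref{sec:k comp}, whereas the paper's $\mathfrak{N}+f$ argument is closer in spirit to what survives there.
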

\begin{proof}
In light of the fact that $l \ge 1$, it is sufficient to show that $ l \le 1$. Let $(u_{R_n},v_{R_n})$ be the convergent subsequence found in Proposition \ref{compactenss}, which we will simply denote $\{(u_n,v_n)\}$.
For $r>0$ we let
\[
	f_n(r) := \frac{\int_{C_{(0,r)}} u_n^2 v_n^2 }{H_{R_n}(r)}, \quad g_n(r):= \frac{\int_{\Sigma_{r}} u_n^2 v_n^2}{H_{R_n}(r)}.
\]
With $f$ and $g$ we identify the same quantities computed for the limiting profile $(u,v)$. Observe that $f_n,g_n,f$ and $g$ are continuous and nonnegative. By definition,
\begin{equation}\label{eqn: estim on f_R}
	f_n(r) \leq \frac{1}{2}N_{R_n}(r) \leq 1 \qquad \forall r > 0
\end{equation}
where we used Lemma \ref{lem: N less 2}. The uniform convergence of $(u_n,v_n)$ implies that $f_n \to f$ and $g_n \to g$ uniformly on compact intervals, while by Theorem \ref{prp:Almgren Neumann} we have
\[
	\int_0^r g_n(s) \, \mathrm{d} s \leq N_{R_n}(r) \quad \text{and} \quad \int_0^r g(s) \, \mathrm{d} s \leq N(r),
\]
so that in particular $g_n \in L^1(0,R)$ and $g \in L^1(\R^+)$. By means of the monotonicity formula for the Almgren quotient $\mathfrak{N}$, Proposition \ref{prp:Almgren Neumann}, it is possible to refine the computation in Lemma \ref{lem: N less 2}:
\[
N_{R_n}(r) = \mathfrak{N}_{R_n}(r) + f_{R_n}(r) \le \mathfrak{N}_{R_n}(R_n) + f_{R_n}(r) \le 1 + f_{n}(r).
\]
In light of the strong $H^1_{loc}(C_\infty)$ convergence of $(u_n, v_n)$ to $(u,v)$, we deduce
\[
N(r) \le 1 +\lim_{n \to +\infty} f_n(r) = 1 + f(r).
\]
We have to show that $f(r) \to 0$ as $r \to +\infty$. To prove this, we begin by computing the logarithmic derivative of $f_n$:
\[
	\frac{f_n'(r)}{f_n(r)} = \frac{\int_{\Sigma_{r}} u_n^2 v_n^2 }{ \int_{C_{(0,r)}} u_n^2 v_n^2 } - 2 \frac{E_{R_n}(r)}{H_{R_n}(r)} = \frac{g_n(r)}{f_n(r)} - 2N_{R_n}(r),
\]
where we used the fact that $H'_{R_n}(r)= 2 E_{R_n}(r)$, see the \eqref{eqn:der H}. Exploiting the strong $H^1$ convergence of the sequence $\{(u_n,v_n)\}$ and the fact that $\lim_{r \to +\infty}N(r) \ge 1$, we deduce that there exist $r_0,\delta>0$ such that $N_{R_n}(r_0) > \delta$ for every $n$ sufficiently large. Consequently, $f_n$ satisfies the inequality
\[
	f'_n(r) + 2 \delta f_n(r) \leq g_n(r) \qquad \text{for } r\in(r_0,R_n).
\]
Multiplying for $e^{2\delta r}$ and integrating in $(r_1,r_2)$ for $r_0<r_1<r_2<R_n$, we obtain
\[
	f_n(r_2) \leq e^{2\delta (r_1-r_2)} f_n(r_1) + \int_{r_1}^{r_2} g_n(s) e^{2\delta (s-r_2)} \,\mathrm{d} s \leq e^{2\delta (r_1-r_2)} + \int_{r_1}^{r_2} g_n(s) \,\mathrm{d}s,
\]
where we used the estimate \eqref{eqn: estim on f_R}. This implies
\[
	f(r_2) \leq e^{2\delta (r_1-r_2)} + \int_{r_1}^{r_2} g(s) \, \mathrm{d}s	\qquad \text{for $r_0<r_1<r_2$}.
\]
Since $g \in L^1(\R^+)$ and $f \ge 0$, choosing $r_1 = \frac12 r_2$ we find
\[
	\limsup_{r \to +\infty} f(r) = 0 = \lim_{r \to +\infty} f(r). \qedhere
\]
\end{proof}

\section{Proof of Theorem \ref{thm: main thm 2}}\label{sec:sec2}

In this section we construct a solution to \eqref{eqn: system R} modeled on the harmonic function $\Gamma(x,y)= e^x \sin y $. Our construction is based on the trivial observation that
\[
	\Phi_R(x,y) := 2\cosh (x+R) e^{-R} \sin y \to \Gamma(x,y) \quad \text{ as $R \to +\infty.$}
\]

\subsection{Existence in bounded cylinders}

As a first step, using the same line of reasoning developed in Proposition \ref{existence thm in bdd cylinders}, it is possible to show the existence of solution to the system
\begin{subequations}\label{system in C_R exp}
\begin{equation}\label{eqn: system in C_R exp}
	\begin{cases}
		-\Delta u=-u v^2 & \text{in $C_{(-3R,R)}$} \\
		-\Delta v=-u^2 v & \text{in $C_{(-3R,R)}$} \\
		u,v>0
	\end{cases}
\end{equation}
(equivalently, we can consider the problem in the rectangle $(-3R,R) \times (0,2\pi)$ with periodic boundary condition on the sides $[-3R,R] \times \{0,2\pi\}$) and such that
\begin{equation}\label{eqn: bc for system in C_R exp}
		u_R = \Phi^+_R, \quad  v_R = \Phi^-_R  \quad \text{on $\Sigma_{R} \cup \Sigma_{-3R}$}
\end{equation}
\end{subequations}
More precisely:

\begin{proposition}\label{existence thm in bdd cylinders exp}
There exists a solution $(u_R,v_R)$ to problem \eqref{eqn: system in C_R exp} with the prescribed boundary conditions \eqref{eqn: bc for system in C_R exp}, such that
\begin{itemize}
\item[1)] $u_R(-R-x,y)=u_R(-R+x,y)$ and $v_R(-R-x,y)=v_R(-R+x,y)$,
\item[2)] the symmetries
\begin{align*}
v_R(x,y) = u_R(x,y-\pi)\quad  & \quad u_R(x,\pi- y)= v_R(x,\pi +y)\\
u_R\left(x,\frac{\pi}{2}+y\right)  =  u_R\left(x,\frac{\pi}{2}-y\right) \quad & \quad v_R\left(x,\frac{3}{2}\pi + y\right)= v_R\left(x, \frac{3}{2}\pi-y\right)
\end{align*}
hold,
\item[3)] $u_R-v_R \ge 0$ in $\{\Phi_R>0\}$ and $v_R-u_R \ge 0$ in $\{\Phi_R<0\}$,
\item[4)] $u_R > (\Phi_R)^+$ and $v_R > (\Phi_R)^-$.
\end{itemize}
\end{proposition}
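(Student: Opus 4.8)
The plan is to repeat, essentially verbatim, the argument of Proposition \ref{existence thm in bdd cylinders}, replacing the cylinder $C_R$ by $C_{(-3R,R)}$, the harmonic function $\Phi$ by $\Phi_R$, and the $x$-reflection $x\mapsto -x$ by the reflection $x\mapsto -2R-x$ about the axis $\{x=-R\}$. Concretely, I would first introduce the weakly closed admissible set
\[
\mathcal{U}_R:=\left\{(u,v)\in (H^1(C_{(-3R,R)}))^2\left|
\begin{array}{l}
u=\Phi_R^+,\ v=\Phi_R^- \text{ on }\Sigma_R\cup\Sigma_{-3R},\ u\ge 0,\\
u-v\ge 0 \text{ in }\{\Phi_R\ge 0\},\\
v(x,y)=u(x,y-\pi),\ u(-R-x,y)=u(-R+x,y),\\
u(x,\pi-y)=v(x,\pi+y),\ u\left(x,\frac{\pi}{2}+y\right)=u\left(x,\frac{\pi}{2}-y\right)
\end{array}\right.\right\},
\]
and minimize $J(u,v)=\int_{C_{(-3R,R)}}|\nabla u|^2+|\nabla v|^2+u^2v^2$ over it by the direct method, the coercivity of $J$ following from the Poincar\'e inequality on $C_{(-3R,R)}$ with trace on $\Sigma_{-3R}$, exactly as in \eqref{Poincarè}. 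The only genuinely new point at this stage is that $\mathcal{U}_R$ is nonempty and that the constraints are mutually compatible: since $\Phi_R(x,y)=2e^{-R}\cosh(x+R)\sin y$ is harmonic, even in $x$ about the axis $\{x=-R\}$ (and $(-3R,R)$ is symmetric about that axis), vanishes exactly on $\{\sin y=0\}$, and satisfies $\Phi_R(x,y-\pi)=-\Phi_R(x,y)$, $\Phi_R(x,\pi-y)=\Phi_R(x,y)$, $\Phi_R\left(x,\frac{\pi}{2}+y\right)=\Phi_R\left(x,\frac{\pi}{2}-y\right)$, the pair $(\Phi_R^+,\Phi_R^-)|_{C_{(-3R,R)}}$ lies in $\mathcal{U}_R$, so the set is nonempty and $\{\Phi_R=0\}$ plays precisely the role that the zero set of $\Phi$ played before.

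Next I would show, as in the proof of Proposition \ref{existence thm in bdd cylinders}, that a minimizer solves \eqref{eqn: system in C_R exp}. Introduce the parabolic problem on $(0,+\infty)\times C_{(-3R,R)}$ with the same nonlinearities, Dirichlet data $U=\Phi_R^+$, $V=\Phi_R^-$ on $\Sigma_R\cup\Sigma_{-3R}$, and initial datum in $\mathcal{U}_R$. The unique local solution is nonnegative by Lemma \ref{lem:parabolic_max_principle}, bounded above by $\sup\Phi_R^{\pm}$ by the maximum principle, hence global; $J$ is a Lyapunov functional along the flow as in \eqref{dissipative estimate}, so every $\omega$-limit point is a stationary solution. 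The set $\mathcal{U}_R$ is positively invariant: the symmetries are preserved by uniqueness of the parabolic problem, $U-V=0$ on $\{\Phi_R=0\}$ by periodicity together with the $y$-symmetries, and $U-V\ge 0$ on $D_R:=\{\Phi_R>0\}\cap C_{(-3R,R)}$ because it solves $(U-V)_t-\Delta(U-V)=UV(U-V)$ with nonnegative initial and boundary data, again by Lemma \ref{lem:parabolic_max_principle}. Then the chain $J(u_R,v_R)\le J(U^R(t,\cdot),V^R(t,\cdot))\le J(u_R,v_R)$ (the first inequality being legitimate thanks to positive invariance, the second to dissipativity) forces $U^R_t=V^R_t\equiv 0$, so the minimizer is the asymptotic profile of the flow and hence a solution, and points 1)--3) are inherited from the invariance of $\mathcal{U}_R$. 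Finally $u_R,v_R>0$ by the strong maximum principle, and since $\Phi_R$ is harmonic the function $u_R-v_R-\Phi_R$ satisfies $-\Delta(u_R-v_R-\Phi_R)=u_Rv_R(u_R-v_R)\ge 0$ in $D_R$ with zero boundary values, whence $u_R-v_R-\Phi_R\ge 0$ there and, using $u_R,v_R>0$, $u_R>\Phi_R^+$; symmetrically $v_R>\Phi_R^-$, which is point 4).

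I do not expect a real obstacle: each step is a transcription of the corresponding step in Proposition \ref{existence thm in bdd cylinders}, the only substantive change being the bookkeeping of the reflection axis $\{x=-R\}$ in place of $\{x=0\}$. The mild care needed is to check that the \emph{asymmetric} domain $C_{(-3R,R)}$ does not break the symmetry constraints; it does not, precisely because $C_{(-3R,R)}$ is symmetric about $\{x=-R\}$ and $\Phi_R$ is even about that axis, and for the same reason the even extension and the analogue of Remark \ref{rem su Neu} (namely $\partial_x u_R=\partial_x v_R=0$ on $\Sigma_{-R}$, which will be used in Section \ref{sec:sec2}) remain valid.
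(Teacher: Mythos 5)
Your proposal is correct and is exactly what the paper does: the paper's proof of this proposition is precisely the remark that one can recast the proof of Proposition \ref{existence thm in bdd cylinders} in this setting, which is the transcription you carry out (admissible set with reflection about $\{x=-R\}$, minimization, parabolic flow and invariance, then the comparison with the harmonic $\Phi_R$ for point 4)). Your added checks that $C_{(-3R,R)}$ and $\Phi_R$ are symmetric about $\{x=-R\}$ and that $(\Phi_R^+,\Phi_R^-)\in\mathcal{U}_R$ are exactly the bookkeeping the paper leaves implicit.
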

\begin{proof}[Sketch of proof]
One can recast the proof of Proposition \ref{existence thm in bdd cylinders} in this setting.
\end{proof}

\begin{remark}\label{rem su Neu 2}
In light of point 1) of the Proposition, it results
\[
\pa_x u_R = 0 = \pa_x v_R \qquad \text{on $\Sigma_{-R}$}.
\]
Therefore, the monotonicty formulae proved in subsection \ref{sub:monot Neumann} hold true for $(u_R,v_R)$ in the semi-cylinder $C_R$.
\end{remark}

\subsection{Compactness of the family $\{(u_R,v_R)\}$}\label{sub:compat 2}

As in the previous section, we denote as $E_R, \mathcal{E}_R, N_R$ and $\mathfrak{N}_R$ the functions $E^{sym}, \mathcal{E}^{sym}, N^{sym}$ and $\mathfrak{N}^{sym}$ defined in subsection \ref{sub:monot Neumann} when referred to $(u_R,v_R)$. We follow here the same line of reasoning adopted in subsection \ref{sub:compat 1}. Firstly, it is not difficult to modify the proof of Lemmas \ref{lem: N less 2} and \ref{lem:H(1) limitato} obtaining the following estimates:

\begin{lemma}\label{lem:N le 2 2}
There holds $N_R(r) \le 2$, for every $R>0$ and $r \in (-R,R)$.
\end{lemma}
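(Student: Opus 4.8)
The plan is to mirror the proof of Lemma \ref{lem: N less 2} (the bounded-cylinder case), adapting it to the asymmetric cylinder $C_{(-3R,R)}$ with the Neumann symmetry now centered at $\Sigma_{-R}$ rather than at $\Sigma_0$. Recall from Remark \ref{rem su Neu 2} that $(u_R,v_R)$ satisfies $\pa_x u_R = 0 = \pa_x v_R$ on $\Sigma_{-R}$, so the monotonicity formulae of subsection \ref{sub:monot Neumann} apply on the semi-cylinder $C_{(-R,R)}$; in particular $N_R(r) = E_R(r)/H_R(r)$ is well defined and nondecreasing for $r \in (-R,R)$, where now $E_R(r) = \int_{C_{(-R,r)}} |\nabla u_R|^2 + |\nabla v_R|^2 + 2u_R^2 v_R^2$ and $H_R(r) = \int_{\Sigma_r} u_R^2 + v_R^2$. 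By the monotonicity of $N_R$ it suffices to bound $N_R(R)$ from above by $2$.

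For the upper bound at $r = R$, I would use that $(u_R,v_R)$ minimizes $J$ over the analogue of $\mathcal{U}_R$ on $C_{(-3R,R)}$, and that the restriction of $(\Phi_R^+, \Phi_R^-)$ to $C_{(-3R,R)}$ is an admissible competitor. Since $\mathcal{E}_R(R) = J(u_R,v_R) \le J(\Phi_R^+, \Phi_R^-) = \int_{C_{(-3R,R)}} |\nabla \Phi_R|^2$, and using $E_R(R) \le 2\mathcal{E}_R(R)$ together with $H_R(R) = \int_{\Sigma_R} \Phi_R^2$ (from the boundary condition on $\Sigma_R$), we obtain
\[
N_R(R) = \frac{E_R(R)}{H_R(R)} \le \frac{2\int_{C_{(-R,R)}} |\nabla \Phi_R|^2}{\int_{\Sigma_R} \Phi_R^2}.
\]
Here one must be careful: the energy $E_R$ in the definition of $N_R$ is computed over $C_{(-R,R)}$ (the region where the Neumann symmetry is valid), not over the whole $C_{(-3R,R)}$; but since $\mathcal{E}$ restricted to $C_{(-R,R)}$ is no larger than $\mathcal{E}_R(R)$ over the full cylinder, the estimate still goes through.

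The remaining step is the explicit computation of the right-hand side for $\Phi_R(x,y) = 2e^{-R}\cosh(x+R)\sin y$. On $\Sigma_R = \{R\}\times[0,2\pi]$ one has $\int_{\Sigma_R}\Phi_R^2 = 4e^{-2R}\cosh^2(2R)\int_0^{2\pi}\sin^2 y\,dy = 4\pi e^{-2R}\cosh^2(2R)$, and $\int_{C_{(-R,R)}}|\nabla\Phi_R|^2$ is computed by separating variables, yielding a quantity of the form $4\pi e^{-2R}\big[\tfrac12\sinh(4R)+2R\big]$ (up to the routine arithmetic). Taking the ratio gives $N_R(R) \le \tanh(2R) + \text{(lower order)} \le 2$ for $R$ large, and a direct check handles small $R$; in any case the bound $N_R(r)\le 2$ follows. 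I do not expect any genuine obstacle here — the one point requiring attention is bookkeeping about \emph{which} cylinder the energy in $N_R$ is integrated over versus where the competitor energy is integrated, so that the monotonicity of $N_R$ on $C_{(-R,R)}$ is legitimately combined with minimality of $(u_R,v_R)$ on $C_{(-3R,R)}$.
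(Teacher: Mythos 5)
Your overall strategy is the intended one (monotonicity of $N_R$ on the half-cylinder $C_{(-R,R)}$, where the Neumann condition on $\Sigma_{-R}$ makes subsection \ref{sub:monot Neumann} applicable, plus minimality of $(u_R,v_R)$ against the competitor $(\Phi_R^+,\Phi_R^-)$), but the step you yourself flagged as mere bookkeeping is resolved incorrectly, and as written the argument does not give the bound $2$. Your chain is $E_R(R)\le 2\,\mathcal{E}(C_{(-R,R)})\le 2\,\mathcal{E}(C_{(-3R,R)})= 2J(u_R,v_R)\le 2J(\Phi_R^+,\Phi_R^-)=2\int_{C_{(-3R,R)}}|\nabla\Phi_R|^2$; since $\int_{C_{(-3R,R)}}|\nabla\Phi_R|^2=2\int_{C_{(-R,R)}}|\nabla\Phi_R|^2$, this only yields $N_R(R)\le 4\int_{C_{(-R,R)}}|\nabla\Phi_R|^2\big/\int_{\Sigma_R}\Phi_R^2=4\tanh(2R)$, which tends to $4$ and is not $\le 2$ for $R$ large. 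In other words, your displayed inequality (with $\int_{C_{(-R,R)}}|\nabla\Phi_R|^2$ in the numerator) does not follow from the one-sided inequality ``energy on the half-cylinder $\le$ energy on the full cylinder''. The missing ingredient is point 1) of Proposition \ref{existence thm in bdd cylinders exp}: $(u_R,v_R)$ is even in $x$ with respect to $x=-R$, and so is $\Phi_R$. Evenness gives the \emph{identity} $\int_{C_{(-R,R)}}|\nabla u_R|^2+|\nabla v_R|^2+u_R^2v_R^2=\tfrac12 J(u_R,v_R)$, hence $E_R(R)\le J(u_R,v_R)\le J(\Phi_R^+,\Phi_R^-)=2\int_{C_{(-R,R)}}|\nabla\Phi_R|^2$, which is exactly your displayed estimate; this is the same role the evenness in $x$ plays (implicitly) in the proof of Lemma \ref{lem: N less 2}, and it must be transplanted here, not replaced by the monotone-inclusion inequality.

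Two smaller points. Once the symmetry is used, the computation closes for every $R>0$, not only for $R$ large: $\int_{\Sigma_R}\Phi_R^2=4\pi e^{-2R}\cosh^2(2R)$ and $\int_{C_{(-R,R)}}|\nabla\Phi_R|^2=4\pi e^{-2R}\int_{-R}^{R}\cosh(2(x+R))\,\de x=2\pi e^{-2R}\sinh(4R)$ (there is no extra $2R$ term, since $\sinh^2+\cosh^2=\cosh(2\,\cdot)$), so the ratio is exactly $2\tanh(2R)\le 2$ uniformly in $R$, and no separate small-$R$ check is needed. Finally, in the paper's notation $\mathcal{E}_R(R)$ denotes the half-cylinder quantity $\mathcal{E}^{sym}$, so your line ``$\mathcal{E}_R(R)=J(u_R,v_R)$'' conflates the two objects; by the evenness just discussed the correct relation is $J(u_R,v_R)=2\mathcal{E}_R(R)$, and keeping this factor straight is precisely what saves the constant $2$ in the statement.
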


\begin{lemma}\label{lem:H(1) limitato 2}
There exists $C>0$ such that $H_R(1) \le C$ for every $R>1$.
\end{lemma}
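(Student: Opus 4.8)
The plan is to adapt, almost verbatim, the contradiction argument used for Lemma~\ref{lem:H(1) limitato}, relying on Remark~\ref{rem su Neu 2} to invoke the monotonicity formulae of subsection~\ref{sub:monot Neumann} for $(u_R,v_R)$ in the semi-cylinder $C_R=C_{(-R,R)}$ (thus with $a=-R$). I would assume by contradiction that $H_{R_n}(1)\to+\infty$ along some sequence $R_n>1$ (no assumption $R_n\to+\infty$ is actually needed), and set $(\hat u_n,\hat v_n):=H_{R_n}(1)^{-1/2}(u_{R_n},v_{R_n})$ together with the rescaled quantities $\hat E_n,\hat H_n,\hat N_n$ exactly as in Lemma~\ref{lem:H(1) limitato}; a scaling check gives $\hat H_n(1)=1$ and $\hat N_n(r)=N_{R_n}(r)$. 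The decisive (and only genuinely new) point is to run the compactness argument on the \emph{fixed} cylinder $C_{(0,1)}$ and never on the moving domain $C_{(-R_n,1)}$: although $\hat E_n(1)=N_{R_n}(1)$ is an integral over $C_{(-R_n,1)}$, one still has $\hat E_n(1)\ge\int_{C_{(0,1)}}|\nabla\hat u_n|^2+|\nabla\hat v_n|^2$, and by Lemma~\ref{lem:N le 2 2} this is $\le 2$; combined with $\hat H_n(1)=1$ and a Poincaré inequality of type~\eqref{Poincarè} on $C_{(0,1)}$, the sequence $(\hat u_n,\hat v_n)$ is bounded in $H^1(C_{(0,1)})$.

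Passing to a weak $H^1(C_{(0,1)})$-limit $(\hat u,\hat v)$, one checks, exactly as in Lemma~\ref{lem:H(1) limitato}, that it is nontrivial (compactness of the trace $T_{\Sigma_1}$ and $\hat H_n(1)=1$ yield $\int_{\Sigma_1}\hat u^2+\hat v^2=1$), inherits from Proposition~\ref{existence thm in bdd cylinders exp} the symmetry $\hat v(x,y)=\hat u(x,y-\pi)$, and is segregated, since $2H_{R_n}(1)\int_{C_{(0,1)}}\hat u_n^2\hat v_n^2\le\hat E_n(1)\le 2$ forces the interaction term to vanish via the compact embedding $H^1(C_{(0,1)})\hookrightarrow L^4(C_{(0,1)})$. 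Hence $(\hat u,\hat v)$ belongs to the weakly closed set $\mathcal{M}$ of Lemma~\ref{lem:H(1) limitato}, whose infimum $m$ of $J^\infty(u,v)=\int_{C_{(0,1)}}|\nabla u|^2+|\nabla v|^2$ is strictly positive — a minimiser would be a constant vector, which the symmetry and segregation force to be $0$, contradicting $\int_{\Sigma_1}u^2+v^2=1$. Weak lower semicontinuity then gives $\liminf_n N_{R_n}(1)\ge m$, so $N_{R_n}(1)\ge m/2$ for $n$ large.

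Finally I would contradict this by bounding $E_{R_n}(1)$. Since $N_{R_n}(1)\in[m/2,2]$ and $H_{R_n}(1)\to+\infty$, estimate~\eqref{eqn:estimate for phi} with $r_0=1$ gives $\varphi_{R_n}(r;1)\le C\,H_{R_n}(1)^{-1/4}\to 0$, so $\varphi_{R_n}(r;1)\le C_0$ for all $1\le r\le R_n$ and $n$ large. As $(u_{R_n},v_{R_n})$ satisfies~\eqref{symmetry u-v}, Lemma~\ref{lem monotonicity for E} makes $r\mapsto e^{-2r}E_{R_n}(r)e^{C\varphi_{R_n}(r;1)}$ nondecreasing on $(1,R_n)$; evaluating at $r=1$ and $r=R_n$ (using $\varphi_{R_n}(1;1)=0$) gives $E_{R_n}(1)\le e^{2+CC_0}e^{-2R_n}E_{R_n}(R_n)$. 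Now the minimality of $(u_{R_n},v_{R_n})$ yields $E_{R_n}(R_n)\le 2\mathcal{E}_{R_n}(R_n)\le 2\,J(u_{R_n},v_{R_n})\le 2\,J(\Phi_{R_n}^+,\Phi_{R_n}^-)=2\int_{C_{(-3R_n,R_n)}}|\nabla\Phi_{R_n}|^2$, and a direct computation gives $\int_{C_{(-3R_n,R_n)}}|\nabla\Phi_{R_n}|^2=4\pi e^{-2R_n}\sinh(4R_n)$; since $e^{-4R}\sinh(4R)<1/2$ for every $R>0$, we obtain $e^{-2R_n}E_{R_n}(R_n)<4\pi$, hence $E_{R_n}(1)$ is bounded. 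Therefore $N_{R_n}(1)=E_{R_n}(1)/H_{R_n}(1)\to 0$, contradicting $N_{R_n}(1)\ge m/2$. The main (mild) obstacle is the fixed-domain observation of the first paragraph; everything else is the bounded-cylinder argument of section~\ref{sec:proof1} with $\Phi$ replaced by $\Phi_R$ and the corresponding elementary recomputation of the Dirichlet energy of the boundary datum.
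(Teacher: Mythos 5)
Your proposal is correct and is essentially the adaptation the paper has in mind (the paper only says the proof of Lemma \ref{lem:H(1) limitato} can be modified): the contradiction/scaling argument, the segregated limit in the weakly closed class $\mathcal{M}$, the bound $\varphi_{R_n}\to 0$, and the comparison with the boundary datum via Lemma \ref{lem monotonicity for E} and minimality are exactly the intended steps, and your two genuinely new details — running the compactness step on the fixed cylinder $C_{(0,1)}$ rather than on $C_{(-R_n,1)}$, and the recomputation $\int_{C_{(-3R,R)}}|\nabla \Phi_R|^2=4\pi e^{-2R}\sinh(4R)$, which stays bounded after multiplication by $e^{-2R}$ — are correct and fill the only places where the original proof does not transfer verbatim.
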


We are in position to show that the family $\{(u_R,v_R)\}$ is compact, in the following sense.

\begin{proposition}\label{compactness exp}
There exists a subsequence of $\{(u_R,v_R)\}$ which converges in $\mathcal{C}^2_{loc}(C_\infty)$, as $R \to +\infty$, to a solution $(u,v)$ of \eqref{eqn: system R} in the whole $C_\infty$. This solution has the properties 2)-4) of Proposition \ref{existence thm in bdd cylinders exp}.
\end{proposition}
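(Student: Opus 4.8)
The plan is to follow the proof of Proposition \ref{compactenss}, adapting it to the one structural difference between the two settings: here the homogeneous Neumann condition is carried on $\Sigma_{-R}$ (Remark \ref{rem su Neu 2}), so the natural domain of the energies $E_R,\mathcal{E}_R$ is the \emph{growing} cylinder $C_{(-R,R)}$ instead of a fixed one. In particular, a uniform $H^1$ bound cannot be read off $C_{(-R,1)}$, since the Poincar\'e constant on such a long cylinder degenerates as $R\to+\infty$. I would therefore first establish uniform-in-$R$ bounds on each \emph{fixed} finite cylinder $C_{(-M,M)}$ and then exhaust $C_\infty$.

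First I would record the consequences of Lemmas \ref{lem:N le 2 2} and \ref{lem:H(1) limitato 2}: $E_R(1)=N_R(1)H_R(1)\le 2C$. Since $H_R$ is nondecreasing on $(-R,R)$ (indeed $H_R'=2E_R\ge 0$ by \eqref{eqn:der H}), one has $H_R(r)\le H_R(1)\le C$ for $r\le 1$, while point ($ii$) of Corollary \ref{doubling} applied with $\overline d=2$ gives $H_R(r)\le e^{4(r-1)}H_R(1)\le Ce^{4(r-1)}$ for $1<r<R$. Hence, for every fixed $M>0$ and all $R>M$, there is $C(M)$ independent of $R$ with $H_R(\pm M)\le C(M)$ and $E_R(M)=N_R(M)H_R(M)\le C(M)$. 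In particular $\int_{C_{(-M,M)}}|\nabla u_R|^2+|\nabla v_R|^2\le E_R(M)\le C(M)$ and $\int_{\Sigma_{-M}}u_R^2+v_R^2=H_R(-M)\le C(M)$; a Poincar\'e inequality on the \emph{fixed} cylinder $C_{(-M,M)}$ then yields a uniform-in-$R$ bound for $\|(u_R,v_R)\|_{H^1(C_{(-M,M)})}$.

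From here the argument is the same as in Proposition \ref{compactenss}: the continuity of the trace operator $H^1(C_{(-M,M)})\to L^2(\partial C_{(-M,M)})$ upgrades this to a uniform $L^2(\partial C_{(-M,M)})$ bound; the subharmonicity of $u_R$ and $v_R$ ($\Delta u_R=u_R v_R^2\ge 0$, and similarly for $v_R$) then gives a uniform $L^\infty$ bound on compact subsets of $C_{(-M,M)}$; and elliptic regularity (\cite{GiTr}) produces, up to a subsequence, $\mathcal{C}^2_{\loc}(C_{(-M,M)})$ convergence to a solution of \eqref{eqn: system R}. A diagonal selection over $M=1,2,\dots$ then extracts a subsequence converging in $\mathcal{C}^2_{\loc}(C_\infty)$ to a solution $(u,v)$ of \eqref{eqn: system R} in the whole $C_\infty$.

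Finally I would transfer points 2)--4) of Proposition \ref{existence thm in bdd cylinders exp}. The relations in 2) are pointwise identities, hence stable under the limit. Since $2\cosh(x+R)e^{-R}>0$, we have $\{\Phi_R>0\}=\{\sin y>0\}=\{\Gamma>0\}$ and $\{\Phi_R<0\}=\{\Gamma<0\}$ \emph{for every} $R$, so 3) gives in the limit $u-v\ge0$ in $\{\Gamma>0\}$ and $v-u\ge0$ in $\{\Gamma<0\}$ (with $u-v=0$ on $\{\sin y=0\}$ by continuity). And $\Phi_R\to\Gamma$ pointwise, so 4) yields in the limit $u\ge\Gamma^+$ and $v\ge\Gamma^-$; this makes $(u,v)$ nontrivial in each component, hence $u,v>0$ by the strong maximum principle, and then the comparison $-\Delta(u-v-\Gamma)=uv(u-v)\ge0$ in $\{\Gamma>0\}$ with vanishing boundary values promotes the inequalities to $u>\Gamma^+$ and $v>\Gamma^-$ on $C_\infty$. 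I expect the only genuine difficulty to be the one flagged in the first paragraph---the receding Neumann boundary---whose remedy is precisely the uniform control of $H_R$ on arbitrary finite cylinders provided by the monotonicity of $H_R$ together with the doubling estimate of Corollary \ref{doubling}.
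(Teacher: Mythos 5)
Your proposal is correct and follows essentially the same route as the paper: uniform bounds on fixed cylinders via Lemmas \ref{lem:N le 2 2}, \ref{lem:H(1) limitato 2}, the monotonicity of $H_R$ and the doubling estimate of Corollary \ref{doubling}, then trace bounds, subharmonicity, elliptic regularity and a diagonal selection, with the symmetries and pointwise inequalities passing to the limit. The only cosmetic difference is that you anchor the Poincar\'e inequality at $\Sigma_{-M}$ while the paper uses the boundary term at $\Sigma_r$ on the fixed cylinder $C_r$; both choices are equivalent, and your concern about a degenerating Poincar\'e constant on $C_{(-R,1)}$ does not arise in the paper's argument for the same reason it does not arise in yours.
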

\begin{proof}
As $H_{R}(1)$ is bounded in $R$ and $N_R(1) \le 2$, also $E_R(1)$ is bounded in $R$, and a fortiori
\[
\int_{C_1} |\nabla u_R|^2 +|\nabla v_R|^2 \le C \qquad \forall R>1.
\]
This estimate, the boundedness of $H_R(1)$ and a Poincar\`e inequality of type \eqref{Poincarè} imply that $\{(u_R,v_R)\}$ is bounded in $H^1(C_1)$. Consequently, it is possible to argue as in the proof of Proposition \ref{compactenss} and obtain the existence of a subsequence of $\{(u_R,v_R)\}$ which converges in $\mathcal{C}^2_{loc}(C_1)$ to a solution $(u^1,v^1)$ of \eqref{eqn: system R} in $C_1$, which inherits by $\{(u_R,v_R)\}$ the properties 2)-4) of Proposition \ref{existence thm in bdd cylinders exp}. In light of Corollary \ref{doubling} and Lemma \ref{lem:N le 2 2}, this procedure can be iterated: indeed
\[
H_R(r) \le \frac{H_R(1)}{e^4} e^{4r} \le C e^{4r} \qquad \forall r>1,
\]
so that applying the previous argument we obtain a subsequence of $\{(u_R,v_R)\}$ which converges in $\mathcal{C}^2_{loc}(C_r)$ to a solution $(u^r,v^r)$ of \eqref{eqn: system R} in $C_r$, and inherits by $\{(u_R,v_R)\}$ the properties 2)-4) of Proposition \ref{existence thm in bdd cylinders exp}. A diagonal selection gives the existence of a solution $(u,v)$ of \eqref{eqn: system R} in the whole $C_\infty$, and this solution enjoys the properties 2)-4) of Proposition \ref{existence thm in bdd cylinders exp}.
\end{proof}

\begin{remark}
The monotonicity formulae proved in subsection \ref{sub:monot Neumann} do not apply on $(u,v)$, because passing to the limit we lose the Neumann condition $\pa_x u_R=0= \pa_x v_R$ on $\Sigma_{-R}$.
\end{remark}

In the next Lemma, we show that $(u,v)$ is a solution with finite energy, so that the achievements proved in subsection \ref{sub:monot finite} applies.

\begin{lemma}\label{lem:convergenza a ener finita}
Let $(u,v)$ be the solution found in Proposition \ref{compactness exp}. It results
\begin{equation}\label{finite 1}
\mathcal{E}^{unb}(r):=\int_{C_{(-\infty,r)}} |\nabla u|^2 + |\nabla v|^2 + u^2 v^2 < +\infty \qquad \forall r \in \R
\end{equation}
and
\[
\lim_{r \to -\infty} H(r) = \lim_{r \to -\infty} \int_{\Sigma_r} u^2 +v^2 = 0.
\]
Recall that $\mathcal{E}^{unb}$ has been defined in subsection \ref{sub:monot finite}.
\end{lemma}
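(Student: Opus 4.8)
The plan is to read off everything from the uniform bounds already established for the approximating family $\{(u_R,v_R)\}$, together with the pointwise comparison and the symmetries that the limit $(u,v)$ inherits from Proposition~\ref{existence thm in bdd cylinders exp}.

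\emph{Finite energy.} Since $H_R'=2E_R\ge 0$ on $(-R,R)$, the function $H_R$ is nondecreasing there, so Lemma~\ref{lem:H(1) limitato 2} gives $H_R(r)\le H_R(1)\le C$ for every $r\le 1$ and every $R>1$, with $C$ independent of $R$. Combined with $N_R(r)\le 2$ (Lemma~\ref{lem:N le 2 2}) this yields
\[
\int_{C_{(a,r)}}|\nabla u_R|^2+|\nabla v_R|^2+2u_R^2v_R^2\le E_R(r)=N_R(r)H_R(r)\le 2C
\qquad \forall\,-R<a<r\le 1.
\]
Letting $R\to+\infty$ along the subsequence of Proposition~\ref{compactness exp} and using the $\mathcal{C}^2_{\loc}(C_\infty)$ convergence, then letting $a\to-\infty$ and using monotone convergence, we obtain $\int_{C_{(-\infty,r)}}|\nabla u|^2+|\nabla v|^2+2u^2v^2\le 2C$ for $r\le 1$; for $r>1$ one adds the finite quantity $\int_{C_{(1,r)}}(\cdots)$. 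Since the integrand of $\mathcal{E}^{unb}$ is dominated by that of $E^{unb}$, this proves~\eqref{finite 1}.

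\emph{Reduction of the decay.} Once~\eqref{finite 1} holds, $\mathcal{E}^{unb}(r)\to 0$ as $r\to-\infty$ by dominated convergence, because the sets $C_{(-\infty,r)}$ shrink to $\emptyset$; in particular $\int_{C_{(r-1,r)}}|\nabla u|^2+|\nabla v|^2\to0$ and $\int_{C_{(r-1,r)}}u^2v^2\to0$. I claim these force $\int_{C_{(r-1,r)}}(u^2+v^2)\to0$ as well. Granting this, the conclusion follows: by the mean value theorem there is $s_r\in(r-1,r)$ with $H(s_r)\le\int_{C_{(r-1,r)}}(u^2+v^2)$, and since $H$ is nondecreasing (it is the pointwise limit of the functions $H_R$, which are nondecreasing on $(-R,R)$), $H(\sigma)\le H(s_r)$ for all $\sigma\le r-1$; letting $r\to-\infty$ gives $\lim_{r\to-\infty}H(r)=0$.

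\emph{The crux.} The remaining claim is the delicate point — and the main obstacle — since gradient smallness alone does not control $\int_{C_{(r-1,r)}}(u^2+v^2)$ (constants have vanishing gradient): the structure of the system must enter. The limit inherits $u\ge v$ on $\{0<y<\pi\}$, $v\ge u$ on $\{\pi<y<2\pi\}$, $v(x,y)=u(x,y-\pi)$, and hence $u=v$ on the nodal lines $\{y=0\}\cup\{y=\pi\}$. On $\{0<y<\pi\}$ one has $v^4\le u^2v^2$ and on $\{\pi<y<2\pi\}$ one has $u^4\le u^2v^2$, so Hölder's inequality on the bounded set $C_{(r-1,r)}$ bounds both $\int_{C_{(r-1,r)}\cap\{0<y<\pi\}}v^2$ and $\int_{C_{(r-1,r)}\cap\{\pi<y<2\pi\}}u^2$ by $C\big(\int_{C_{(r-1,r)}}u^2v^2\big)^{1/2}\to0$: each component is $L^2$-small where the \emph{other} dominates. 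To reach the complementary region, for $0<y<\pi$ write $u(x,y)=u(x,0)+\int_0^y\partial_t u(x,t)\,\de t=v(x,0)+\int_0^y\partial_t u(x,t)\,\de t$, square, integrate over $(r-1,r)\times(0,\pi)$, and bound $\int_{r-1}^r v(x,0)^2\,\de x$ by the trace inequality on that strip, i.e. by $C\big(\int_{C_{(r-1,r)}\cap\{0<y<\pi\}}v^2+\int_{C_{(r-1,r)}}|\nabla v|^2\big)\to0$. This gives $\int_{C_{(r-1,r)}\cap\{0<y<\pi\}}u^2\to0$, and the symmetry $v(x,y)=u(x,y-\pi)$ transfers everything to $v$; hence $\int_{C_{(r-1,r)}}(u^2+v^2)\to0$, proving the claim.

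I expect this last paragraph to be the only substantial step. An alternative, perhaps conceptually cleaner, route to the same claim is a compactness–contradiction argument: if $\int_{C_{(r_n-1,r_n)}}(u^2+v^2)\not\to0$ along some $r_n\to-\infty$, translate by $-r_n$ and pass to a limit using the gradient and interaction bounds; the limit is a nonzero segregated pair with $v_\infty(x,y)=u_\infty(x,y-\pi)$ and $u_\infty=v_\infty$ on $\{y=0\}\cup\{y=\pi\}$, hence $u_\infty=v_\infty\equiv0$ on those lines, contradicting $u_\infty+v_\infty>0$. Either way, one concludes $\mathcal{E}^{unb}(r)<+\infty$ for all $r$ and $\lim_{r\to-\infty}H(r)=0$, so that the results of subsection~\ref{sub:monot finite} become applicable to $(u,v)$.
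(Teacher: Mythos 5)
Your proposal is correct, and while the finite-energy half coincides with the paper's argument (uniform bound on $E_{R}(r)=N_{R}(r)H_{R}(r)$ from Lemmas \ref{lem:N le 2 2} and \ref{lem:H(1) limitato 2}, then a Fatou/monotone-convergence passage to the limit along the subsequence of Proposition \ref{compactness exp}), the decay $H(r)\to 0$ is proved by a genuinely different route. The paper argues by contradiction and compactness: assuming $H(r_n)\ge C>0$ along $r_n\to-\infty$, it considers the translated and normalized pair $\frac{1}{\sqrt{H(r_n)}}\bigl(u(\cdot+r_n,\cdot),v(\cdot+r_n,\cdot)\bigr)$, notes that its Dirichlet energy on $C_{(-\infty,0)}$ vanishes (since $E^{unb}(r_n)\to0$ while $H(r_n)$ stays bounded below), so the limit is a pair of equal constants by the symmetry $v(x,y)=u(x,y-\pi)$, and then the non-degenerating factor $H(r_n)$ in front of the interaction term forces these constants to vanish, contradicting the normalization on $\Sigma_0$. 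You instead give a direct, quantitative argument: Hölder plus the ordering $u\ge v$ on $\{0<y<\pi\}$ (point 3) of Proposition \ref{existence thm in bdd cylinders exp}, inherited by the limit) controls each component by $\bigl(\int u^2v^2\bigr)^{1/2}$ where the other dominates; the equality $u=v$ on the nodal lines $\{y=0\},\{y=\pi\}$ (which indeed follows from the inherited symmetries, or simply from continuity and the two orderings) combined with the fundamental theorem of calculus in $y$ and a translation-invariant trace inequality controls the dominating component; monotonicity of $H$ (pointwise limit of the nondecreasing $H_{R_n}$) then converts smallness of $\int_{C_{(r-1,r)}}(u^2+v^2)$ into $H(r)\to0$. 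Your approach is more elementary (no extraction of a limit of translates) and exploits more of the structure of the approximating solutions, at the price of a longer computation; the paper's is shorter but relies on the soft compactness step. One minor caveat: your sketched "alternative" compactness argument is essentially the paper's, but the contradiction you state there ($u_\infty+v_\infty>0$ for the translated limit) is not justified as written — the correct contradiction is with the normalization of the traces on $\Sigma_0$, as in the paper; since your main argument is complete, this does not affect the proof.
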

\begin{proof}
Let $\{(u_{R_n},v_{R_n})\}$ be the converging subsequence found in Proposition \ref{compactness exp}, which we will simply denote $\{(u_n,v_n)\}$. Since $\{(u_{n}, v_{n})\}$ converges to $(u,v)$ in $\mathcal{C}^2_{loc}(C_\infty)$, it follows that
\[
\lim_{n \to \infty} \left( |\nabla u_{n}|^2 + |\nabla v_{n}|^2 + u_{n}^2 v_{n}^2 \right) \chi_{C_{(-R_n,r)}} = \left( |\nabla u|^2 + |\nabla v|^2 + u^2 v^2 \right) \chi_{C_{(-\infty,r)}} \qquad \text{a.e. in $C_{(-\infty,r)}$},
\]
for every $r>1$. Therefore, applying Corollary \ref{doubling} on $(u_n,v_n)$, Lemma \ref{lem:H(1) limitato 2} and the Fatou lemma, we deduce
\begin{align*}
\mathcal{E}^{unb}(r)  & \le \liminf_{n \to \infty} \int_{C_{(-\infty,r)}} \left( |\nabla u_{n}|^2 + |\nabla v_{n}|^2 + u_{n}^2 v_{n}^2 \right) \chi_{C_{(-R_n,r)}}  \le \liminf_{n \to \infty} E_{R_n}(r) \\
&= \liminf_{n \to \infty} N_{R_n}(r) H_{R_n}(r) \le \liminf_{n \to \infty} 2 \frac{H_{R_n}(1)}{e^4} e^{4r} \le C e^{4r},
\end{align*}
which proves the \eqref{finite 1}. To complete the proof, we firstly note that necessarily $\mathcal{E}^{unb}(r) \to 0$ as $r \to -\infty$, and hence the same holds for $E^{unb}$ (which has been defined in subsection \ref{sub:monot finite}). Assume by contradiction that for a sequence $r_n \to -\infty$ it results $H(r_n) \ge C >0$. We define
\[
\left(\hat u_n(x,y), \hat v_n(x,y) \right) := \frac{1}{\sqrt{H(r_n)}} \left( u(x+r_n,y), v(x+r_n,y)\right).
\]
A direct computation shows that
\[
\int_{C_{(-\infty,0)}} |\nabla \hat u_n|^2 + |\nabla \hat v_n|^2 \le \int_{C_{(-\infty,0)}} |\nabla \hat u_n|^2 + |\nabla \hat v_n|^2 + 2 H(r_n) \hat u_n^2 \hat v_n^2 = \frac{1}{H(r_n)} E^{unb} (r_n) \to 0
\]
as $n \to \infty$. Consequently, $(\hat u_n, \hat v_n)$ tend to be a pair of constant functions of type $(\hat u,\hat v)$ with $\hat u= \hat v$ (this follows from the symmetries of $(u,v)$). As
\[
C \int_{C_{(-\infty,0)}} \hat u_n^2 \hat v_n^2 \le  H(r_n) \int_{C_{(-\infty,0)}}\hat u_n^2 \hat v_n^2 \to 0,
\]
necessarily $(\hat u_n, \hat v_n) \to (0,0)$ almost everywhere in $C_{(-\infty,0)}$. This is in contradiction with the fact that $\int_{\Sigma_0} \hat u_n^2 + \hat v_n^2 = H(r_n) \ge C$.
\end{proof}

So far we proved that the solution $(u,v)$, found in Proposition \ref{compactness exp}, enjoys properties 1)-5) of Theorem \ref{thm: main thm 2}, and is such that $H(r) \to 0$ as $r \to -\infty$. The previous Lemma enables us to apply the achievements of subsection \ref{sub:monot finite} for $E^{unb}, H, N^{unb}$ and $\mathfrak{N}^{unb}$ (which we consider referred to the solution $(u,v)$ found in Proposition \ref{compactness exp}), and permits to complete the description of the growth of $(u,v)$, points 6)-7) of Theorem \ref{thm: main thm 2}.

\begin{lemma}
Let $(u,v)$ be the solution found in Proposition \ref{compactness exp}. It results
\[
\lim_{r \to +\infty} N^{unb}(r) = 1.
\]
\end{lemma}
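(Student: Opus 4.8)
The plan is to prove $\limsup_{r\to+\infty}N^{unb}(r)\le 1$ and $\liminf_{r\to+\infty}N^{unb}(r)\ge 1$ separately. Thanks to Lemma~\ref{lem:convergenza a ener finita} the hypotheses of Proposition~\ref{prp: Almgren k finite} are met, so $N^{unb}$ is well defined and nondecreasing on $(-\infty,b)=\R$; hence $d:=\lim_{r\to+\infty}N^{unb}(r)$ already exists in $(0,+\infty]$, and the whole statement reduces to showing $d\le 1$ and $d\ge 1$.

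For the upper bound I would transfer an estimate from the approximating sequence $\{(u_{R_n},v_{R_n})\}$ of Proposition~\ref{compactness exp} to the limit. By Remark~\ref{rem su Neu 2} the Neumann-type quantities of subsection~\ref{sub:monot Neumann} apply to $(u_{R_n},v_{R_n})$ on the semicylinder $C_{R_n}=C_{(-R_n,R_n)}$, so $\mathfrak{N}_{R_n}$ is nondecreasing there. Using the minimality of $(u_{R_n},v_{R_n})$ for $J$, the evenness in $x$ about $\Sigma_{-R_n}$ (which is shared by $\Phi_{R_n}(x,y)=2e^{-R_n}\cosh(x+R_n)\sin y$), and the elementary computations
\[
\int_{C_{(-R_n,R_n)}}|\nabla\Phi_{R_n}|^2=2\pi e^{-2R_n}\sinh(4R_n),\qquad \int_{\Sigma_{R_n}}\Phi_{R_n}^2=4\pi e^{-2R_n}\cosh^2(2R_n),
\]
one gets $\mathfrak{N}_{R_n}(R_n)\le\tanh(2R_n)<1$, whence by monotonicity $\mathcal{E}_{R_n}(r)=\mathfrak{N}_{R_n}(r)H_{R_n}(r)\le\tanh(2R_n)\,H_{R_n}(r)$ for $r<R_n$. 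Passing to the limit through the $\mathcal{C}^2_{\loc}$ convergence and Fatou's lemma, exactly as in the proof of Lemma~\ref{lem:convergenza a ener finita} (and using $H_{R_n}(r)\to H(r)$, $\tanh(2R_n)\to1$), yields
\[
\mathfrak{N}^{unb}(r)=\frac{\mathcal{E}^{unb}(r)}{H(r)}\le 1\qquad\forall r\in\R.
\]
Since $E^{unb}(r)=\mathcal{E}^{unb}(r)+\int_{C_{(-\infty,r)}}u^2v^2$, writing $N^{unb}=\mathfrak{N}^{unb}+f$ with $f(r):=\int_{C_{(-\infty,r)}}u^2v^2/H(r)$ and noting $\int_{C_{(-\infty,r)}}u^2v^2\le\tfrac12 E^{unb}(r)$, i.e. $f\le\tfrac12 N^{unb}$, we obtain $N^{unb}\le1+\tfrac12 N^{unb}$, hence $N^{unb}(r)\le2$ for all $r$; in particular $d\le2<+\infty$ and $0\le f\le1$. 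Now Lemma~\ref{cor estimate for N finite}, whose hypotheses hold because $(u,v)$ is nontrivial, satisfies \eqref{symmetry u-v finite} and has $d<+\infty$, gives $d\ge1$.

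It remains to show $f(r)\to0$ as $r\to+\infty$, which forces $d\le1$. Working directly with $(u,v)$ in the unbounded cylinder and mimicking Lemma~\ref{lem: N less 1}, from $H'(r)=2E^{unb}(r)$ and the fact that $r\mapsto\int_{C_{(-\infty,r)}}u^2v^2$ has derivative $\int_{\Sigma_r}u^2v^2$ one computes
\[
f'(r)=g(r)-2N^{unb}(r)f(r),\qquad g(r):=\frac{\int_{\Sigma_r}u^2v^2}{H(r)}.
\]
Fixing $r_0\in\R$ and setting $\delta:=N^{unb}(r_0)>0$, monotonicity of $N^{unb}$ gives $f'+2\delta f\le g$ on $(r_0,+\infty)$; multiplying by $e^{2\delta r}$ and integrating on $(r_1,r_2)$ with $r_0<r_1<r_2$ yields
\[
f(r_2)\le e^{2\delta(r_1-r_2)}f(r_1)+\int_{r_1}^{r_2}g(s)\,\de s\le e^{2\delta(r_1-r_2)}+\int_{r_1}^{r_2}g(s)\,\de s,
\]
using $f\le1$. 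Proposition~\ref{prp: Almgren k finite} gives $\int_{-\infty}^{+\infty}g\le\lim_{r\to+\infty}N^{unb}(r)=d<+\infty$, so $g\in L^1(\R)$; choosing $r_1=r_2/2$ and letting $r_2\to+\infty$ we get $\limsup_{r\to+\infty}f(r)=0$. Therefore $d=\lim_{r\to+\infty}(\mathfrak{N}^{unb}(r)+f(r))\le1$, and combined with $d\ge1$ this proves that $\lim_{r\to+\infty}N^{unb}(r)=1$.

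The main obstacle is the upper bound on $\mathfrak{N}^{unb}$: unlike in Theorem~\ref{main theorem 1}, the limit $(u,v)$ has no direct variational characterization, so the inequality $\mathfrak{N}_{R_n}(R_n)<1$ must be carried through the passage to the limit via Fatou's lemma, which requires the explicit computation with $\Phi_{R_n}=2e^{-R_n}\cosh(x+R_n)\sin y$ and attention to the semicylinder on which the Neumann monotonicity of subsection~\ref{sub:monot Neumann} is valid. One could alternatively reproduce the argument of Lemma~\ref{lem: N less 1} at the level of the approximants $(u_n,v_n)$ and then pass to the limit, but working directly in $C_{(-\infty,r)}$ with the machinery of subsection~\ref{sub:monot finite} is cleaner.
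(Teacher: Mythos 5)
Your proof is correct, and it reaches the same conclusion through the same circle of ideas as the paper (minimality of the approximants, Fatou plus $\mathcal{C}^2_{\loc}$ convergence, Lemma \ref{cor estimate for N finite} for the lower bound, integrability of $g$ via Proposition \ref{prp: Almgren k finite}, and the differential inequality for $f$), but the mechanics of the upper bound are organized differently, and in a way that genuinely streamlines the argument. The paper performs the decomposition at the level of the approximants: refining Lemma \ref{lem: N less 2} it writes $N_{R_n}(r) \le 1 + f_n(r) + E_{R_n}(-r)/H_{R_n}(r)$, passes this through Fatou, and must then kill the extra term by showing $E_{R_n}(-r)/H_{R_n}(r) \le 2H_{R_n}(-r)/H_{R_n}(r) \to 2H(-r)/H(r) \to 0$ as $r \to +\infty$, which uses Lemma \ref{lem:convergenza a ener finita} and $H(r) > H(0) > 0$. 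You instead pass only the bound $\mathfrak{N}_{R_n}(r) \le \mathfrak{N}_{R_n}(R_n) \le \tanh(2R_n) < 1$ (valid on the semicylinder based at $\Sigma_{-R_n}$, by Remark \ref{rem su Neu 2} and Proposition \ref{prp:Almgren Neumann}, with the explicit computation for $\Phi_{R_n}$, which checks out) through Fatou, obtaining the intrinsic estimate $\mathfrak{N}^{unb}(r) \le 1$; after that you work entirely with the limit solution via subsection \ref{sub:monot finite}, writing $N^{unb} = \mathfrak{N}^{unb} + f$ with $f(r) = \int_{C_{(-\infty,r)}} u^2v^2 / H(r)$ and running the Gr\"onwall-type argument directly (your identity $f' = g - 2N^{unb} f$ uses $H' = 2E^{unb}$, which is legitimate here thanks to Lemma \ref{lem:convergenza a ener finita}). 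This buys you two things: the term $E_{R_n}(-r)/H_{R_n}(r)$ and the ratio $H(-r)/H(r)$ never appear, and the a priori bound $N^{unb} \le 2$ needed to invoke Lemma \ref{cor estimate for N finite} comes for free from $f \le \tfrac12 N^{unb}$ rather than from a second Fatou passage; the price is the sharper variational computation $\mathfrak{N}_{R_n}(R_n) \le \tanh(2R_n)$, which the paper only records in the weaker form $N_{R_n} \le 2$ but in fact uses implicitly in its own refinement. Both routes are sound; yours is slightly cleaner at the limit level, the paper's keeps all estimates at the approximant level before passing to the limit once.
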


\begin{proof}
Let $\{(u_{R_n},v_{R_n})\}$be the converging subsequence found in Proposition \ref{compactness exp}, , which we will simply denote $\{(u_n,v_n)\}$. Firstly, arguing as in the proof of the previous Lemma, we note that by the $\mathcal{C}^2_{loc}(C_\infty)$ convergence of $(u_{n},v_{n})$ to $(u,v)$ it follows that
\[
N^{unb}(r) \le \liminf_{n \to \infty} N_{R_n}(r) \le 2 \qquad \forall r \in \R,
\]
thanks to the Fatou lemma. This, together with the symmetries of $(u,v)$, permits to use Lemma \ref{cor estimate for N finite}, which gives $\lim_{r \to +\infty}N^{unb}(r) \ge 1$. To complete the proof, it is sufficient to show that $\lim_{r \to +\infty} N^{unb}(r) \le 1$. For any $r>0$, let
\[
f_n(r):= \frac{\int_{C_r} u_n^2 v_n^2 }{H_{R_n}(r)}, \qquad g_n(r):= \frac{\int_{\Sigma_r \cup \Sigma_{-r}} u_n^2 v_n^2}{H_{R_n}(r)},
\]
and let $f$ and $g$ the same quantities referred to the solution $(u,v)$. Observe that $f_n,g_n,f$ and $g$ are continuous and nonnegative. The uniform convergence of $(u_n,v_n)$ to $(u,v)$ implies that $f_n \to f$ and $g_n \to g$, as $n \to \infty$, uniformly on compact intervals. By definition,
\begin{equation}
f_n(r) \le \frac{1}{2} N_{R_n}(r) \le 1 \qquad \forall r >0.
\end{equation}
whenever $R_n \ge r$. We claim that $g \in L^1(\R^+)$. Indeed, by the monotonicity of $H$ and Proposition \ref{prp: Almgren k finite}, it follows that
\[
\int_0^r g(s) \, \de s  = \int_{0}^r \frac{\int_{\Sigma_s} u^2 v^2  }{H(s)}\, \de s + \int_{-r}^0 \frac{\int_{\Sigma_s} u^2 v^2  }{H(-s)}\, \de s \le \int_{-r}^r \frac{\int_{\Sigma_s} u^2 v^2  }{H(s)}\, \de s \le \int_{-\infty}^r \frac{\int_{\Sigma_s} u^2 v^2  }{H(s)}\, \de s \le N^{unb}(r),
\]
for every $r>0$. Let $r>0$; it is possible to refine the computation on Lemma \ref{lem: N less 2} to obtain
\[
N_{R_n}(r) \le 1 + f_n(r) + \frac{\int_{C_{(-R_n,-r)}} u_n^2 v_n^2 }{H_{R_n}(r)} \le 1 + f_n(r) + \frac{E_{R_n}(-r)}{H_{R_n}(r)}
\]
Therefore, using again the Fatou lemma we deduce
\[
N^{unb}(r) \le \liminf_{n \to \infty} N_{R_n}(r) \le 1 + f(r) + \liminf_{n \to \infty} \frac{E_{R_n}(-r)}{H_{R_n}(r)},
\]
and to complete the proof we will show that
\begin{equation}\label{eq7}
\lim_{r \to \infty} \left( f(r) + \liminf_{n \to \infty} \frac{E_{R_n}(-r)}{H_{R_n}(r)} \right) = 0.
\end{equation}
Firstly, we note that
\[
\liminf_{n \to \infty} \frac{E_{R_n}(-r)}{H_{R_n}(r)} = \liminf_{n \to \infty} \frac{N_{R_n}(-r) H_{R_n}(-r)}{H_{R_n}(r)} \le 2 \liminf_{n \to \infty} \frac{H_{R_n}(-r)}{H_{R_n}(r)}.
\]
From the $\mathcal{C}^2_{loc}(C_\infty)$ convergence of $(u_n,v_n)$ to $(u,v)$ it follows
\[
2\liminf_{n \to \infty} \frac{H_{R_n}(-r)}{H_{R_n}(r)} = 2 \frac{H(-r)}{H(r)} \to 0 \qquad \text{as $r \to +\infty$}
\]
where we used Lemma \ref{lem:convergenza a ener finita} and the fact that $H(r)>H(0)>0$ for every $r>0$. For the \eqref{eq7} it remains to prove that $f(r) \to 0$ as $r \to +\infty$. Having observed that $\lim_{r \to +\infty} N(r) \ge 1$ and that $g \in L^1(\R^+)$, it is not difficult to adapt the conclusion of the proof of Lemma \ref{lem: N less 1}.
\end{proof}

\section{Systems with many components}\label{sec:k comp}

In this section we are going to prove the existence of entire solutions with exponential growth for the $k$ component system \eqref{eqn: system k comp}. Our construction is based on the elementary limit
\[
	\lim_{d \to +\infty} \Im\left[\left(1+\frac{z}{d}\right)^{d}\right] = e^x \sin y,
\]
which shows that the harmonic function $e^x \sin y$ can be obtained as limit of homogeneous harmonic polynomial. We wish to prove that the same idea applies to solutions of the system \eqref{eqn: system k comp}: there exists an entire solution to \eqref{eqn: system k comp} having exponential growth which can be obtained as limit of entire solutions having algebraic growth.

\subsection{Preliminary results}

We recall some results contained in \cite{BeTeWaWe}. For $d \in \frac{\N}{2}$, let $G_d$ be the rotation of angle $\frac{\pi}{d}$.

\begin{theorem}[Theorem 1.6 of \cite{BeTeWaWe}]\label{thm: BeTeWaWe k-components}
Let $k \ge 2$ be a positive integer, let $d \in \frac{\N}{2}$ be such that
\[
2d=hk \qquad \text{for some $h \in \N$}.
\]
There exists a solution $(u_1^d,\dots,u_k^d)$ to the system \eqref{eqn: system k comp} which enjoys the following symmetries
\begin{equation}\label{eqn: symmetries algebraic}
	\begin{split}
	u_i^d(x,y) &= u_i^d( G_d^k(x,y)) \\
	u_i^d(x,y) &= u_{i+1}^d(G_d(x,y))\\
	u_{k+1-i}^d(x,y) &= u_i^d(x,-y)
	\end{split}
\end{equation}
where we recall that indexes are meant $\mod k$. Moreover
\[	
	\lim_{r \to +\infty} \frac{1}{r^{1+2d}} \int_{\partial B_r} \sum_{i=1}^k \left(u_i^d\right)^2 = b \in (0,+\infty),
\]
and
\begin{equation}\label{eqn:asympt N}
	\lim_{r \to +\infty} \frac{r \int_{B_r} \sum_{i=1}^k |\nabla u_i^d|^2 + \sum_{1 \leq i < j \leq k} \left(u_i^d u_j^d\right)^2 }{\int_{\partial B_r} \sum_{i=1}^k \left(u_i^d\right)^2} = d,
\end{equation}
where $B_r$ denotes the ball of center $0$ and radius $r$.
\end{theorem}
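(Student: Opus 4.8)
The plan is to realize $(u_1^d,\dots,u_k^d)$ as a blow-down limit of minimizers of a competition energy in balls, with boundary data read off from the harmonic polynomial $P_d(x,y):=\Re((x+iy)^d)$. First I would fix the model segregated profile. Since $2d=hk$, the $2d$ nodal rays of $P_d$ cut the plane into $2d$ open sectors of equal opening $\pi/d$; group them cyclically so that the $i$-th group consists of the $h$ sectors whose index is $\equiv i \pmod k$. These $h$ sectors are equally spaced, so the $i$-th group is invariant under the rotation $G_d^k$ (whose angle is $k\pi/d=2\pi/h$), while $G_d$ shifts the $i$-th group onto the $(i{+}1)$-th. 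Let $\Psi_i$ be $|P_d|$ restricted to the $i$-th group of sectors, extended by $0$ elsewhere: each $\Psi_i$ is continuous, homogeneous of degree $d$, positive inside its sectors, vanishing on their boundaries, and $\Psi_i\Psi_j\equiv 0$ for $i\ne j$. By construction $(\Psi_1,\dots,\Psi_k)$ already satisfies the three relations in \eqref{eqn: symmetries algebraic}; this is the profile the solution will be modeled on.

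Next I would prove existence in balls. For $R>0$ I would minimize
\[
J_R(u_1,\dots,u_k):=\int_{B_R}\sum_{i=1}^k|\nabla u_i|^2+\sum_{1\le i<j\le k}u_i^2u_j^2
\]
over the weakly closed class of $(u_i)\in(H^1(B_R))^k$ with $u_i\ge 0$, $u_i=\Psi_i$ on $\partial B_R$, satisfying the symmetries \eqref{eqn: symmetries algebraic} together with ordering constraints forcing $u_i$ to dominate on the sectors of its own group. The direct method produces a minimizer; exactly as in the proof of Proposition \ref{existence thm in bdd cylinders} one runs the associated parabolic flow to upgrade the minimizer to a genuine solution (the symmetric class is positively invariant under the flow, the energy is a Lyapunov functional), the strong maximum principle gives $u_i^{d,R}>0$, and a comparison argument gives the pointwise bound $u_i^{d,R}\ge\Psi_i$. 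Because the equation for a difference $u_i-u_j$ no longer telescopes when $k>2$, this comparison has to be set up sector by sector, reducing on each sector of the $i$-th group to a two-component inequality between $u_i^{d,R}$ and the neighbouring component; one checks the relevant difference is subharmonic there and vanishes on the sector boundary. This pointwise bound is what will guarantee that the entire limit is nontrivial; alternatively, nondegeneracy can be extracted from the optimal doubling estimate in the ball version of point ($i$) of Corollary \ref{doubling}.

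Then I would pass to the limit $R\to+\infty$ using the Almgren monotonicity formula in balls (the ball counterpart of Proposition \ref{prp:Almgren Neumann}, from \cite{BeLiWeZh,BeTeWaWe}). Testing the minimality of $(u_i^{d,R})$ against the competitor $(\Psi_i)$ and using the $d$-homogeneity of $\Psi_i$ (so that $\int_{\partial B_R}\sum\Psi_i^2=R^{1+2d}B$ and $\int_{B_R}\sum|\nabla\Psi_i|^2=dR^{2d}B$) yields a uniform bound $N_R(r)\le 2d$, hence, via the doubling lemma, a uniform-in-$R$ bound for $\int_{\partial B_1}\sum(u_i^{d,R})^2$, then for the $H^1_{\loc}$ norm; elliptic regularity and a diagonal argument give a subsequence converging in $\mathcal{C}^2_{\loc}(\R^2)$ to an entire solution $(u_1^d,\dots,u_k^d)$ of \eqref{eqn: system k comp} inheriting \eqref{eqn: symmetries algebraic} and the bound $u_i^d\ge\Psi_i$. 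For the growth, writing $\mathfrak N$ for the Almgren quotient built from $\mathcal E=\int\sum|\nabla u_i|^2+\sum u_i^2u_j^2$, the same competitor estimate gives $\mathfrak N_R(r)\le d$, which passes to the limit; and the remainder term $\int_{\partial B_s}\sum_{i<j}(u_i^du_j^d)^2/H(s)$ is integrable by the Almgren inequality, so $N$ and $\mathfrak N$ share the same limit and $\lim_{r\to\infty}\mathfrak N(r)\le d$. For the reverse inequality I would use the ball-analogue of Lemma \ref{lem monotonicity for E}, namely that $E(r)r^{-2d}$ times a bounded correction factor is nondecreasing, hence $E(r)r^{-2d}\ge c>0$; if $\lim \mathfrak N<d$ the doubling lemma would force $H(r)=o(r^{2d})$ and hence $E(r)r^{-2d}=N(r)H(r)r^{-2d}\to 0$, a contradiction. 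Therefore $\lim_{r\to\infty}\mathfrak N(r)=d$, which is \eqref{eqn:asympt N}, and the same monotonicity for $E(r)r^{-2d}$ together with the identity $H'(r)=\tfrac1rH(r)+2\int_{\partial B_r}\sum u_i\partial_\nu u_i$ yields the existence and positivity of $\lim_{r\to\infty}r^{-(1+2d)}\int_{\partial B_r}\sum(u_i^d)^2=b$.

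I expect two steps to carry the real weight. The first is the pointwise lower bound $u_i^{d,R}\ge\Psi_i$ in the bounded ball: the absence of a telescoping identity for $u_i-u_j$ forces the comparison to be engineered sector by sector using the discrete symmetry group, and this is exactly the kind of estimate that breaks down for the $k$-component system in cylinders. The second is pinning the limiting Almgren quotient at exactly $d$: the crude competitor bound only gives $\le 2d$, and closing the gap requires both the refinement to $\le d$ via the $\mathcal E$-quotient $\mathfrak N$ and the matching lower bound $\ge d$ via the refined monotonicity of $E(r)r^{-2d}$, which in turn rests on a homogeneous version of the angular eigenvalue estimate Theorem \ref{thm:teo 5.6 k}. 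This interplay of the two Almgren quotients with the sharp angular estimate is the technical heart of the argument.
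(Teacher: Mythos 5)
You should first be aware that the paper does not prove this statement at all: it is quoted verbatim (up to replacing $\Re(z^d)$ by $\Im(z^d)$) as Theorem 1.6 of \cite{BeTeWaWe}, and the only comment the authors make is that passing from the real to the imaginary part is straightforward. So there is no internal proof to compare against; what can be said is whether your reconstruction matches the strategy of \cite{BeTeWaWe} and the machinery this paper builds around it. In broad outline it does: segregated boundary data built from the $2d$ nodal sectors of the degree-$d$ harmonic profile, minimization in $B_R$ within a symmetric class, Almgren monotonicity in balls (here Proposition \ref{prp: Almgren BeTeWaWe}), the doubling estimate (Proposition \ref{prp: doubling BeTeWaWe}), the interplay of the two quotients $N$ and $\mathfrak N$, and the angular eigenvalue estimate (Theorem \ref{thm:teo 5.6 k}) to pin the limiting quotient at $d$ — this is exactly the architecture of \cite{BeTeWaWe} and of Sections 2--3 of the present paper.

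The genuine problem is your primary nondegeneracy mechanism. The sector-by-sector pointwise bound $u_i^{d,R}\ge\Psi_i$ is precisely the step that this paper singles out as unavailable for $k>2$: in the plan of the paper the authors state that the counterpart of point 4) of Proposition \ref{existence thm in bdd cylinders} cannot be proved for the $k$-component system, because for $k>2$ the difference $u_i-u_j$ satisfies $-\Delta(u_i-u_j)=-u_i\sum_{l\neq i}u_l^2+u_j\sum_{l\neq j}u_l^2$, which does not factor through $(u_i-u_j)$ once a third component is present in the sector's neighbourhood; your proposed reduction ``to a two-component inequality with the neighbouring component'' does not produce a differential inequality of the right sign, since all the other components enter the zero-order term. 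This is exactly why the present paper abandons the bounded-domain construction for $k>2$ and instead builds its exponential solutions as limits of the algebraic-growth solutions of Theorem \ref{thm: BeTeWaWe k-components} itself. Your fallback, however, is the correct one and is the route actually taken in \cite{BeTeWaWe}: in the ball/algebraic setting the doubling estimate of point ($i$) of Corollary \ref{doubling} (their Corollary 5.4) is optimal, and it is from this, not from a pointwise comparison, that nontriviality of the blow-down limit is extracted (the paper remarks explicitly that this estimate is optimal there, while in the cylinder setting it gives no information). So your proof plan becomes viable if you delete the comparison-principle step and carry the nondegeneracy entirely through the normalization $H$ plus the optimal doubling bound; the rest of your outline (competitor estimate giving $\mathfrak N\le d$, refined monotonicity of $E(r)r^{-2d}$ via the angular estimate giving the matching lower bound, and the resulting existence and positivity of $b$) is consistent with the quoted result. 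A minor point: for $d\in\frac{\N}{2}\setminus\N$ the profile $\Re(z^d)$ is not a polynomial, but since $2d\in\N$ the $2d$ nodal rays and the $d$-homogeneous segregated profiles $\Psi_i$ are still well defined, so this does not affect the construction.
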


The solution $(u_1^d,\dots,u_k^d)$ is modeled on the harmonic polynomial $\Im(z^d)$, as specified by the symmetries \eqref{eqn: symmetries algebraic}. In the quoted statement, the authors modeled their construction on the functions $\Re(z^d)$: it is straightforward to obtain an analogous result replacing the real part with the imaginary one.
\begin{figure}
	 \centering
        \begin{subfigure}[b]{0.45\textwidth}
                \centering
                \includegraphics[width=.9\textwidth]{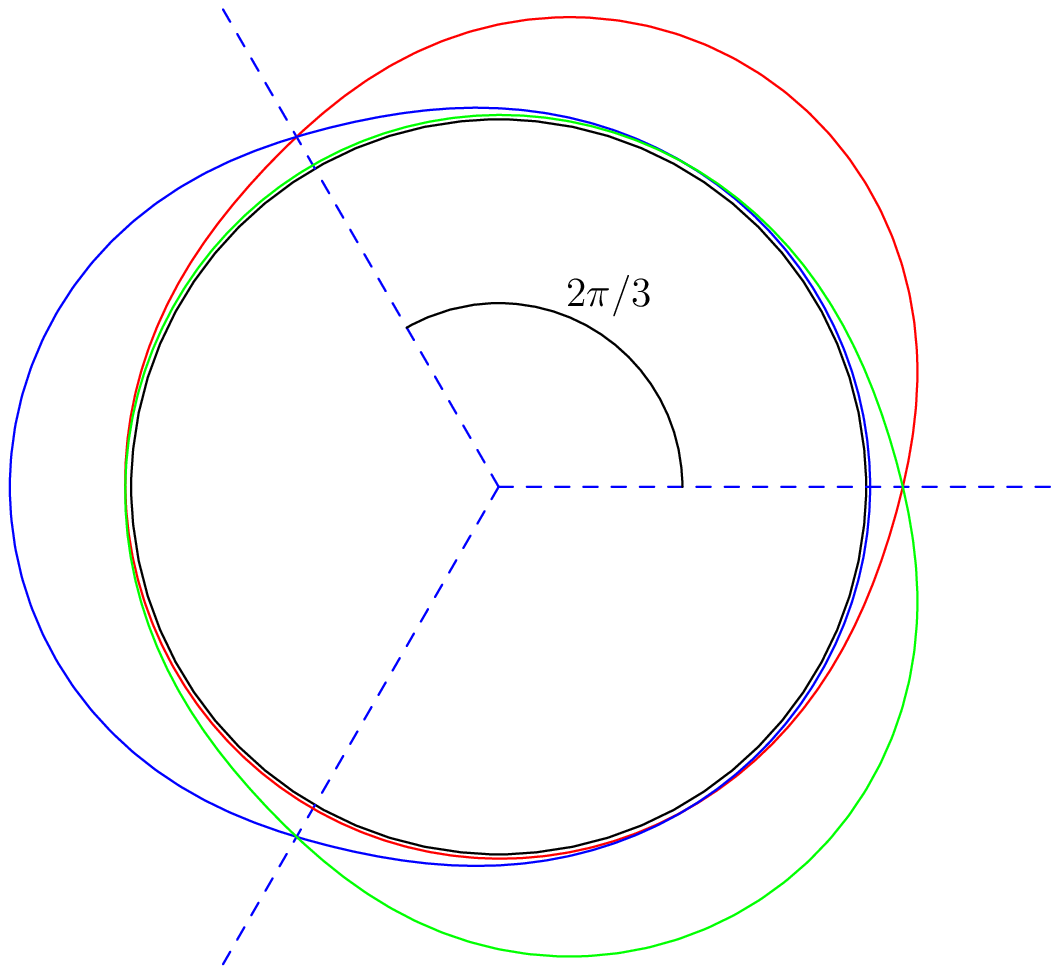}
        \end{subfigure}
        \begin{subfigure}[b]{0.45\textwidth}
                \centering
                \includegraphics[width=.9\textwidth]{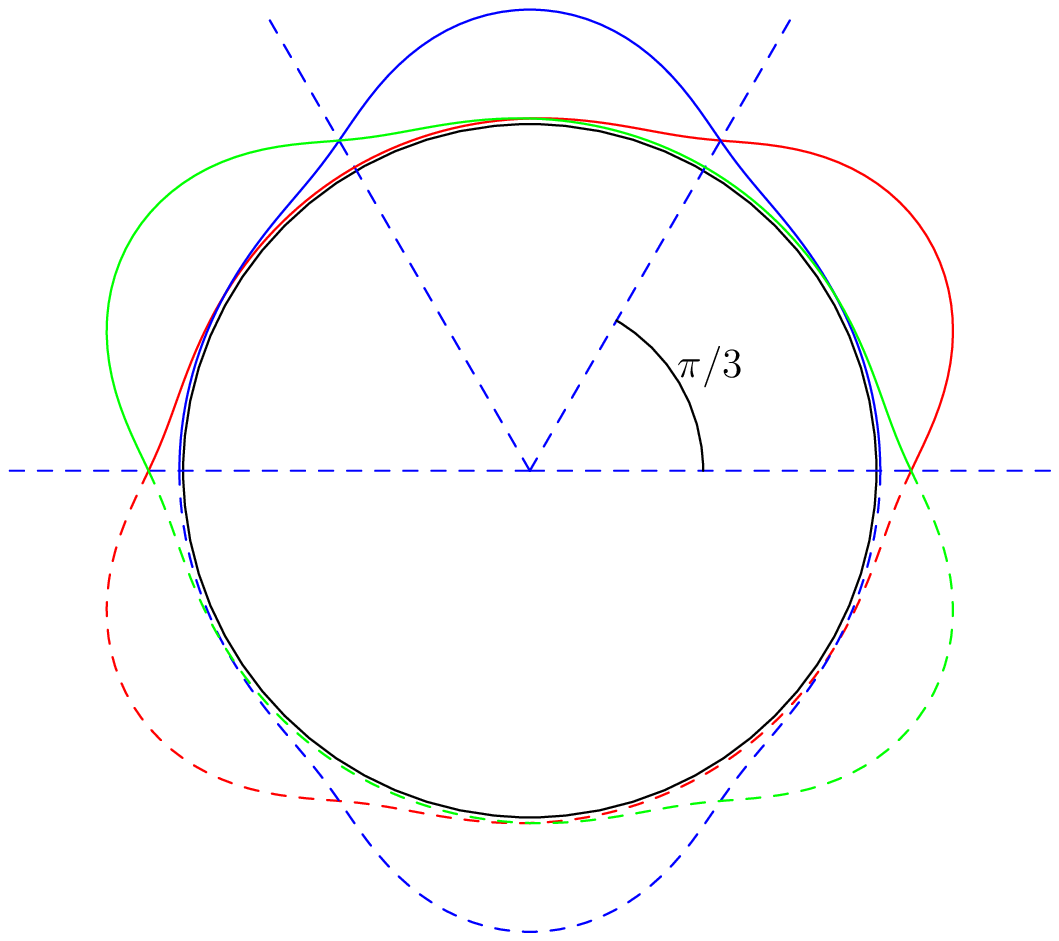}
        \end{subfigure}
        \\
        \begin{subfigure}[b]{0.45\textwidth}
                \centering
                \includegraphics[width=.9\textwidth]{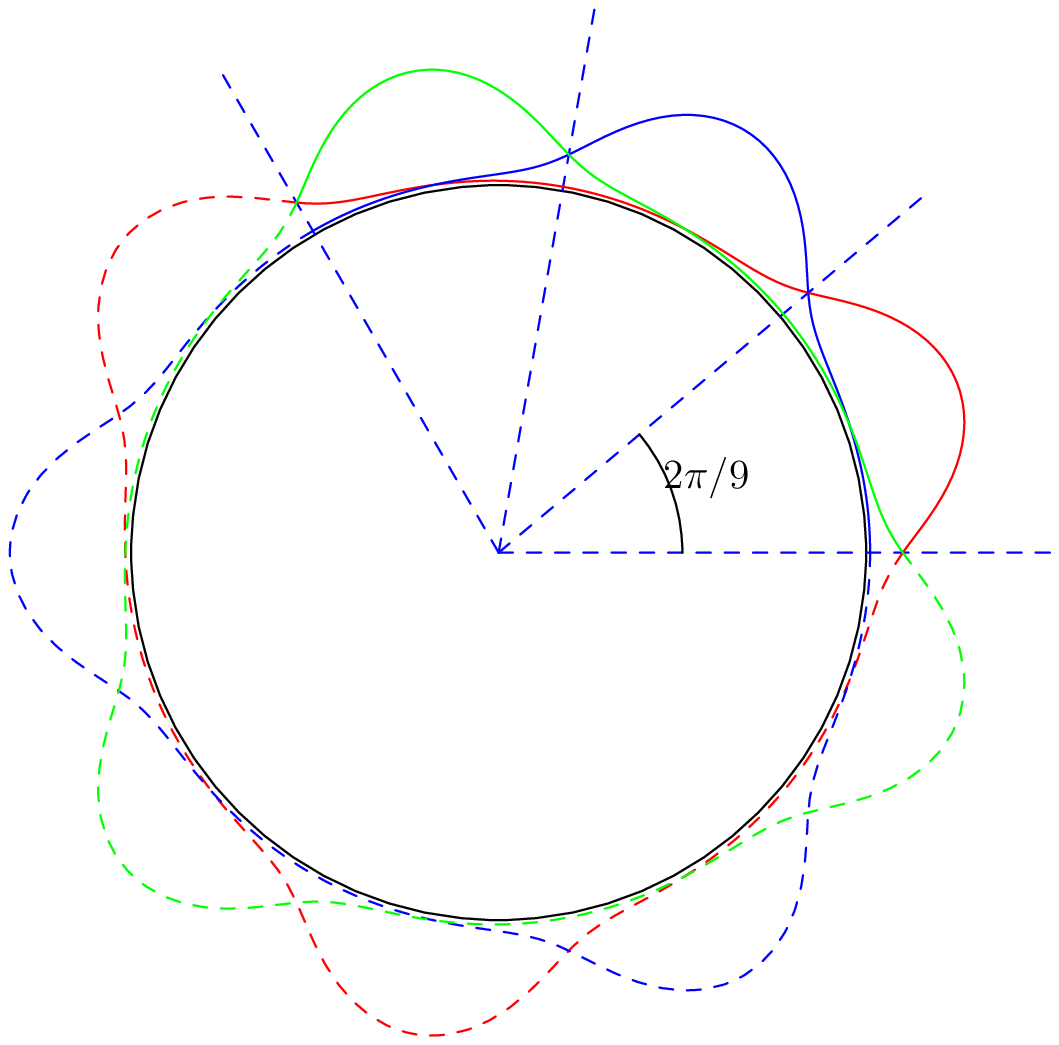}
        \end{subfigure}
        \begin{subfigure}[b]{0.45\textwidth}
                \centering
                \includegraphics[width=.9\textwidth]{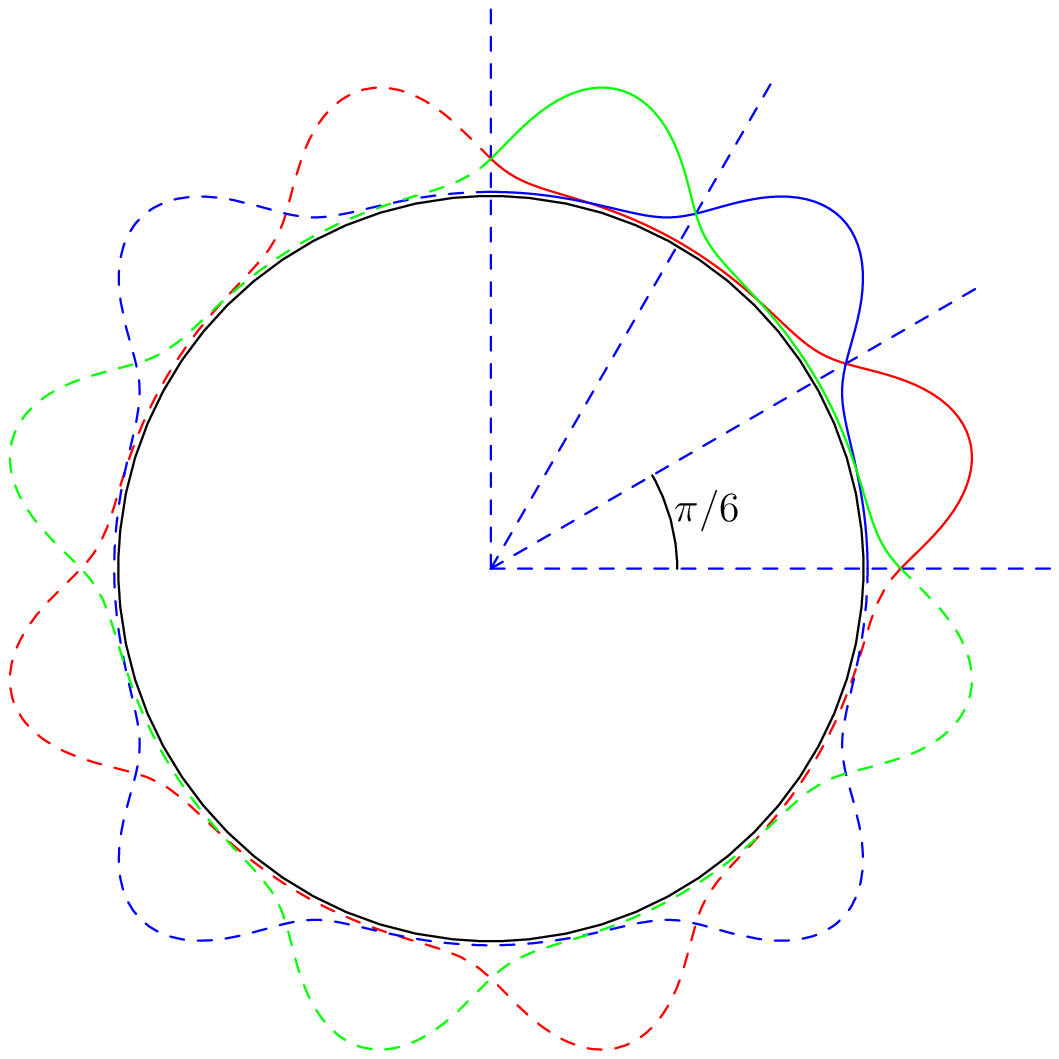}
        \end{subfigure}
  \caption{In the figure we represent some of the solutions obtained in Theorem \ref{thm: BeTeWaWe k-components}. Here the number of components is set as $k =3$: each component is drawn with a different color. On the other hand the periodicity (that is, how many times the patch of $3$-components is replicated in the circle) is given by $h=1$ (up left), $h=2$ (up right), $h=3$ (down left) and $h=4$ (down right), respectively. As a consequence, the growth rate $d$ varies as $d = \frac{3}{2}, 3, \frac{9}{2}, 6$, following the same order.}
\end{figure}
\begin{remark}\label{rem: simmetrie}
We point out that the symmetries \eqref{eqn: symmetries algebraic} implies that $u_1^d$ is symmetric with respect to the reflection with the axis $y= \tan\left(\frac{\pi}{2d}\right) x$.
\end{remark}

For a solution $(u_1,\dots,u_k)$ of system \eqref{eqn: system k comp} in $\R^2$, we introduce the functionals
\begin{equation}\label{eqn: def funct palle}
	\begin{split}
	E^{alg}(r;\Lambda) &:= \int_{B_r} \sum_{i=1}^{k}|\nabla u_i|^2 + \Lambda \sum_{1 \leq i < j \leq k} \left(u_i u_j\right)^2\\
	H^{alg}(r) &:= \frac{1}{r} \int_{\partial B_r} \sum_{i=1}^{k}\left( u_i \right)^2
	\end{split}
\end{equation}
The index $alg$ denotes the fact that these quantities are well suited to describe the growth of $(u_1,\dots,u_k)$ under the assumption that $(u_1,\dots,u_k)$ has algebraic growth. In particular, as proved in Lemma 2.1 of \cite{Fa} and Corollary A.8 of \cite{FaSo} for the case $k=2$, the Almgren quotient
\[
N^{alg}(r;1):= \frac{E^{alg}(r;1)}{H^{alg}(r)}
\]
is bounded in $r \in \R^+$ if and only if $(u_1,\dots,u_k)$ has algebraic growth.

It is not difficult to adapt the proof of Proposition 5.2 in \cite{BeTeWaWe} to obtain the following general  result (in the sense that it holds true for an arbitrary solution of \eqref{eqn: system k comp} in $\R^N$, for any dimension $N \ge 2$).

\begin{proposition}[see Proposition 5.2 of \cite{BeTeWaWe}]\label{prp: Almgren BeTeWaWe}
Let $N \ge 2$,
\[
\Lambda \in \begin{cases}  \left[1, \frac{N}{N-2}\right] & \text{if $N>2$} \\
\left[1,+\infty\right) & \text{if $N=2$},
\end{cases}
\]
and let $(u_1,\ldots,u_k)$ be a solution of \eqref{eqn: system k comp} in $\R^N$; the Almgren quotient
\[
	N^{alg}(r;\Lambda):= \frac{E^{alg} (r;\Lambda)}{H^{alg}(r)} = \ddfrac{r\int_{B_r} \sum_{i=1}^{k}|\nabla u_i|^2 + \Lambda \sum_{1 \leq i < j \leq k} \left(u_i u_j\right)^2}{\int_{\partial B_r} \sum_{i=1}^{k}\left( u_i \right)^2},
\]
is well defined in $(0,+\infty)$ and nondecreasing in $r$.
\end{proposition}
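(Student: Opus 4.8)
The plan is to prove monotonicity of $r\mapsto N^{alg}(r;\Lambda)$ by computing its logarithmic derivative and checking it is nonnegative. First I would note that by elliptic regularity $(u_1,\dots,u_k)$ is smooth, and since each $u_i>0$ we have $H^{alg}(r)>0$ for every $r>0$; hence $N^{alg}(\cdot;\Lambda)$ is a well-defined smooth function on $(0,+\infty)$ and only the inequality $(N^{alg})'(r;\Lambda)\ge 0$ remains. Two identities carry the argument. The first is Green's formula, which — using that each $u_i$ is subharmonic, $\Delta u_i=u_i\sum_{j\neq i}u_j^2\ge 0$ — gives
\[
\int_{\pa B_r}\sum_i u_i\,\pa_\nu u_i=\int_{B_r}\Bigl(\sum_i|\nabla u_i|^2+2\sum_{i<j}(u_iu_j)^2\Bigr).
\]
The second is the Pohozaev--Rellich identity obtained by testing the $i$-th equation with $x\cdot\nabla u_i$, integrating by parts, and summing over $i$:
\[
\Bigl(1-\tfrac N2\Bigr)\int_{B_r}\sum_i|\nabla u_i|^2+\frac r2\int_{\pa B_r}\Bigl(\sum_i|\nabla u_i|^2-2\sum_i(\pa_\nu u_i)^2\Bigr)=\frac N2\int_{B_r}\sum_{i<j}(u_iu_j)^2-\frac r2\int_{\pa B_r}\sum_{i<j}(u_iu_j)^2 .
\]

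Next I would differentiate. One has $(E^{alg})'(r;\Lambda)=\int_{\pa B_r}\bigl(\sum_i|\nabla u_i|^2+\Lambda\sum_{i<j}(u_iu_j)^2\bigr)$, and writing $H^{alg}(r)=r^{N-2}\int_{\pa B_1}\sum_i u_i^2(r\omega)\,\mathrm{d}\omega$ yields $(H^{alg})'(r)=\frac{N-2}{r}H^{alg}(r)+\frac2r\int_{\pa B_r}\sum_i u_i\,\pa_\nu u_i$. Substituting the Pohozaev identity into $(E^{alg})'$ — to trade the boundary Dirichlet integral for $2\int_{\pa B_r}\sum_i(\pa_\nu u_i)^2$ plus lower-order terms — and Green's formula into $(H^{alg})'$, after simplification $\frac{d}{dr}\log N^{alg}(r;\Lambda)$ splits into two pieces. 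The first is the \emph{Cauchy--Schwarz piece}
\[
\frac{2\int_{\pa B_r}\sum_i(\pa_\nu u_i)^2}{E^{alg}(r;\Lambda)}-\frac{2\int_{\pa B_r}\sum_i u_i\,\pa_\nu u_i}{\int_{\pa B_r}\sum_i u_i^2},
\]
and the second equals $\frac{1}{r\,E^{alg}(r;\Lambda)}$ times
\[
\bigl(N-(N-2)\Lambda\bigr)\int_{B_r}\sum_{i<j}(u_iu_j)^2+(\Lambda-1)\,r\int_{\pa B_r}\sum_{i<j}(u_iu_j)^2 .
\]
This is exactly where the hypotheses enter: $\Lambda\ge 1$ makes the last summand nonnegative, while $\Lambda\le\frac{N}{N-2}$ (with no upper restriction when $N=2$, where the coefficient $N-(N-2)\Lambda$ equals $2$) makes the first summand nonnegative, so the second piece is $\ge 0$.

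It then remains to control the Cauchy--Schwarz piece. By the Cauchy--Schwarz inequality $\bigl(\int_{\pa B_r}\sum_i u_i\pa_\nu u_i\bigr)^2\le\bigl(\int_{\pa B_r}\sum_i u_i^2\bigr)\bigl(\int_{\pa B_r}\sum_i(\pa_\nu u_i)^2\bigr)$ together with Green's formula, one finds that this piece is bounded below by $(2-\Lambda)\int_{B_r}\sum_{i<j}(u_iu_j)^2$ times a positive factor; it is therefore $\ge 0$ as soon as $\Lambda\le 2$ — in particular for every admissible $\Lambda$ when $N\ge 4$, since then $\frac{N}{N-2}\le 2$ — and, combined with the second piece, this gives $(N^{alg})'(r;\Lambda)\ge 0$. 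In the residual range $2<\Lambda\le\frac{N}{N-2}$, which occurs only for $N\in\{2,3\}$, the deficit $(\Lambda-2)\int_{B_r}\sum_{i<j}(u_iu_j)^2$ must be absorbed by the zeroth-order piece; here one invokes the subharmonicity of the $u_i$ and the Sobolev embedding $H^1\hookrightarrow L^4$ (available in those dimensions) to estimate $\int_{B_r}\sum_{i<j}(u_iu_j)^2$ against the Dirichlet energy and $\int_{\pa B_r}\sum_i u_i^2$, so that the restriction on $\Lambda$ makes the accounting close. The hard part is precisely this last step in the case $\Lambda>2$; the rest is a routine transcription of the proof of Proposition 5.2 in \cite{BeTeWaWe}.
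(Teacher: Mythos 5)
Your main computation is correct and is, in substance, the paper's own proof: the identity you derive by testing with $x\cdot\nabla u_i$ is exactly the Pohozaev identity behind equation (5.3) of \cite{BeTeWaWe} used in \eqref{eqn: de E lambda}, and your splitting of $\tfrac{\mathrm{d}}{\mathrm{d}r}\log N^{alg}(r;\Lambda)$ into the Cauchy--Schwarz piece plus $\frac{1}{rE^{alg}(r;\Lambda)}\bigl[(N-(N-2)\Lambda)\int_{B_r}\sum_{i<j}(u_iu_j)^2+(\Lambda-1)r\int_{\pa B_r}\sum_{i<j}(u_iu_j)^2\bigr]$ reproduces, after multiplying back by $N^{alg}$, the paper's displayed lower bound for $(N^{alg})'(r;\Lambda)$. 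Your treatment of the Cauchy--Schwarz piece is also the right one: Green's formula gives $\int_{\pa B_r}\sum_i u_i\pa_\nu u_i=\int_{B_r}\bigl(\sum_i|\nabla u_i|^2+2\sum_{i<j}(u_iu_j)^2\bigr)$, and Cauchy--Schwarz bounds that piece from below by $\frac{2\int_{\pa B_r}\sum_i u_i\pa_\nu u_i}{E^{alg}(r;\Lambda)\int_{\pa B_r}\sum_i u_i^2}\,(2-\Lambda)\int_{B_r}\sum_{i<j}(u_iu_j)^2$, nonnegative precisely when $\Lambda\le2$. This restriction is also hidden in the paper's ``proceeding as in the proof of Proposition 5.2 of \cite{BeTeWaWe}'': that proof replaces the energy by $\int_{\pa B_r}\sum_i u_i\pa_\nu u_i$ before applying Cauchy--Schwarz, which requires the interaction coefficient $\Lambda$ to be at most the coefficient $2$ produced by Green's formula. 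So for $1\le\Lambda\le2$ (hence for the whole stated range when $N\ge4$, and in particular for $\Lambda=2$, $N=2$, the only case the paper ever uses) your proof is complete and coincides with the paper's.

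The gap is your last paragraph, and it cannot be filled as sketched. For $2<\Lambda\le\frac{N}{N-2}$ the missing inequality is, pointwise in $r$, $(N-(N-2)\Lambda)\int_{B_r}W+(\Lambda-1)r\int_{\pa B_r}W\ge 2(\Lambda-2)\,\frac{r\int_{\pa B_r}\sum_i u_i\pa_\nu u_i}{\int_{\pa B_r}\sum_i u_i^2}\int_{B_r}W$, where $W=\sum_{i<j}(u_iu_j)^2$; the factor $\frac{r\int_{\pa B_r}\sum_i u_i\pa_\nu u_i}{\int_{\pa B_r}\sum_i u_i^2}$ dominates $N^{alg}(r;1)$ and is unbounded over the class of all solutions, so no subharmonicity or Sobolev-embedding estimate with universal constants can absorb the deficit. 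Worse, for $N=2$ the monotonicity genuinely fails when $\Lambda$ is large: for the solutions of Theorem \ref{thm: BeTeWaWe k-components}, the function $f(r):=r\int_{B_r}W/\int_{\pa B_r}\sum_i u_i^2$ is positive, is $O(r^2)$ as $r\to0^+$, and tends to $0$ as $r\to+\infty$ (by the facts recalled in Lemma \ref{lem:bound on N}), hence it is not nondecreasing; since $N^{alg}(\cdot;\Lambda)=N^{alg}(\cdot;1)+(\Lambda-1)f$ with $N^{alg}(\cdot;1)$ bounded, $N^{alg}(\cdot;\Lambda)$ must decrease somewhere once $\Lambda$ is large enough. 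So you should not attempt the residual range $\Lambda>2$: what your argument proves --- and what the paper's proof actually establishes and uses --- is monotonicity for $\Lambda\in[1,2]$ when $N\in\{2,3\}$ and for the full stated interval $\bigl[1,\frac{N}{N-2}\bigr]$ when $N\ge4$.
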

\begin{proof}
We observe that
\begin{equation}\label{eqn: de E lambda}
\begin{split}
\frac{\de }{\de r}E^{alg}(r;\Lambda) &= \frac{\de}{\de r} \left(\frac{1}{r^{N-2}}\int_{B_r} \sum_i|\nabla u_i|^2 + \sum_{i < j} \left(u_i u_j\right)^2\right) + \frac{\de}{\de r} \left(\frac{\Lambda-1}{r^{N-2}}\int_{B_r} \sum_{ i<j} \left(u_i u_j\right)^2\right) \\
& = \frac{2}{r^{N-2}} \int_{\pa B_r} \sum_{i} (\pa_\nu u_i)^2+ \frac{2}{r^{N-1}} \int_{B_r} \sum_{ i < j} \left(u_i u_j\right)^2  \\
& \qquad + \frac{(2-N)(\Lambda-1)}{r^{N-1}} \int_{B_r} \sum_{i<j} u_i^2 u_j^2 + \frac{\Lambda-1}{r^{N-2}}\int_{\pa B_r} \sum_{i<j} u_i^2 u_j^2,
\end{split}
\end{equation}
where we used equation (5.3) in \cite{BeTeWaWe}. Proceeding as in the proof of Proposition 5.2 in \cite{BeTeWaWe}, one gets
\[
\begin{split}
\frac{\de }{\de r}N^{alg}(r;\Lambda) \ge (2+(\Lambda-1)(2-N)) \frac{\int_{B_r} \sum_{i<j} u_i^2 u_j^2  }{r^{N-1} H^{alg}(r)} + \frac{(\Lambda-1)\int_{\pa B_r} \sum_{i<j} u_i^2 u_j^2}{r^{N-2}H^{alg}(r)},
\end{split}
\]
which is $\ge 0$ by our assumption on $\Lambda$.
\end{proof}

\begin{remark}
In \cite{BeTeWaWe} the authors consider the case $\Lambda=1$.
\end{remark}

We work in the plane $\R^2$, so that it is possible to choose $\Lambda=2$ in Proposition \ref{prp: Almgren BeTeWaWe}. We denote $E_d(\cdot;\Lambda)$ and $H_d$ the quantities defined in \eqref{eqn: def funct palle} when referred to the functions $(u_1^d,\ldots,u_k^d)$ defined in Theorem \ref{thm: BeTeWaWe k-components}; also, we denote $\displaystyle N_d(\cdot; \Lambda):= \frac{E_d(\cdot;\Lambda)}{H_d}$. In case $\Lambda=2$, we will simply write $E_d$ and $N_d$ to ease the notation.

\begin{lemma}\label{lem:bound on N}
Let $(u_1^d,\ldots,u_k^d)$ be defined in Theorem \ref{thm: BeTeWaWe k-components}. There holds $\lim_{r \to +\infty} N_d(r)=d$.
\end{lemma}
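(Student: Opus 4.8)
The plan is to play off against each other two facts about $(u_1^d,\dots,u_k^d)$ coming from Theorem \ref{thm: BeTeWaWe k-components} and Proposition \ref{prp: Almgren BeTeWaWe}: the monotonicity of $N_d = N^{alg}(\cdot;2)$ (Proposition \ref{prp: Almgren BeTeWaWe} with $N=2$, $\Lambda=2$), which ensures that $L:=\lim_{r\to+\infty}N_d(r)$ exists in $(0,+\infty]$, and the sharp asymptotics $H_d(r)=r^{-1}\int_{\partial B_r}\sum_i(u_i^d)^2\sim b\,r^{2d}$, coming from the stated limit $r^{-(1+2d)}\int_{\partial B_r}\sum_i(u_i^d)^2\to b$. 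Morally, $N_d$ controls the \emph{instantaneous} logarithmic growth rate of $H_d$, whereas the asymptotics fix its \emph{averaged} rate; monotonicity forces the two to coincide, giving $L=d$.

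First I would record the differential identity $H_d'(r)=\frac{2}{r}E_d(r;2)$, equivalently $\frac{\de}{\de r}\log H_d(r)=\frac{2}{r}N_d(r)$. This is the only computation needed: testing the $i$-th equation of \eqref{eqn: system k comp} with $u_i^d$ on $B_r$ and summing over $i$, the divergence theorem gives $\int_{\partial B_r}\sum_i u_i^d\,\partial_\nu u_i^d=\int_{B_r}\sum_i|\nabla u_i^d|^2+\sum_i(u_i^d)^2\sum_{j\ne i}(u_j^d)^2=E_d(r;2)$ (using $\sum_i u_i^2\sum_{j\ne i}u_j^2=2\sum_{i<j}u_i^2u_j^2$); on the other hand, writing $H_d(r)=\int_{\partial B_1}\sum_i(u_i^d(r\omega))^2\,\de\omega$ — here two-dimensionality is used — and differentiating under the integral sign yields $H_d'(r)=\frac{2}{r}\int_{\partial B_r}\sum_i u_i^d\,\partial_\nu u_i^d$. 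In particular $H_d$ is positive and nondecreasing, so passing to the logarithmic derivative is legitimate.

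Next I would integrate this identity over a dyadic interval: $\log\frac{H_d(2r)}{H_d(r)}=\int_r^{2r}\frac{2}{s}N_d(s)\,\de s$. By the asymptotics of $H_d$ the left-hand side converges to $2d\log 2$ as $r\to+\infty$. For the right-hand side I would sandwich, using monotonicity of $N_d$: $2N_d(r)\log 2\le\int_r^{2r}\frac{2}{s}N_d(s)\,\de s\le 2L\log 2$ (the upper bound vacuous if $L=+\infty$). Since the middle term stays bounded, $L=+\infty$ is impossible; and then, letting $r\to+\infty$, the lower bound tends to $2L\log 2$, so the squeeze forces $\int_r^{2r}\frac{2}{s}N_d(s)\,\de s\to 2L\log 2$. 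Comparing with $2d\log 2$ gives $L=d$.

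The argument is short; the only point requiring care — the ``main obstacle'', such as it is — is not to presuppose $L<+\infty$, which is exactly why I would argue through the dyadic integration rather than by differentiating the asymptotic expansion of $H_d$ (which is not licit). As an alternative one could start from \eqref{eqn:asympt N} — which reads $N^{alg}(\cdot;1)\to d$ — and reduce the statement to showing that $\int_{B_r}\sum_{i<j}(u_i^d u_j^d)^2$ is negligible compared with $H_d(r)$; but the route through $H_d'$ is more direct and bypasses this separate estimate.
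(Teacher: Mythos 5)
Your argument is correct, and it reaches the conclusion by a genuinely different route than the paper. The paper simply factors $N_d(r)=\frac{E_d(r;2)}{r^{2d}}\cdot\frac{r^{2d}}{H_d(r)}$ and delegates the whole content to two quoted facts: the asymptotics \eqref{eqn:asympt N} for $N_d(\cdot;1)$ and Corollary 5.8 of \cite{BeTeWaWe}, which says $\lim_{r\to+\infty}E_d(r;2)/r^{2d}=\lim_{r\to+\infty}E_d(r;1)/r^{2d}$, i.e.\ that the interaction term is negligible at the scale $r^{2d}$. You bypass both of these external inputs: your only ingredients are the monotonicity of $N_d=N^{alg}(\cdot;2)$ (Proposition \ref{prp: Almgren BeTeWaWe}, with $\Lambda=2$ admissible since $N=2$), the growth $H_d(r)/r^{2d}\to b\in(0,+\infty)$ stated in Theorem \ref{thm: BeTeWaWe k-components}, and the identity $H_d'(r)=\frac{2}{r}E_d(r;2)$, i.e.\ $(\log H_d(r))'=\frac{2}{r}N_d(r)$ --- the same identity the paper uses without comment in Proposition \ref{prp: doubling BeTeWaWe} --- which you derive correctly by testing the equations with $u_i^d$ on $B_r$ (the factor $2$ coming from $\sum_i u_i^2\sum_{j\neq i}u_j^2=2\sum_{i<j}u_i^2u_j^2$). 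The dyadic integration and squeeze are carried out properly, including the exclusion of $\lim N_d=+\infty$, and there is no circularity since you do not invoke Proposition \ref{prp: doubling BeTeWaWe} itself. What your proof buys is self-containedness: it needs neither Corollary 5.8 of \cite{BeTeWaWe} nor even \eqref{eqn:asympt N}, only the quantitative growth of $H_d$; what the paper's proof buys is brevity, at the price of resting on the external estimate for the interaction term.
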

\begin{proof}
It is an easy consequence of the \eqref{eqn:asympt N} and of Corollary 5.8 in \cite{BeTeWaWe}, where it is proved that for the solution $(u_1^d,\ldots,u_k^d)$ there holds
\[
\lim_{r \to +\infty} \frac{E_d(r;2)}{r^{2d}}= \lim_{r \to +\infty} \frac{E_d(r;1)}{r^{2d}}.
\]
Therefore,
\begin{align*}
\lim_{r \to +\infty} N_d(r) &= \lim_{r \to +\infty} \frac{E_d(r;2)}{H_d(r)} = \lim_{r \to +\infty} \frac{E_d(r;2)}{r^{2d}} \cdot \lim_{r \to +\infty}\frac{r^{2d}}{H_d(r)} \\
&= \lim_{r \to +\infty} \frac{E_d(r;1)}{r^{2d}}\cdot \lim_{r \to +\infty} \frac{r^{2d}}{H_d(r)} = \lim_{r \to +\infty} N_d(r;1) = d. \qedhere
\end{align*}
\end{proof}

As a consequence, the following doubling property holds true:

\begin{proposition}[See Proposition 5.3 of \cite{BeTeWaWe}]\label{prp: doubling BeTeWaWe}
For any $0< r_1 < r_2$ it holds
\[
	\frac{H_d(r_2)}{r_2^{2d}} \leq \frac{H_d(r_1)}{r_1^{2d}}.
\]
\end{proposition}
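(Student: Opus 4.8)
The plan is to transcribe the argument of Corollary \ref{doubling}($ii$) to the present setting of balls in $\R^2$. The heart of the matter is to identify the logarithmic derivative of $H_d$ with the Almgren quotient $N_d$, exactly as in subsection \ref{sub:monot Neumann} the relation $H'=2E$ was the key to the doubling estimate.

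First I would establish the identity $H_d'(r) = \tfrac{2}{r}E_d(r)$ for every $r>0$. Since we work in dimension $N=2$, for a $\mathcal{C}^1$ function $g$ one has $\frac{\mathrm{d}}{\mathrm{d}r}\int_{\pa B_r} g = \int_{\pa B_r}\bigl(\pa_\nu g + \tfrac1r g\bigr)$. Applying this with $g=\sum_i (u_i^d)^2$, using $\pa_\nu (u_i^d)^2 = 2u_i^d\pa_\nu u_i^d$, and testing \eqref{eqn: system k comp} with $u_i^d$ on $B_r$ — which gives $\int_{\pa B_r}\sum_i u_i^d\pa_\nu u_i^d = \int_{B_r}\sum_i |\nabla u_i^d|^2 + 2\sum_{i<j}(u_i^d u_j^d)^2 = E_d(r)$, the coefficient $2$ here being precisely the one appearing in $E^{alg}(\cdot;2)$ — one obtains $\frac{\mathrm{d}}{\mathrm{d}r}\bigl(rH_d(r)\bigr) = 2E_d(r)+H_d(r)$, that is $rH_d'(r)=2E_d(r)$. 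As $(u_1^d,\dots,u_k^d)$ is nontrivial, $H_d$ is positive and smooth on $(0,+\infty)$, hence
\[
\frac{\mathrm{d}}{\mathrm{d}r}\log H_d(r) = \frac{2E_d(r)}{rH_d(r)} = \frac{2N_d(r)}{r}.
\]

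Next I would invoke Proposition \ref{prp: Almgren BeTeWaWe} with $\Lambda=2$ (admissible since $N=2$) together with Lemma \ref{lem:bound on N}: the quotient $N_d$ is nondecreasing and $\lim_{r\to+\infty}N_d(r)=d$, so $N_d(r)\le d$ for every $r>0$. Plugging this into the previous identity,
\[
\frac{\mathrm{d}}{\mathrm{d}r}\log\!\left(\frac{H_d(r)}{r^{2d}}\right) = \frac{2N_d(r)}{r}-\frac{2d}{r}\le 0,
\]
so $r\mapsto H_d(r)/r^{2d}$ is nonincreasing on $(0,+\infty)$; integrating between $r_1$ and $r_2$ yields the asserted inequality.

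I do not anticipate any genuine difficulty here. The only point requiring a little care is the computation of $H_d'$: one must keep track of the normalizing factor $1/r$ in the definition of $H^{alg}$, and it is essential that $E_d=E^{alg}(\cdot;2)$ (with coupling coefficient $2$, not $1$) is exactly the boundary flux $\int_{\pa B_r}\sum_i u_i^d\pa_\nu u_i^d$ — this is what makes $\frac{\mathrm{d}}{\mathrm{d}r}\log H_d = 2N_d/r$ work cleanly. Beyond that, the proof is a verbatim repetition of the proof of Corollary \ref{doubling}($ii$), with $e^{2\overline d r}$ replaced by $r^{2d}$.
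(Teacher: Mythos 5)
Your proposal is correct and follows essentially the same route as the paper: the paper's "direct computation" is exactly the identity $\tfrac{\de}{\de r}\log H_d(r)=\tfrac{2N_d(r)}{r}$ (equivalently $rH_d'(r)=2E_d(r)$, obtained by testing the system with $u_i^d$ on $B_r$, where the coupling coefficient $2$ in $E_d=E^{alg}(\cdot;2)$ is indeed what makes the flux identity exact), combined with $N_d(r)\le d$ from the monotonicity of $N_d(\cdot;2)$ and Lemma \ref{lem:bound on N}, followed by integration. You have merely written out the computation the paper leaves implicit, so there is nothing to correct.
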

\begin{proof}
A direct computation shows that
\[
	\frac{\de }{\de r} \log \frac{H_d(r)}{r^{2d}} = \frac{2N_d(r)}{r}-\frac{2d}{r} \leq 0;
\]
an integration gives the thesis.
\end{proof}

Let us consider the scaling
\begin{equation}\label{eqn: first scaling}
    (u_{1,R}^d,\dots, u_{k,R}^d) := \left(\frac{2d}{k H_d(R)}\right)^{\frac12} \left(u_1^d(Rx,Ry), \dots, u_k^d(Rx,Ry)\right),
\end{equation}
where $R$ will be determined later as a function of $d$. We see that
\begin{equation}\label{eqn: system in C_R exp k}
	\begin{cases}
		\displaystyle -\Delta u_{i,R}^d=- \beta_R^d \, u_{i,R}^d \sum_{j\neq i} \left(u_{j,R}^d\right)^2 & \text{in $\R^2$}\\
		\displaystyle \int_{\partial B_1} \sum_{i=1}^k \left(u_{i,R}^d\right)^2 = \frac{2d}{k}
	\end{cases}
\end{equation}
where $\beta_R^d := \frac{k}{2d}H_d(R) R^2$.

\begin{remark}\label{rem: su beta}
As a function of $R$, $\beta^d_R$ is continuous and such that $\beta_R^d \to 0$ if $R \to 0$ and $\beta_R^d \to \infty$ if $R \to \infty$.
\end{remark}

Accordingly with our scaling, we introduce the new Almgren quotient
\[
	N_{d,R}(r):= \frac{E_{d,R} (r)}{H_R(r)} = \ddfrac{r\int_{B_r} \sum_{i=1}^k |\nabla u_{i,R}^d|^2 + 2\beta_R^d \sum_{1 \le  i < j \le k} \left(u_{i,R}^d \, u_{j,R}^d\right)^2}{\int_{\partial B_r} \sum_{i=1}^k \left( u_{i,R}^d \right)^2}.
\]
We point out that $N_{d,R}(r)=N_d(Rr)$, so that from Lemma \ref{lem:bound on N} and the monotonicity of $N_{d}$ we deduce
\begin{equation}\label{eqn:nound on N R}
N_{d,R}(r) \le d \qquad \forall r,R>0,
\end{equation}
for every $d$. By the symmetries, the solution $(u_{1,R}^d,\dots,u_{k,R}^d)$ is $\frac{k\pi}{d}$-periodic with respect to the angular component, thus it is convenient to restrict our attention to the cones
\[
	S^d_r := \left\{(\rho,\theta): \rho \in (0,r), \theta \in \left(0,\frac{k\pi}{d}\right)\right\} \quad \text{and} \quad S^d:= \left\{(\rho,\theta); \rho>0, \theta \in \left(0,\frac{k\pi}{d}\right) \right\}.
\]
The boundary $\partial S^d_r$ can be decomposed as $\partial S^d_r = \partial_p S^d_r \cup \partial _r S^d_r$, where
\[
	\partial_p S^d_r:= (0,r) \times \left\{0,\frac{k\pi}{d}\right\} \quad \text{and} \quad \partial_r S^d_r :=  \{r\} \times \left(0,\frac{k\pi}{d}\right).
\]
Taking into account the periodicity of $(u_{1,R}^d, \ldots, u_{k,R}^d)$, we note that $(u_{1,R}^d, \ldots, u_{k,R}^d)$ has periodic boundary conditions on $\pa_p S^d_r$; furthermore
\begin{equation}\label{eqn: scale 1}
	\begin{split}
		E_{d,R}(r) &= \frac{2d}{k} \int_{S^d_r} \sum_i |\nabla u_{i,R}^d|^2 + 2\beta_R^d \sum_{i < j} \left(u_{i,R}^d \, u_{j,R}^d\right)^2\\
		H_{d,R}(r) &=\frac{2d}{k r}  \int_{\partial_r S^d_r} \sum_i \left( u_{i,R}^d \right)^2 \\
        N_{d,R}(r) & = \ddfrac{r\int_{S^d_r} \sum_i|\nabla u_{i,R}^d|^2 + 2\beta_R^d \sum_{ i < j } \left(u_{i,R}^d \, u_{j,R}^d\right)^2}{\int_{\partial S^d_r} \sum_i \left( u_{i,R}^d \right)^2}.
	\end{split}
\end{equation}

\subsection{A blow-up in a neighborhood of $(1,0)$}

In order to pursue our strategy, we consider the further scaling
\begin{equation}\label{eqn:further}
    (\hat u_{1,R}^d(x,y), \dots, \hat u_{k,R}^d (x,y)) = \frac{\sqrt{\beta_R^d}}{d} \left(u_{1,R}^d\left(1+\frac{x}{d},\frac{y}{d}\right), \dots, u_{k,R}^d\left(1+\frac{x}{d},\frac{y}{d}\right) \right).
\end{equation}
Accordingly, we will consider the scaled domains \(	\hat S^d_r = d \left(S^d_r -(1,0)\right) \) and $\hat S^d = d \left(S^d -(1,0)\right)$ and the respective boundaries. Having in mind to let $d \to \infty$, we observe that this scaling is a blow-up centered in the point $(1,0)$. It is easy to verify that $(\hat u_{1,R}^d,\ldots, \hat u_{k,R}^d)$ solves (see \eqref{eqn: system in C_R exp k})
\begin{equation}\label{eqn: syst R d hat}
	\begin{cases}
		\displaystyle -\Delta \hat u_{i,R}^d=- \hat u_{i,R}^d \sum_{j \ne i} \left(\hat u_{j,R}^d \right)^2 & \text{in $\hat S^d$} \\
		\displaystyle \int_{\partial_r \hat S^d_1} \sum_{i=1}^k \left(\hat u_{i,R}^d\right)^2 = \frac{2\beta_R^d}{k},
	\end{cases}
\end{equation}
with suitable periodic conditions on $\partial \hat S^d$. A direct computation shows that from \eqref{eqn: scale 1} it follows
\[
 	N_{d,R}(r) =  d \ddfrac{r \int_{\hat S^d_r} \sum_i |\nabla \hat u_{i,R}^d|^2 + 2 \sum_{i<j} \left(\hat u_{i,R}^d \hat u_{j,R}^d\right)^2}{\int_{\partial_r \hat S^d_r} \sum_i \left(\hat u_{i,R}^d\right)^2},
\]
where in the new coordinates
\begin{equation}\label{eqn: r}
	r = \sqrt{\left(1+\frac{x}{d}\right)^2+\left(\frac{y}{d}\right)^2}.
\end{equation}
We are then led to define a new Almgren quotient for the scaled functions $(\hat u_{1,R}^d,\ldots, \hat u_{k,R}^d)$:
\[
\begin{split}
   \hat E_{d,R}(r)& := \int_{\hat S^d_r} \sum_{i=1}^k |\nabla \hat u_{i,R}^d|^2 + 2 \sum_{1 \le i<j \le k} \left(\hat u_{i,R}^d \hat u_{j,R}^d\right)^2 \\
    \hat H_{d,R}(r) &:= \frac{1}{r}\int_{\partial_r \hat S^d_r} \sum_{i=1}^k \left( \hat u_{i,R}^d \right)^2 \\
	\hat N_{d,R}(r) &:= \frac{\hat E_{d,R}(r)}{\hat H_{d,R}(r)} = \frac{1}{d} N_{d,R}(r).
	\end{split}
\]
From the equation \eqref{eqn:nound on N R}, we deduce
\begin{equation}\label{eqn: estim on hat N}
\hat N_{d,R}(r) \le 1 \qquad \forall r,R>0, \ \forall d \in \frac{\N}{2}.
\end{equation}

In order to understand the behavior of $(\hat u_{1,R}^d,\ldots, \hat u_{k,R}^d)$ when $d \to \infty$, we fix $R=R(d)$ to get a non-degeneracy condition.

\begin{lemma}\label{lem:choice of R}
For every $d \in \frac{\N}{2}$ there exists $R_d>0$ such that
\[
\hat H_{d,R_d}(1)= \int_{\pa_r \hat S_1^d} \sum_i \left( \hat u_{i,R_d}^d \right)^2 =1.
\]
\end{lemma}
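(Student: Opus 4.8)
The plan is to observe that the required normalization $\hat H_{d,R}(1)=1$ is, once the definitions are unwound, a single scalar equation in the unknown $R$, and to solve it by the intermediate value theorem.

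First I would recall that by definition $\hat H_{d,R}(1)=\int_{\partial_r\hat S^d_1}\sum_{i=1}^k(\hat u_{i,R}^d)^2$, and that this integral has already been identified in \eqref{eqn: syst R d hat}: the normalization $\int_{\partial B_1}\sum_i(u_{i,R}^d)^2=\frac{2d}{k}$ from \eqref{eqn: system in C_R exp k}, together with the blow-up change of variables \eqref{eqn:further} (which stretches arc length on $\partial_r\hat S^d_1$ by a factor $d$), gives
\[
\hat H_{d,R}(1)=\int_{\partial_r\hat S^d_1}\sum_{i=1}^k(\hat u_{i,R}^d)^2=\frac{2\beta_R^d}{k},
\qquad\text{where } \beta_R^d=\frac{k}{2d}H_d(R)R^2 .
\]
Thus the lemma reduces to finding $R_d>0$ with $\beta_{R_d}^d=\frac{k}{2}$.

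Next I would record the properties of $R\mapsto\beta_R^d$ already stated in Remark \ref{rem: su beta}: it is continuous on $(0,+\infty)$, because $H_d(R)=\frac{1}{R}\int_{\partial B_R}\sum_i(u_i^d)^2$ depends continuously on $R$ by the smoothness of $(u_1^d,\dots,u_k^d)$; it tends to $0$ as $R\to 0^+$, since $\sum_i(u_i^d)^2$ is bounded near the origin, whence $\int_{\partial B_R}\sum_i(u_i^d)^2=O(R)$ and $\beta_R^d=O(R^2)$; and it tends to $+\infty$ as $R\to+\infty$, since Theorem \ref{thm: BeTeWaWe k-components} yields $\int_{\partial B_r}\sum_i(u_i^d)^2\sim b\,r^{1+2d}$ with $b>0$, so that $\beta_R^d\sim\frac{kb}{2d}R^{2+2d}$ and $d>0$.

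Finally I would apply the intermediate value theorem: the continuous function $R\mapsto\frac{2}{k}\beta_R^d$ takes values arbitrarily close to $0$ and arbitrarily large on $(0,+\infty)$, hence it attains the value $1$ at some $R_d>0$, and for that choice $\hat H_{d,R_d}(1)=1$, as claimed. I do not expect a genuine obstacle; the only point that deserves attention is confirming the identity $\hat H_{d,R}(1)=\frac{2}{k}\beta_R^d$ — i.e. that the Jacobian of the change of variables \eqref{eqn:further} on the boundary arc has been handled correctly — but this is precisely the normalization already recorded in \eqref{eqn: syst R d hat}.
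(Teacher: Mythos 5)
Your proposal is correct and follows essentially the same route as the paper: the paper likewise reduces the normalization to solving $\beta_{R}^d=\frac{k}{2}$ via the identity $\hat H_{d,R}(1)=\frac{2\beta_R^d}{k}$ from \eqref{eqn: syst R d hat}, and then invokes the continuity and limiting behavior of $R\mapsto\beta_R^d$ recorded in Remark \ref{rem: su beta} (i.e.\ the intermediate value theorem). Your additional justification of the remark's properties (smoothness of the solution near the origin and the growth rate from Theorem \ref{thm: BeTeWaWe k-components}) is a harmless elaboration of what the paper leaves implicit.
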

\begin{proof}
By \eqref{eqn: syst R d hat} we know that $\hat H_d(1) = \frac{2\beta_R^d}{k}$, so that we have to find $R_d$ such that $\beta_R^d= \frac{k}{2}$. As observed in Remark \ref{rem: su beta}, this choice is possible.
\end{proof}

We denote $(\hat u_1^d, \ldots, \hat u_k^d):= (\hat u_{1, R_d}^d, \ldots, \hat u_{k,R_d}^d)$, $\hat H_d:= \hat H_{d, R_d}$, $\hat E_d:= \hat E_{d,R_d}$, $\hat N_d:= \hat N_{d,R_d}$ and $\beta^d:= \beta^d_{R_d}$. We aim at proving that , up to a subsequence, the family $\left\{(\hat u_{1}^d,\dots,\hat  u_{k}^d): \ d \in \frac{\N}{2} \right\}$ converges, as $d \to +\infty$, to a solution of \eqref{eqn: system k comp}. To this aim, major difficulties arise from the fact that $\hat S^d_r$ and $\hat S^d$ depend on $d$; in the next Lemma we show that this problem can be overcome thanks to a convergence property of these domains.

\begin{lemma}\label{lem: scaling domini}
For any $r>1$, the sets $\hat S^d_r$ converge to $\R \times (0,k\pi)$ as $k \to +\infty$, in the sense that
\[
    \R \times (0,k\pi) = \inter \left(\bigcap_{n \in \frac{\N}{2}} \bigcup_{d > n} \hat S^d_r \right),
\]
where for $A \subset \R^2$ we mean that $\inter (A)$ denotes the inner part $A$. Analogously,
\[
\displaystyle \R \times (0,k\pi) = \inter \left(\bigcap_{n \in \frac{\N}{2}} \bigcup_{d > n} \hat S^d \right) \quad \text{and} \quad  (-\infty,0) \times (0,k\pi) = \inter \left(\bigcap_{n \in \frac{\N}{2}} \bigcup_{d > n} \hat S^d_1 \right),
\]
and for every $\bar x \in \R$
\[
    (-\infty,\bar x) \times (0,k\pi) = \inter \left(\bigcap_{n \in \frac{\N}{2}} \bigcup_{d > n} \hat S^d_{1+\frac{\bar x}{d}} \right).
\]
\end{lemma}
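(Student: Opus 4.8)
The plan is to pass to coordinates centred at $(1,0)$, where the scaled sector is cut out by one radial and one angular inequality, and to inspect each of these as $d\to+\infty$. I would set $\Psi_d(x,y):=\bigl(1+\tfrac{x}{d},\tfrac{y}{d}\bigr)$, so that $(x,y)\in\hat S^d_r$ if and only if $\Psi_d(x,y)\in S^d_r$; for $d$ large the point $\Psi_d(x,y)$ lies in the open right half-plane, its polar angle is $\theta_d(x,y)=\arctan\bigl(\tfrac{y}{d+x}\bigr)$, and
\[
(x,y)\in\hat S^d_r\iff\; 0<\Bigl(1+\tfrac{x}{d}\Bigr)^2+\Bigl(\tfrac{y}{d}\Bigr)^2<r^2\;\text{ and }\;0<\theta_d(x,y)<\frac{k\pi}{d}.
\]
Since the four statements differ only in their radial constraint, I would treat the first in detail and then indicate the (minor) changes. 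Writing $L_r:=\bigcap_{n\in\frac{\N}{2}}\bigcup_{d>n}\hat S^d_r$ for the set of points lying in $\hat S^d_r$ for infinitely many $d$, the goal is $\inter(L_r)=\R\times(0,k\pi)$.

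First I would prove $\R\times(0,k\pi)\subseteq\inter(L_r)$: given $(x_0,y_0)$ with $0<y_0<k\pi$, the radial quantity $\tfrac{2x_0}{d}+\tfrac{x_0^2+y_0^2}{d^2}$ tends to $0<r^2-1$, so the radial inequality holds for all large $d$; and since $\tan$ is increasing and $\tfrac{k\pi}{d}<\tfrac{\pi}{2}$ for $d$ large, the angular condition $0<\theta_d(x_0,y_0)<\tfrac{k\pi}{d}$ amounts to $0<\tfrac{y_0}{d+x_0}<\tan\bigl(\tfrac{k\pi}{d}\bigr)$, i.e.\ $0<y_0<(d+x_0)\tan\bigl(\tfrac{k\pi}{d}\bigr)$, which holds for large $d$ because the right-hand side converges to $k\pi>y_0$. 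Hence $(x_0,y_0)\in\hat S^d_r$ for all large $d$, so $(x_0,y_0)\in L_r$; as $\R\times(0,k\pi)$ is open this yields the inclusion. For the converse I would show $L_r\subseteq\R\times[0,k\pi]$: if $y_0<0$ then $\theta_d(x_0,y_0)<0$ for large $d$, and if $y_0>k\pi$ then $(d+x_0)\tan\bigl(\tfrac{k\pi}{d}\bigr)\to k\pi<y_0$ forces $\theta_d(x_0,y_0)>\tfrac{k\pi}{d}$ for large $d$, so in either case $(x_0,y_0)$ lies in only finitely many $\hat S^d_r$. Taking interiors, $\inter(L_r)\subseteq\inter\bigl(\R\times[0,k\pi]\bigr)=\R\times(0,k\pi)$, and the first identity follows.

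For the remaining three I would only change the radial step. For $\hat S^d$ there is no upper radial bound ($\rho>0$ is automatic for large $d$), so the angular analysis alone yields $\R\times(0,k\pi)$. For $\hat S^d_{1+\bar x/d}$, which for $\bar x=0$ is $\hat S^d_1$, the radial inequality $\bigl(1+\tfrac{x}{d}\bigr)^2+\bigl(\tfrac{y}{d}\bigr)^2<\bigl(1+\tfrac{\bar x}{d}\bigr)^2$ rearranges to $2(x-\bar x)d<\bar x^2-x^2-y^2$, which for fixed $(x,y)$ holds for all large $d$ when $x<\bar x$ and fails for all large $d$ when $x\ge\bar x$; intersecting with the angular constraint $0<y<k\pi$ exactly as above gives $\inter(L)=(-\infty,\bar x)\times(0,k\pi)$.

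I expect the only mildly delicate point to be the elementary limit $(d+x)\tan\bigl(\tfrac{k\pi}{d}\bigr)\to k\pi$, which controls the behaviour near the ``far edge'' $y\approx k\pi$ and is what makes the angular window $(0,k\pi/d)$ open up precisely to $(0,k\pi)$ after rescaling; everything else is a routine asymptotic inspection of two inequalities, together with the elementary observation that for the strips and half-strips occurring here one has $\inter(\overline{\Omega})=\Omega$.
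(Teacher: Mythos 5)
Your proof is correct and follows essentially the same route as the paper's: the same two inclusions $\R\times(0,k\pi)\subseteq L_r\subseteq\R\times[0,k\pi]$, driven by the radial asymptotics and the key limit $(x+d)\tan\bigl(\tfrac{k\pi}{d}\bigr)\to k\pi$, followed by taking interiors. The only difference is that the paper proves the first identity in detail and leaves the remaining three as analogous, while you spell out the modified radial step (in particular the rearrangement $2(x-\bar x)d<\bar x^2-x^2-y^2$ for $\hat S^d_{1+\bar x/d}$), which is a welcome but inessential addition.
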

\begin{proof}
We prove only the first claim. Let $r>1$.
\paragraph{\textbf{Step 1)}} $\displaystyle \R \times (0,k\pi) \subset \bigcap_{n \in \frac{\N}{2}} \bigcup_{d > n} \hat S^d_r$. \\
Let $(x,y) \in \R \times (0,k\pi)$. We show that for every $d \in \frac{\N}{2}$ sufficiently large $(x,y) \in \hat S_r^d$, that is, $\left(1+\frac{x}{d},\frac{y}{d}\right) \in S^d_r$, which means
\[
\sqrt{\left(1+\frac{x}{d}\right)^2 + \left(\frac{y}{d}\right)^2} < r \quad \text{and} \quad \arctan\left( \frac{y}{x+d}\right) \in \left(0,\frac{k\pi}{d}\right).
\]
For the first condition it is possible to choose $d$ sufficiently large, as $r>1$. To prove the second condition, we start by considering $d>-x$, so that $ \arctan\left( \frac{y}{x+d}\right)>0$. Now, provided $d$ is sufficiently large
\[
\arctan \left( \frac{y}{x+d}\right) < \frac{k\pi}{d} \quad \Leftrightarrow \quad y < (x+d) \tan\left(\frac{k\pi}{d}\right).
\]
Since $y < k\pi$, there exists $\eps > 0$ such that $y \leq k(1-\eps)\pi$. Let $\bar d$ be sufficiently large so that
\[
x+d > \left(1-\frac{\eps}{2}\right)d \quad  \text{and} \quad \frac{d}{k\pi} \tan \left(\frac{k\pi}{d}\right) > 1-\frac{\eps}{2}
\]
for every $d > \bar d$. Then
\[
(x+d) \tan\left(\frac{k\pi}{d}\right) >\left(1-\frac{\eps}{2}\right)^2 k\pi > (1-\eps)k\pi \ge y
\]
whenever $d>\bar d$.
\paragraph{\textbf{Step 2)}} $\displaystyle \bigcap_{n \in \frac{\N}{2}} \bigcup_{d > n} \hat S^d_r \subset \R \times [0,k\pi]$.\\
We show that $\left( \R \times [0,k\pi]\right)^c \subset \left( \bigcap_{n \in \frac{\N}{2}} \bigcup_{d > n} \hat S^d_r\right)^c$. If $(x,y) \not \in \R \times [0,k\pi]$, then $y > k\pi$ or $y < 0$. We consider only the case $y> k\pi$; in such a situation
\[
y>k\pi = \lim_{d \to \infty} (x+d) \tan \left(\frac{k\pi}{d}\right),
\]
so that $(x,y) \not \in \hat S^d_r$ for every $d$ sufficiently large.
\end{proof}

\begin{remark}\label{rem:conv boundaries}
As a consequence of the previous result, we see that
\[
\pa_r \hat S^d_1 \to \{0\} \times [0,k\pi] \quad \text{and} \quad \pa_r \hat S^d_{1+\frac{\bar x}{d}} \to \{\bar x\} \times [0,k\pi]
\]
for every $\bar x \in \R$.
\end{remark}

\begin{figure}
	 \centering
				\includegraphics[width=0.9\textwidth]{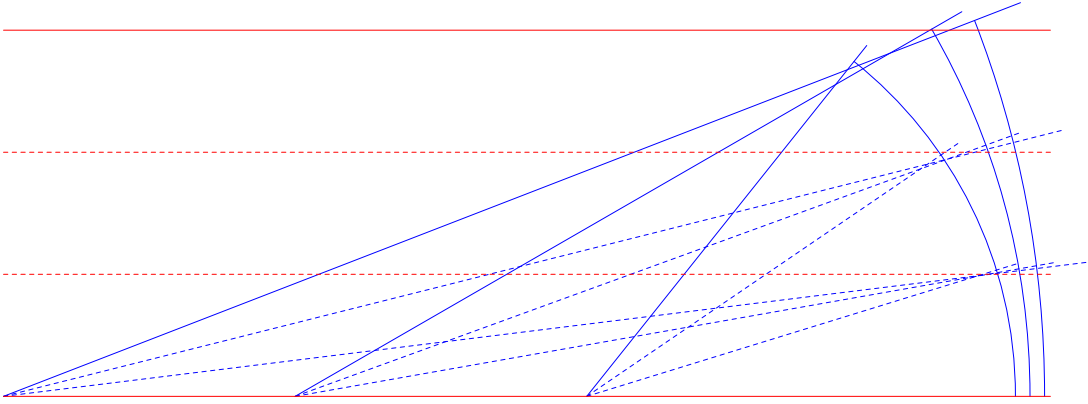}
	\caption{Visualization of the construction in Lemma \ref{lem: scaling domini}. In red the limiting set $\R \times (0,k\pi)$. In blue some of the scaled domains $\hat S^d_r$, for $r>1$.}
\end{figure}

\begin{remark}\label{eqn: asymptotic}
Recall the expression of $r$ in the new variable, given by \eqref{eqn: r}. For every $r>0$ and $d \in \frac{\N}{2}$ there exists $\xi(r,d)$ such that
\[
r=1 + \frac{\xi(r,d)}{d} \quad \Leftrightarrow \quad \xi(r,d)= d(r-1).
\]
Note that for every $(x,y) \in \pa_r \hat S_r^d$ it results $x<\xi(r,d)$. On the contrary, fixing $(x,y) \in \partial_r \hat S_r^d$ there exists $\zeta(d,x,y)$ such that
\[
	r = \sqrt{\left(1+\frac{x}{d}\right)^2+\left(\frac{y}{d}\right)^2} = 1+\frac{x}{d} + \zeta(d,x,y).
\]
In particular, if $y =0$ we have $\zeta(d,x,0) = 0$, while if $y > 0$, $\zeta(d,x,y) \sim d^{-2}$.
\end{remark}

We are ready to prove the convergence of $\{(\hat u_1^d,\dots, \hat u_k^d)\}$ as $d \to \infty$.

\begin{lemma}\label{lem:existence k comp}
Up to a subsequence, $\{(\hat u_1^d,\dots, \hat u_k^d)\}$ converges in $\mathcal{C}^2_{loc}\left( C_\infty \right)$, as $d \to \infty$, to a nontrivial solution $(\hat u_1,\dots, \hat u_k)$ of \eqref{eqn: system k comp}. This solution, which is $k\pi$-periodic in $y$, enjoys the symmetries
\[
\hat u_{i+1}(x,y)= \hat u_i\left(x,y-\pi\right) \quad \text{and} \quad \hat u_1\left(x,y +\frac{\pi}{2}\right) = \hat u_1\left(x,y-\frac{\pi}{2}\right)
\]
\end{lemma}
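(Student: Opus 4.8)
The plan is to run the compactness machinery of Sections~\ref{sec:proof1}--\ref{sec:sec2} in the present, $d$-dependent setting: produce bounds on compact sets that are uniform in $d$, extract a convergent subsequence by elliptic regularity, and finally check that the limit is nontrivial and keeps the symmetries. The genuinely new point is that the domains $\hat S^d$ move with $d$, so each estimate must be made compatible with the domain convergence of Lemma~\ref{lem: scaling domini}. \emph{Uniform $H^1_{\loc}$ bounds.} First I would note that $\hat N_d$ is nondecreasing --- it is $1/d$ times $N_{d,R_d}$, a dilation of the monotone Almgren quotient $N_d(\cdot;2)$ of Proposition~\ref{prp: Almgren BeTeWaWe} --- that $0\le\hat N_d(r)\le1$ by \eqref{eqn: estim on hat N}, and that $\hat H_d(1)=1$ by Lemma~\ref{lem:choice of R}. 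Since $\hat H_d$ is a constant multiple of $H_{d,R_d}$, one has $\frac{\de}{\de r}\log\hat H_d(r)=\frac{2d\,\hat N_d(r)}{r}$, which lies in $[0,2d/r]$; integrating from $1$ to $r_d:=1+\bar x/d$ gives $\hat H_d(r_d)\le(1+\bar x/d)^{2d}$, bounded by a constant $C(\bar x)$ uniformly in $d$. Hence $\hat E_d(r_d)=\hat N_d(r_d)\hat H_d(r_d)\le C(\bar x)$ and the trace $\int_{\pa_r\hat S^d_{r_d}}\sum_i(\hat u_i^d)^2=r_d\hat H_d(r_d)\le C(\bar x)$. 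For a compact $K\subset\R\times(0,k\pi)$, choosing $\bar x$ above the $x$-coordinates on $K$ and invoking Lemma~\ref{lem: scaling domini}, we get $K\subset\hat S^d_{r_d}$ for $d$ large, so the bound on $\hat E_d(r_d)$ controls $\int_K\sum_i|\nabla\hat u_i^d|^2$, while a Poincar\'e inequality on a bounded sub-slab of $\hat S^d_{r_d}$ (whose geometry converges, and on whose outer boundary the trace bound is available) controls $\int_K\sum_i(\hat u_i^d)^2$.

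\emph{Compactness and limit.} Each $\hat u_i^d>0$ is subharmonic since $-\Delta\hat u_i^d=-\hat u_i^d\sum_{j\ne i}(\hat u_j^d)^2\le0$, so the local $L^2$ bound upgrades to a uniform $L^\infty_{\loc}$ bound by the sub-mean-value inequality; then the right-hand sides of \eqref{eqn: syst R d hat} are uniformly bounded in $L^\infty_{\loc}$, and the elliptic estimates of \cite{GiTr} give uniform $\mathcal{C}^{2,\alpha}_{\loc}$ bounds. By Ascoli--Arzel\`a and a diagonal argument along an exhaustion of $\R\times(0,k\pi)$, a subsequence converges in $\mathcal{C}^2_{\loc}$ to $(\hat u_1,\dots,\hat u_k)$ solving \eqref{eqn: system k comp} in $\R\times(0,k\pi)$. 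Because each $(u_1^d,\dots,u_k^d)$ is $k\pi/d$-periodic in the angular variable, the $\hat u_i^d$ satisfy periodic conditions on $\pa_p\hat S^d$, which by Lemma~\ref{lem: scaling domini} and Remark~\ref{rem:conv boundaries} converge to $\R\times\{0,k\pi\}$; thus the limit is $k\pi$-periodic in $y$ and defines a solution on the cylinder $C_\infty$, with convergence in $\mathcal{C}^2_{\loc}(C_\infty)$.

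\emph{Nontriviality and symmetries.} The normalization is preserved in the limit: $\pa_r\hat S^d_1\to\{0\}\times[0,k\pi]$ by Remark~\ref{rem:conv boundaries}, and on a bounded range of $y$ the defining relation $(1+x/d)^2+(y/d)^2=1$ forces $x=O(1/d)$ with slope $O(1/d)$, so the arc-length elements converge too; the $\mathcal{C}^0_{\loc}$ convergence then gives $\int_{\{0\}\times[0,k\pi]}\sum_i\hat u_i^2=\lim_d\hat H_d(1)=1$, so $(\hat u_1,\dots,\hat u_k)$ is nontrivial. The symmetries \eqref{eqn: symmetries algebraic}, carried through the blow-up \eqref{eqn:further}, pass to the limit: the rotation $G_d$ by $\pi/d$ about the origin, seen near $(1,0)$ and dilated by $d$, converges to the translation $(x,y)\mapsto(x,y+\pi)$, so $u_i^d=u_{i+1}^d\circ G_d$ becomes $\hat u_{i+1}(x,y)=\hat u_i(x,y-\pi)$; and the reflection axis $y=\tan(\pi/2d)\,x$ of $u_1^d$ (Remark~\ref{rem: simmetrie}) converges, after the same blow-up, to the line $y=\pi/2$, giving $\hat u_1(x,\tfrac\pi2+y)=\hat u_1(x,\tfrac\pi2-y)$.

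\emph{Main obstacle.} The delicate step is the uniform bound above: the doubling inequality only yields $\hat H_d(r)\lesssim r^{2d}$, which is useless for $r$ fixed $>1$, so one is forced to work with the $d$-dependent radii $r_d=1+\bar x/d\to1$ that match the growing-slab shape of $\hat S^d$, for which $(1+\bar x/d)^{2d}$ stays bounded, and to couple this with Lemma~\ref{lem: scaling domini} so that a prescribed compact set is eventually engulfed by $\hat S^d_{r_d}$. Verifying that the Poincar\'e and trace constants on these slowly expanding sub-slabs remain uniform in $d$ is the remaining technical check.
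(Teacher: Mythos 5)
Your proposal follows the paper's own route almost step for step: the doubling bound $\hat H_d(1+\bar x/d)\le(1+\bar x/d)^{2d}\le C(\bar x)$ (this is exactly Proposition \ref{prp: doubling BeTeWaWe} combined with Lemma \ref{lem:choice of R}), the bound $\hat E_d\le\hat N_d\hat H_d\le C(\bar x)$ via \eqref{eqn: estim on hat N}, the use of Lemma \ref{lem: scaling domini} to engulf a prescribed compact set, subharmonicity plus elliptic regularity for $\mathcal{C}^2_{\loc}$ compactness, the normalization of Lemma \ref{lem:choice of R} for nontriviality, and the blow-up of the symmetries \eqref{eqn: symmetries algebraic} and of the reflection axis of Remark \ref{rem: simmetrie}. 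These parts are fine (the identity $\frac{\de}{\de r}\log\hat H_d=\frac{2d\hat N_d}{r}$ strictly involves the $\Lambda=1$ quotient rather than $\hat N_d$, but since both quotients are bounded by $d$ the inequality you need survives, and the paper's own computation has the same cosmetic imprecision).

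There is, however, one genuine gap: all your uniform bounds are obtained on compact subsets $K\subset\R\times(0,k\pi)$, i.e.\ strictly inside the open strip, because only such $K$ are eventually contained in $\hat S^d_{r_d}$. From this you cannot conclude convergence in $\mathcal{C}^2_{\loc}(C_\infty)$, nor the $k\pi$-periodicity of the limit: both require uniform control in a neighborhood of the seam $y\in\{0,k\pi\}$, and the sentence ``the periodic conditions on $\pa_p\hat S^d$ converge to $\R\times\{0,k\pi\}$, thus the limit is periodic and converges in $\mathcal{C}^2_{\loc}(C_\infty)$'' does not produce such control (boundary conditions on moving boundaries do not pass to the limit without estimates up to and across those boundaries). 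The same issue affects the normalization integral, whose endpoints $(0,0)$ and $(0,k\pi)$ lie on the seam, and the symmetry $\hat u_{i+1}(x,y)=\hat u_i(x,y-\pi)$, which for $y\in(0,\pi)$ refers to values of $\hat u_i$ at $y-\pi<0$, outside the strip where your limit is defined. The paper closes this gap by exploiting the angular periodicity quantitatively: it extends $(\hat u_1^d,\dots,\hat u_k^d)$ by periodicity and estimates the energy over the enlarged cones $\hat T^d_r=d\left(T^d_r-(1,0)\right)$, where $T^d_r$ spans the angles $\left(-\frac{\pi}{d},(k+1)\frac{\pi}{d}\right)$; since $\sum_i|\nabla u_i^d|^2+\sum_{i<j}(u_i^du_j^d)^2$ is invariant under the rotation $G_d$, the energy over $\hat T^d_{1+\frac{x_B+1}{d}}$ is bounded by a fixed multiple of $\hat E_d\left(1+\frac{x_B+1}{d}\right)$, and these enlarged domains converge to $\R\times(-\pi,(k+1)\pi)$. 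Running your compactness argument on this larger strip gives a limit that is controlled across $y=0$ and $y=k\pi$, from which the $\mathcal{C}^2_{\loc}(C_\infty)$ convergence, the periodicity, the normalization and the symmetry identities all follow as you describe. Your proof is repaired exactly by inserting this enlarged-domain estimate before the Ascoli--Arzel\`a step.
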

\begin{proof}
From Proposition \ref{prp: doubling BeTeWaWe} and Lemma \ref{lem:choice of R}, we deduce that for any $r \geq 1$ and $d$ the inequality
\[
    \frac{\hat H_{d}(r) }{r^{2d}} = \frac{\beta^d H_{d}(r) }{d^2 r^{2d}} \leq \frac{\beta^d}{d^2} H_{d}(1) =  \hat H_{d}(1) = 1
\]
holds. For every $x>0$, let $r = 1 + \frac{x}{d}$; for every $d$ sufficiently large, we have
\begin{equation}\label{eqn: estimate on hat H}
 \hat H_{d}\left(1+\frac{x}{d} \right)\leq \left(1+\frac{x}{d}\right)^{2d} \leq 2e^{2 x}
\end{equation}
Recalling the \eqref{eqn: estim on hat N} (which we apply for $R=R_d$), we deduce
\begin{equation}\label{estim on hat E}
\hat E_d\left(1+\frac{x}{d}\right) = \hat N_d\left(1+\frac{x}{d}\right) \hat H_d\left(1+\frac{x}{d}\right) \le 2 e^{2x}
\end{equation}
for every $d$ sufficiently large. Recall that $(\hat u_1^d,\dots,\hat u_k^d)$ can be extended by angular periodicity in the whole plane $\R^2$. Let us introduce
\[
T_r^d:= \left\{(\rho,\theta): \rho<r, \ \theta \in \left( -\frac{\pi}{d}, (k+1)\frac{\pi}{d}\right)\right\} \supset S_r^d,
\]
and let $\hat T_r^d:=d\left( T_r^d-(1,0)\right) \supset \hat S_r^d$. Suitably modifying the argument in Lemma \ref{lem: scaling domini}, it is not difficult to see that
\[
\inter \left(\bigcap_{n \in \frac{\N}{2}} \bigcup_{d > n} \hat T_{1+\frac{\bar x}{d}}^d \right) = (-\infty,\bar x) \times (-\pi,(k+1)\pi)
\]
for every $\bar x \in \R$. Hence, let $B$ an open ball contained in $\R \times (-\pi,(k+1)\pi)$, and let $x_B := \sup \{x: (x,y) \in B\}$, so that $B \subset (-\infty,x_B+1) \times (-\pi,(k+1)\pi)$. Using the same argument in the proof of Lemma \ref{lem: scaling domini}, it is possible to show that
\[
B \subset \hat T_r^d,
\]
for every $d$ sufficiently large, and by the \eqref{estim on hat E} and the periodicity of $(\hat u_1,\dots,\hat u_k)$ we deduce
\[
\int_B \sum_i |\nabla u_i|^2 \le 3 \hat E_d\left(1+\frac{x_B+1}{d}\right) \le 2 e^{2(x_B+1)}
\]
whenever $d$ is sufficiently large. This, together with \eqref{eqn: estimate on hat H}, implies that $\{(\hat u_{1}^d,\ldots, \hat u_{k}^d)\}$ is uniformly bounded in $H^1(B)$, for every $B \subset \R \times (-\pi,(k+1)\pi)$. By the compactness of the trace operator, this bound provides a uniform-in-$d$ bound on the $L^2(\pa K)$ norm for every compact $K \subset \subset \R \times (-\pi,(k+1)\pi)$, which in turns, due to the subharmonicity of $u_{i}^d$, gives a uniform-in-$d$ bound on the $L^\infty(K)$ norm of $\{(\hat u_{1}^d,\ldots, \hat u_{k}^d)\}$, for every compact set $K \subset \subset \R \times (-\pi,(k+1)\pi)$. The standard regularity theory for elliptic equations guarantees that when $d \to \infty$ then $\{(\hat u_{1}^d,\ldots, \hat u_{k}^d)\}$ converges in $\mathcal{C}^2_{loc}(\R \times (-\pi,(k+1)\pi))$, up to a subsequence, to a function $(\hat u_1, \dots, \hat u_k)$ which is a solution to \eqref{eqn: system k comp}. By the convergence and by the normalization required in Lemma \ref{lem:choice of R}, we deduce that (recall also the convergence of the boundaries $\pa \hat S_1^d$, Remark \ref{rem:conv boundaries})
\[
\int_0^{k\pi} \sum_i \hat u_i(0,y)^2\,\de y = 1;
\]
in particular, $(\hat u_1,\dots,\hat u_k)$ is nontrivial. The $k\pi$-periodicity in $y$ follows directly form the convergence of the domains, Lemma \ref{lem: scaling domini}. By the pointwise convergence of $(\hat u_1^d, \dots, \hat u_k^d)$ to $(\hat u_1,\dots, \hat u_k)$ and by the symmetries of each function $(\hat u_1^d, \dots, \hat u_k^d)$ (see equation \eqref{eqn: symmetries algebraic} and Remark \ref{rem: simmetrie}) we deduce also that
\[
\hat u_{i+1}(x,y)= \hat u_i\left(x,y-\pi\right) \quad \text{and} \quad \hat u_1\left(x,y +\frac{\pi}{2}\right) = \hat u_1\left(x,y-\frac{\pi}{2}\right).
\]
\end{proof}

\subsection{Characterization of the growth of $(\hat u_1,\dots, \hat u_k)$}

So far we proved the existence of a solution $(\hat u_1,\dots,\hat u_k)$ of \eqref{eqn: system k comp} which enjoys the properties 1) and 2) of Theorem \ref{main theorem 2}. In this subsection, we are going to complete the proof of the quoted statement, showing that $(\hat u_1,\dots,\hat u_k)$ enjoys also the properties 3)-5). We denote as $\hat {\mathcal{E}}, \hat E, \hat H$ and $\hat N$ the quantities $\mathcal{E}^{unb}, E^{unb}, H$ and $N^{unb}$ introduced in subsection \ref{sub:monot finite} when referred to the function $(\hat u_1,\ldots,\hat u_k)$. Firstly, we show that $(\hat u_1,\dots,\hat u_k)$ has finite energy, point 3) of Theorem \ref{main theorem 2}, and that $\hat H(x) \to -\infty$ as $x \to -\infty$.

\begin{lemma}\label{lem: estim on E con Fatou}
For every $x \in \R$ there holds $\hat{\mathcal{E}}(x)<+\infty$. In particular
\[
	\hat{\mathcal{E}}(x) \leq \liminf_{d \to \infty} \hat{\mathcal{E}}_d\left(1+\frac{x}{d}\right) \quad \text{and} \quad 	\hat{E}(x) \leq \liminf_{d \to \infty} \hat{E}_d\left(1+\frac{x}{d}\right).
\]
Furthermore, $\displaystyle \lim_{x \to -\infty} \hat H(x) = 0$.
\end{lemma}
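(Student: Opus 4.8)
The plan is to transfer the uniform bounds available for the rescaled solutions $(\hat u_1^d,\dots,\hat u_k^d)$ to the limit $(\hat u_1,\dots,\hat u_k)$ by combining the $\mathcal{C}^2_{loc}$ convergence of Lemma~\ref{lem:existence k comp} with the convergence of the domains established in Lemma~\ref{lem: scaling domini}, and then to deduce the decay of $\hat H$ by a blow-up argument modeled on Lemma~\ref{lem:convergenza a ener finita}. First I would fix $x\in\R$ and let $K$ be an arbitrary compact subset of $C_{(-\infty,x)}$. Arguing exactly as in Step~1 of the proof of Lemma~\ref{lem: scaling domini} (with $x$ playing the role of the parameter $\bar x$ there, using that $\sup\{x'\colon (x',y')\in K\}<x$ so that the radial condition holds with room to spare), one gets $K\subset \hat S^d_{1+x/d}$ for every $d$ sufficiently large, uniformly on $K$. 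Hence, for such $d$,
\[
\int_K \Bigl(\sum_i |\nabla \hat u_i^d|^2 + \sum_{i<j}\bigl(\hat u_i^d\hat u_j^d\bigr)^2\Bigr)\le \hat{\mathcal{E}}_d\!\left(1+\tfrac{x}{d}\right);
\]
letting $d\to\infty$ along the subsequence of Lemma~\ref{lem:existence k comp} and using that the integrand converges in $\mathcal{C}^1$ on the fixed compact set $K$, we obtain $\int_K \bigl(\sum_i|\nabla\hat u_i|^2+\sum_{i<j}(\hat u_i\hat u_j)^2\bigr)\le \liminf_{d\to\infty}\hat{\mathcal{E}}_d(1+x/d)$. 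Taking the supremum over all such $K$ gives $\hat{\mathcal{E}}(x)\le\liminf_{d\to\infty}\hat{\mathcal{E}}_d(1+x/d)$, and the inequality for $\hat E$ follows in the same way, with the coefficient $1$ of the interaction term replaced by $2$.

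For finiteness, note that $\hat{\mathcal{E}}_d\le\hat E_d=\hat N_d\hat H_d\le\hat H_d$ by \eqref{eqn: estim on hat N}, while $\hat H_d(1+x/d)\le 2e^{2x}$ for $x>0$ and $d$ large by \eqref{eqn: estimate on hat H}; this gives $\hat{\mathcal{E}}(x)<+\infty$ for $x>0$, and for $x\le0$ finiteness is immediate since $\hat{\mathcal{E}}$ is nondecreasing. In particular $\hat{\mathcal{E}}(0)<+\infty$, so the integral $\hat{\mathcal{E}}(x)=\int_{C_{(-\infty,0)}}\bigl(\sum_i|\nabla\hat u_i|^2+\sum_{i<j}(\hat u_i\hat u_j)^2\bigr)\chi_{C_{(-\infty,x)}}$ tends to $0$ as $x\to-\infty$ by dominated convergence, whence also $\hat E(x)\le 2\hat{\mathcal{E}}(x)\to0$.

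To prove $\hat H(x)\to0$ I would argue by contradiction, following verbatim the scheme of Lemma~\ref{lem:convergenza a ener finita}: if $\hat H(x_n)\ge c>0$ for some sequence $x_n\to-\infty$, the rescalings $\hat w_i^n(x,y):=\hat H(x_n)^{-1/2}\hat u_i(x+x_n,y)$ satisfy $\int_{\Sigma_0}\sum_i(\hat w_i^n)^2=1$, together with
\[
\int_{C_{(-1,0)}}\sum_i|\nabla\hat w_i^n|^2\le\frac{\hat E(x_n)}{\hat H(x_n)}\to0,\qquad \int_{C_{(-1,0)}}\sum_{i<j}\bigl(\hat w_i^n\hat w_j^n\bigr)^2\le\frac{\hat E(x_n)}{2\hat H(x_n)^2}\to0.
\]
A Poincar\'e inequality of type \eqref{Poincarè} on $C_{(-1,0)}$ then bounds $\{(\hat w_1^n,\dots,\hat w_k^n)\}$ in $H^1(C_{(-1,0)})$, so along a subsequence it converges weakly to a vector of constants, which by the symmetry $\hat w_{i+1}^n(x,y)=\hat w_i^n(x,y-\pi)$ must all coincide, say with $w\ge0$; the compact embedding $H^1(C_{(-1,0)})\hookrightarrow L^4(C_{(-1,0)})$ combined with the second estimate forces $w^4$ to have zero integral, hence $w=0$; and the compactness of the trace operator onto $\Sigma_0$ yields $\int_{\Sigma_0}\sum_i(\hat w_i^n)^2\to0$, contradicting the normalization. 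Therefore $\hat H(x)\to0$ as $x\to-\infty$.

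The genuinely delicate point is the first step: making the inclusion $K\subset\hat S^d_{1+x/d}$ precise and uniform on compact sets, so that the $d$-dependence of the domains does not obstruct the passage to the limit. The finiteness statement, the energy decay, and the contradiction argument for $\hat H$ — an almost verbatim repetition of Lemma~\ref{lem:convergenza a ener finita} — are then routine.
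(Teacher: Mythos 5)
Your argument is correct and follows essentially the same route as the paper: the paper gets $\hat{\mathcal{E}}(x)\leq\liminf_{d\to\infty}\hat{\mathcal{E}}_d\left(1+\frac{x}{d}\right)\leq 2e^{2x}$ by applying the Fatou lemma to the integrands multiplied by $\chi_{\hat S^d_{1+x/d}}$, using Lemma \ref{lem: scaling domini} and the $\mathcal{C}^2_{loc}$ convergence, which is the same mechanism as your compact exhaustion (only take the compacts inside the open strip $(-\infty,x)\times(0,k\pi)$, away from the measure-zero seam $y\in\{0,k\pi\}$, so that the inclusion $K\subset\hat S^d_{1+x/d}$ genuinely holds for all large $d$). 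For the decay of $\hat H$ the paper merely invokes the argument of Lemma \ref{lem:convergenza a ener finita}, which is precisely the rescaling/contradiction scheme you wrote out in detail.
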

\begin{proof}
By the $\mathcal{C}^2_{loc}(\R^2)$ convergence of $(\hat u_1^d,\dots,\hat u_k^d)$ to $(\hat u_1,\dots, \hat u_k)$ and by the convergence properties of the domains $\hat S^d_{1+\frac{x}{d}}$, Lemma \ref{lem: scaling domini}, we deduce
\[
\lim_{d \to \infty} \left(\sum_i |\nabla \hat u_i^d|^2 + \sum_{i<j} \left(\hat u_i^d \hat u_j^d\right)^2\right) \chi_{\hat S_{1+\frac{x}{d}}^d} = \left(\sum_i |\nabla \hat u_i|^2 + \sum_{i<j} \left(\hat u_i \hat u_j\right)^2 \right) \chi_{C_{(-\infty,x)}} \qquad \text{a. e. in $C_\infty$},
\]
for every $x \in \R$. As a consequence, we can apply the Fatou lemma obtaining
\[
\hat{\mathcal{E}}(x) \le \liminf_{d \to \infty} \hat{\mathcal{E}}_d\left(1+\frac{x}{d}\right) \le 2 e^{2x},
\]
where the uniform boundedness of $\hat {\mathcal{E}}_d\left(1+\frac{x}{d}\right)$ comes from \eqref{estim on hat E}. To prove that $\hat H(x) \to 0$ as $x\to -\infty$, we can proceed with the same argument developed in Lemma \ref{lem:convergenza a ener finita}.
\end{proof}

In light of the previous Lemma, the monotonicity formulae proved in subsection \ref{sub:monot finite} applies for $\hat{\mathcal{E}}, \hat E, \hat H$ and $\hat N$.

\begin{lemma}\label{cor: stima su N k}
There holds
\[
\lim_{x \to +\infty} \hat N(x) =1.
\]
\end{lemma}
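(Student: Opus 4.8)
The goal is to show $\lim_{x\to+\infty}\hat N(x)=1$ for the entire $k$-component solution $(\hat u_1,\dots,\hat u_k)$ constructed above. Since the monotonicity formulae of subsection \ref{sub:monot finite} apply (Lemma \ref{lem: estim on E con Fatou}), $\hat N$ is nondecreasing and the limit $\hat d:=\lim_{x\to+\infty}\hat N(x)$ exists in $(0,+\infty]$. The proof splits, as in Lemmas \ref{lem: N less 1} and the corresponding lemma in section \ref{sec:sec2}, into two inequalities: first $\hat d\ge 1$, which comes essentially for free, and then $\hat d\le 1$, which is the substantial part.

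\textbf{Step 1: the lower bound.} First I would use the uniform bound $\hat N_d(r)\le 1$ from \eqref{eqn: estim on hat N}. By the $\mathcal{C}^2_{\loc}$ convergence, the convergence of the domains (Lemma \ref{lem: scaling domini}, Remark \ref{rem:conv boundaries}) and Fatou's lemma applied as in Lemma \ref{lem: estim on E con Fatou}, one gets $\hat N(x)\le 1$ for every $x\in\R$; hence $\hat d\le 1$ is already half of what we want, but we still need $\hat d\ge 1$. For this I invoke Lemma \ref{cor estimate for N finite}: $(\hat u_1,\dots,\hat u_k)$ is a nontrivial solution in $C_\infty$, satisfies the decay \eqref{eqn:decay} (that is $\hat H(x)\to 0$ as $x\to-\infty$, proved in Lemma \ref{lem: estim on E con Fatou}) and the symmetry \eqref{symmetry u-v finite} (proved in Lemma \ref{lem:existence k comp} with $\tau=\pi/2$). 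Since $\hat d<+\infty$, Lemma \ref{cor estimate for N finite} gives $\hat d\ge 1$, and combined with Step's bound this already forces $\hat d=1$.

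\textbf{Wait — in fact the two inequalities together close the argument immediately.} So the cleanest route is: (i) from \eqref{eqn: estim on hat N}, $\hat N_d(1+\tfrac{x}{d})\le 1$ for all $d$ large, so by Fatou $\hat N(x)\le 1$ for all $x$, whence $\hat d\le 1$; (ii) by Lemma \ref{cor estimate for N finite} (applicable by the symmetries and decay already established), $\hat d\ge 1$. Therefore $\hat d=1$. The main technical point to be careful about is the passage to the limit in the quotient: one cannot pass the $\liminf$ through a ratio directly, so I would pass to the limit in numerator and denominator separately. The denominator $\hat H_d(1+\tfrac{x}{d})=\tfrac1r\int_{\pa_r\hat S^d_r}\sum_i(\hat u_i^d)^2$ converges to $\hat H(x)=\int_{\{x\}\times[0,k\pi]}\sum_i\hat u_i^2>0$ by $\mathcal{C}^2_{\loc}$ convergence and Remark \ref{rem:conv boundaries} (using that $\zeta(d,x,y)\to 0$, Remark \ref{eqn: asymptotic}); the numerator satisfies $\hat{\mathcal E}(x)\le\liminf_d\hat{\mathcal E}_d(1+\tfrac{x}{d})$ and likewise for $\hat E$ by Lemma \ref{lem: estim on E con Fatou}. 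Combining, $\hat N(x)=\hat E(x)/\hat H(x)\le\liminf_d \hat E_d(1+\tfrac{x}{d})/\hat H_d(1+\tfrac{x}{d})=\liminf_d\hat N_d(1+\tfrac{x}{d})\le 1$.

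\textbf{The main obstacle.} The delicate step is verifying that the hypotheses of Lemma \ref{cor estimate for N finite} are genuinely met — in particular that $\hat H(x)\to 0$ as $x\to-\infty$ (needed for \eqref{eqn:decay}) and that the symmetry \eqref{symmetry u-v finite} survives the blow-up limit with the correct value of $\tau$. Both are handled by earlier results (Lemmas \ref{lem: estim on E con Fatou} and \ref{lem:existence k comp}), so once those are in hand the present lemma is a short argument. A subtlety worth a line is that the blown-up domains $\hat S^d_{1+x/d}$ are not rectangles and their boundary pieces $\pa_r\hat S^d_{1+x/d}$ are slightly curved; the estimate $\zeta(d,x,y)\sim d^{-2}$ from Remark \ref{eqn: asymptotic} guarantees these converge to the flat segment $\{x\}\times[0,k\pi]$ fast enough that the trace integrals pass to the limit, which is exactly what is needed for the denominator.
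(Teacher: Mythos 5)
Your proposal is correct and follows essentially the same route as the paper: the lower bound $\lim_{x\to+\infty}\hat N(x)\ge 1$ via Lemma \ref{cor estimate for N finite} (using the symmetries from Lemma \ref{lem:existence k comp} and the decay of $\hat H$ from Lemma \ref{lem: estim on E con Fatou}), and the upper bound by passing to the limit separately in numerator (Fatou, Lemma \ref{lem: estim on E con Fatou}) and denominator (strong convergence, $\hat H_d(1+\tfrac{x}{d})\to\hat H(x)$), combined with the uniform bound \eqref{eqn: estim on hat N}. Your extra care about the curved boundaries $\pa_r\hat S^d_{1+x/d}$ and the quotient passage is exactly the implicit content of the paper's argument.
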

\begin{proof}
By Proposition \ref{prp: Almgren k finite}, we know that $\hat N$ is nondecreasing in $x$, and thanks to the symmetries of $(\hat u_1,\dots,\hat u_k)$, see Lemma \ref{lem:existence k comp}, Lemma \ref{cor estimate for N finite} implies that $\lim_{x \to +\infty} \hat N(x) \ge 1$. It remains to show that this limit is smaller then $1$. This follows from the estimates of Lemma \ref{lem: estim on E con Fatou} and from the strong convergence of $(\hat u_1^d,\dots, \hat u_k^d) \to (\hat u_1,\dots, \hat u_k)$, which implies that $\hat H_d \left(1+\frac{x}{d}\right) \to \hat H(x)$ as $d \to \infty$: therefore, for every $x \in \R$
\[
\hat N(x) = \frac{\hat E(x)}{\hat H(x)} \le \frac{\liminf_{d \to \infty} \hat E_d(x)}{\lim_{d \to \infty} \hat H_d(x)} = \liminf_{d \to \infty} \hat N_d(x) \le 1,
\]
where we used the \eqref{eqn: estim on hat N}.
\end{proof}

In light of this achievement, we can apply Corollary \ref{doubling finite} to complete the proof of point 5) of Theorem \ref{main theorem 2}. The fact that $\gamma>0$ follows by Lemmas \ref{cor: stima su N k} and \ref{cor estimate for N finite}):
\[
\lim_{r \to +\infty}\frac{\hat H(r)}{e^{2r} }= \lim_{r \to +\infty}\frac{\hat E(r)}{e^{2r}} \cdot \lim_{r \to +\infty}\frac{1}{\hat N(r)} >0.
\]

\begin{remark}
With a similar construction, it is possible to obtain the existence of solutions to \eqref{eqn: system k comp} in $\R^2$ modeled on $\cosh x \sin y$. To do this, we can first construct solutions of \eqref{eqn: system k comp} having algebraic growth defined outside the ball of radius $1$, with homogeneous Neumann  boundary conditions on $\pa B_1$. This can be done suitably modifying the proof of Theorem 1.6 in \cite{BeTeWaWe}. Then, performing a new blow-up in a neighborhood of $(1,0)$, we can obtain a solution of \eqref{eqn: system k comp} defined in $\R_+^2$, with homogeneous Neumann condition on $\{x=0\}$; this solution can be extended by even-symmetry in $x$ in the whole $\R^2$.
\end{remark}

\section{Asymptotics of solutions which are periodic in one variable}\label{sec: blow-down}

In this section we prove Theorem \ref{thm: blow-down 2}.

\begin{proof}[Proof of Theorem \ref{thm: blow-down 2}]
Let us start with case ($i$). First of all, let us recall that, since the solution $(u,v)$ is non trivial, $N(0) > 0$: in particular, from point ($i$) of Corollary \ref{doubling finite} it follows that $H(r) \to + \infty$ as $r \to +\infty$. Let us consider the shifted functions
\[
    (u_R(x,y),v_R(x,y)) := \frac{1}{\sqrt{H(R)}} (u(x+R,y), v(x+R,y))
\]
which solve the system
\[
    \begin{cases}
    - \Delta u_R = - H(R) u_R v_R^2 &\text{in $C_\infty$}\\
    - \Delta v_R = - H(R) u_R^2 v_R &\text{in $C_\infty$}\\
    \displaystyle \int_{\Sigma_{0} } u_R^2+v_R^2 = 1
    \end{cases}
\]
and share the same periodicity of $(u,v)$. We introduce
\[
	\begin{split}
		E_R(r) := \int_{C_{(-\infty,r)}} |\nabla u_R|^2+|\nabla_R|^2 + 2H(R) u_R^2 v_R^2,\\
		H_R(r) := \int_{\Sigma_r } u_R^2+v_R^2 \quad \text{and} \quad N_R(r) := \frac{E_R(r)}{H_R(r)}.
	\end{split}
\]
It is easy to see that
\[
    \begin{split}
    E_R(r) &= \frac{1}{H(R)} E^{unb}(r+R)\\
    H_R(r) &= \frac{1}{H(R)} H(r+R)
    \end{split} \quad \Rightarrow \quad N_R(r) = N^{unb}(r+R)
\]
for any $r$ (recall that $E^{unb}$ and $N^{unb}$ have been defined in subsection \ref{sub:monot finite}). We point out that, by definition, $N_{R_1}(r) \le N_{R_2}(r)$ for every $R_1<R_2$. Furthermore, $N_{R}(r) \le N(+\infty)$ for every $r,R$ and $N_R(r) \to N(+\infty)$ as $R \to \infty$ for every $r \in \R$. Therefore, $N_R$ tends to the constant function $N(+\infty)$ in $L^1_{\loc}(\R)$.

Thanks to the normalization condition $H_R(0) = 1$ and the uniform bound $N_R(r) < N(+\infty)$, applying Corollary \ref{doubling finite} (see also Remark \ref{rem:vale tutto sul beta problema}) we deduce that $H_R(r)$ is uniformly bounded in $R$ for every $r>0$. Consequently, also $E_R(r)$ is uniformly bounded in $R$ for every $r>0$, and this reveals that the sequence $(u_R,v_R)$ is uniformly bounded in $H^1_{\loc}(C_\infty)$ and, by standard elliptic estimates, in $L^{\infty}_{\loc}(C_\infty)$. From Theorem 2.6 of \cite{Wa} (it is a local versione of Theorem 1.1 of \cite{NoTaTeVe}), we evince that the sequence $(u_R,v_R)$ is uniformly bounded also in $\mathcal{C}^{0,\alpha}_{\loc}(C_\infty)$ for any $\alpha \in (0,1)$. Consequently, up to a subsequence, $(u_R,v_R)$ converges in $\mathcal{C}^0_{loc}(C_\infty)$ and in $H^1_{loc}(C_\infty)$ to a pair $(\Psi^+,\Psi^-)$, where $\Psi$ is a nontrivial harmonic function (this is a combination of the main results in \cite{NoTaTeVe} and \cite{DaWaZh}). By the convergence, $\Psi$ has to be $2\pi$-periodic in $y$.

Firstly, we prove that $H(r;\Psi) \to 0$ ar $r \to -\infty$, so that the results of subsection \ref{sub:monot har finite} hold true for $\Psi$. As already observed, $N_R(r)  \ge N_{\bar R} (r) $ for every $r \in \R$, for every $R>\bar R$. By the expression of the logarithmic derivative of $H_R$, see Corollary \ref{doubling finite} (see also Remark \ref{rem:vale tutto sul beta problema}) we have
\[
\frac{\de }{\de r} \log H_R(r) = 2N_R(r) \ge 2 N_{\bar R} (r)= \frac{\de}{\de r} \log H_{\bar R}(r) \qquad \forall r.
\]
As a consequence, taking into account that $H_R(0)=1$ for every $R$, for every $r<0$ it results
\[
\frac{H_R(0)}{H_R(r)} \ge \frac{H_{\bar R}(0)}{H_{\bar R}(r)} \quad \Leftrightarrow \quad H_{\bar R}(r) \ge H_R(r) \qquad \forall R>\bar R.
\]
Passing to the limit as $R \to +\infty$, by the $\mathcal{C}^0_{\loc}(\R^2)$ convergence of $(u_R,v_R)$ to $(\Psi^+,\Psi^-)$ it follows that $H_{\bar R}(r) \ge H(r;\Psi)$, which gives $H(r;\Psi) \to 0$ as $r \to -\infty$ in light of our assumption on $(u,v)$.

Using again the expression of the logarithmic derivative of $H_R$ and $H(\cdot;\Psi)$, we deduce
\[
	\log \frac{H_R(r_2)}{H_R(r_1)}= 2 \int_{r_1}^{r_2} N_R(s) \de s \quad \text{and} \quad \log \frac{H(r_2;\Psi)}{H(r_1;\Psi)}= 2 \int_{r_1}^{r_2} N(s;\Psi) \de s,
\]
where $r_1<r_2$. The left hand side of the first identity converges to the left hand side of the second identity; recalling that $N_R(r) \to N(+\infty)$ for every $r$, we deduce
\[
\int_{r_1}^{r_2} N(s;\Psi) \de s =	\lim_{R \to +\infty} \int_{r_1}^{r_2}  N_R(s) \de s = N(+\infty) (r_2-r_1)  \quad \Rightarrow \quad \frac{1}{r_2-r_1} \int_{r_1}^{r_2} N(s;\Psi)\,\de s = N(+\infty).
\]
for every $r_1<r_2$. It is well known that, being $N(\cdot;\Psi) \in L^1_{\loc}(\R)$, the limit as $r_2 \to r_1$ of the left hand side converges to $N(r_1;\Psi)$ for almost every $r_1 \in \R$. Hence, $N(r;\Psi) = N(+\infty)$ for every $r \in \R$.  We are then in position to apply Proposition \ref{thm: Almgren per armoniche}:
\[
	 \lim_{R \to +\infty} N(R) =\lim_{R \to +\infty} N_R(0) = N(0;\Psi) = d \in \N \setminus \{0\},
\]
and $\Psi(x,y)  = \left[C_1 \cos(d y) + C_2 \sin(d y)\right]e^{d x}$ for some constant $C_1,C_2 \in \R$.

\medskip

As far as case ($ii$) is concerned, for the sake of simplicity we assume $a=0$. One can repeat the proof with minor changes replacing $E^{unb}$ and $N^{unb}$ with $E^{sym}$ and $N^{sym}$ (which have been defined in subsection \ref{sub:monot Neumann}). The unique nontrivial step consists in proving that in this setting $H(r;\Psi) \to 0$ as $r \to -\infty$. To this aim, we note that, as before,
\[
H_R(r) \le H_{\bar R}(r) \qquad \forall R> \bar R,
\]
for every $r> -\bar R$. In particular, if $r>1 -\bar R$, by Proposition \ref{prp:Almgren Neumann} and Corollary \ref{doubling} we deduce
\[
H_R(r) \le H_{\bar R}(r) = \frac{H(r+\bar R)}{H(\bar R)} \le \frac{e^{2N(1)(r+\bar R)}}{e^{2N(1)\bar R}} = e^{2 N(1) r} \qquad \forall R> \bar R,
\]
for every $r>1-\bar R$. Passing to the limit as $R \to +\infty$, by $\mathcal{C}^0_{\loc}(\R^2)$ convergence we obtain
\[
H(r;\Psi) \le e^{2N(1)r} \qquad \forall r \in \R,
\]
which yields $H(r;\Psi) \to 0$ as $r \to -\infty$.
\end{proof}

\appendix
\section{}

We start with the following version of the parabolic minimum principle, which we used in the proof of Proposition \ref{existence thm in bdd cylinders}.

\begin{lemma}\label{lem:parabolic_max_principle}
Let $N \ge 2$, let $\Omega = (a,b) \times \Omega' \subset \R^N$ be open and connected, let $c\in L^\infty(\Omega)$ and let $w \in H^1(\Omega)$ be such that
\[
\begin{cases}
w_t-\Delta w \geq c(x) w & \text{in $[0,T] \times \Omega$}\\
w \ge 0 & \text{on $\{0\} \times \overline{\Omega}$} \\
w \ge 0 & \text{on $(0,T) \times (a,b) \times \pa \Omega'$},
\end{cases}
\]
and $w$ has $(b-a)$-periodic boundary condition on $\{a,b\} \times \Omega'$. Then $w \ge 0$.
\end{lemma}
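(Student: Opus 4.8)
The plan is to prove the statement by the standard energy (Gronwall) argument for the weak parabolic minimum principle. Set $w^- := \max\{-w,0\}\ge 0$; it suffices to show that $w^-\equiv 0$. Since $w\ge 0$ on $(0,T)\times(a,b)\times\partial\Omega'$ we have $w^-=0$ there, and $w^-$ inherits from $w$ the $(b-a)$-periodic boundary condition on $\{a,b\}\times\Omega'$; moreover $w^-(0,\cdot)=0$ because $w(0,\cdot)\ge 0$. Also $w^-\in L^2(0,T;H^1(\Omega))$, since $s\mapsto s^-$ is Lipschitz, with $\nabla w^- = -\chi_{\{w<0\}}\nabla w$, so that in particular $\nabla w\cdot\nabla w^- = -|\nabla w^-|^2$ and $w\,w^- = -(w^-)^2$ pointwise a.e.

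The key step is to test the differential inequality $w_t-\Delta w\ge cw$, interpreted weakly against nonnegative test functions compatible with the boundary conditions, with $\varphi=w^-(t,\cdot)$ for a.e. $t\in(0,T)$ (this is where the Dirichlet condition and the $x$-periodicity are used to make $w^-$ admissible). The time term gives $\int_\Omega w_t\, w^- = -\tfrac12\tfrac{d}{dt}\int_\Omega (w^-)^2$ by the chain rule; the elliptic term gives $\int_\Omega \nabla w\cdot\nabla w^- = -\int_\Omega |\nabla w^-|^2$, the boundary contribution on the Dirichlet part vanishing because $w^-=0$ and the contributions on $\{a\}\times\Omega'$ and $\{b\}\times\Omega'$ cancelling by periodicity; and the zero-order term gives $\int_\Omega c\,w\,w^- = -\int_\Omega c\,(w^-)^2$. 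Substituting into $\int_\Omega(w_t\,w^- + \nabla w\cdot\nabla w^- - c\,w\,w^-)\ge 0$ yields
\[
-\frac12\frac{d}{dt}\int_\Omega (w^-)^2 - \int_\Omega |\nabla w^-|^2 + \int_\Omega c\,(w^-)^2 \ge 0,
\]
so that, writing $\phi(t):=\int_\Omega (w^-(t,\cdot))^2$ and discarding the nonpositive gradient term,
\[
\phi'(t) \le 2\,\|c\|_{L^\infty(\Omega)}\,\phi(t) \qquad\text{for a.e. }t\in(0,T).
\]
Since $\phi(0)=0$, Gronwall's lemma forces $\phi\equiv 0$ on $[0,T]$, i.e. $w^-\equiv 0$, which is the claim.

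The main obstacle is not the computation but the rigorous justification of the manipulations above: one needs enough parabolic regularity on $w$ — say $w\in L^2(0,T;H^1(\Omega))$ with $w_t\in L^2(0,T;H^{-1}(\Omega))$, or else a Steklov-averaging regularization in time — so that $w^-$ is a legitimate test function, so that the pairing $\int_0^t\langle w_t,w^-\rangle$ equals $\tfrac12\phi(t)-\tfrac12\phi(0)$ via the Lions chain-rule lemma, and so that the boundary integrals appearing in the integration by parts make sense as traces and the periodicity in $x\in(a,b)$ can genuinely be used to annihilate the lateral flux through $\{a,b\}\times\Omega'$. I note that the connectedness of $\Omega$ and the particular interval $[0,T]$ are not needed for this weak minimum principle (they would be relevant only for the corresponding strong minimum principle).
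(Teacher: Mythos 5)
Your proposal is correct and follows essentially the same route as the paper: the paper defines $J(t)=\tfrac12\int_\Omega (w^-)^2$, shows by the same testing-with-$w^-$ computation (using the Dirichlet and periodic boundary conditions) that $J'(t)\le 2\|c\|_{L^\infty(\Omega)}J(t)$, and concludes by Gronwall since $J(0)=0$. Your write-up simply makes explicit the integration by parts, the sign bookkeeping, and the regularity/trace caveats that the paper compresses into ``a direct computation shows''.
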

\begin{proof}
Let $J(t):= \frac{1}{2}\int_{\Omega}  (w^-)^2$. A direct computation shows that $J'(t) \le 2 \|c\|_{L^\infty(\Omega)} J(t)$, where we used the boundary conditions. Consequently,
\[
J(t) \le J(0) e^{2 \|c\|_{L^\infty(\Omega)}t}=0 \qquad \forall t \in [0,T]
\]
where the last identity follows by the initial condition.
\end{proof}

\begin{remark}
Note that we do not require anything about the sign of $c$.
\end{remark}

In sections \ref{sec:proof1} and \ref{sec:sec2}, we exploited many times the following properties of the trace operators.

\begin{theorem}\label{teo:compactness traces}
For $a<b$ real numbers, let $C_{(a,b)}= (a,b) \times \S_k$ be a bounded cylinder. The trace operator $Tr: u \in H^1(C_{(a,b)}) \mapsto u|_{\Sigma_a \cup \Sigma_b} \in L^2(\Sigma_a \cup \Sigma_b)$ is compact.
\end{theorem}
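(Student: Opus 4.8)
The plan is to first observe that the trace operator $Tr$ is \emph{bounded} --- which follows from an elementary one--dimensional computation --- and then to upgrade boundedness to compactness by combining a refined trace inequality with the Rellich--Kondrachov compact embedding $H^1(C_{(a,b)}) \hookrightarrow L^2(C_{(a,b)})$; the latter is available since $C_{(a,b)} = (a,b) \times \S_k$ is a bounded, smooth (flat) cylinder, hence in particular a bounded Lipschitz domain. Since $\Sigma_a$ and $\Sigma_b$ are disjoint, the $L^2(\Sigma_a \cup \Sigma_b)$--norm splits as the sum of the two traces, so it suffices to treat each of $\Sigma_a$ and $\Sigma_b$ separately.

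The core of the argument is the following estimate: for every $\delta \in (0,b-a)$ and every $\eps>0$ there is $C=C(\delta,\eps)$, independent of $u$, such that
\[
\int_{\Sigma_a} u^2 \;\le\; C \int_{C_{(a,a+\delta)}} u^2 \;+\; \eps \int_{C_{(a,b)}} |\nabla u|^2 ,
\]
and symmetrically with $\Sigma_a$, $C_{(a,a+\delta)}$ replaced by $\Sigma_b$, $C_{(b-\delta,b)}$. To prove it, I would first assume $u \in \mathcal{C}^1(\overline{C_{(a,b)}})$: for $a<s<a+\delta$ and $y \in \S_k$ one has $u(a,y)^2 = u(s,y)^2 - 2\int_a^s u\, \partial_x u \,\de x$, and bounding $2|u\,\partial_x u| \le \eps |\partial_x u|^2 + \eps^{-1} u^2$ gives, after averaging in $s$ over $(a,a+\delta)$ and integrating in $y$ over $\S_k$, precisely the displayed inequality with $C(\delta,\eps)=\delta^{-1}+\eps^{-1}$. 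The general case $u \in H^1(C_{(a,b)})$ follows by density; in particular, specialising to $\delta=(b-a)/2$ and $\eps=1$ this already shows that $Tr$ is well defined and bounded from $H^1(C_{(a,b)})$ into $L^2(\Sigma_a\cup\Sigma_b)$.

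To conclude, let $(u_n)$ be a bounded sequence in $H^1(C_{(a,b)})$, say $\sup_n \|u_n\|_{H^1} \le M_0$. By Rellich--Kondrachov, after passing to a subsequence (not relabelled) we may assume $u_n \to u$ strongly in $L^2(C_{(a,b)})$ and weakly in $H^1(C_{(a,b)})$ for some $u \in H^1(C_{(a,b)})$. Applying the estimate above to $u_n-u$, for every $\eps>0$ we obtain
\[
\limsup_{n\to\infty}\int_{\Sigma_a}(u_n-u)^2 \;\le\; C(\delta,\eps)\limsup_{n\to\infty}\|u_n-u\|_{L^2(C_{(a,b)})}^2 \;+\; \eps\sup_n\|\nabla(u_n-u)\|_{L^2}^2 \;\le\; \eps\,(2M_0)^2 ,
\]
and letting $\eps\to0$ gives $Tr(u_n)\to Tr(u)$ in $L^2(\Sigma_a)$; the same argument applies on $\Sigma_b$. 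Thus every bounded sequence in $H^1(C_{(a,b)})$ has a subsequence whose image under $Tr$ converges in $L^2(\Sigma_a\cup\Sigma_b)$, i.e.\ $Tr$ is compact.

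I do not expect any genuine obstacle: the only points requiring a word of care are the density step that legitimises the fundamental--theorem--of--calculus identity for arbitrary $H^1$ functions, and the (routine) observation that the flat cylinder is regular enough for the Rellich embedding. An alternative, equally elementary route is to expand $u$ in Fourier series in the periodic variable $y$ and reduce to a family of one--dimensional trace estimates on $(a,b)$, but the approach above avoids the bookkeeping with the Fourier modes.
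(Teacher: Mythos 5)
Your proof is correct and follows essentially the same strategy as the paper: a trace inequality bounding $\int_{\Sigma_a\cup\Sigma_b}u^2$ by the interior $L^2$ norm plus a gradient term, combined with the compact embedding $H^1(C_{(a,b)})\hookrightarrow L^2(C_{(a,b)})$. The paper derives this inequality via the divergence theorem applied to $u^2\,\nabla w$ with $w(x,y)=x(x-1)$ and applies it directly to a sequence converging weakly to zero, whereas your fundamental-theorem-of-calculus computation with averaging and Young's inequality, applied to $u_n-u$ with the $\eps$-limiting step, is a cosmetic variant of the same argument.
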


\begin{proof}
Let $(u_n) \subset H^1(C_{(a,b)})$ be such that $u_n \rightharpoonup 0$. We show that $u_n|_{\Sigma_a \cup \Sigma_b} \to 0$ in $L^2(\Sigma_a \cup \Sigma_b)$. For the sake of simplicity we consider the case $a=0$ and $b=1$. Let $w(x,y):= x(x-1)$. We note that $\pa_\nu w= 1$ on $\Sigma_0 \cup \Sigma_1$. Let
\[
F(x,y) = \nabla w(x,y) = (2x-1,0)  \quad \text{and} \quad g(x,y) = \Delta w(x,y) = 2.
\]
By the divergence theorem
\[
2 \int_{C_{(0,1)}} u_n^2 = \int_{C_{(0,1)}} (\div F) u_n^2 = -2 \int_{C_{(0,1)}} 2u_n F\cdot \nabla u_n + \int_{\Sigma_a \cup \Sigma_b} u_n^2,
\]
so that
\[
\int_{\Sigma_a \cup \Sigma_b} u_n^2 \le 2 \|u_n\|_{L^2(C_{(0,1)}}^2 + 2 \|u_n\|_{L^2(C_{(0,1)}} \| \nabla u_n\|_{L^2(C_{(0,1)}} \to 0
\]
as $n \to \infty$, by the compactness of the Sobolev embedding $H^1(C_{(0,1)} \hookrightarrow L^2(C_{(0,1)})$.
\end{proof}

\begin{corollary}\label{cor:local compactenss traces}
For $a<b$ real numbers, let $C_{(a,b)}= (a,b) \times \S_k$ be a bounded cylinder. The local trace operator $T_{\Sigma_b}: u \in H^1(C_{(a,b)}) \mapsto u|_{\Sigma_b} \in L^2(\Sigma_b)$ is compact.
\end{corollary}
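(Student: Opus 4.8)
The plan is to obtain the statement as an immediate consequence of Theorem \ref{teo:compactness traces}. Since $\Sigma_a$ and $\Sigma_b$ are disjoint, the Hilbert space $L^2(\Sigma_a \cup \Sigma_b)$ splits as the orthogonal direct sum $L^2(\Sigma_a) \oplus L^2(\Sigma_b)$, and consequently the restriction map $P \colon g \in L^2(\Sigma_a \cup \Sigma_b) \mapsto g|_{\Sigma_b} \in L^2(\Sigma_b)$ is a bounded (norm-one) linear projection. I would then simply write $T_{\Sigma_b} = P \circ Tr$, where $Tr$ is the full trace operator of Theorem \ref{teo:compactness traces}; being the composition of the compact operator $Tr$ with the bounded operator $P$, the map $T_{\Sigma_b}$ is compact. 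There is essentially no obstacle here: beyond Theorem \ref{teo:compactness traces} the only thing used is the elementary fact that the composition of a compact operator with a bounded one is compact.

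Alternatively, one can give a self-contained argument by repeating the proof of Theorem \ref{teo:compactness traces} with an auxiliary function tailored to $\Sigma_b$ only. Given $u_n \rightharpoonup 0$ in $H^1(C_{(a,b)})$, I would take $w(x,y) := \frac{(x-a)^2}{2(b-a)}$ and $F := \nabla w = \left(\frac{x-a}{b-a},\, 0\right)$, so that $\div F = \frac{1}{b-a}$ while $F \cdot \nu = 1$ on $\Sigma_b$ and $F \cdot \nu = 0$ on $\Sigma_a$ (recall that $C_{(a,b)}$ has no lateral boundary, being periodic in $y$). The divergence theorem then yields
\[
\int_{\Sigma_b} u_n^2 = \frac{1}{b-a}\int_{C_{(a,b)}} u_n^2 + 2\int_{C_{(a,b)}} u_n\, F\cdot\nabla u_n,
\]
and since $|F| \le 1$ on $C_{(a,b)}$ the right-hand side is bounded by $\frac{1}{b-a}\|u_n\|_{L^2(C_{(a,b)})}^2 + 2\|u_n\|_{L^2(C_{(a,b)})}\|\nabla u_n\|_{L^2(C_{(a,b)})}$, which tends to $0$ by the compactness of the embedding $H^1(C_{(a,b)}) \hookrightarrow L^2(C_{(a,b)})$ together with the $H^1$-boundedness of $(u_n)$. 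Hence $T_{\Sigma_b}$ sends weakly convergent sequences to strongly convergent ones, i.e. it is compact. Either way, the argument is routine; I would present the first (shorter) one.
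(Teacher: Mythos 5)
Your first argument is correct and is essentially the paper's own proof: the paper also writes $T_{\Sigma_b}$ as the composition of the compact full trace operator $Tr$ of Theorem \ref{teo:compactness traces} with a bounded operator (there, multiplication by the characteristic function $\chi_{\Sigma_b}$, which is the same as your projection $P$ up to the obvious identification). The self-contained alternative you sketch is also valid, but since you would present the first argument, nothing more needs to be said.
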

\begin{proof}
It is an easy consequence of Theorem \ref{teo:compactness traces} and of the fact that the linear operator $L_f: \varphi \in L^2(\Sigma_a \cup \Sigma_b) \mapsto f \varphi \in L^2(\Sigma_a \cup \Sigma_b)$ is continuous for every $f \in L^\infty(\Sigma_a \cup \Sigma_b)$. As $T_{\Sigma_b} = L_{\chi_{\Sigma_b}} \circ Tr_{C_{(a,b)}}$, where $\chi_{\Sigma_b}$ is the characteristic function of $\Sigma_b$, $T_{\Sigma_b}$ is compact.
\end{proof}

\noindent \textbf{Acknowledgments:} the authors thank Prof. Alberto Farina, Prof. Susanna Terracini and Prof. Gianmaria Verzini for many valuable discussions related to this problem. The first author is partially supported by PRIN 2009 grant "Critical Point Theory and Perturbative Methods for Nonlinear Differential Equations".


\begin{thebibliography}{10}

\bibitem{BeLiWeZh}
H.~Berestycki, T.-C. Lin, J.~Wei, and C.~Zhao.
\newblock On {P}hase-{S}eparation {M}odels: {A}symptotics and {Q}ualitative
  {P}roperties.
\newblock {\em Arch. Ration. Mech. Anal.}, 208(1):163--200, 2013.

\bibitem{BeTeWaWe}
H.~Berestycki, S.~Terracini, K.~Wang, and J.~Wei.
\newblock On entire solutions of an elliptic system modeling phase-separation.
\newblock {\em to appear on Adv. Math.}, 2013.

\bibitem{CaffLin}
L.~A. Caffarelli and F.-H. Lin.
\newblock Singularly perturbed elliptic systems and multi-valued harmonic
  functions with free boundaries.
\newblock {\em J. Amer. Math. Soc.}, 21(3):847--862, 2008.

\bibitem{ChanLinLinLin}
S.-M. Chang, C.-S. Lin, T.-C. Lin, and W.-W. Lin.
\newblock Segregated nodal domains of two-dimensional multispecies
  {B}ose-{E}instein condensates.
\newblock {\em Phys. D}, 196(3-4):341--361, 2004.

\bibitem{DaWaZh}
E.~N. Dancer, K.~Wang, and Z.~Zhang.
\newblock The limit equation for the {G}ross-{P}itaevskii equations and {S}.
  {T}erracini's conjecture.
\newblock {\em J. Funct. Anal.}, 262(3):1087--1131, 2012.

\bibitem{Fa}
A.~Farina.
\newblock Some symmetry results for entire solutions of an elliptic system
  arising in phase separation.
\newblock {\em Preprint}, 2012.

\bibitem{FaSo}
A.~Farina and N.~Soave.
\newblock Monotonicity and 1-dimensional symmetry for solutions of an elliptic
  system arising in {B}ose-{E}instein condensation.
\newblock 2013.
\newblock Preprint arXiv:1303.1265.

\bibitem{GiTr}
D.~Gilbarg and N.~S. Trudinger.
\newblock {\em Elliptic partial differential equations of second order}.
\newblock Classics in Mathematics. Springer-Verlag, Berlin, 2001.
\newblock Reprint of the 1998 edition.

\bibitem{NoTaTeVe}
B.~Noris, H.~Tavares, S.~Terracini, and G.~Verzini.
\newblock Uniform {H}\"older bounds for nonlinear {S}chr\"odinger systems with
  strong competition.
\newblock {\em Comm. Pure Appl. Math.}, 63(3):267--302, 2010.

\bibitem{TaTe}
H.~Tavares and S.~Terracini.
\newblock Regularity of the nodal set of segregated critical configurations
  under a weak reflection law.
\newblock {\em Calc. Var. Partial Differential Equations}, 45(3-4):273--317,
  2012.

\bibitem{Wa}
K.~Wang.
\newblock On the {D}e {G}iorgi type conjecture for an elliptic system modeling
  phase separation.
\newblock {\em Preprint}, 2012.

\end{thebibliography}

\end{document}